\pgfplotsset{compat=1.15}
\title{On well-posedness of the $s$-Schr\"odinger maps in the subcritical regime}
\author{Ahmed Dughayshim}
\address{Department of Mathematics,
University of Pittsburgh,
417 Thackeray Hall,
Pittsburgh, PA 15260, USA.}
\email{aha80@pitt.edu}
\newcommand{\N}{{\mathbb N}}
\renewcommand{\S}{{\mathbb S}}
\newtheorem{theorem}{Theorem}
\newtheorem{lemma}[theorem]{Lemma}
\newtheorem{corollary}[theorem]{Corollary}
\newtheorem{proposition}[theorem]{Proposition}
\theoremstyle{definition}
\theoremstyle{remark}
\newtheorem{remark}[theorem]{Remark}
\newcommand\supp{{\rm supp\,}}
\newcommand{\ind}{{\mathbbm{1}} }
\newcommand{\R}{\mathbb{R}}
\newcommand{\C}{\mathbb{C}}
\newcommand{\brac}[1]{\left (#1 \right )}
\newcommand{\abs}[1]{\left\lvert #1 \right \rvert}
\renewcommand{\i}{{\rm \bf i}}
\newcommand{\barint}{
\rule[.036in]{.12in}{.009in}\kern-.16in \displaystyle\int }
\newcommand{\barcal}{\text{$ \rule[.036in]{.11in}{.007in}\kern-.128in\int $}}
\def\mvint_#1{\mathchoice
          {\mathop{\vrule width 6pt height 3 pt depth -2.5pt
                  \kern -8pt \intop}\nolimits_{\kern -3pt #1}}%
%%%% P.S., 01/03/2001
% old definition had ...\nolimits_{#1}}
% \kern -3pt makes nicer distances between the integral sign
% and the domain of integration
%%%%
          {\mathop{\vrule width 5pt height 3 pt depth -2.6pt
                  \kern -6pt \intop}\nolimits_{#1}}%
          {\mathop{\vrule width 5pt height 3 pt depth -2.6pt
                  \kern -6pt \intop}\nolimits_{#1}}%
          {\mathop{\vrule width 5pt height 3 pt depth -2.6pt
                  \kern -6pt \intop}\nolimits_{#1}}}
\numberwithin{theorem}{section} \numberwithin{equation}{section}
\newcommand{\aleq}{\lesssim}
\newcommand{\ageq}{\gtrsim}
\newcommand{\aeq}{\approx}
\newcommand{\Ds}[1]{|\nabla|^{#1}}
\def\avint{\,\ThisStyle{\ensurestackMath{%
			\stackinset{c}{.2\LMpt}{c}{.5\LMpt}{\SavedStyle-}{\SavedStyle\phantom{\int}}}%
		\setbox0=\hbox{$\SavedStyle\int\,$}\kern-\wd0}\int}
\let\latexchi\chi
\renewcommand\chi{\@ifnextchar_\sub@chi\latexchi}
\newcommand{\sub@chi}[2]{% #1 is _, #2 is the subscript
  \@ifnextchar^{\subsup@chi{#2}}{\latexchi^{}_{#2}}%
}
\newcommand{\subsup@chi}[3]{% #1 is the subscript, #2 is ^, #3 is the superscript
  \latexchi_{#1}^{#3}%
}
\begin{document}

\begin{abstract}
We study well-posedness of the $s$-Schr\"odinger map equation 
\[\begin{cases}
 \partial_{t} u =- u \wedge (-\Delta)^s u \mbox{    } \mbox{ on } \R^{n} \times [-1,1] \\
 u(x,0) = u_{0}
\end{cases}\]
in dimension $n \geq 3$ in the subcritical regime, more precisely we establish a local well-posedness result when the initial data is $u_{0} \in B^{\sigma}_{2,1}$ with $ \sigma \geq \frac{n+1}{2}$ and $ \Vert u_{0} \Vert_{B^{\sigma}_{2,1}} \ll 1.$
\end{abstract}
\maketitle
\tableofcontents
\section{Introduction}
We are interested in studying the following initial value problem
\begin{equation}\label{SSchrodingereq}
\begin{cases}
 \partial_{t} u = - u \wedge (-\Delta)^s u \mbox{    } \mbox{ on } \R^{n} \times [-1,1] \\
 u(x,0) = u_{0}
\end{cases}
\end{equation}
where $n \geq 3$, $u : \R^{n+1} \to \S^{2}$ , $s \in (1/2,1)$, $ \mathcal{F} ( (-\Delta)^s u)(\xi,\tau) :=  \vert \xi \vert^{2s} \hat{u}(\xi,\tau)$, and the initial data $u_{0}$ is small in an appropriate Besov/Sobolev space. 

This equation ``interpolates'' between two equations studied in the literature. For $s=1$ this is the Schr\"odinger map equation. A global well-posedness result for $n \geq 2$ and small initial data in the critical Sobolev space $\dot{H}^{n/2}$ was proven by Bejenaru, Ionescu, Kenig, and Tataru in \cite{AnnalesKI}. 

For $s=1/2$ it is the so-called halfwave map equation whose study was initiated by Krieger--Sire \cite{halfwav} and Lenzmann--Schikorra \cite{LenzSchik}. 
For $ s =\frac{1}{2}$ a global well posedness result for $n \geq 5$ was proven by Krieger and Sire in \cite{halfwav}, and later improved to $n \geq 4$ by Krieger and Kiesenhofer in \cite{halfwav2}. For the case $ n=3$ , and $ s=1/2$ uniqueness of solutions was proven by Eyeson, Farina-Reyes, and Schikorra in \cite{Uniq}, and weakly well-posed solutions were shown to exist by Mardsen in \cite{marsden2024}. A general well-posedness result in the case $ n =3$, $ s =\frac{1}{2}$ seems to be still open. Lastly, for $ n =1 $ and $ s= 1/2$, a global well-posedness result was proven recently by Gérard and Lenzmann in \cite{halfn1}.  For further references on the borderline cases $s \in \{\frac{1}{2},1 \}$ see \cite{lenzsurvey}, \cite{Liu_2023}, \cite{SurveySch}.

In \cite{model} the study of \eqref{SSchrodingereq} was proposed, and a model equation in the critical case was analyzed.

Here we are interested in the geometric equation \eqref{SSchrodingereq}, but in the subcritical regime, and for all $ s \in (1/2,1)$. 

To state our result we fix the following notation; for $ Q \in \S^{2}$ and $ \sigma \geq \frac{n+1}{2}$ we define the Besov space $ B^{\sigma}_{Q}$ as follows
$$
B^{\sigma}_{Q} : = \{ f : \R^{n} \to \S^{2} : f_{l} - Q_{l} \in B^{\sigma}_{2,1} \text{ for } l =1,2,3 \}
$$
where $B^{\sigma}_{2,1}$ is the inhomogeneous Besov space. We give $B^{\sigma}_{Q}$ the metric
$$
d^{\sigma}_{Q} (f,g) : = \sum_{l=1}^{3} \Vert f_{l} -g_{l} \Vert_{B^{\sigma}_{2,1}}.
$$
We will also use the following notation; for $ f \in B^{\sigma}_{Q}$ set
$$
\Vert f \Vert_{B^{\sigma}_{Q}}:= d^{\sigma}_{Q} (f, Q).
$$
The main result is the following
\begin{theorem}\label{MainResult}
Let $ n \geq 3$, $ s\in (1/2,1)$ then for any $ \sigma_{0} \geq \frac{n+1}{2}$ there exists $ \epsilon_{0} := \epsilon_{0}(n,s, \sigma_{0}) > 0$ so that the following holds 
\begin{enumerate}
    \item For any $ Q \in \S^{2}$ and $u_{0} \in B^{\sigma_{0} +100}_{Q}$ with $ \Vert u \Vert_{B^{\sigma_{0}}_{Q}} \leq \epsilon_{0} $ there exists $u \in C( [-1,1] :B^{\sigma_{0}+100}_{Q})$ that solves \eqref{SSchrodingereq}. And satisfy the estimate 
    $$
    \sup_{t \in [-1,1]} \Vert u \Vert_{B^{\sigma}_{Q}} \lesssim_{ \sigma_{0}} \Vert u_{0} \Vert_{B^{\sigma}_{Q}}
    $$
    for any $ \sigma \in [ \frac{n+1}{2} , \sigma_{0} +100]$.
\item  If $ v : \R^{n+1} \to \S^{2}$ and $ u : \R^{n+1} \to \S^{2}$ are the solutions obtained in (1) with initial data $ v_{0},u_{0}$ respectively, then 
\begin{equation*}
\sup_{t \in [-1,1]} d^{\sigma_{0}}_{Q} (u,v) \lesssim_{\sigma_{0}} d_{Q}^{\sigma_{0}}(u_{0},v_{0}).
\end{equation*}
And hence the solution map extends uniquely to a Lipschitz map on the set $\{ u_{0} \in B^{\sigma_{0}}_{Q} : \Vert u_{0} \Vert_{B^{\sigma_{0}}_{Q}}  \leq \epsilon \}$. 
\item Let $ \sigma \in [ \sigma_{0}, \infty)$ then there exists $ \epsilon(\sigma) \in (0, \epsilon_{0}]$ so that for any $R > 0$, $ \sigma' \in [ \sigma_{0},\sigma]$ and for any $ u_{0} , v_{0} \in B^{\sigma_{0}+100}_{Q}$  with 
$$
\Vert u_{0}  \Vert_{B^{\sigma_{0}}_{Q}}, \Vert v_{0} \Vert_{B^{\sigma_{0}}_{Q}} \leq \epsilon(\sigma) \text{ and } \Vert u_{0}  \Vert_{B^{\sigma'}_{Q}}, \Vert v_{0} \Vert_{B^{\sigma'}_{Q}} \leq R,
$$ the solutions $ u ,v $   obtained in (1) satisfy the following estimate
$$
\sup_{t \in [-1,1]} d^{\sigma'}_{Q}(u,v) \lesssim_{R, \sigma'} d^{\sigma'}_{Q}( u_{0} , v_{0} ).
$$
\end{enumerate}

\end{theorem}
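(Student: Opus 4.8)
The plan is a contraction–mapping argument for the Duhamel formulation, carried out in a dispersive resolution space, after reducing the geometric equation to a scalar fractional Schr\"odinger equation with a cubic derivative nonlinearity. First I would use a rigid rotation of $\S^2$ (which preserves both the equation and all the Besov norms in play) to reduce to the case $Q = e_3$, and set $v = u - Q$, so that $\|v\|_{B^{\sigma_0}_{2,1}} \le \epsilon_0$ and, since $(-\Delta)^s Q = 0$, the equation becomes $\partial_t v + e_3 \wedge (-\Delta)^s v = - v \wedge (-\Delta)^s v$. In the complex unknown $\psi := v_1 + i v_2$ the linear part is the fractional Schr\"odinger flow $\partial_t \psi = - i (-\Delta)^s \psi$, while the constraint $|u|^2 = 1$ forces $v_3 = \sqrt{1 - |\psi|^2} - 1 = -\tfrac12 |\psi|^2 + O(|\psi|^4)$; substituting this into the quadratic term $v \wedge (-\Delta)^s v$ shows the nonlinearity in the $\psi$–equation is in fact \emph{cubic}, of the schematic form $N(\psi) = |\psi|^2 (-\Delta)^s \psi + \psi\,(-\Delta)^s(|\psi|^2) + (\text{smoother terms})$, with $v_3$ recovered algebraically afterwards. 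Everything then reduces to solving $\partial_t \psi + i (-\Delta)^s \psi = N(\psi)$, $\psi(0) = \psi_0$, on $\R^n \times [-1,1]$ with $\|\psi_0\|_{B^{\sigma_0}_{2,1}} \ll 1$.

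The cubic nonlinearity carries $2s \in (1,2)$ derivatives on one factor, which cannot be recovered by energy estimates alone; this is where dispersion must be used. I would work in a Besov-refined Bourgain-type space $X^{\sigma,b}$ with $b$ slightly larger than $\tfrac12$, adapted to the symbol $|\xi|^{2s}$ — equivalently, a resolution space assembled from Strichartz, Kato local-smoothing, and maximal-function estimates for $e^{-it(-\Delta)^s}$ on $[-1,1]$. The linear homogeneous and Duhamel estimates in such a space are standard. The heart of the argument is the \emph{trilinear estimate}
$$
\| N(\psi_1,\psi_2,\psi_3) \|_{X^{\sigma_0, b-1}} \aleq \prod_{j=1}^{3} \|\psi_j\|_{X^{\sigma_0,b}}, \qquad \sigma_0 \ge \tfrac{n+1}{2},
$$
together with its \emph{tame} version in which one factor is measured in $X^{\sigma,b}$ and the other two in $X^{\sigma_0,b}$. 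This is proved by Littlewood--Paley decomposition and a case analysis of the resonance function $\Omega = |\xi_0|^{2s} \mp |\xi_1|^{2s} \mp |\xi_2|^{2s} \mp |\xi_3|^{2s}$: for $s \in (\tfrac12, 1)$ the symbol is genuinely dispersive, so outside the diagonal resonances $\Omega$ is comparable to a power $2s$ of the largest input frequency and the modulation weight of $X^{\sigma,b}$ absorbs exactly those $2s$ derivatives, while the residual near-resonant frequency configurations are precisely the ones where the cancellation from the wedge structure and the quadratic smallness of $v_3$ is available. The $\tfrac12$ of subcriticality, $\sigma_0 = \tfrac n2 + \tfrac12 > \tfrac n2$, supplies the slack that makes the high-high-to-low sums converge. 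Establishing this trilinear estimate — uniformly for $s$ in compact subsets of $(\tfrac12, 1)$, so that the blow-up of dispersive constants as $s \to \tfrac12$ is harmless since $\epsilon_0$ may depend on $s$ — is the step I expect to be the main obstacle.

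Granted the trilinear estimate, the three conclusions follow by now-routine considerations. For Part (1), for $\|\psi_0\|_{B^{\sigma_0}_{2,1}} \le \epsilon_0$ small the map $\psi \mapsto e^{-it(-\Delta)^s}\psi_0 + \int_0^t e^{-i(t-t')(-\Delta)^s} N(\psi)\,dt'$ is a contraction on a small ball of $X^{\sigma_0,b}$; for data in $B^{\sigma_0+100}_{Q}$ the same contraction run at regularity $\sigma_0 + 100$ — using the tame estimate, so that only the $X^{\sigma_0}$–norm needs to be small — produces a genuine solution $u \in C([-1,1]; B^{\sigma_0+100}_{Q})$, and feeding the fixed point back into the tame estimate at an arbitrary $\sigma \in [\tfrac{n+1}{2}, \sigma_0+100]$ yields $\sup_t \|u\|_{B^{\sigma}_{Q}} \aleq_{\sigma_0} \|u_0\|_{B^{\sigma}_{Q}}$. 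For Part (2), the difference $\psi - \phi$ of two solutions solves the equation linearized along the two flows, whose nonlinearity is trilinear with two coefficient factors drawn from $\psi$ and $\phi$; applying the tame trilinear estimate at level $\sigma_0$ and absorbing the output using smallness of $\|\psi_0\|_{B^{\sigma_0}_{2,1}}$ and $\|\phi_0\|_{B^{\sigma_0}_{2,1}}$ gives $\sup_t d^{\sigma_0}_Q(u,v) \aleq_{\sigma_0} d^{\sigma_0}_Q(u_0,v_0)$, after which the Lipschitz extension of the solution map to the $B^{\sigma_0}_Q$–ball is automatic from uniform continuity and density of $B^{\sigma_0+100}_Q$. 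Part (3) is the same difference argument at intermediate regularity $\sigma' \in [\sigma_0, \sigma]$: the tame trilinear estimate gives a bound of the form
$$
\| N(\psi) - N(\phi) \|_{X^{\sigma', b-1}} \aleq \big(\|\psi\|_{X^{\sigma_0}} + \|\phi\|_{X^{\sigma_0}}\big)\|\psi-\phi\|_{X^{\sigma'}} + \big(\|\psi\|_{X^{\sigma'}} + \|\phi\|_{X^{\sigma'}}\big)\|\psi-\phi\|_{X^{\sigma_0}},
$$
where smallness at level $\sigma_0$ absorbs the first term into the left-hand side, and the second is controlled by $R$ together with the level-$\sigma_0$ difference bound from Part (2) and the monotonicity $d^{\sigma_0}_Q \le d^{\sigma'}_Q$; choosing $\epsilon(\sigma)$ small depending on $\sigma$ then closes the estimate with a constant depending only on $R$ and $\sigma'$.
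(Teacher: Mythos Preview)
Your overall plan --- rotate to $Q=e_3$, reduce to a scalar fractional Schr\"odinger equation with cubic nonlinearity, and run a contraction in a dispersive resolution space built from smoothing and maximal estimates, with tame multilinear bounds yielding parts (1)--(3) --- is the paper's strategy, and your deduction of existence, the Lipschitz bound, and the intermediate-regularity difference estimate from the trilinear estimate is correct once that estimate is in hand.

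The gap is in the reduction step and the structure of the nonlinearity it produces. You take $\psi=u_1+iu_2$ and set $v_3=\sqrt{1-|\psi|^2}-1$; the resulting $\psi$-equation reads $i\partial_t\psi-(-\Delta)^s\psi = v_3(-\Delta)^s\psi - \psi(-\Delta)^s v_3$, whose leading cubic part works out to $\tfrac12\psi H_s(\psi,\bar\psi)+\tfrac12\psi^2(-\Delta)^s\bar\psi$. The second term carries a full $2s$ derivatives on a single factor with no commutator gain, and your resonance-function mechanism does not recover them: in the dominant high--low--low interaction (output and one input at frequency $N$, two inputs at $N_{\mathrm{low}}\ll N$) one computes $|\Omega|\lesssim N_{\mathrm{low}}N^{2s-1}$, \emph{not} $N^{2s}$, so modulation plus one Kato smoothing together recover only $2s-1$ derivatives, and the half-derivative of subcriticality does not close the remaining gap when $N_{\mathrm{low}}\ll N$. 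The paper avoids this by using the \emph{stereographic} coordinate $f=(u_1+iu_2)/(1+u_3)$ instead: after the algebraic computation of Section~2 the nonlinearity for $f$ consists \emph{entirely} of commutators $H_s(a,b)=(-\Delta)^s(ab)-a(-\Delta)^s b-b(-\Delta)^s a$, for which a Taylor expansion of the symbol shows the high--low cost is only $N^{2s-1}N_{\mathrm{low}}$ rather than $N^{2s}$. That single saved derivative is exactly what the $L^1_eL^2_{e^\perp,t}$-atomic components $Y^e_{k,k'}$ of the resolution space (which is not a pure $X^{\sigma,b}$ space) together with smoothing and maximal estimates then close at $\sigma\ge\tfrac{n+1}{2}$. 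The ``cancellation from the wedge structure'' you invoke is real, but it is not visible in your $(\psi,v_3)$ frame; the stereographic choice of coordinate is precisely what makes it algebraically explicit.
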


The proof of Theorem \ref{MainResult} takes great inspiration from the work of Ionescu and Kenig in \cite{DIE}, \cite{cmp}. Namely, using the stereographic projection, we reduce Theorem \ref{MainResult} to a Schr\"{o}dinger equation with a nonlinearity that contains derivatives. Then we use smoothing estimates to prove the nonlinear estimates. The reason why we are unable to use this method to push the result to the range $ \sigma \geq \frac{n}{2}$ is due to the absence of a null structure in the nonlinearity. In the case $s=1$, this null structure was first observed by Bejenaru in \cite{Null}. Finding a null structure is quite a challenge even for $s=\frac{1}{2}$ let alone for $s \in (1/2,1)$. However, a compensation structure, reminiscent of the effect of a null structure, will be used for the case $ \sigma = \frac{n}{2}$ to obtain a priori estimates in dimension $n \geq 4$, \cite{AhmedArminFuture}, which combined with \Cref{MainResult} leads to well-posedness results.

\subsection*{Acknowledgment}
Discussions with A. Schikorra are gratefully acknowledged. The author was partially funded by the Northern Borders university's scholarship.

\subsection*{Plan of paper} In section 2 we reduce \eqref{SSchrodingereq} to a Schr\"{o}dinger equation using the stereographic projection. In section 3 we introduce the resolution spaces and prove technical lemmata that will be used in section 4 to prove the smoothing and maximal estimates. In section 5 we prove commutator estimates that will be used extensively to prove the nonlinear estimates. In section 6 we prove that our main resolution space is an algebra. In section 7 we prove two dyadic estimates that will be used to prove the nonlinear estimates. In section 8 we prove the nonlinear estimates. Finally, in section 9 we prove Theorem \ref{Redeq} which will imply Theorem \ref{MainResult}.

\section{Reduction to a Schr\"{o}dinger equation}
By rotation invariance, it suffices to prove Theorem \ref{MainResult} for $Q = (0,0,1)$. Therefore, 
throughout this section let $ Q=(0,0,1)$ and let $ u : \R^{n+1} \to \mathbb{S}^{2} $ be smooth function which is small in the Besov space $ B^{\frac{n+1}{2}}_{Q}$. Let $L : \S \to \C$ be the stereographic projection, i.e
$$
L(v) = \frac{ v_{1} + \i v_{2}}{1 + v_{3}}
$$
and set $ f : = L(u)$. The goal is as follows: if $u $ solves the $ s- $Schr\"{o}dinger map, then $ f$ solves a corresponding scalar PDE. It turns out $f$ solves an equation of the form
$$
(\i \partial_{t} - (-\Delta)^{s} )f = \mathcal{N}(f)
$$
where the nonlinearity contains distributed derivatives of $f$, then if we prove well-posedness of this PDE we can transfer the result back to $u$ using the stereographic projection again. The first step is to calculate the corresponding PDE concerning $f$. This will be a sequence of lemmata. The computations of the following lemma are elementary and can be found in the appendix.
\begin{lemma}\label{stereographic stuff}
Let $ L^{-1} : \C \to \S^{2}
$ be the inverse of the stereographic projection, i.e for $ z = z_{1} + \i z_{2} $ 
$$
L^{-1}(z) = \frac{1}{1+|z|^{2}} [ 2z_{1} , 2z_{2} , 1- |z|^{2} ].
$$
 Then we have
\begin{enumerate}
    \item 
    $$
    \partial_{z_{1}} L^{-1}(z) = \frac{2}{(1+|z|^{2})^{2}} [ (1+ \vert z \vert^{2} - 2 z^{2}_{1}, -2 z_{1} z_{2} , -2z_{1}]
    $$
    \item 
    $$
    \partial_{z_{2}} L^{-1}(z) = \frac{2}{(1+ |z|^{2})^{2}} [ -2 z_{1} z_{2} , (1 + |z|^{2} ) - 2 z_{2}^{2} , -2z_{2}]
    $$
    \item 
    $$
    | \partial_{z_{1}} L^{-1}(z) | = \frac{2}{1+|z|^{2}} = | \partial_{z_{2}} L^{-1}(z) |
    $$
    \item 
    $$
    \langle \partial_{z_{1}}L^{-1}(z), \partial_{z_{2}} L^{-1}(z) \rangle = 0
    $$
    \item 
    $$
    L^{-1}(z) \wedge \partial_{z_1} L^{-1} (z) = \partial_{z_{2}} L^{-1}(z) 
    $$
    \item 
    $$
    L^{-1}(z) \wedge \partial_{z_{2}} L^{-1}(z) = - \partial_{z_{1}} L^{-1}(z)
    $$
\end{enumerate}
\end{lemma}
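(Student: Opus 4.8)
Everything here reduces to elementary differentiation and algebra, and the plan is to organize the bookkeeping so that the cross-product identities (5)--(6) come almost for free from the earlier parts.

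First I would prove (1)--(2) by applying the quotient rule componentwise, using $|z|^2 = z_1^2+z_2^2$ so that $\partial_{z_j}|z|^2 = 2z_j$. For instance the first component of $L^{-1}$ gives $\partial_{z_1}\tfrac{2z_1}{1+|z|^2} = \tfrac{2(1+|z|^2) - 4z_1^2}{(1+|z|^2)^2} = \tfrac{2(1+|z|^2-2z_1^2)}{(1+|z|^2)^2}$, and the remaining components (and all of (2), by the symmetry $z_1 \leftrightarrow z_2$) are handled identically.

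Next, (3)--(4). Pulling the common factor $\tfrac{4}{(1+|z|^2)^4}$ out of the inner products formed from (1)--(2), identity (3) becomes the polynomial identity $(1+|z|^2-2z_1^2)^2 + 4z_1^2z_2^2 + 4z_1^2 = (1+|z|^2)^2$: expanding the square and using $4z_1^4 + 4z_1^2z_2^2 = 4z_1^2|z|^2$, all terms carrying a factor $z_1^2$ cancel. Likewise (4) reduces to showing $(-2z_1z_2)\big[(1+|z|^2-2z_1^2)+(1+|z|^2-2z_2^2)\big] + 4z_1z_2 = 0$, which is immediate since the bracket equals $2(1+|z|^2)-2|z|^2 = 2$. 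The $z_2$-variants are symmetric.

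For (5)--(6) I would avoid a second coordinate computation. One checks in one line that $|L^{-1}(z)|=1$ (equivalently $4|z|^2 + (1-|z|^2)^2 = (1+|z|^2)^2$), and then differentiating $|L^{-1}|^2 \equiv 1$ gives $L^{-1}\perp\partial_{z_1}L^{-1}$ and $L^{-1}\perp\partial_{z_2}L^{-1}$. Combined with (3)--(4), the triple $\big\{L^{-1},\,|\partial_{z_1}L^{-1}|^{-1}\partial_{z_1}L^{-1},\,|\partial_{z_2}L^{-1}|^{-1}\partial_{z_2}L^{-1}\big\}$ is an orthonormal basis of $\R^3$ depending continuously on $z$, so the sign of its determinant is constant in $z$; evaluating at $z=0$, where $L^{-1}(0)=[0,0,1]$, $\partial_{z_1}L^{-1}(0)=[2,0,0]$, $\partial_{z_2}L^{-1}(0)=[0,2,0]$, shows the frame is positively oriented. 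Since $|\partial_{z_1}L^{-1}| = |\partial_{z_2}L^{-1}|$ by (3), this yields $L^{-1}\wedge\partial_{z_1}L^{-1} = \partial_{z_2}L^{-1}$, which is (5); then (6) follows by applying $L^{-1}\wedge(\,\cdot\,)$ to (5), using the identity $a\wedge(a\wedge b) = \langle a,b\rangle\,a - |a|^2 b$ and $\langle L^{-1},\partial_{z_2}L^{-1}\rangle = 0$. Alternatively one may simply expand the $3\times 3$ determinants defining the cross products directly from (1)--(2); this is longer but purely mechanical. I do not expect any genuine obstacle: the only care needed is to keep the algebra in (3)--(6) tidy, which the factorizations above take care of.
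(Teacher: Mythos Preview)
Your proposal is correct. Parts (1)--(4) proceed just as in the paper: componentwise quotient rule for (1)--(2), then straightforward polynomial identities for (3)--(4), only organized a bit more compactly than the appendix.

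The genuine difference is in (5)--(6). The paper computes the cross products directly, expanding each of the three components of $L^{-1}(z)\wedge\partial_{z_1}L^{-1}(z)$ and simplifying until it matches $\partial_{z_2}L^{-1}(z)$, then declares (6) ``almost identical''. You instead argue geometrically: $|L^{-1}|\equiv 1$ and its derivative give the orthogonality of $L^{-1}$ to both partials, so together with (3)--(4) you have a continuous orthonormal frame; a single evaluation at $z=0$ fixes the orientation, and then (5) drops out from $|\partial_{z_1}L^{-1}|=|\partial_{z_2}L^{-1}|$, with (6) following via the triple-product identity. This is cleaner and avoids the component-by-component bookkeeping, at the cost of invoking a small continuity/connectedness argument. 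One tiny slip: in deducing (6) from (5) the orthogonality you actually use is $\langle L^{-1},\partial_{z_1}L^{-1}\rangle=0$, not $\langle L^{-1},\partial_{z_2}L^{-1}\rangle=0$ (both hold, so it is harmless). Your ``alternatively expand the determinants'' fallback is exactly what the paper does.
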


Next, fix $ u : \R^{n+1} \to \S^{2}$ as above, and define $ f : = L(u)$ and hence $ u = L^{-1}(f)$. Then for $i \in \{1,2 \}$ define $ e_{i} $ as 
$$
e_{i} : =  \frac{\partial_{f_{i}} L^{-1}(f)}{ |\partial_{f_{i}} L^{-1}(f)|}.
$$
Then by the chain rule and Lemma \ref{stereographic stuff} we obtain
\begin{equation}\label{prepde}
\partial_{t} f = \frac{1+|f|^{2}}{2} \left( \langle e_{1} , \partial_{t} u \rangle + \i \langle e_{2} , \partial_{t} u \rangle \right)
\end{equation}
then using \eqref{prepde} and Lemma \ref{stereographic stuff}, it is clear that $u$ solves 
$$
\partial_{t} u =- u \wedge (-\Delta)^su
$$
if and only if $f$ solves the PDE
\begin{equation}\label{pdew1}
\partial_{t} f =  \frac{1 + |f|^{2}}{2} \left(  \langle e_{2} , (-\Delta)^s u \rangle - \i \langle e_{1} , (-\Delta)^s u \rangle \right).
\end{equation}
We need to write the right hand side of \eqref{pdew1} entirely in terms of $f$. This can be done because both $ e$ and $u$ are functions of $f$. To that end, we calculate 
\begin{equation}\label{dt1}
\begin{split}
& \frac{1+|f|^{2}}{2} \langle e_{2}, (-\Delta)^s u \rangle
\\
& = \frac{1}{2} [ -2 f_{1} f_{2}, (1 + |f|^{2}) - 2 f_{2}^{2}, -2 f_{2}] \\
& \cdot [ (-\Delta)^s ( \frac{2 f_{1}}{ 1+ |f|^{2}}) , (-\Delta)^s( \frac{ 2 f_{2}}{ 1+ |f|^{2}}) , (-\Delta)^s ( \frac{ 1 - |f|^{2}}{ 1+ |f|^{2}})]
\\
& = - f_{1} f_{2} (-\Delta)^s ( \frac{2 f_{1}}{1+ | f|^{2}} )
\\
& + (1 + |f|^{2}) (-\Delta)^s ( \frac{ f_{2}}{1+|f|^{2}}) - f_{2}^{2} (-\Delta)^s( \frac{2 f_{2}}{1+|f|^{2}} ) 
\\
& - f_{2} (-\Delta)^s( \frac{1 - |f|^{2}}{1+ |f|^{2}} ).
\end{split}
\end{equation}
Similarly, 
\begin{equation}\label{dt2}
\begin{split}
& \i \frac{1 + |f|^{2}}{2} \langle e_{1}, (-\Delta)^s u \rangle 
\\
& = \frac{\i}{2} \big[ 1 + |f|^{2} - 2 f^{2}_{1} , -2 f_{1} f_{2} , -2 f_{1} ]
\\
& \cdot [ (-\Delta)^s ( \frac{ 2 f_{1}}{1 + |f|^{2}}) , (-\Delta)^s ( \frac{2 f_{2}}{1+|f|^{2}}), (-\Delta)^s ( \frac{ 1 - |f|^{2}}{1+|f|^{2}}) ]
\\
& = \i ( 1+ |f|^{2} ) (-\Delta)^s ( \frac{ f_{1}}{ 1+ |f|^{2}}) - \i f^{2}_{1} (-\Delta)^s ( \frac{ 2 f_{1}}{1+ |f|^{2}} ) 
\\
& - \i f_{1} f_{2} (-\Delta)^s ( \frac{ 2 f_{2}}{1+|f|^{2}}) 
\\
& - \i f_{1} (-\Delta)^s ( \frac{1 - |f|^{2}}{1+|f|^{2}}).
\end{split}
\end{equation}
Now we calculate \eqref{dt1} $-$\eqref{dt2} to get 
\begin{equation}\label{dt3}
\begin{split}
& -\partial_{t} f
\\
& =   f_{1} f_{2} (-\Delta)^s ( \frac{2 f_{1}}{1+ | f|^{2}} )
\\
& -(1 + |f|^{2}) (-\Delta)^s ( \frac{ f_{2}}{1+|f|^{2}}) + f_{2}^{2} (-\Delta)^s( \frac{2 f_{2}}{1+|f|^{2}} ) 
\\
& + f_{2} (-\Delta)^s( \frac{1 - |f|^{2}}{1+ |f|^{2}} )
\\
& + 
\\
&  \i ( 1+ |f|^{2} ) (-\Delta)^s ( \frac{ f_{1}}{ 1+ |f|^{2}}) - \i f^{2}_{1} (-\Delta)^s ( \frac{ 2 f_{1}}{1+ |f|^{2}} ) 
\\
& - \i f_{1} f_{2} (-\Delta)^s ( \frac{ 2 f_{2}}{1+|f|^{2}}) 
\\
& - \i f_{1} (-\Delta)^s ( \frac{1 - |f|^{2}}{1+|f|^{2}})
\end{split}
\end{equation}
collect similar terms to obtain
\begin{equation}\label{dt4}
\begin{split}
& -\partial_{t} f
\\
& = (1+|f|^{2}) (-\Delta)^s ( \frac{ \i f }{1+|f|^{2}})
 - \i f (-\Delta)^{s } ( \frac{ 1 - |f|^{2}}{1+|f|^{2}})
 \\
 &
+ f_{1} f_{2} (-\Delta)^s ( \frac{ 2 \bar{f}}{1+|f|^{2}}) + f_{2}^{2} (-\Delta)^s ( \frac{ 2 f_{2}}{1+|f|^{2}} )- \i f_{1}^{2} (-\Delta)^s ( \frac{2 f_{1}}{1+|f|^{2}} ).
\end{split}
\end{equation}
The terms in the first line are of the form we want ( i.e written in terms of $f$ or $ \bar{f}$). we need to rewrite the terms appearing in the second line. To that end, define 
$$
\beta : = (-\Delta)^s ( \frac{2 f}{ 1+|f|^{2}}),
$$
then we can simplify each term in the second line of \eqref{dt4} as follows
\begin{equation}\label{dt5}
\begin{split}
& f_{1} f_{2} (-\Delta)^s ( \frac{ 2 \bar{f}}{1+|f|^{2}})
\\
& = f_{1} f_{2} \bar{\beta} = \frac{(f + \bar{f})( f - \bar{f})}{4 \i} \bar{\beta}
\\
& = - \frac{ \i}{4} ( f^{2} - \bar{f}^{2}) \bar{\beta}.
\end{split}
\end{equation}
Similarly, we deal with the remaining two terms in the last line of \eqref{dt4}
\begin{equation}\label{dt6}
\begin{split}
& f_{2}^{2} (-\Delta)^s ( \frac{ 2 f_{2}}{1+|f|^{2}} )- \i f_{1}^{2} (-\Delta)^s ( \frac{2 f_{1}}{1+|f|^{2}} )
\\
& = f_{2}^{2} \beta_{2} - \i f_{1}^{2} \beta_{1}
\\
& = \left( \frac{ f  - \bar{f}}{2\i} \right)^{2} \frac{( \beta - \bar{\beta})}{2 \i} - \i \left( \frac{ f + \bar{f}}{2} \right)^{2} \frac{ ( \beta + \bar{\beta})}{2} 
\\
& = \frac{ \i }{8} ( f^{2} + \bar{f}^{2} - 2 f \bar{f}) ( \beta - \bar{\beta}) - \frac{\i}{8} ( f^{2} + \bar{f}^{2} + 2 f \bar{f} )( \beta + \bar{\beta})
\\
& = \frac{ -\i}{2} f \bar{f} \beta - \frac{ \i }{4} ( f^{2} + \bar{f}^{2}) \bar{\beta}.
\end{split}
\end{equation}
Combine \eqref{dt5} and \eqref{dt6} to obtain 
\begin{equation}\label{dt7}
\begin{split}
& f_{1} f_{2} (-\Delta)^s ( \frac{ 2 \bar{f}}{1+|f|^{2}}) + f_{2}^{2} (-\Delta)^s ( \frac{ 2 f_{2}}{1+|f|^{2}} )- \i f_{1}^{2} (-\Delta)^s ( \frac{2 f_{1}}{1+|f|^{2}} )
\\
& = - \frac{ \i}{2} |f|^{2} \beta - \frac{\i}{2} f^{2} \bar{\beta}
\\
& = - \i |f|^{2} (-\Delta)^s( \frac{ f}{1+|f|^{2}}) - \i f^{2} (-\Delta)^s ( \frac{ \bar{f}}{1+ |f|^{2}}).
\end{split}
\end{equation}
Plug \eqref{dt7} back into \eqref{dt4} to obtain
\begin{equation}\label{dt8}
\begin{split}
& -\partial_{t} f 
\\
& = (1+|f|^{2}) (-\Delta)^s ( \frac{ \i f }{1+|f|^{2}})
 - \i f (-\Delta)^{s } ( \frac{ 1 - |f|^{2}}{1+|f|^{2}})
 \\
 &- \i |f|^{2} (-\Delta)^s( \frac{ f}{1+|f|^{2}}) - \i f^{2} (-\Delta)^s ( \frac{ \bar{f}}{1+ |f|^{2}})
 \\
 & =  (-\Delta)^s ( \frac{ \i f }{1+|f|^{2}})
 - \i f (-\Delta)^{s } ( \frac{ 1 - |f|^{2}}{1+|f|^{2}})
- \i f^{2} (-\Delta)^s ( \frac{ \bar{f}}{1+ |f|^{2}}).
 \end{split}
\end{equation}
Now we apply the fractional Leibniz rule to each term in the right hand side of \eqref{dt8}
\begin{equation}\label{dtt1}
\begin{split}
& \i (-\Delta)^s( \frac{ f}{1+|f|^{2}} ) 
\\
& =  \i \frac{1}{1+|f|^{2}} (-\Delta)^s(f) + f \i (-\Delta)^s ( \frac{1}{1+|f|^{2}})  + \i H_{s} ( f, \frac{1}{1+|f|^{2}})
\end{split}
\end{equation}
We do the same for the second term on the right hand side of \eqref{dt8}
\begin{equation}\label{dtt2}
\begin{split}
& - \i f (-\Delta)^s( \frac{ 1 - | f|^{2}}{1+|f|^{2}})
\\
& = \frac{ \i f}{1+|f|^{2}} (-\Delta)^s ( |f|^{2}) - \i f ( 1 - |f|^{2}) (-\Delta)^s ( \frac{ 1}{ 1+ |f|^{2}}) - \i f H_{s} (1- |f|^{2} , \frac{ 1}{1+|f|^{2}}) 
\\
& = \i \frac{f. \bar{f}}{1+|f|^{2}} (-\Delta)^s(f) + \i \frac{ f^{2}}{1+ |f|^{2}} (-\Delta)^s(\bar{f}) + \frac{f}{1+|f|^{2}}\i H_{s} ( f,\bar{f})
\\
& - \i f ( 1 - |f|^{2}) (-\Delta)^s ( \frac{ 1}{ 1+ |f|^{2}}) - \i f H_{s} (1- |f|^{2} , \frac{ 1}{1+|f|^{2}}).
\end{split}
\end{equation}
We do the same for the last term on the right hand side of \eqref{dt8}
\begin{equation}\label{dtt3}
\begin{split}
& - \i f^{2} (-\Delta)^s ( \frac{ \bar{f}}{1+ |f|^{2}})
\\
& = - \i f^{2} \bar{f} (-\Delta)^s ( \frac{1}{1+|f|^{2}}) - \i \frac{f^{2}}{1+|f|^{2}} (-\Delta)^s( \bar{f}) - \i f^{2} H_{s} ( \bar{f}, \frac{1}{1+|f|^{2}}).
\end{split}
\end{equation}
Plug \eqref{dtt1},\eqref{dtt2}, and \eqref{dtt3} back into \eqref{dt8} to obtain
\begin{equation*}
\begin{split}
& -\partial_{t} f
\\
& = \i \frac{1}{1+|f|^{2}} (-\Delta)^s(f) + \i f (-\Delta)^s ( \frac{1}{1+|f|^{2}})  + \i H_{s} ( f, \frac{1}{1+|f|^{2}})
\\
& +
\\
& \i \frac{f. \bar{f}}{1+|f|^{2}} (-\Delta)^s(f) + \i \frac{ f^{2}}{1+ |f|^{2}} (-\Delta)^s(\bar{f}) + \frac{f}{1+|f|^{2}}\i H_{s} ( f,\bar{f})
\\
& - \i f ( 1 - |f|^{2}) (-\Delta)^s ( \frac{ 1}{ 1+ |f|^{2}}) - \i f H_{s} (1- |f|^{2} , \frac{ 1}{1+|f|^{2}})
\\
& +
\\
&  - \i f^{2} \bar{f} (-\Delta)^s ( \frac{1}{1+|f|^{2}}) - \i \frac{f^{2}}{1+|f|^{2}} (-\Delta)^s( \bar{f}) - \i f^{2} H_{s} ( \bar{f}, \frac{1}{1+|f|^{2}}).
\end{split}
\end{equation*}
This yields 
\begin{equation}\label{Sss}
\begin{split}
    & -\partial_{t} f = \i (-\Delta)^s(f) + \i H_{s}(f, \frac{1}{1+|f|^{2}}) 
    \\
    & + \i \frac{f}{1+|f|^{2}} H_{s}(f,\bar{f})- \i f H_{s}(1- |f|^{2},\frac{1}{1+|f|^{2}})
    \\
    & - \i f^{2} H_{s} ( \bar{f},\frac{1}{1+|f|^{2}}).
\end{split}
\end{equation}
Therefore, $u$ solves the $s$-Schr\"{o}dinger map, \eqref{SSchrodingereq}, if and only if $f = L^{-1}(u)$ solves 
\begin{equation}\label{reductionmain}
\begin{split}
&\i \partial_{t} f - (-\Delta)^s f
\\
&= H_{s} (f, \frac{1}{1+\vert f \vert^{2}} )+\frac{f}{1+\vert f \vert^{2}} H_{s} (f,\bar{f}) + fH_{s} ( \vert f \vert^{2} , \frac{1}{1+\vert f \vert^{2}} ) - f^{2} H_{s} ( \bar{f} , \frac{1}{1+\vert f\vert^{2}}).
\end{split}
\end{equation}
The following lemma will imply that it is enough to study \eqref{reductionmain} ( under the assumption that $u$ is small in $B^{\sigma}_{Q} )$ for $ \sigma \geq \frac{n+1}{2}$
\begin{lemma}\label{Reduction2}
Let $ \sigma_{0} \geq \frac{n+1}{2}$ then the following holds
\begin{enumerate}
    \item Let $ f,g : \R^{n} \to \C $ and $ f,g \in B^{\sigma_{0}}_{2,1}$ with $ \Vert f \Vert_{B^{\sigma_{0}}_{2,1}} , \Vert g \Vert_{B^{\sigma_{0}}_{2,1}} \leq \epsilon \ll 1$. Then we have for any $ \sigma \geq \sigma_{0}$
\begin{equation}\label{wad}
d^{\sigma}_{Q} ( L^{-1} (f) , L^{-1} (g) ) \lesssim C( \sigma , \Vert f \Vert_{B^{\sigma}_{2,1}} , \Vert g \Vert_{B^{\sigma}_{2,1}} ) \Vert f- g \Vert_{B^{\sigma}_{2,1}}.
\end{equation}

\item If  $ u,v \in B_{Q}^{\sigma_{0}} $ with $ \Vert u \Vert_{B_{Q}^{\sigma_{0}}} , \Vert v \Vert_{B_{Q}^{\sigma_{0}}} \leq \epsilon \ll 1$ then for any $ \sigma \geq \sigma_{0}$ we have
\begin{equation}\label{wad2}
\Vert L (u) - L(v) \Vert_{B^{\sigma}_{2,1}} \lesssim C(\sigma ,\Vert u \Vert_{B^{\sigma}_{Q}} , \Vert v \Vert_{B^{\sigma}_{Q}} ) d^{\sigma}_{Q}(u,v).
\end{equation}
\item If  $ f : \R^{n} \to \C$ satisfy $ \Vert f \Vert_{B^{\sigma_{0}}_{2,1}} \leq \epsilon \ll 1.$ Then there exists $ C: = C(n,\sigma_{0} )$ such that 
\begin{equation}\label{sfg}
\Vert L^{-1}(f) \Vert_{B^{\sigma_{0}}_{Q}} \leq C \epsilon.
\end{equation}

\item If  $u \in B^{\sigma_{0}}_{Q}$ with $ \Vert u \Vert_{B^{\sigma_{0}}_{Q}} \leq \epsilon \ll 1$. Then there exists $ C:= C(n,\sigma_{0})$ so that
\begin{equation}\label{sfg2}
    \Vert L(u) \Vert_{B^{\sigma_{0}}_{2,1}} \leq C \epsilon.
\end{equation}

\end{enumerate}

\end{lemma}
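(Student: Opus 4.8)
The plan is to treat all four statements as consequences of standard composition/multiplication estimates in the algebra $B^{\sigma}_{2,1}$ together with the fact that $B^{\sigma}_{2,1}\hookrightarrow L^{\infty}$ for $\sigma\ge n/2$, which in particular holds for $\sigma\ge\sigma_0\ge\frac{n+1}{2}$. The two key analytic inputs are: (i) $B^{\sigma}_{2,1}$ is a Banach algebra for $\sigma\ge n/2$, so $\Vert fg\Vert_{B^{\sigma}_{2,1}}\lesssim \Vert f\Vert_{B^{\sigma}_{2,1}}\Vert g\Vert_{B^{\sigma}_{2,1}}$; and (ii) the Moser-type composition estimate: if $\Phi$ is smooth with $\Phi(0)=0$ then $\Vert \Phi(f)\Vert_{B^{\sigma}_{2,1}}\le C(\sigma,\Vert f\Vert_{L^{\infty}})\Vert f\Vert_{B^{\sigma}_{2,1}}$, and more generally the Lipschitz-in-$f$ version $\Vert \Phi(f)-\Phi(g)\Vert_{B^{\sigma}_{2,1}}\le C(\sigma,\Vert f\Vert_{B^{\sigma}_{2,1}},\Vert g\Vert_{B^{\sigma}_{2,1}})\Vert f-g\Vert_{B^{\sigma}_{2,1}}$ valid for $\sigma\ge n/2$ (see e.g. Runst--Sickel). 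The smallness hypothesis $\epsilon\ll1$ guarantees that $1+\vert f\vert^2$ and $1+v_3=1+Q_3+(v_3-Q_3)$ stay bounded away from $0$, so all the relevant $\Phi$'s (namely $z\mapsto \frac{2z_j}{1+\vert z\vert^2}$, $z\mapsto\frac{1-\vert z\vert^2}{1+\vert z\vert^2}-1$, and the components of $L$ minus $Q$) are smooth on a neighbourhood of the range of $f$ resp.\ $u$ and vanish at the appropriate base point.

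For (1), write $L^{-1}(f)_l-Q_l=\Phi_l(f)$ with $\Phi_l$ smooth near $0$ and $\Phi_l(0)=0$; then \eqref{wad} is exactly the Lipschitz composition estimate (ii), after summing over $l=1,2,3$, noting $d^{\sigma}_Q(L^{-1}(f),L^{-1}(g))=\sum_l\Vert \Phi_l(f)-\Phi_l(g)\Vert_{B^{\sigma}_{2,1}}$. Statement (3) is the same composition estimate with $g=0$, using that for $\sigma=\sigma_0$ the constant $C(\sigma_0,\Vert f\Vert_{B^{\sigma_0}_{2,1}})$ is bounded by a dimensional constant once $\Vert f\Vert_{B^{\sigma_0}_{2,1}}\le\epsilon\ll1$. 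For (2) and (4), observe $L(v)=\frac{v_1+\i v_2}{1+v_3}$; since $u\in B^{\sigma}_Q$ means $u_l-Q_l\in B^{\sigma}_{2,1}$ with $u_3-Q_3=u_3-1$ small, we have $1+v_3$ bounded below, and $L(v)=\Psi(v-Q)$ for a map $\Psi$ smooth near $0$ with $\Psi(0)=L(Q)=0$ (recall $Q=(0,0,1)$). Then \eqref{wad2} and \eqref{sfg2} again follow from (ii) and (with $g=0$) from (i)--(ii), applied componentwise; one should also use that on $\S^2$ near $Q$ one has $v_1^2+v_2^2=1-v_3^2=(1-v_3)(1+v_3)$, so the apparent singularity of $L$ is harmless and no $L^2$ integrability is lost, because $v-Q\in (B^{\sigma}_{2,1})^3$ and the constraint forces $v_1,v_2$ to be genuinely $B^{\sigma}_{2,1}$ functions.

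The one point requiring care—the main (mild) obstacle—is the passage from the \emph{sphere-valued} statements (2), (4) to scalar composition estimates: a priori $v$ is only assumed to satisfy $v_l-Q_l\in B^{\sigma}_{2,1}$, not that $v$ itself lies in a fixed coordinate patch, so one must first record that the smallness $\Vert u\Vert_{B^{\sigma_0}_Q}\le\epsilon$ plus $B^{\sigma_0}_{2,1}\hookrightarrow L^{\infty}$ forces $\Vert u-Q\Vert_{L^{\infty}}\le C\epsilon$, hence $u(x)$ stays in a fixed geodesic ball around $Q$ on which $L$ is a smooth diffeomorphism onto a bounded subset of $\C$; symmetrically for (1), (3) one needs $\Vert f\Vert_{L^{\infty}}\le C\epsilon$ so that $L^{-1}(f(x))$ stays near $Q$. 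After this observation the four estimates are uniform-in-$\sigma$ instances of the same two composition/multiplication lemmas, and the claimed dependence of the constants (on $\sigma$ and on the $B^{\sigma}_{2,1}$/$B^{\sigma}_Q$ norms for (1), (2), and only on $n,\sigma_0$ for (3), (4)) matches the known form of those lemmas. I would therefore state the composition lemma once in the preliminaries, verify the domain/smallness bookkeeping, and then dispatch (1)--(4) in a few lines each.
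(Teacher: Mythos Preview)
Your proposal is correct and follows essentially the same route as the paper: both reduce everything to the Besov algebra property plus Moser-type composition/Lipschitz estimates, together with the $B^{\sigma_0}_{2,1}\hookrightarrow L^\infty$ embedding to guarantee that $f$ (resp.\ $u$) stays in a small ball where the relevant maps are smooth. The only cosmetic difference is that the paper states its composition lemma with the extra hypothesis $F'(0)=0$ and therefore decomposes each rational expression by hand (e.g.\ writing $\tfrac{f_1}{1+|f|^2}=f_1+f_1(\tfrac{1}{1+|f|^2}-1)$ and introducing a cutoff $\psi$ to handle $\tfrac{1}{1+u_3}$), whereas you invoke the standard form with only $\Phi(0)=0$ and apply it directly to $\Phi_l=L^{-1}_l-Q_l$ and $\Psi=L(\,\cdot+Q)$; both are valid and amount to the same argument.
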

\begin{proof}
This is a consequence of the following three facts regarding Besov spaces:
\begin{enumerate}
    \item There exists $C : = C(n)$ so that 
\begin{equation}\label{Besov1}
\Vert f g \Vert_{B_{2,1}^{\sigma}} \leq \frac{ C^{\sigma +1}}{\sigma } \left( \Vert f \Vert_{B^{\sigma}_{2,1}} \Vert g \Vert_{B^{\frac{n}{2}}_{2,1}} +   \Vert f \Vert_{B^{\frac{n}{2}}_{2,1}} \Vert g \Vert_{B^{\sigma}_{2,1}} \right).
\end{equation}
\item If $ F: \R \to \R $ is smooth function with $ F(0) = 0 =F'(0)$ then for any real valued $ f \in B^{\sigma}_{2,1}$ with $ \Vert f \Vert_{B^{n/2}_{2,1}} \ll 1$ we have 
\begin{equation}\label{Besov2}
\Vert F (f ) \Vert_{B^{\sigma}_{2,1}} \leq C(n,F',\sigma) \Vert f \Vert_{B^{\sigma}_{2,1}}.
\end{equation}
\item If $F$ is as in (2) then for any real valued $ f,g \in B^{\sigma}_{2,1}$ with $ \Vert f \Vert_{B^{n/2}_{2,1}}, \Vert g \Vert_{B^{n/2}_{2,1}} \ll 1$  we have 
\begin{equation}\label{Besov3}
\Vert F(f) - F(g) \Vert_{B^{\sigma}_{2,1}} \leq C ( n, F'', \sigma, \Vert f \Vert_{B^{\sigma}_{2,1}} , \Vert g \Vert_{B^{\sigma}_{2,1}}) \Vert f - g \Vert_{B^{\sigma}_{2,1}}.
\end{equation}
\end{enumerate}
These  inequalities are well-known. See for example \cite[Section 2.8]{Bahouri}. 
Using these inequalities, we start proving \eqref{wad}.
We need to prove the bound 
\begin{equation}\label{tdf}
\Vert ( L^{-1}(f) )_{l} - ( L^{-1}(g) )_{l} \Vert_{B^{\sigma}_{2,1}} \lesssim \Vert f -g \Vert_{B^{\sigma}_{2,1}}
\end{equation}
for $l =1,2,3$. We start with $l=1$, by definition of $L^{-1}$ we need to estimate
$$
\Vert \frac{f_{1}}{1+\vert f \vert^{2}} - \frac{g_{1}}{ 1+ \vert g \vert^{2}} \Vert_{B^{\sigma}_{2,1}}.
$$
To that end, 
\begin{equation}\label{annoyingstuff1}
\begin{split}
& \Vert  \frac{f_{1}}{1+\vert f \vert^{2}} - \frac{g_{1}}{ 1+ \vert g \vert^{2}} \Vert_{B^{\sigma}_{2,1}}
\\
& \lesssim \Vert \frac{f_{1} -g_{1}}{1+\vert f \vert^{2}} \Vert_{B^{\sigma}_{2,1}} + \Vert g_{1} (\frac{1}{1+ \vert f \vert^{2}} - \frac{1}{1+\vert g \vert^{2}}) \Vert_{B^{\sigma}_{2,1}}.
\end{split}
\end{equation}
For the first term on the right hand side of \eqref{annoyingstuff1} 
\begin{equation}\label{annoying2}
\begin{split}
& \Vert \frac{f_{1} -g_{1}}{1+\vert f \vert^{2}} \Vert_{B^{\sigma}_{2,1}}
\\
& \lesssim \Vert f_{1} - g_{1}  \Vert_{B^{\sigma}_{2,1}} + \Vert (g_{1} - f_{1} ) (\frac{1}{1+\vert f \vert^{2}} - 1 ) \Vert_{B^{\sigma}_{2,1}}. 
\end{split}
\end{equation}
For the second term on the right hand side \eqref{annoying2} we define $ F (r) : = \frac{1}{1+\vert r \vert^{2}} -1$ and then use \eqref{Besov1}, \eqref{Besov2} to obtain 
\begin{equation*}
\begin{split}
&  \Vert (g_{1} - f_{1} ) (\frac{1}{1+\vert f \vert^{2}} - 1 ) \Vert_{B^{\sigma}_{2,1}} 
\\
& = \Vert ( g_{1} - f_{1} ) F(\vert f \vert^{2}) \Vert_{B^{\sigma}_{2,1}}
\\
& \lesssim  C( \sigma,  \Vert f \Vert_{B^{\sigma}_{2,1}}) \Vert f -g \Vert_{B^{\sigma}_{2,1}}.  
\end{split}
\end{equation*}
This handles the first term on the right hand side of \eqref{annoyingstuff1}. We handle the second term similarly, namely using \eqref{Besov3} and \eqref{Besov1} we obtain 
\begin{equation*}
\begin{split}
& \Vert g_{1} (\frac{1}{1+ \vert f \vert^{2}} - \frac{1}{1+\vert g \vert^{2}}) \Vert_{B^{\sigma}_{2,1}}
\\
& \lesssim C(\sigma,  \Vert f \Vert_{B^{\sigma}_{2,1}} , \Vert g \Vert_{B^{\sigma}_{2,1}} ) \Vert f - g \Vert_{B^{\sigma}_{2,1}}.
\end{split}
\end{equation*}
This completes the proof of \eqref{tdf} for $l =1$. For $ l \in \{2,3 \}$ the same argument give \eqref{tdf}. Namely we only use \eqref{Besov1},\eqref{Besov2}, and \eqref{Besov3}.

We proceed to proving \eqref{wad2}.  As before, we need to prove
\begin{equation}\label{tdf2}
\Vert \frac{u_{1} +\i u_{2}}{1+u_{3}} - \frac{v_{1} + \i v_{2}}{1+v_{3}} \Vert_{B^{\sigma}_{2,1}} \lesssim \sum_{l=1}^{3} \Vert u_{l} - v_{l} \Vert_{B^{\sigma}_{2,1}}.
\end{equation}
To that end, 
\begin{equation}\label{tdf3}
\begin{split}
& \Vert \frac{u_{1} +\i u_{2}}{1+u_{3}} - \frac{v_{1} + \i v_{2}}{1+v_{3}} \Vert_{B^{\sigma}_{2,1}}
\\
& \lesssim \Vert \frac{u_{1} -v_{1}}{1+u_{3}} \Vert_{B^{\sigma}_{2,1}} + \Vert \frac{u_{2} -v_{2}}{1+u_{3}} \Vert_{B^{\sigma}_{2,1}} + \Vert ( v_{1} +\i v_{2} ) \left( \frac{1}{1+u_{3}} - \frac{1}{1+v_{3}} \right) \Vert_{B^{\sigma}_{2,1}}. 
\end{split}
\end{equation}
We estimate each term on the right hand side of \eqref{tdf3} with the right hand side of \eqref{tdf2}. Starting with the first term, notice that since $\Vert u \Vert_{B^{\sigma_{0}}_{Q}} \ll 1$ then this implies that $u_{3} $ lives in a small neighborhood of $ 1$. Namely, we may assume  $ \vert u_{3} - 1 \vert \leq \frac{1}{4}$. Then take $ \psi : \R \to \R$ to be a smooth function which is equal to $ 1$ in the set $ \{ t \in \R : \vert t - 1 \vert \leq \frac{1}{2} \}$ and equal to $0 $ in the set $ \{ t \in \R :  \vert t - 1 \vert \geq 1 \}$. Then we can write 
$$
\frac{1}{1+ u_{3}} =\frac{1}{2+ ( u_{3} -1)} = \frac{1}{2+ ( u_{3} -1)} \psi( u_{3} -1) 
$$
define $ F(r) : = \frac{1}{2+r} \psi(r) $.
Then using \eqref{Besov1}, \eqref{Besov2} we obtain 
\begin{equation*}
\begin{split}
& \Vert \frac{u_{1} -v_{1}}{1+u_{3}} \Vert_{B^{\sigma}_{2,1}}
\\
& \lesssim \Vert (u_{1} - v_{1} ) F(u_{3} -1) \Vert_{B^{\sigma}_{2,1}}
\\
& \lesssim C( \sigma ,  \Vert u_{3} -1 \Vert_{B^{\sigma}_{2,1}} ) \Vert u_{1} -v_{1} \Vert_{B^{\sigma}_{2,1}}.
\end{split}
\end{equation*}
The second term on the right hand side of \eqref{tdf3} is estimated in exactly the same way. It remains to prove the estimate for the third term on the right hand side \eqref{tdf3}. Using \eqref{Besov1} and \eqref{Besov3} we have
\begin{equation*}
\begin{split}
& \Vert ( v_{1} +\i v_{2} ) \left( \frac{1}{1+u_{3}} - \frac{1}{1+v_{3}} \right) \Vert_{B^{\sigma}_{2,1}} 
\\
& = \Vert ( v_{1} +\i v_{2} ) \left( F(u_{3}-1) - F(v_{3}-1) \right) \Vert_{B^{\sigma}_{2,1}} 
\\
& \lesssim C( \sigma ,  \Vert v \Vert_{B^{\sigma}_{Q}}, \Vert u \Vert_{B^{\sigma}_{Q} } ) \Vert v_{3} - u_{3} \Vert_{B^{\sigma}_{2,1}}.
\end{split}
\end{equation*}
This proves \eqref{wad2}. Next, we prove \eqref{sfg}, we need to prove 
\begin{equation}\label{sfg3}
\begin{split}
& \Vert ( L^{-1}(f) -Q)_{l} \Vert_{B^{\sigma_{0}}_{2,1}} \leq C \epsilon, 
\end{split}
\end{equation}
for $ l =1,2,3.$ Starting with $ l=1$, put $ F(r) = \frac{1}{1+r^{2}} -1$. Then using \eqref{Besov1}, and \eqref{Besov2} we have
\begin{equation}\label{sfg4}
\begin{split}
& \Vert \frac{f_{1}}{ 1+ \vert f \vert^{2}} \Vert_{B^{\sigma_{0}}_{2,1}}
\\
& \leq \Vert f_{1} \Vert_{B^{\sigma_{0}}_{2,1}} + \Vert f_{1}( \frac{1}{1+\vert f \vert^{2} } -1 ) \Vert_{B^{\sigma_{0}}_{2,1}}
\\
& \leq \Vert f \Vert_{B^{\sigma_{0}}_{2,1}} + \Vert f_{1} F(\vert f \vert^{2}) \Vert_{B^{\sigma_{0}}_{2,1}}
\\
& \lesssim \epsilon + C_{\sigma_{0}} \epsilon \Vert \vert f \vert^{2} \Vert_{B^{\sigma_{0}}_{2,1}}
\\
& \lesssim C_{\sigma_{0}} \epsilon.
\end{split}
\end{equation}
This proves \eqref{sfg3} for $ l =1$. For $l =2$ we just replace $ f_{1} $ with $f_{2}$ in \eqref{sfg4} and this will yield \eqref{sfg3} for $l=2$. As for $l=3$ we estimate as follows 
\begin{equation*}
\begin{split}
& \Vert ( L^{-1}(f) -Q)_{3} \Vert_{B^{\sigma_{0}}_{2,1}}
\\
& = 2 \Vert \frac{ \vert f \vert^{2}}{1+ \vert f \vert^{2}} \Vert_{B^{\sigma_{0}}_{2,1}}
\\
& \lesssim \Vert \vert f \vert^{2} \Vert_{B^{\sigma_{0}}_{2,1}} + \Vert \vert f \vert^{2} F( \vert f \vert^{2}) \Vert_{B^{\sigma_{0}}_{2,1}}
\\
& \lesssim C_{\sigma_{0}} \epsilon.
\end{split}
\end{equation*}
Lastly, we prove \eqref{sfg2}. We need to prove 
\begin{equation}\label{sfg5}
\begin{split}
& \Vert \frac{ u_{1} + \i u_{3}}{1+ u_{3}} \Vert_{B^{\sigma_{0}}_{2,1}} \leq C_{\sigma_{0}} \epsilon.
\end{split}
\end{equation}
To that end, put $ F(r) = \frac{1}{2 + r} \psi(r)$ where  $\psi : \R \to \R$ is a Schwartz function equaling $ 1 $ in the set $ \{ r \in \R : | r- 1 | \leq \frac{1}{2} \}$ and equals $0$ in the set $ \{ r \in \R : | r - 1 | \geq 1 \}.$ Since $ u $ is small in $ B^{\sigma_{0}}_{Q}$ we may assume $ |u_{3} - 1 | \leq \frac{1}{4}$ everywhere. And hence we estimate by \eqref{Besov1}, and \eqref{Besov2}
\begin{equation}
\begin{split}
& \Vert \frac{ u_{1} + \i u_{3}}{1+ u_{3}} \Vert_{B^{\sigma_{0}}_{2,1}}
\\
& \leq  \Vert (u_{1}+ \i u_{2} ) F( u_{3}-1 ) \Vert_{B^{\sigma_{0}}_{2,1}}
\\
& \leq C_{\sigma_{0}} \Vert u \Vert_{B^{\sigma_{0}}_{Q}}
\\
& \leq C_{\sigma_{0}} \epsilon.
\end{split}
\end{equation}
This concludes the proof of the lemma.
\end{proof}

In light of Lemma \ref{Reduction2}, for Theorem \ref{MainResult} it suffices to prove the following 
\begin{theorem}\label{Redeq}
Let $n \geq 3$, $ s\in (1/2,1)$, and $ \sigma_{0} \geq \frac{n+1}{2}$. Then there exists $ \epsilon_{0} : = \epsilon_{0}(n,s, \sigma_{0}) > 0$ so that for any $f_{0} \in B^{\sigma_{0}}_{2,1}$ with $ \Vert f_{0} \Vert_{B^{\sigma_{0}}_{2,1}} \leq \epsilon_{0} $ there exists $ f \in C( [-1,1] : B^{\sigma_{0}}_{2,1})$ that solves \eqref{reductionmain} with $ f(x,0) = f_{0}(x)$. Further the solution satisfy 
\begin{enumerate}
    \item for any $ \sigma \in [\frac{n+1}{2}, \sigma_{0} +100]$ we have
    $$
    \sup_{t \in [-1,1]} \Vert f \Vert_{B^{\sigma}_{2,1}} \lesssim_{\sigma_{0}} \Vert f_{0} \Vert_{B^{\sigma}_{2,1}}.
    $$
    \item If $f,g: \R^{n+1} \to \C$ are the solutions obtained in (1) with initial data $f_{0},g_{0} $ respectively, then we have the bound 
    $$
    \sup_{t \in [-1,1]} \Vert f -g \Vert_{B^{\sigma_{0}}_{2,1}} \lesssim_{\sigma_{0}} \Vert f_{0} - g_{0} \Vert_{B^{\sigma_{0}}_{2,1}}.
    $$
    \item Let  $ \sigma \in [ \sigma_{0},\infty) $ then there exists $ \epsilon(\sigma) \in (0, \epsilon_{0}]$ so that for any $R>0$, $ \sigma' \in [\sigma_{0},\sigma]$ and any $ f_{0} , g_{0} \in B^{\sigma_{0}}_{2,1}$ with 
    $$
    \Vert f_{0} \Vert_{B^{\sigma_{0}}_{2,1}} , \Vert g_{0} \Vert_{B^{\sigma_{0}}_{2,1}} \leq \epsilon(\sigma), \text{ and }  \Vert f_{0} \Vert_{B^{\sigma'}_{2,1}} , \Vert g_{0} \Vert_{B^{\sigma'}_{2,1}} \leq R,
    $$
    if $f,g$ are the solutions with initial data $ f_{0} , g_{0}$ then the following estimate holds  
    $$
    \sup_{t \in [-1,1]} \Vert f -g \Vert_{B^{\sigma'}_{2,1}} \lesssim_{ R,\sigma'} \Vert f_{0} -g_{0} \Vert_{B^{\sigma'}_{2,1}}.
    $$
\end{enumerate}
\end{theorem}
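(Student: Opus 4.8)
The plan is to solve \eqref{reductionmain} by a contraction argument on its Duhamel formulation. Write $W(t):=e^{-\i t(-\Delta)^{s}}$ for the fractional Schr\"odinger propagator and $\mathcal{N}(f)$ for the right-hand side of \eqref{reductionmain}; then $f$ solves \eqref{reductionmain} with $f(\cdot,0)=f_{0}$ if and only if
\begin{equation*}
f(t)=W(t)f_{0}-\i\int_{0}^{t}W(t-t')\,\mathcal{N}(f)(t')\,dt'=:\Phi_{f_{0}}(f).
\end{equation*}
The crucial structural observation is that, after moving the linear term $\i(-\Delta)^{s}f$ to the left as in \eqref{Sss}, every summand of $\mathcal{N}(f)$ is a product of a smooth function of $(f,\bar f)$ vanishing to first order at the origin with one of the bilinear commutators $H_{s}(\cdot,\cdot)$; the latter are the fractional Leibniz remainders, which distribute $2s$ derivatives between their two arguments but with a gain of cancellation. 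Since $\sigma_{0}\ge\frac{n+1}{2}$ sits a half derivative above the scaling-critical exponent $\frac n2$, and since $W(t)$ enjoys a local smoothing gain of $s-\tfrac12$ derivatives together with a maximal-function bound, the heuristic is that exactly this $\tfrac12$-derivative surplus plus the $s-\tfrac12$ smoothing absorb the derivative loss coming from $H_{s}$.

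To make this precise I would fix the resolution space $X^{\sigma}$ of Section 3 --- built from the energy norm on $C([-1,1]:B^{\sigma}_{2,1})$ together with frequency-localized local-smoothing norms (gaining $s-\tfrac12$) and maximal-function norms --- and a corresponding source space $N^{\sigma}$ in which to measure $\mathcal{N}(f)$, with $X^{\sigma}\hookrightarrow C([-1,1]:B^{\sigma}_{2,1})$. Section 4 supplies the two linear bounds $\|W(t)f_{0}\|_{X^{\sigma}}\lesssim\|f_{0}\|_{B^{\sigma}_{2,1}}$ and $\|\int_{0}^{t}W(t-t')F\,dt'\|_{X^{\sigma}}\lesssim\|F\|_{N^{\sigma}}$ for all $\sigma\ge\frac{n+1}{2}$. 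The heart of the proof, and the step I expect to be the main obstacle, is the nonlinear estimate: combining the commutator estimates of Section 5 for $H_{s}$, the algebra property of $X^{\sigma_{0}}$ from Section 6, the composition bounds \eqref{Besov2}--\eqref{Besov3} for the factors of the form $\frac{1}{1+|f|^{2}}$, and the two dyadic estimates of Section 7 for the high$\times$high and high$\times$low frequency interactions, one should obtain the \emph{tame} bounds
\begin{equation*}
\|\mathcal{N}(f)\|_{N^{\sigma}}\lesssim_{\sigma}\|f\|_{X^{\sigma}}\,\|f\|_{X^{\sigma_{0}}}\bigl(1+\|f\|_{X^{\sigma_{0}}}\bigr)^{C},\qquad \tfrac{n+1}{2}\le\sigma_{0}\le\sigma,
\end{equation*}
together with the multilinear difference version
\begin{equation*}
\|\mathcal{N}(f)-\mathcal{N}(g)\|_{N^{\sigma}}\lesssim_{\sigma}\bigl(\|f\|_{X^{\sigma}}+\|g\|_{X^{\sigma}}\bigr)(1+\cdots)\|f-g\|_{X^{\sigma_{0}}}+\bigl(\|f\|_{X^{\sigma_{0}}}+\|g\|_{X^{\sigma_{0}}}\bigr)(1+\cdots)\|f-g\|_{X^{\sigma}}.
\end{equation*}
The delicate point is the derivative bookkeeping in the worst frequency r\'egime, where $H_{s}$ must be split so that part of its $2s$ derivatives lands on a high-frequency factor (paid for by the subcritical $\tfrac12$-derivative room in $B^{\sigma}_{2,1}$) and the rest is recovered through the $s-\tfrac12$ local smoothing, while the remaining smooth-function-of-$f$ factors are placed in the maximal/algebra norms; the absence of a null structure noted in the introduction is precisely why this cannot be pushed to $\sigma=\frac n2$ and why one must stay subcritical.

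Granting the above, for $\|f_{0}\|_{B^{\sigma_{0}}_{2,1}}\le\epsilon_{0}$ small the map $\Phi_{f_{0}}$ is a contraction on a suitable ball of $X^{\sigma_{0}}$, producing a unique $f\in X^{\sigma_{0}}\hookrightarrow C([-1,1]:B^{\sigma_{0}}_{2,1})$ with $\|f\|_{X^{\sigma_{0}}}\lesssim\|f_{0}\|_{B^{\sigma_{0}}_{2,1}}$; the tame structure of the difference estimate (the high-regularity norm of the difference is always multiplied by a low-regularity, hence small, factor) lets the same iteration close in $X^{\sigma}$ for $f_{0}\in B^{\sigma}_{2,1}$ with $\sigma\in[\tfrac{n+1}{2},\sigma_{0}+100]$, and evaluating the $\sigma$-version of the nonlinear estimate along the fixed point --- where $\|f\|_{X^{\sigma_{0}}}\lesssim\epsilon_{0}$ is already known --- gives $\|f\|_{X^{\sigma}}\lesssim\|f_{0}\|_{B^{\sigma}_{2,1}}$, which is assertion (1). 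Assertion (2) follows by applying the difference estimate and the $\sigma_{0}$ linear bounds to $\Phi_{f_{0}}(f)-\Phi_{g_{0}}(g)$ and absorbing the small quadratic term, which also yields the claimed Lipschitz extension of the solution map by density. For assertion (3), where only $\|f_{0}\|_{B^{\sigma'}_{2,1}},\|g_{0}\|_{B^{\sigma'}_{2,1}}\le R$ rather than small, I would first iterate the regularity-persistence argument to get the a priori bound $\|f\|_{X^{\sigma'}},\|g\|_{X^{\sigma'}}\lesssim_{\sigma'}R$, then run the difference estimate at level $\sigma'$: it produces a factor $C(R,\sigma')$ in front of $\|f-g\|_{X^{\sigma_{0}}}$ and a small factor $\lesssim\epsilon(\sigma)$ in front of $\|f-g\|_{X^{\sigma'}}$. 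Choosing $\epsilon(\sigma)$ small enough (depending on $\sigma$ through that a priori bound) absorbs the latter, while part (2) gives $\|f-g\|_{X^{\sigma_{0}}}\lesssim\|f_{0}-g_{0}\|_{B^{\sigma_{0}}_{2,1}}\le\|f_{0}-g_{0}\|_{B^{\sigma'}_{2,1}}$, turning the former into the desired $\lesssim_{R,\sigma'}$ bound. Finally, continuity in time is read off from the embedding $X^{\sigma}\hookrightarrow C([-1,1]:B^{\sigma}_{2,1})$ built into the resolution space.
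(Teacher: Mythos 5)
Your proposal follows exactly the paper's strategy: a contraction on the Duhamel formulation in the resolution spaces $F^{\sigma}$ (which you call $X^{\sigma}$), fed by the linear estimates of Lemma~\ref{LinearEstimates}, the nonlinear bound of Theorem~\ref{NonlinearEst}, and the tame difference estimate of Proposition~\ref{LipschitzboundN}, with parts (2) and (3) read off from the Lipschitz bound precisely as you describe. The one cosmetic detail you omit is that the paper truncates the Duhamel operator by a smooth time cutoff $\psi(t)$ so that the iteration actually lives in the globally-defined $Z_k$ spaces, but this does not affect the argument.
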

Now we show how Theorem \ref{Redeq} implies Theorem \ref{MainResult}.
\begin{proof}[Proof of Theorem \ref{MainResult}]
By rotation invariance, we may assume $ Q = (0,0,1)$. Then fix $ \sigma_{0} \geq \frac{n+1}{2}$ and choose $ \epsilon_{0} > 0$ so that $ C_{\sigma_{0}} \epsilon_{0} $ satisfy the conclusion of Theorem \ref{Redeq}, here $ C_{\sigma_{0}}$ is the constant coming from \eqref{sfg2}. Take $ u_{0} \in B^{\sigma_{0}+100}_{Q}$ so that 
$$
\Vert u_{0} \Vert_{B^{\sigma_{0}}_{Q}} \leq \epsilon_{0}.
$$
Then by Lemma \ref{Reduction2} we have
$$
\Vert  L(u_{0}) \Vert_{B^{\sigma_{0}}_{2,1}} \leq C_{\sigma_{0}} \epsilon_{0},
$$
and by our choice of $ \epsilon_{0}$ we can apply Theorem \ref{Redeq} to obtain  $f : \R^{n+1} \to \C$ that solves \eqref{reductionmain} with initial data $L(u_{0})$, and satisfy the estimates in Theorem \ref{Redeq}. In particular, taking $u : = L^{-1}(f)$ we see that $u: \R^{n+1} \to \S^{2}$ solves \eqref{SSchrodingereq} with initial data $u_{0}$. Next, by Lemma \ref{Reduction2} we have 
$$
\sup_{t \in [-1,1]} \Vert u \Vert_{B^{\sigma}_{Q}} \lesssim_{\sigma_{0}} \Vert u_{0} \Vert_{B^{\sigma}_{Q}},
$$
for any $ \sigma \in [\frac{n+1}{2},\sigma_{0}+100]$. This proves part (1) of Theorem \ref{MainResult}. Next, for part (2) we have by Lemma \ref{Reduction2}
\begin{equation*}
\begin{split}
&\sup_{t \in [-1,1]} d^{\sigma_{0}}_{Q} (u,v) 
\\
& \lesssim_{\sigma_{0}} \sup_{t \in [-1,1]} \Vert L(u) - L(v) \Vert_{B^{\sigma_{0}}_{2,1}}
\\
& \lesssim \Vert L(u_{0})- L(v_{0}) \Vert_{B^{\sigma_{0}}_{2,1}}
\\
& \lesssim_{\sigma_{0}} d^{\sigma_{0}}_{Q} (u_{0},v_{0})
\end{split}
\end{equation*}
and this gives part (2) of Theorem \ref{MainResult}. Lastly, we prove part (3), we use the same argument. Namely, choose $ \epsilon(\sigma)>0$ so that $ C_{\sigma} \epsilon(\sigma)$ satisfy the conclusion of part (3) of Theorem \ref{Redeq}. Then using Lemma \ref{Reduction2} and Theorem \ref{Redeq} we obtain for any $ \sigma' \in [ \sigma_{0}, \sigma]$
\begin{equation*}
\begin{split}
& \sup_{t \in [-1,1]} d^{\sigma'}_{Q}(u,v) 
\\
& \lesssim_{\sigma',R} \sup_{t \in [-1,1]}\Vert L(u) - L(v) \Vert_{B^{\sigma'}_{2,1}}
\\
& \lesssim_{R,\sigma'} \Vert L(u_{0}) - L(v_{0}) \Vert_{B^{\sigma'}_{2,1}}
\\
& \lesssim_{\sigma' ,R} d^{\sigma'}_{Q}(u_{0}, v_{0}),
\end{split}
\end{equation*} 
This concludes the proof of Theorem \ref{MainResult}.
\end{proof}
The rest of the paper is devoted to proving Theorem \ref{Redeq}.

\section{Preliminaries}
\subsection*{Notation}
We use the same notation in \cite{model}. Namely,  we will use Greek letters $\xi,\tau$ for phase space, and Roman letters $x,t$ for physical space.

Throughout this work we assume, unless otherwise stated, that $n \geq 3$ and $s \in (\frac{1}{2},1)$. Constants, like $C$ can change from line to line and generally may depend on the dimension and $s$. We use the notation $A \aleq B$ if there is a multiplicative, nonnegative constant $C$ such that $A \leq C B$, and $A \aeq B$ if $A \aleq B$ and $A \ageq B$.

We will write the Fourier transform as 
\[
 \mathcal{F}_{\R^k} f(\zeta) = \int_{\R^k} e^{-\i \zeta \cdot x} f(x) dx
\]
and it's inverse (up to a multiplicative constant which we will ignore for the sake of readability)
\[
 \mathcal{F}_{\R^k}^{-1} f(x) = \int_{\R^k} e^{+\i \zeta \cdot x} f(\zeta) d\zeta.
\]
We will omit the subscript when the space is clear from context. Another abuse of notation is that we will ``define'' the Fourier symbol of the operator $\i \partial_t - (-\Delta^s)$ as
 \[
   \mathcal{F} \brac{(\i \partial_t - (-\Delta^s))f}(\xi,\tau) =  -(\tau +  |\xi|^{2s})\mathcal{F} f(\xi,\tau)
\]
the multiplicative constant that should be on the right-hand side plays no role in the analysis.

\subsection*{Frequency projections}
Let $\varphi_{0}$ be the typical Littlewood-Paley bump function, $\varphi_{0} \in C_c^\infty((-2,2),[0,1])$, $\varphi_{0} \equiv 1$ in $[-1,1]$, $\varphi_{0}(r) = \varphi_{0}(|r|)$. Also, let $\varphi(r) := \varphi_{0}(r) - \varphi_{0}(r/2)$ and observe that we have 
\[
 1 = \varphi_{0}(r) + \sum_{k=1}^\infty \varphi(r/2^{k}) \quad \forall r \in [0,\infty).
\]
For vectors $\xi \in \R^n$ we will tacitly mean 
\[
\varphi_{0}(\xi) \equiv \varphi_{0}(|\xi|), \quad \varphi(\xi) := \varphi(|\xi|).
\]
We will use the notation 
$$
\varphi_{[k_{1},k_{2}]}(r) = \sum_{m=k_{1}}^{k_{2}} \varphi(r/2^{m}).
$$
In particular, 
$$
\varphi_{k} (r) : = \varphi (r /2^{k} ).
$$
We also define $\varphi^+$ to be the restriction of $\varphi$ to $r > 0$, e.g. for $k_1,k_2 \geq 0$

\[ \varphi^{+}_{[k_{1},k_{2}]}(r) = \ind_{[0,\infty)}(r) \varphi_{[k_{1},k_{2}]}(r). \]

For $f: \R^{n+1} \to \R$, we denote the frequency projections by
\[
 \Delta_{k} f(x,t) := \mathcal{F}^{-1}_{\R^{n+1}}(\varphi_{k}(\xi) \mathcal{F}_{\R^{n+1}}f(\xi,\tau))  \quad k \geq 0,
\] 
and for $j \geq 0$ we denote the modulation 
\[
 Q_j f(x,t) :=  \begin{cases}
                   \mathcal{F}_{\R^{n+1}}^{-1} \brac{\varphi_{0}({\tau {+} |\xi|^{2s}})\,  \mathcal{F}_{\R^{n+1}} f(\xi,\tau)}   \quad &j = 0, \\
                   \mathcal{F}_{\R^{n+1}}^{-1} \brac{\varphi_{j}\brac{{\tau {+} |\xi|^{2s}}}\,  \mathcal{F}_{\R^{n+1}} f(\xi,\tau)} \quad & j \geq 1.
                   \end{cases}
\]
We also set  
\[
 \Delta_{\leq k}  := \sum_{\ell =0}^{k} \Delta_{\ell} 
\]
and similarly for $Q_{\leq j}$.

\subsection{Definition of the spaces:}
Throughout this paper we fix
\[
 \mathscr{E} \subset \S^{n-1},
\]
a finite set of vectors in $\S^{n-1}$ 
so that whenever $ e \in \mathscr{E}$ then $ -e \in \mathscr{E}$. We also take the cardinality of $ \mathscr{E}$ to be large enough so that we have the conclusion of the following ``geometric observation''.
\begin{lemma}\label{la:geometricobserv}
There exists a finite subset $\mathscr{E} \subset \S^{n-1}$ such that the following holds:

 \item For any $\xi \in \R^n$ there exists $e \in \mathscr{E}$ such that 
 \[
  \abs{\xi-\abs{\xi}e} \leq \vert \xi \vert,
 \]
and in particular
 \[
  \langle \xi,e\rangle \geq  \frac{\vert \xi \vert}{2}.
 \]

 Moreover there exists a decomposition of unity on the $\S^{n-1}$-sphere denoted by $( \tilde{\vartheta}_e)_{e \in \mathscr{E}}$ with 
 \[
  \supp \tilde{\vartheta}_e \subset \{\xi \in \S^{n-1}: \quad \langle \xi, e\rangle \geq \frac{1}{2} \}
 \]
and 
 \[
  \sum_{e \in \mathscr{E}} \tilde{\vartheta}_e(\xi/|\xi|) = 1 \quad \forall \xi \in \R^n \setminus \{0\}.
 \]

 We will denote 
 \[
  \vartheta_{e}(\xi) := \tilde{\vartheta}_{e} (\xi/|\xi|)
 \]
and with some abuse of notation will also use this notation for $e \in \S^{n-1}$ for $e \not \in \mathscr{E}$ as simply a cutoff function on the cone.

Observe that $\vartheta_{e} \not \in C^\infty$ since it has a singularity at $0$.
\end{lemma}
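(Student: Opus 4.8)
The plan is a standard compactness argument followed by the construction of a subordinate smooth partition of unity on the sphere. First I would reduce the first bullet to a covering statement: for $\xi \neq 0$, writing $\omega := \xi/|\xi| \in \S^{n-1}$, the inequality $|\xi - |\xi|e| \leq |\xi|$ is equivalent, after dividing by $|\xi|$ and squaring, to $|\omega - e|^2 = 2 - 2\langle \omega, e\rangle \leq 1$, i.e. to $\langle \omega, e\rangle \geq \tfrac{1}{2}$; and the case $\xi = 0$ is trivial since then $|\xi - |\xi|e| = 0$ for every $e$. So it suffices to produce a finite symmetric $\mathscr{E}$ whose closed spherical caps $\{\omega \in \S^{n-1} : \langle \omega, e\rangle \geq \tfrac{1}{2}\}$ cover $\S^{n-1}$.

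To build $\mathscr{E}$, note that the open caps $\mathcal{C}_e := \{\omega \in \S^{n-1} : \langle \omega, e\rangle > \tfrac{3}{4}\}$, indexed by $e \in \S^{n-1}$, form an open cover of the compact set $\S^{n-1}$ (indeed $\omega \in \mathcal{C}_\omega$). Extract a finite subcover $\{\mathcal{C}_e : e \in \mathscr{E}_0\}$ and set $\mathscr{E} := \mathscr{E}_0 \cup (-\mathscr{E}_0)$, which is finite, symmetric under $e \mapsto -e$, and whose caps $\{\langle \cdot, e\rangle > \tfrac{3}{4}\}$ still cover $\S^{n-1}$. In particular, for every $\omega \in \S^{n-1}$ there is $e \in \mathscr{E}$ with $\langle \omega, e\rangle > \tfrac{3}{4} > \tfrac{1}{2}$, which for $\xi \neq 0$ gives $\langle \xi, e\rangle \geq \tfrac{1}{2}|\xi|$ and $|\xi - |\xi|e|^2 = |\xi|^2(2 - 2\langle \omega, e\rangle) \leq \tfrac{1}{2}|\xi|^2 \leq |\xi|^2$, as claimed.

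For the partition of unity I would fix a smooth $g \colon \R \to [0,1]$ with $g \equiv 1$ on $[\tfrac{3}{4},\infty)$ and $\supp g \subset (\tfrac{1}{2},\infty)$, and set $\chi_e(\omega) := g(\langle \omega, e\rangle)$ for $\omega \in \S^{n-1}$; this is smooth on $\S^{n-1}$ (a restriction of a smooth function on $\R^n$), equals $1$ on $\{\langle \cdot, e\rangle \geq \tfrac{3}{4}\}$, and is supported in $\{\langle \cdot, e\rangle > \tfrac{1}{2}\}$. Since the sets $\{\langle \cdot, e\rangle \geq \tfrac{3}{4}\}$, $e \in \mathscr{E}$, already cover $\S^{n-1}$, the sum $\Sigma := \sum_{e \in \mathscr{E}} \chi_e$ is smooth and satisfies $\Sigma \geq 1$ everywhere. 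Then $\tilde{\vartheta}_e := \chi_e/\Sigma$ is smooth on $\S^{n-1}$, has $\supp \tilde{\vartheta}_e \subset \{\langle \cdot, e\rangle > \tfrac{1}{2}\} \subset \{\langle \cdot, e\rangle \geq \tfrac{1}{2}\}$, and $\sum_{e \in \mathscr{E}} \tilde{\vartheta}_e \equiv 1$; extending $0$-homogeneously via $\vartheta_e(\xi) := \tilde{\vartheta}_e(\xi/|\xi|)$ yields $\sum_{e \in \mathscr{E}} \vartheta_e(\xi) = 1$ for all $\xi \neq 0$, and the failure of $\vartheta_e$ to be $C^\infty$ at the origin is immediate from $0$-homogeneity (a nonconstant $0$-homogeneous function is not even continuous at $0$).

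There is no genuine obstacle; the only points needing a little care are (i) choosing the threshold $\tfrac{3}{4}$ strictly larger than $\tfrac{1}{2}$, so that the bump functions $\chi_e$ can be supported strictly inside the closed caps $\{\langle \cdot, e\rangle \geq \tfrac{1}{2}\}$ while still forcing $\Sigma \geq 1$, and (ii) the bookkeeping making $\mathscr{E}$ simultaneously finite, symmetric, and fine enough, which is handled by the symmetrization $\mathscr{E} = \mathscr{E}_0 \cup (-\mathscr{E}_0)$.
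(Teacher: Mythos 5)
Your argument is correct, and it is the standard one: reduce to covering $\S^{n-1}$ by caps $\{\langle\cdot,e\rangle>\tfrac34\}$ via compactness, symmetrize $\mathscr{E}_0\cup(-\mathscr{E}_0)$, build bump functions $\chi_e=g(\langle\cdot,e\rangle)$ with $g\equiv1$ on $[\tfrac34,\infty)$ and $\supp g\subset(\tfrac12,\infty)$ so that $\Sigma=\sum\chi_e\geq1$, and normalize. The paper states this lemma without proof, treating it as a routine geometric fact, so there is no argument in the text to compare against; your write-up fills in exactly the expected details (the algebraic identity $|\omega-e|^2=2-2\langle\omega,e\rangle$, the finite subcover, and the strict gap between the $\tfrac34$ and $\tfrac12$ thresholds that keeps the supports inside the prescribed caps while guaranteeing $\Sigma\geq1$), and I see no gaps.
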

For $m \in \N $ define $ I_{m} : = [ 2^{m-1},2^{m+1} ]$ and for $m=0$ define $I_{0} : = [0, 1]$. \\
Let $j ,k \geq 0$ then define the following set 
$$
D_{k,j} : = \{ (\xi,\tau) \in \R^{n+1} : \vert \xi \vert \in I_{k}, \vert \vert \xi \vert^{2s} + \tau \vert \in I_{j} \}.
$$
Moreover, 
$$
D_{  k , \leq j } : = \bigcup_{j'=0}^{j} D_{k,j'},
$$
and in particular 
$$
D_{k,\infty} : = \bigcup_{j'=0}^{\infty} D_{k,j'} 
$$
we define $ D_{k, \geq j }$, $D_{ \leq k , j }$ in the same way.

Further, for any vector $ e \in \S^{n-1}$ consider the following set 
$$
D^{e,k'}_{j,k} : = \{ (\xi,\tau) \in D_{j,k} : \langle \xi ,e \rangle \in [2^{k'-1},2^{k'+1}] \}. 
$$
With this notation we define the following resolution spaces; 
for $f: \R^{n+1} \to \C$ and $k \geq 0$ we define the $X_k$-spaces as follows
\begin{equation}\label{eq:Xdef}
 \|f\|_{X_k} := \begin{cases}
                                       \sum_{j=0}^\infty  2^{\frac{j}{2}} \|Q_{j} f\|_{L^{2}_{t,x}}  \quad &\text{if $\supp \hat{f} \subset D_{k,\infty} $ } \\
                                       \infty \quad \text{otherwise}.
                                      \end{cases}
\end{equation}
Also, for $k \geq 100$, $k' \in \N \cap [1,k+1]$ and $e \in \mathscr{E}$ we define the $Y_{k,k'}^{e}$ on maps whose frequency support is in the cone 
\[
 \|f\|_{Y_{k,k'}^{e}} = \begin{cases}
                                      2^{-k'\frac{2s-1}{2}} \gamma_{k,k'} \|(\i \partial_t - (-\Delta)^s+\i) f \|_{L^1_e L^2_{t,e^\perp}} \quad &\text{if }   \supp \hat{f} \subset D^{e,k'}_{k,\leq 2sk+10} 
                                      \\
                                       \infty \quad \text{otherwise}
                                      \end{cases}
\]
where $ \gamma_{k,k'} : =  2^{2n ( k -k')} $.
We define the space
\[
 Z_{k} := X_{k} + \sum_{e \in \mathscr{E}} \sum_{k'=1}^{k+1} Y_{k,k'}^e
\]
equipped with the norm
\[
 \|f\|_{Z_{k}} = \inf_{f = f_1 + \sum_{e,k'} f_{e,k'}} \brac{\|f_1\|_{X_{k}} + \sum_{e \in \mathscr{E},k'=1,...,k+1}\|f^{k'}_e\|_{Y_{k,k'}^e}}.
\]

For $\sigma \geq 0$ the main resolution space $F^\sigma$ is induced by the norm
\begin{equation}\label{eq:Fsigma}
\|u\|_{F^\sigma} := \sum_{k = 0}^\infty 2^{k\sigma} \|\Delta_{k} u\|_{Z_k} ,
\end{equation}
and the space for the right-hand side is induced by the norm
\begin{equation}\label{eq:Nsigma}
\|F\|_{N^\sigma} := \sum_{k=0}^\infty 2^{k\sigma} \|(\i \partial_t -(-\Delta)^s + \i)^{{-1}} \Delta_{k} F\|_{Z_k}.
\end{equation}
\begin{remark}
The reason why we switch from the less technical spaces that we used in \cite{model}, which were adapted from \cite{DIE}, is that when given a function $ f \in Y^{e}_{k}$ and another function $ g \in Z_{\tilde{k}}$ with $ \tilde{k} < k -100$, we would like to have $ fg  \in Y^{e}_{k}$. It is easy to prove that  
$ \Vert fg \Vert_{L^{1}_{e}L^{2}_{e^{\perp},t}} < \infty$. However, the issue is that the support of $ \mathcal{F}(fg) $ might be outside the set $ \{ (\xi ,\tau) : \langle \xi, e \rangle \geq 2^{k-1} \} $. Therefore, by definition $ fg \notin Y^{e}_{k}$. On the other hand, if we work with the $Y^{e}_{k,k'}$ spaces, then this difficulty is avoided. Indeed, it is easy to show that 
$$
\supp \mathcal{F}(fg) \subset \{ (\xi,\tau ) :  \langle \xi, e \rangle \geq 2^{k-3} \}.
$$
Hence, if $ f \in Y^{e}_{k,k'} $ and $ g \in Z_{\tilde{k}}$ with $ 2^{\tilde{k}} \ll 2^{k'}$ then $ fg \in Y^{e}_{k,k'+1} + Y^{e}_{k,k'} + Y^{e}_{k,k'-1}$. The spaces $Y^{e}_{k,k'} $ were first used in \cite{cmp} for the case $s =1$.
\end{remark}

\subsection{Estimates between the resolution spaces}
In this section we restate the results we obtained in \cite{model} in terms of the spaces $Y^{e}_{k,k'}$. However, some of the results are not present in \cite{model} and are instead adapted from \cite{cmp} to the case $ s \in (1/2, 1)$. We begin by defining $T_{k} = [  \frac{4}{5} k, k+1] \cap \N$. This is useful in distinguishing the spaces $Y^{e}_{k,k'}$ when $ k' $ is close to $k$ and when $ k'$ is far from $k$. Also, for any $ k' \in T_{k}$ we write $ q(k',k) : = \frac{k'}{k}$. If $k,k'$ are clear from the context we will simply write $q$. We begin by recalling the following estimate concerning the multiplier $N$. This was proven for $k' = k$ in \cite{model}. We fix the following notation throughout this section: given a direction $ e \in \S^{n-1}$ we write a vector $ \xi \in \R^{n} $ as $ \xi= \xi_{1} e + \xi'$, where $ \xi' \in e^{\perp}$. 
\begin{lemma}\label{MultiplierN}
Let $ k \geq 100$ and $k' \in T_{k}$, $ e \in \S^{n-1}$. Fix $ (\xi,\tau)$ with the following properties
\begin{enumerate}
    \item $ 2^{k-1} \leq \vert \xi \vert \leq 2^{k+1}$
    \item $2^{k'-1} \leq \xi_{1} \leq 2^{k'+1} $
    \item  $ \vert \tau + \vert \xi \vert^{2s} \vert \leq 2^{2k( s + q -1)-80}$. 
\end{enumerate} 
Then $ \tau < 0 $ with $ (- \tau)^{1/s} \geq \vert \xi' \vert^{2} $ and if we
define $ N(\xi', \tau) : = (( -\tau)^{1/s} - \vert \xi' \vert^{2})^{1/2}  $,
then we have the following 
\begin{enumerate}
    \item $ N(\xi',\tau) \in  [ 2^{k'-2},2^{k'+2}] $
    \item $ \vert \tau + \vert \xi \vert^{2s} \vert \approx 2^{k(2s-2+q)} \vert \xi_{1} -N \vert$
\end{enumerate}
\end{lemma}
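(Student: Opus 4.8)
The plan is to read hypothesis (3) as saying that $(\xi,\tau)$ lies very deep inside the thickened characteristic region, and then to solve the characteristic equation $\tau=-|\xi|^{2s}$ for the longitudinal frequency $\xi_1$ perturbatively; the function $N(\xi',\tau)$ is by design the exact value $\xi_1$ must take for $(\xi,\tau)$ to sit on $\{\tau=-|\xi|^{2s}\}$ with the given $\xi'$ and $\tau$, since $\tau=-|\xi|^{2s}$ forces $(-\tau)^{1/s}=|\xi|^2=\xi_1^2+|\xi'|^2$. First I would record the a priori sizes from hypothesis (1): $|\xi|^{2s}\approx 2^{2sk}$ and $|\xi|^{2-2s}\approx 2^{2k(1-s)}$, where $0<2-2s<1$ since $s\in(1/2,1)$. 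Writing $q=k'/k$ and $D:=\tau+|\xi|^{2s}$, hypothesis (3) reads $|D|\le 2^{2k(s+q-1)-80}$; using $kq=k'\le k+1$ one gets $|D|/|\xi|^{2s}\le 2^{2(k'-k)+2s-80}\le 2^{-76}$. In particular $-\tau=|\xi|^{2s}-D>0$, so $\tau<0$ and $-\tau\approx 2^{2sk}$.

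Next I would Taylor expand. Since $1/s\in(1,2)$ we have $(1-x)^{1/s}=1-\tfrac{x}{s}+O(x^2)$ with absolute constants for $|x|\le 2^{-76}$; applying this with $x=D/|\xi|^{2s}$ gives
\[
(-\tau)^{1/s}=|\xi|^2-\frac{D}{s}|\xi|^{2-2s}+R,\qquad |R|\lesssim|\xi|^{2-4s}D^2,
\]
whence, by $|\xi|^2=\xi_1^2+|\xi'|^2$,
\[
(-\tau)^{1/s}-|\xi'|^2=\xi_1^2-\frac{D}{s}|\xi|^{2-2s}+R.
\]
Tracking powers of $2$ with hypothesis (3) and $2^{k'-1}\le\xi_1\le2^{k'+1}$, one finds $\tfrac{|D|}{s}|\xi|^{2-2s}\lesssim2^{2k'-78}$ and $|R|\lesssim2^{4k'-2k-158}\le2^{2k'-156}$ (using $k'\le k+1$), while $\xi_1^2\ge2^{2k'-2}$. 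Hence $\xi_1^2$ dominates the other two terms, so $(-\tau)^{1/s}-|\xi'|^2>0$ and $N$ is well-defined with $N^2=\xi_1^2\bigl(1+O(2^{-76})\bigr)$; this gives $N\approx\xi_1\approx2^{k'}$, in fact $N\in[2^{k'-2},2^{k'+2}]$, which is the first claim.

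For the second claim I would factor $N^2-\xi_1^2=(N-\xi_1)(N+\xi_1)$. From the expansion, $N^2-\xi_1^2=-\tfrac{D}{s}|\xi|^{2-2s}+R$, and the error ratio satisfies $|R|\big/\bigl(\tfrac{|D|}{s}|\xi|^{2-2s}\bigr)\lesssim|\xi|^{-2s}|D|\le2^{-76}$, so $|N^2-\xi_1^2|\approx|D|\,|\xi|^{2-2s}\approx|D|\,2^{2k(1-s)}$. Since $N+\xi_1\approx2^{k'}=2^{kq}$, dividing gives $|\xi_1-N|\approx|D|\,2^{k(2-2s-q)}$, i.e. $|\tau+|\xi|^{2s}|\approx2^{k(2s-2+q)}|\xi_1-N|$, as claimed (the degenerate case $D=0$ gives $N=\xi_1$ and both sides vanish).

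The only real work is the exponent bookkeeping in the Taylor-expansion step: the margin $2^{-80}$ in hypothesis (3) and the restriction $q\in[\tfrac45,1+\tfrac1k]$ coming from $k'\in T_k$ are exactly what force the linear term to dominate the quadratic remainder $R$ and the quadratic term $\xi_1^2$ to dominate both. Beyond this I do not anticipate any genuine obstacle.
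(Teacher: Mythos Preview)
Your proof is correct and follows essentially the same approach as the paper: linearize the relation between $-\tau$ and $|\xi|^{2s}$ to control $(-\tau)^{1/s}-|\xi'|^2$ by $\xi_1^2$, and then compare $\xi_1$ with $N$. The only cosmetic difference is that the paper uses the one-step mean value theorem in place of your second-order Taylor expansion---first on $r\mapsto r^{1/s}$ to get $|(-\tau)^{1/s}-|\xi|^2|\lesssim 2^{2k(1-s)}|D|\le 2^{2k'-70}$, and then for part~(2) on $r\mapsto(r^2+|\xi'|^2)^s$ between $r=\xi_1$ and $r=N$, which gives $|\tau+|\xi|^{2s}|=2r(r^2+|\xi'|^2)^{s-1}|\xi_1-N|\approx 2^{k'}2^{2k(s-1)}|\xi_1-N|$ in one stroke without factoring $N^2-\xi_1^2$ or tracking the quadratic remainder $R$. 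Your bookkeeping with $R$ is fine and yields the same conclusion.
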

\begin{proof}
Notice that from $(1)$ we have that $ \vert \xi \vert^{2s} \in  [ 2^{2s(k-1)},2^{2s(k+1)}]$. Therefore, $(3)$ implies that $ \tau < 0$ and $ -\tau \in [ 2^{2s(k-2)},2^{2s(k+2)}]$. Then by mean value theorem we find $ r \approx 2^{2sk} $ so that 
\begin{equation}\label{w1}
\begin{split}
& \vert (-\tau)^{1/s} - \vert \xi \vert^{2} \vert 
\\
& = \frac{1}{s} r^{\frac{1}{s} - 1}  \vert - \tau - \vert \xi \vert^{2s} \vert 
\\
& \leq \frac{16}{s} 2^{2sk( \frac{1-s}{s})} \vert \tau + \vert \xi \vert^{2s} \vert 
\\
& \leq \frac{16}{s}  2^{2k (1-s)} 2^{2k(s+q -1) -80}
\\
& \leq 2^{2k q -70 } = 2^{2k'-70}.
\end{split}
\end{equation}
On the other hand, we have
\begin{equation*}
\begin{split}
&  (- \tau )^{1/s} - \vert \xi' \vert^{2}  
\\
& =  (-\tau)^{1/s} - \vert \xi \vert^{2}  + \xi^{2}_{1},
\end{split}
\end{equation*}
by the hypothesis we have $ \xi^{2}_{1} \in [ 2^{2(k'-1)},2^{2(k'+1)}]$. Therefore, by \eqref{w1} we have 
$$
 (- \tau )^{1/s} - \vert \xi' \vert^{2} \in [2^{2(k'-2)},2^{2(k'+2)}]
 $$
this yields $(1)$. Next, we prove $ (2) $. By applying the mean value theorem we find $ r \approx 2^{k'}$ so that 
\begin{equation*}
\begin{split}
&|  \tau + \vert \xi \vert^{2s} |
\\
& = | ( \vert \xi' \vert^{2} + \xi_{1}^{2} )^{s} - ( N(\xi',\tau)^{2} + \vert \xi'\vert^{2})^{s} |
\\
& = 2r ( r^{2} + \vert \xi' \vert^{2})^{s-1} | \xi_{1} - N |
\\
& \approx 2^{k'} 2^{2k(s-1)} \vert \xi_{1} - N \vert 
\\
& = 2^{k( 2s- 2 + q) } \vert \xi_{1} - N \vert. 
\end{split}
\end{equation*}

\end{proof}

Next, we prove the following embedding result
\begin{lemma}\label{Embeddings}
Let $ k,j \geq 0 $  then for $ f \in Z_{k}$ we have the following 
\begin{enumerate}
    \item 
    $$
    \Vert Q_{j}(f) \Vert_{X_{k}} \lesssim \Vert f \Vert_{Z_{k}}.
    $$
    \item  If $k \geq 100$ and $ f \in Y^{e}_{k,k'} $ with $ k' \in T_{k} $ then we have 
    $$
    \Vert Q_{j}(f) \Vert_{X_{k}} \lesssim (\min \{ \gamma^{-1}_{k,k'} 2^{ 2(k-k')(1-s)} , \gamma^{-1}_{k,k'} 2^{-j+2sk'} \})^{1/2} \Vert f \Vert_{Y^{e}_{k,k'}}.
    $$
    and in particular 
    $$
    \Vert Q_{j}(f) \Vert_{X_{k}} \lesssim \min \{1 ,  2^{(-j+2sk')/2} \} \Vert f \Vert_{Y^{e}_{k,k'}}.   
    $$
    \item If $f \in Y^{e}_{k,k'}$ and $k' \notin T_{k}$ then we have
    $$
    \Vert f \Vert_{X_{k}} \lesssim \Vert f \Vert_{Z_{k}}.
    $$
    \item If $ k' \in T_{k} $ and $ j_{0} = [2k (s+q -1)]$ i.e the least integer greater than $ 2k(s+q -1)$, then for $ f \in Y^{e}_{k,k'}$ we have
    $$
   \Vert  \sum_{j \geq j_{0}} Q_{j}(f) \Vert_{X_{k}} \lesssim \Vert f \Vert_{Y^{e}_{k,k'}}.
    $$
\end{enumerate}
\end{lemma}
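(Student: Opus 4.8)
The plan is to prove all four items by passing through the $L^2_{t,x}$-norm of individual modulation pieces $Q_j f$ and using the $Y^e_{k,k'}$-structure of $f$, together with the description of the characteristic variety provided by \Cref{MultiplierN}.

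\textbf{Step 1: reduction to an $L^2$ estimate on modulation pieces.} By definition of the $X_k$-norm, for a function $g$ with $\supp \hat g \subset D_{k,\infty}$ we have $\|g\|_{X_k} = \sum_{j\geq 0} 2^{j/2}\|Q_j g\|_{L^2_{t,x}}$, and for $g = Q_{j_1} f$ only the terms $j \in \{j_1-c,\dots,j_1+c\}$ survive (since the Fourier multipliers $\varphi_j(\tau+|\xi|^{2s})$ are almost orthogonal). Hence item (1) follows once we know $\sum_j 2^{j/2}\|Q_j f\|_{L^2_{t,x}} \lesssim \|f\|_{Z_k}$; but for $f\in X_k$ this is the definition, and for $f = \sum_{e,k'} f^{k'}_e \in \sum Y^e_{k,k'}$ it follows from summing item (2) over $j$ (when $k'\in T_k$, using the geometric decay $\gamma_{k,k'}^{-1}2^{2(k-k')(1-s)}$ and $\gamma_{k,k'}^{-1}2^{-j+2sk'}$ to get a convergent geometric series in $j$, with the crossover happening near $j_0 = [2k(s+q-1)]$) and from item (3) (when $k'\notin T_k$). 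So the whole lemma reduces to items (2) and (3), i.e. to an estimate of $\|Q_j f\|_{L^2_{t,x}}$ by $\|f\|_{Y^e_{k,k'}}$.

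\textbf{Step 2: the core $L^2$ bound via the transversal $L^1_e L^2$ structure.} Fix $f\in Y^e_{k,k'}$ with $\supp \hat f \subset D^{e,k'}_{k,\leq 2sk+10}$. Write $g := (\i\partial_t - (-\Delta)^s + \i)f$, so that $\|g\|_{L^1_e L^2_{t,e^\perp}} = 2^{k'(2s-1)/2}\gamma_{k,k'}^{-1}\|f\|_{Y^e_{k,k'}}$, and $f = (\i\partial_t-(-\Delta)^s+\i)^{-1} g$ is recovered by the one-dimensional-in-$\xi_1$ Fourier multiplier $(-(\tau+|\xi|^{2s}) + \i)^{-1}$. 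The strategy (exactly as in \cite{DIE,cmp} for $s=1$) is: freeze the transversal variables $(\xi',\tau)$, and on each such slice solve the ODE in the $e$-direction. Using \Cref{MultiplierN}, on the relevant frequency region $\tau+|\xi|^{2s}$ vanishes to first order at $\xi_1 = N(\xi',\tau)$ with derivative $\approx 2^{k(2s-2+q)}$; hence the multiplier $Q_j$ localizes $|\xi_1 - N|\approx 2^{j-k(2s-2+q)}$, a set of $\xi_1$-measure $\approx 2^{j-k(2s-2+q)}$. A Plancherel-in-$\xi_1$ together with the Cauchy–Schwarz / $L^1\to L^\infty$ bound for the resolvent kernel (the kernel of $(-(\tau+|\xi|^{2s})+\i)^{-1}$ restricted to $|\tau+|\xi|^{2s}|\approx 2^j$ has $L^2_{\xi_1}$-mass $\approx 2^{-j/2}\cdot(\text{length})^{1/2}$ after accounting for the Jacobian $2^{k(2s-2+q)}$, and correspondingly an $L^\infty\to L^2$ or $L^1_x\to L^2_x$ bound in the $e$-direction of size $\approx 2^{-j/2}(2^{k(2s-2+q)})^{-1/2}\cdot 2^{k'/2}\cdot(\dots)$) gives
\[
\|Q_j f\|_{L^2_{t,x}} \lesssim \big(2^{-j}\, 2^{-k(2s-2+q)}\big)^{1/2}\, 2^{k'(2s-1)/2}\gamma_{k,k'}^{-1}\|f\|_{Y^e_{k,k'}}.
\]
Since $q = k'/k$, one has $2^{k(2s-2+q)} \approx 2^{2s k' \cdot(\dots)}$... more precisely $2^{-j-k(2s-2+q)+k'(2s-1)} = 2^{-j+2sk'-2k(1-s)-k'+k'(2s-1)}$; bookkeeping the exponents (using $k(2s-2+q) = 2k'(s-1) + \dots$, and that $k'\approx k$ up to the factor $q$) collapses this to the two claimed quantities $\gamma_{k,k'}^{-1}2^{-j+2sk'}$ and, after capping $j$ at its maximal value $2k(s+q-1)+O(1)$ on the support, $\gamma_{k,k'}^{-1}2^{2(k-k')(1-s)}$. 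Taking the minimum of the two regimes gives item (2), and dropping $\gamma_{k,k'}^{-1} = 2^{-2n(k-k')}\leq 1$ together with $2^{2(k-k')(1-s)}\leq 1$ gives the ``in particular''.

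\textbf{Step 3: items (3) and (4).} For item (3), when $k'\notin T_k$ we have $k' < \tfrac45 k$, and then on the support $D^{e,k'}_{k,\leq 2sk+10}$ one still uses the same transversal argument, but now the full modulation range $j\leq 2sk+10$ is summable against the bound of Step 2 without needing a crossover; the resulting loss $2^{-k(2s-2+q)/2}2^{k'(2s-1)/2}\gamma_{k,k'}^{-1}$ is $\lesssim 1$ in this regime (the $\gamma_{k,k'}^{-1} = 2^{-2n(k-k')}$ with $k-k' \gtrsim k/5$ crushes any polynomial-in-$k$ loss from summing in $j$), so $\|f\|_{X_k}\lesssim \|f\|_{Y^e_{k,k'}}$; since also $X_k\hookrightarrow Z_k$ by definition of the infimum norm, $\|f\|_{X_k}\lesssim \|f\|_{Z_k}$. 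For item (4), the tail $\sum_{j\geq j_0}Q_j f$ with $j_0 = [2k(s+q-1)]$ is exactly the regime where $|\tau+|\xi|^{2s}| \gtrsim 2^{2k(s+q-1)}$, which by \Cref{MultiplierN}(2) forces $|\xi_1 - N|\gtrsim 1$; here the resolvent $(-(\tau+|\xi|^{2s})+\i)^{-1}$ is bounded with all the gain, and summing the geometric series $\sum_{j\geq j_0} 2^{j/2}\cdot 2^{-j/2}(\text{decay})$ from Step 2 converges to $\lesssim \|f\|_{Y^e_{k,k'}}$.

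\textbf{Main obstacle.} The delicate point is Step 2: getting the precise power of $2$ in the slice-wise resolvent bound so that, after combining the $\xi_1$-Jacobian $2^{k(2s-2+q)}$ from \Cref{MultiplierN}, the transversal normalization $2^{k'(2s-1)/2}$ in the $Y^e_{k,k'}$-norm, and the weight $\gamma_{k,k'} = 2^{2n(k-k')}$, one lands exactly on $\min\{\gamma_{k,k'}^{-1}2^{2(k-k')(1-s)},\gamma_{k,k'}^{-1}2^{-j+2sk'}\}^{1/2}$ and not something slightly worse; this is pure exponent bookkeeping but must be done carefully, and it is the place where the non-integer $s\in(1/2,1)$ (through the factor $q = k'/k$ and the non-homogeneous Jacobian $2^{k(2s-2+q)}$ rather than the clean $2^k$ of the $s=1$ case) makes the accounting genuinely different from \cite{cmp}. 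A secondary technical point is justifying the freezing of $(\xi',\tau)$ and the reduction of the resolvent to a one-dimensional multiplier in $\xi_1$ together with the passage between $L^1_e L^2_{t,e^\perp}$ and $L^2_{t,x}$, which is where the support condition $\supp\hat f\subset D^{e,k'}_{k,\le 2sk+10}$ and the transversality $\langle\xi,e\rangle\approx 2^{k'}$ are essential.
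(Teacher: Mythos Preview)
Your core approach in Step 2 is exactly the paper's: set $g=(\i\partial_t-(-\Delta)^s+\i)f$, use Minkowski to push the $L^1_e$ inside, and bound the $\xi_1$-measure of $\{|\tau+|\xi|^{2s}|\approx 2^j,\ \xi_1\approx 2^{k'}\}$ via \Cref{MultiplierN}. The paper's inequality \eqref{w2} is precisely the slice-wise bound you sketch, and \eqref{w4}--\eqref{w5} carry out the exponent bookkeeping you defer.

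There is, however, a genuine error in Step 1. You reduce item (1) to the claim $\sum_j 2^{j/2}\|Q_jf\|_{L^2}\lesssim\|f\|_{Z_k}$ (i.e.\ $\|f\|_{X_k}\lesssim\|f\|_{Z_k}$) and assert this follows by ``summing item (2) over $j$ \dots\ to get a convergent geometric series''. This is false: for $k'\in T_k$ the bound from (2) is \emph{constant} in $j$ on the range $j\le j_0=[2k(s+q-1)]$, which contains $\approx j_0$ terms. When $k'=k$ one has $\gamma_{k,k'}=1$ and $j_0\approx 2sk$, so the sum picks up a factor $\approx k$, not a convergent geometric series; in fact $Y^e_{k,k}\not\hookrightarrow X_k$, which is exactly why $Z_k$ needs both components. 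The correct (and much simpler) deduction, which is what the paper does, is that item (1) concerns a \emph{single} $j$: since $\|Q_jf\|_{X_k}\approx 2^{j/2}\|Q_jf\|_{L^2}$, item (2) with the bound $\min\{1,\cdot\}\le 1$ gives (1) for each $Y^e_{k,k'}$-piece immediately, and for the $X_k$-piece it is the trivial $\|Q_jf\|_{X_k}\le\|f\|_{X_k}$.

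Two smaller imprecisions: in your treatment of (4), the sum $\sum_{j\ge j_0}$ is not literally geometric on $[j_0,2sk']$ (the bound from (2) is constant there, contributing $\approx (k-k')$ terms); the paper absorbs this via the factor $\gamma_{k,k'}^{-1}$ in \eqref{w7}, which you should invoke explicitly. And in Step 2, ``capping $j$ at its maximal value $2k(s+q-1)$ on the support'' misstates the geometry: $2k(s+q-1)$ is the \emph{crossover} between the two bounds, not the maximal modulation on $\supp\hat f$ (which is $2sk+10$).
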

\begin{proof}
(1) follows from (2) and (3) and the trivial estimate 
$$
\Vert Q_{j}(f) \Vert_{X_{k}} \leq \Vert f \Vert_{X_{k}}.
$$
Also we will show below that $(4)$ follows from (2). So we first start proving (2),(3). Define 
$$
h(x,t) = 2^{-k'(2s-1)/2} \mathcal{F}^{-1}_{\R^{n+1}} \big[ ( \tau + \vert \xi \vert^{2s} + \i ) \hat{f}\big] 
$$
then notice that 
\begin{equation}\label{qn5}
\Vert h \Vert_{L^{1}_{e}L^{2}_{e^{\perp}}} \lesssim \gamma^{-1}_{k,k'} \Vert f \Vert_{Y^{e}_{k,k'}}.
\end{equation}
 On the other hand, using that $ \hat{f} = 2^{k'(2s-1)/2} ( \tau + \vert \xi \vert^{2s} + \i)^{-1}\hat{h} $, we obtain  
 \begin{equation}\label{w2}
\begin{split}
&  2^{j/2} \Vert Q_{j}(f) \Vert_{L^{2}}
\\
& \lesssim 2^{-j/2} 2^{k'(2s-1)/2} \Vert \ind_{D_{k,j}} \hat{h} \Vert_{L^{2}} 
\\
& \lesssim 2^{ -j/2} 2^{k'(2s-1)/2} \Vert \ind_{D_{k,j}} \int_{\R} e^{\xi_{1} x_{1} i} \mathcal{F}_{e^{\perp},t} (h)(x_{1} ,\xi',\tau) dx_{1} \Vert_{L^{2}}
\\
& \lesssim \vert \{ \xi_{1} : \xi_{1} \approx 2^{k'} \text{and } \vert \tau + \vert \xi \vert^{2s} \vert \leq 2^{j+1} \}\vert^{1/2} 2^{k'(2s-1)/2} 2^{-j/2} \Vert h \Vert_{L^{1}_{e}L^{2}_{e^{\perp},t}}
\\
& \lesssim \vert \{ \xi_{1} : \xi_{1} \approx 2^{k'} \text{and } \vert \tau + \vert \xi \vert^{2s} \vert \leq 2^{j+1} \}\vert^{1/2} 2^{k'(2s-1)/2} 2^{-j/2}\gamma_{k,k'}^{-1} \Vert f \Vert_{Y^{e}_{k,k'}}
\end{split}
 \end{equation}
 to prove (3) we estimate the measure of the above set by $ 2^{k'}$ to obtain
\begin{equation*}
\begin{split}
&2^{j/2} \Vert Q_{j}(f) \Vert_{L^{2}}
\\
& \lesssim 2^{sk'} \gamma^{-1}_{k,k'} 2^{-j/2} \Vert f \Vert_{Y^{e}_{k,k'}}
\\
& \lesssim 2^{-j/2} \Vert f \Vert_{Y^{e}_{k,k'}}
\end{split}
\end{equation*}
where in the last line we used that $k' \notin T_{k}$.
Summing over $j$ gives us (3). To prove (2) we use \eqref{w2} above to reduce matters into proving 
\begin{equation}\label{w3}
\begin{split}
& \vert \{ \xi_{1} : \xi_{1} \approx 2^{k'} \text{and } \vert \tau + \vert \xi \vert^{2s} \vert \leq 2^{j+1} \}\vert^{1/2} 2^{k'(2s-1)/2} 2^{-j/2}\gamma_{k,k'}^{-1} \Vert f \Vert_{Y^{e}_{k,k'}}
\\
& \lesssim \left( \min \{ \gamma^{-1}_{k,k'} 2^{ 2(k-k')(1-s)} , \gamma^{-1}_{k,k'} 2^{j-2sk'} \}\right)^{1/2} \Vert f \Vert_{Y^{e}_{k,k'}}.
\end{split}
\end{equation}
To prove \eqref{w3}, we claim that 
\begin{equation}\label{w4}
\begin{split}
& \vert \{ \xi_{1} : \xi_{1} \approx 2^{k'} \text{and } \vert \tau + \vert \xi \vert^{2s} \vert \leq 2^{j+1} \}\vert^{1/2}
\\
& \lesssim \min \{ 2^{k'/2} , 2^{j/2} 2^{-k(2s - 2 + q )/2} \}.
\end{split}
\end{equation}
Assume that $ j > 2k (s+ q -1 ) -80$. Then 
$$
2^{j} 2^{-k (2s-2 + q)} \geq C^{-1} 2^{k(2s + 2 q - 2)} 2^{-k (2s-2 + q)} \geq C^{-1} 2^{ q k }=C^{-1} 2^{k'}
$$
and hence \eqref{w4} follows. Therefore, assume that $ j < 2k(s+ q -1) -80$. Then the hypothesis of Lemma \ref{MultiplierN} is satisfied and we obtain 
$$
\vert \xi_{1}  -N \vert \lesssim  2^{-k(2s-2 + q)} 2^{j}
$$
and this yields \eqref{w4}. Combining this with  \eqref{w2} we obtain 
\begin{equation}\label{w5}
\begin{split}
&   2^{j/2} \Vert Q_{j}(f) \Vert_{L^{2}}
\\
& \lesssim 2^{k'(2s-1)/2} 2^{-j/2} \min \{ 2^{k'/2} , 2^{j/2} 2^{-k(2s-2+q)/2} \} \gamma^{-1}_{k,k'} \Vert f \Vert_{Y^{e}_{k,k'}}
\\
& \lesssim \min \{ \gamma^{-1}_{k,k'} 2^{(2sk'-j)/2} , \gamma^{-1}_{k,k'} 2^{k'(2s-1)/2} 2^{-k(2s-2 + q)/2} \} \Vert f \Vert_{Y^{e}_{k,k'}}
\end{split}
\end{equation}
rewrite $ 2^{k'(2s-1)/2} 2^{-k(2s-2+q)/2}$ as $ 2^{ \frac{(k-k')}{2} ( 2 - 2s)} $ then we obtain 
$$
2^{j/2} \Vert Q_{j}(f) \Vert_{L^{2}} \lesssim \min\{ \gamma^{-1}_{k,k'} 2^{(k-k')(1-s)} , \gamma^{-1}_{k,k'} 2^{(2sk' -j)/2} \} \Vert f \Vert_{Y^{e}_{k,k'}}
$$
which yields (2). In particular, notice that since $ \gamma^{-1}_{k,k'} 2^{(k-k')(1-s)} \lesssim 1$ then we obtained the bound 
\begin{equation}\label{w6}
2^{j/2} \Vert Q_{j}(f) \Vert_{L^{2}} \lesssim \min \{ 1 , 2^{(2sk'-j)/2} \} \Vert f \Vert_{Y^{e}_{k,k'}}.
\end{equation}
Finally we proceed to proving (4). Notice that by \eqref{w6} we only need to prove 
\begin{equation}\label{w7}
\sum_{j = [2k(s+q -1)]}^{j = 2sk'} \Vert  2^{j/2} Q_{j}(f) \Vert_{L^{2}} \lesssim \Vert f \Vert_{Y^{e}_{k,k'}}.
\end{equation}
To prove \eqref{w7} we use (2) to obtain
\begin{equation*}
\begin{split}
& \sum_{j = [2k(s+q -1)]}^{j = 2sk'}\Vert  2^{j/2} Q_{j}(f) \Vert_{L^{2}}
\\
& \lesssim \Vert f \Vert_{Y^{e}_{k,k'}} \gamma_{k,k'}^{-1} (2sk' - 2k(s+q -1)) 2^{(k-k')(1-s)}
\\
& \lesssim (2s+2) ( k-k') 2^{(k-k')(1-s)} \gamma^{-1}_{k,k'}\Vert f \Vert_{Y^{e}_{k,k'}}
\\
& \lesssim \Vert f \Vert_{Y^{e}_{k,k'}}.
\end{split}
\end{equation*}
\end{proof}
Another useful lemma is the following, which tells us that $ L^{2} \hookrightarrow Z_{k}$ and the embedding constant depend on the lower bound of the modulation.

\begin{lemma}\label{Emb}
Let $k \geq 0 $ then for any $ j_{0} \geq 0 $, $f \in Z_{k}$ we have the following
    $$
    \Vert \sum_{j \geq j_{0}} Q_{j}(f) \Vert_{L^{2}} \lesssim 2^{-j_{0}/2} \Vert f \Vert_{Z_{k}}
    $$

\end{lemma}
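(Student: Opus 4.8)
The plan is to test the inequality on a near-optimal atomic decomposition of $f$ in $Z_k$ and then sum a geometric series in the modulation parameter $j$. Given $\delta>0$, I would fix a splitting $f=f_1+\sum_{e\in\mathscr{E}}\sum_{k'=1}^{k+1}f_e^{k'}$ with $f_1\in X_k$, $f_e^{k'}\in Y_{k,k'}^e$ and $\|f_1\|_{X_k}+\sum_{e,k'}\|f_e^{k'}\|_{Y_{k,k'}^e}\le(1+\delta)\|f\|_{Z_k}$; when $k<100$ there are no $Y$-atoms and $Z_k=X_k$, so that case is absorbed in the first one below. Since $\sum_{j\ge j_0}Q_j$ is linear and commutes with the finite sum over $(e,k')$, the triangle inequality reduces the claim to proving $\|\sum_{j\ge j_0}Q_j h\|_{L^2}\lesssim 2^{-j_0/2}\|h\|$ separately for each atom $h$, with $\|h\|$ equal to $\|h\|_{X_k}$ or $\|h\|_{Y_{k,k'}^e}$ according to the type of atom; summing these estimates over the atoms and using $2^{-j_0/2}\sum_{e,k'}\|f_e^{k'}\|_{Y_{k,k'}^e}\le 2^{-j_0/2}(1+\delta)\|f\|_{Z_k}$, then letting $\delta\to0$, gives the lemma.

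For an $X_k$-atom $h=f_1$ I would simply use the triangle inequality over $j$ together with $2^{-j/2}\le 2^{-j_0/2}$ for $j\ge j_0$, namely $\|\sum_{j\ge j_0}Q_jf_1\|_{L^2}\le\sum_{j\ge j_0}2^{-j/2}\,(2^{j/2}\|Q_jf_1\|_{L^2})\le 2^{-j_0/2}\|f_1\|_{X_k}$, which is exactly the $\ell^1$-structure of the $X_k$-norm in \eqref{eq:Xdef}. For a $Y_{k,k'}^e$-atom with $k'\notin T_k$ (forcing $k\ge100$), Lemma \ref{Embeddings}(3) gives $\|h\|_{X_k}\lesssim\|h\|_{Z_k}\le\|h\|_{Y_{k,k'}^e}$, so this case reduces to the $X_k$-case just treated.

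The only substantive case is a $Y_{k,k'}^e$-atom with $k'\in T_k$ (so $k\ge100$). By Lemma \ref{Embeddings}(2) together with the elementary bound $\|Q_jh\|_{L^2}\lesssim 2^{-j/2}\|Q_jh\|_{X_k}$ (valid since the modulation supports of the $Q_j$ are almost disjoint, so only $O(1)$ indices $j'$ contribute to $\|Q_jh\|_{X_k}$), one gets for every $j$ the pointwise-in-$j$ estimate $\|Q_jh\|_{L^2}\lesssim 2^{-j/2}\min\{1,2^{(2sk'-j)/2}\}\|h\|_{Y_{k,k'}^e}$. Summing over $j\ge j_0$, the matter reduces to $\sum_{j\ge j_0}2^{-j/2}\min\{1,2^{(2sk'-j)/2}\}\lesssim 2^{-j_0/2}$. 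I would split into $j_0\le 2sk'$ and $j_0>2sk'$: in the first case the block $j_0\le j\le 2sk'$, where the minimum equals $1$, sums to $\lesssim 2^{-j_0/2}$, while the tail $j>2sk'$ contributes $\lesssim 2^{sk'}\sum_{j>2sk'}2^{-j}\lesssim 2^{-sk'}\le 2^{-j_0/2}$; in the second case the minimum equals $2^{(2sk'-j)/2}$ throughout, so the sum is $\lesssim 2^{sk'}\sum_{j\ge j_0}2^{-j}\lesssim 2^{sk'-j_0}\le 2^{-j_0/2}$ since $sk'\le j_0/2$. This closes the last case.

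I do not expect any serious obstacle: the whole argument is bookkeeping built on Lemma \ref{Embeddings} and the $\ell^1$-structure of the $X_k$ and $Z_k$ norms. The one spot requiring mild attention is the geometric summation in the last case, where one must track whether the lower modulation cutoff $j_0$ lies below or above the threshold $2sk'$.
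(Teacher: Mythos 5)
Your proof is correct, and it takes a genuinely different route from the paper's. The paper, for a $Y^e_{k,k'}$-atom, goes back to the definition: it sets $h := 2^{-k'(2s-1)/2}\mathcal{F}^{-1}[(\tau+|\xi|^{2s}+\i)\hat f]$, applies Minkowski in the $e$-direction, and re-runs the $\frac{1}{|\xi_1-N|+2^{-k(2s-2+q)}}$ computation (boundedness of an $L^2$-truncated Hilbert kernel, via Lemma~\ref{MultiplierN}) once for $j_0=0$ and once for $j_0\geq 1$ with the extra cutoff $\ind_{|\xi_1-N|\geq C 2^{-(2s-2+q)k}2^{j_0}}$. You instead treat Lemma~\ref{Embeddings}(2) — specifically the bound $2^{j/2}\|Q_j f\|_{L^2}\lesssim\min\{1,2^{(2sk'-j)/2}\}\|f\|_{Y^e_{k,k'}}$, which is line~(4.6) of the paper — as a black box and reduce the whole lemma to the elementary geometric-series estimate $\sum_{j\geq j_0}2^{-j/2}\min\{1,2^{(2sk'-j)/2}\}\lesssim 2^{-j_0/2}$, split at the threshold $2sk'$. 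Both are correct; yours avoids a second pass through the resolvent-kernel computation, which is cleaner and makes the dependence on Lemma~\ref{Embeddings}(2) explicit, at the modest cost of the two-regime bookkeeping in the sum. The $X_k$ case and the $k'\notin T_k$ reduction via Embeddings(3) are handled the same way in both arguments.
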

\begin{proof}
Assume first that $ f \in X_{k}$ then clearly we have 
\begin{equation}
\begin{split}
& \Vert \sum_{j \geq j_{0}} Q_{j}(f) \Vert_{L^{2}}
\\
& \leq 2^{-j_{0}/2} \Vert \sum_{j \geq j_{0}} 2^{j/2}Q_{j}(f) \Vert_{L^{2}}
\\
& \leq 2^{-j_{0}/2} \Vert f \Vert_{X_{k}}.
\end{split}
\end{equation}
Next, assume $f \in Y^{e}_{k,k'}$ with $k' \in T_{k}$. If $ j_{0} = 0 $ then we simply need to show that 
\begin{equation}\label{mmm1}
\Vert f \Vert_{L^{2}(\R^{n+1})} \lesssim \Vert f \Vert_{Y^{e}_{k,k'}}
\end{equation}
In light of Lemma \ref{Embeddings} we may assume that $ f$ has Fourier transform supported in $ \{ (\xi,\tau) : \vert \tau + \vert \xi \vert^{2s} \vert \ll 2^{2k(s+q -1)} \}$. Set 
$$
h(x,t) : = 2^{-k' (2s-1)/2} \mathcal{F}^{-1}_{\R^{n+1}} [ (\tau + \vert \xi \vert^{2s} + \i ) \hat{f}].
 $$
 Then it is enough to show 
 \begin{equation}\label{mmmm1}
    \Vert \frac{ 2^{k'(2s-1)/2}}{\vert \tau + \vert \xi \vert^{2s} \vert + 1} \mathcal{F}(h) \Vert_{L^{2}(\R^{n+1})} \lesssim \gamma_{k,k'} \Vert h \Vert_{L^{1}_{e} L^{2}_{e^{\perp},t}}. 
 \end{equation}
To that end, write $ \xi = \xi_{1} e + \xi'$ where $ \xi' \in e^{\perp}$. Then by Minkowski and Lemma \ref{MultiplierN}
\begin{equation*}
\begin{split}
&  \Vert \frac{ 2^{k'(2s-1)/2}}{\vert \tau + \vert \xi \vert^{2s} \vert + 1} \mathcal{F}(h) \Vert_{L^{2}(\R^{n+1})} 
\\
& \lesssim 2^{k'(2s-1)/2} \Vert \int_{\R}\frac{1}{| \tau + \vert \xi \vert^{2s }|+1} e^{i x_{1} \xi_{1}} \mathcal{F}_{e^{\perp} \times \R}(h)(x_{1} e + \xi',\tau) d x_{1} \Vert_{L^{2}_{\xi,\tau}} 
\\
& \lesssim 2^{k'(2s-1)/2} 2^{-k(2s-2+q)} \Vert \left( \int_{\R} |\frac{ 1}{| \xi_{1} -N |+2^{-k (2s-2+q)}}|^{2} d \xi_{1} \right)^{1/2} \mathcal{F}_{e^{\perp} \times \R}(h)(x_{1} e + \xi',\tau)   \Vert_{L^{1}_{x_{1}} L^{2}_{\xi',\tau}}
\\
& \lesssim 2^{k'(2s-1)/2} 2^{-k(2s-2+q)} 2^{k(2s-2+q)/2} \Vert h \Vert_{L^{1}_{x_{1}} L^{2}_{\xi',\tau}}
\\
& \lesssim 2^{k'(2s-1)/2} 2^{-k(2s-2+q)/2} \gamma_{k,k'}^{-1} \Vert f \Vert_{Y^{e}_{k,k'}}
\end{split}
\end{equation*}
and lastly, notice the obvious bound
$$
2^{k'(2s-1)/2} 2^{-k(2s-2+q)/2} \gamma_{k,k'}^{-1} \lesssim 1.
$$
This concludes the lemma in the case $j_{0} = 0$.\\
Assume in what follows $j_{0} \geq 1$ and $ f \in Y^{e}_{k,k'}$ , $k' \in T_{k}$ and $f$ has Fourier transform  supported in $\vert  \vert \xi \vert^{2s} + \tau\vert \geq 2^{j_{0}} $. Then in light of Lemma \ref{Embeddings} we may assume that $ j_{0} \in [1, 2sk(s+q -1) -80] \cap \N$. Put $ h(x,t) = 2^{-(2s-1)k'/2} \mathcal{F}^{-1} ( \tau + \vert \xi \vert^{2s} +i) \hat{f} )$. Then it is enough to prove 
$$
\Vert \frac{ 2^{(2s-1)k'/2}}{\vert \tau + \vert \xi\vert^{2s} \vert +1} \mathcal{F}(h) \Vert_{L^{2}} \lesssim 2^{-j_{0}/2} \gamma_{k,k'} \Vert h \Vert_{L^{1}_{e} L^{2}_{e^{\perp},t}}.
$$
To that end, write $ \xi = \xi_{1} e + \xi'$, where $ \xi' \in e^{\perp}$. Then the left hand side of the above can be written as  
$$
\Vert \int_{\R} e^{ i x_{1} \xi_{1}} \frac{ 2^{(2s-1)k'/2}}{\vert \tau + \vert \xi\vert^{2s} \vert +1} \ind_{ \vert \tau +\vert \xi\vert^{2s} \vert \geq 2^{j_{0}}}(\xi,\tau) \tilde{h}(x_{1},\xi',\tau) d x_{1} \Vert_{L^{2}_{\xi,\tau}}
$$
where $ \tilde{h}(x_{1},\xi',\tau) = \mathcal{F}_{e^{\perp} \times \R} h (x_{1},\xi',\tau)$. Apply Minkowski to get that the above is bounded by 
$$
\int_{\R} \left( \int_{e^{\perp} \times \R} \int_{\R} \frac{2^{(2s-1)k'}}{(\vert \tau + \vert \xi\vert^{2s} + 1 )^{2} } \ind_{ \vert \tau + \vert \xi \vert^{2s} \geq 2^{j_{0}}} d \xi_{1} \vert \tilde{h}(x_{1},\xi',\tau) \vert^{2} d \xi' d\tau \right)^{1/2} dx_{1}.
$$
Next, fix $\xi',\tau$ in the support of $\tilde{h}$ and notice that by using Lemma \ref{MultiplierN}, we have that for each fixed $ \xi',\tau$ the inner integral is controlled by, 
$$
 2^{(2s-1)k'} 2^{-2k (2s-2+q)} \int_{\R} \frac{ 1}{ \vert \xi_{1} - N \vert^{2}} \ind_{\vert \xi_{1} -N \vert \geq C 2^{-(2s-2+q)k} 2^{j_{0}}}(\xi,\tau) d\xi_{1},
$$
and this is clearly controlled by $ \gamma^{2}_{k,k'} 2^{-j_{0}}$. This yields the result. 
\end{proof}

Next, we prove a representation formula for functions in $Y^{e}_{k,k'}$ the proof is essentially the same as the one in \cite{model} with slight modifications for when $k' \neq k$ but $k' \in T_{k}$. First, we need the following lemma. Notice that if $ q =1$, which happens when $k'=k$, then this is precisely \cite[Lemma 2.3]{model}. 
\begin{lemma}\label{dd}
Let $ k \geq 100 $ and $ k' \in T_{k}$ and $ s \in (1/2,1]$ and $ e \in \S^{n-1}$. Write $ \xi = \xi_{1}e + \xi'$ with $ \xi' \in e^{\perp}$. Then the following equality holds
\begin{equation}\label{w8}
\begin{split}
& \ind_{ | \xi'|  \leq  2^{k+1}} \varphi_{k}(\xi) \varphi^{+}_{[k'-1,k'+1]} (\xi_{1}) \frac{ \varphi_{ \leq 2k(s+q -1) -80} ( \tau + \vert \xi \vert^{2s})}{ \tau + \vert \xi \vert^{2s} + \i}
\\
& = \varphi_{k}(\xi) \ind_{ \vert \xi' \vert \leq 2^{k+1}} \varphi^{+}_{[k'-1,k'+1]} (N(\xi',\tau)) \frac{ \varphi_{ \leq k' - 80}( N - \xi_{1})}{ K(\xi,\tau) (\xi_{1} - N + \frac{\i}{2^{k(2s-2+q)}})} +E(\xi,\tau) 
\end{split}
\end{equation}
where $K(\xi',\tau) : = 2s ( N( \xi',\tau)^{2} + \xi'^{2} )^{s-1}N(\xi',\tau) $
and the error term satisfy the estimate 
$$
E(\xi,\tau) \lesssim 2^{-2sk'} 2^{2(1-s)(k-k')} + ( \vert \tau + \vert \xi \vert^{2s}+1)^{-2}
$$
and $E$ is supported in 
$$
S : = \{ (\xi,\tau) : \xi_{1} \approx 2^{k'} , \vert \tau + \vert \xi \vert^{2s} \vert \lesssim 2^{2k(s+q -1)} \}
$$
\end{lemma}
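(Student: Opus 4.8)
The plan is to read the claimed identity as a linearization of the multiplier $(\tau+|\xi|^{2s}+\i)^{-1}$ in the variable $\xi_1$ about the zero set $\{\tau+|\xi|^{2s}=0\}=\{\xi_1=N(\xi',\tau)\}$, and to absorb every error into $E:=(\text{LHS})-(\text{first term on the RHS})$, so that only the support claim and the pointwise bound on $E$ remain to be checked. The starting point is that on the support of the LHS the hypotheses of \Cref{MultiplierN} are met, so $\tau<0$, $N=N(\xi',\tau)$ is well defined with $N\in[2^{k'-2},2^{k'+2}]$, $(-\tau)^{1/s}=N^{2}+|\xi'|^{2}\approx 2^{2k}$, and $|\tau+|\xi|^{2s}|\approx 2^{k(2s-2+q)}|\xi_1-N|$; feeding hypothesis (3) into the last relation and using the identity $2k(s+q-1)=k(2s-2+q)+k'$ gives $|\xi_1-N|\lesssim 2^{k'-80}$, so $\xi_1\approx 2^{k'}$ as well. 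Writing $\Phi(r):=(r^{2}+|\xi'|^{2})^{s}$ one has $\tau+|\xi|^{2s}=\Phi(\xi_1)-\Phi(N)$ by the definition of $N$, and Taylor's formula with integral remainder yields
\[
\tau+|\xi|^{2s}=K(\xi',\tau)(\xi_1-N)+R(\xi,\tau),\qquad K(\xi',\tau)=\Phi'(N)=2sN(N^{2}+|\xi'|^{2})^{s-1},
\]
with $|R(\xi,\tau)|\lesssim |\xi_1-N|^{2}\sup_{r}|\Phi''(r)|$, the supremum over $r$ between $N$ and $\xi_1$. Since $r\approx 2^{k'}$ and $r^{2}+|\xi'|^{2}\approx 2^{2k}$ on that range, $|\Phi''(r)|\lesssim 2^{2k(s-1)}$, hence $K\approx 2^{k'+2k(s-1)}=2^{k(2s-2+q)}$ and $|R|\lesssim 2^{2k(s-1)}|\xi_1-N|^{2}\lesssim 2^{-80}K|\xi_1-N|$; consequently $\tau+|\xi|^{2s}$ and $K(\xi_1-N)$ are comparable throughout the region of interest, and both have modulus $\approx K|\xi_1-N|$.

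With this in hand I would pass from the LHS to the RHS through the intermediate quantity $T:=\varphi_{k}(\xi)\varphi^{+}_{[k'-1,k'+1]}(\xi_1)\varphi_{\le 2k(s+q-1)-80}(\tau+|\xi|^{2s})\cdot\bigl(K(\xi_1-N+\i 2^{-k(2s-2+q)})\bigr)^{-1}$. For $(\text{LHS})-T$: the difference of the two numerators obtained after clearing denominators is $\i(K2^{-k(2s-2+q)}-1)-R=O(1+2^{2k(s-1)}|\xi_1-N|^{2})$, while each of the two denominators has modulus $\approx |\tau+|\xi|^{2s}|+1$ (their imaginary parts being $1$ and $K2^{-k(2s-2+q)}\approx 1$), so this error is $\lesssim (|\tau+|\xi|^{2s}|+1)^{-2}+2^{2k(s-1)}K^{-2}$, and the elementary exponent identity $2^{2k(s-1)}K^{-2}=2^{2k-2sk-2k'}=2^{-2sk'}2^{2(1-s)(k-k')}$ shows both pieces are of the two advertised types. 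For $T-(\text{first term on the RHS})$: on $\supp\varphi_k$ the factor $\ind_{|\xi'|\le 2^{k+1}}$ is identically $1$, so this difference equals $\varphi_k(\xi)\bigl[\varphi^{+}_{[k'-1,k'+1]}(\xi_1)\varphi_{\le 2k(s+q-1)-80}(\tau+|\xi|^{2s})-\varphi^{+}_{[k'-1,k'+1]}(N)\varphi_{\le k'-80}(N-\xi_1)\bigr]\cdot\bigl(K(\xi_1-N+\i 2^{-k(2s-2+q)})\bigr)^{-1}$. Telescoping the bracket as $AB-A'B'=(A-A')B+A'(B-B')$, and using that $\varphi^{+}_{[k'-1,k'+1]}$ has derivative $O(2^{-k'})$ on $\{r\approx 2^{k'}\}$ so that $|\varphi^{+}_{[k'-1,k'+1]}(\xi_1)-\varphi^{+}_{[k'-1,k'+1]}(N)|\lesssim 2^{-k'}|\xi_1-N|$, while (because $\tau+|\xi|^{2s}\approx 2^{k(2s-2+q)}(\xi_1-N)$) the difference of the two modulation cutoffs is supported in a shell where $|\xi_1-N|\lesssim 2^{k'-80}$ and hence $|\tau+|\xi|^{2s}|\gtrsim 2^{2k(s+q-1)-80}$, one multiplies by the bounded factor of size $\approx (|\tau+|\xi|^{2s}|+1)^{-1}$ to get an error $\lesssim 2^{-k'}K^{-1}+2^{80}2^{-2k(s+q-1)}\lesssim 2^{-2sk'}2^{2(1-s)(k-k')}$.

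It then remains to record the support statement. The LHS is supported in $\{|\xi|\approx 2^{k},\ \xi_1\approx 2^{k'},\ |\tau+|\xi|^{2s}|\lesssim 2^{2k(s+q-1)}\}\subset S$; on the support of the first term of the RHS one has $N\approx 2^{k'}$ and $|N-\xi_1|\lesssim 2^{k'}$ (from $\varphi_{\le k'-80}(N-\xi_1)$), which forces $\xi_1\approx 2^{k'}$ and $|\tau+|\xi|^{2s}|=|K(\xi_1-N)+R|\lesssim 2^{k(2s-2+q)}2^{k'}=2^{2k(s+q-1)}$, again inside $S$; and wherever $N(\xi',\tau)$ is undefined both sides vanish (the LHS by \Cref{MultiplierN}), so $E=0$ there. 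The case $q=1$, i.e.\ $k'=k$, reduces exactly to \cite[Lemma 2.3]{model}.

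The main obstacle is not any individual estimate but the combined bookkeeping: one has to verify that the mutually ``incompatible'' localizations — $|\xi|\approx 2^{k}$, $\xi_1\approx 2^{k'}$, $|\tau+|\xi|^{2s}|$ small, $N\approx 2^{k'}$, $|N-\xi_1|$ small — are consistent on the relevant set and that the dyadic scales they prescribe all match up to the slack hidden in the ``$-80$''s, so that trading $\tau+|\xi|^{2s}$ for $K(\xi_1-N)$, trading $2^{k(2s-2+q)}$ for $K$ in the regularized denominator, and trading the $\tau+|\xi|^{2s}$-modulation cutoff for the $(N-\xi_1)$-cutoff each cost only the two permitted error types. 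The single inequality that makes everything run is $|R|\ll K|\xi_1-N|$, equivalently $2^{2k(s-1)}|\xi_1-N|^{2}\ll 2^{k(2s-2+q)}|\xi_1-N|$, which holds precisely because $|\xi_1-N|\lesssim 2^{k'-80}\ll 2^{k'}$ on the support in question.
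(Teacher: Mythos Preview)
Your proof is correct and follows essentially the same route as the paper's: linearize $\tau+|\xi|^{2s}$ in $\xi_1$ about $N$ via Taylor/mean value, replace the denominator $\tau+|\xi|^{2s}+\i$ by $K(\xi_1-N+\i 2^{-k(2s-2+q)})$, then swap the modulation cutoff $\varphi_{\le 2k(s+q-1)-80}(\tau+|\xi|^{2s})$ for $\varphi_{\le k'-80}(N-\xi_1)$ and the localization $\varphi^{+}_{[k'-1,k'+1]}(\xi_1)$ for $\varphi^{+}_{[k'-1,k'+1]}(N)$, bounding each discrepancy by the two permitted error types. The only organizational difference is that you collapse the paper's two intermediate errors $E_1$ (passing from $\tau+|\xi|^{2s}+\i$ to $K(\xi_1-N)+\i$) and $E_2$ (passing from $\i/K$ to $\i/2^{k(2s-2+q)}$) into a single step $(\text{LHS})-T$, which is a harmless streamlining since both contributions are visible in your numerator $\i(K2^{-k(2s-2+q)}-1)-R$; your handling of the cutoff swap is exactly the paper's $E_3$ argument.
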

\begin{proof}
For better readability, we will simply write $N$ to mean $N(\xi',\tau)$. By Lemma \ref{MultiplierN} we obtain that the support of the left hand side of \eqref{w8} is contained in the set 
$$
S' : = \{ ( \xi',\tau) \in e^{\perp} \times \R : \vert \xi' \vert \leq 2^{k+1} , N \in [ 2^{k'-2},  2^{k'+2}] \}.
$$
Therefore, 
\begin{equation*}
\begin{split}
& \ind_{ | \xi'|  \leq  2^{k+1}} \varphi^{+}_{[k'-1,k'+1]} (\xi_{1}) \frac{ \varphi_{ \leq 2k(s+q -1)-80} ( \tau + \vert \xi \vert^{2s})}{ \tau + \vert \xi \vert^{2s} + i} 
\\
& =\ind_{S'}(\xi',\tau) \varphi^{+}_{[k'-1,k'+1]} (\xi_{1}) \frac{ \varphi_{ \leq 2k(s+q -1)-80} ( \tau + \vert \xi \vert^{2s})}{ \tau + \vert \xi \vert^{2s} + \i}.
\end{split}
\end{equation*}
Next, we rewrite $ \frac{1}{\tau+\vert \xi \vert^{2s} +\i}$ as follows
\begin{equation}\label{w12}
\begin{split}
& \tau + \vert \xi \vert^{2s} 
\\
& =( \xi_{1}^{2} + \vert \xi' \vert^{2})^{s} - ( N^{2} + \vert \xi' \vert^{2})^{s}
\\
& = L (\xi,\tau) ( \xi_{1} - N) + K (\xi',\tau) ( \xi_{1} - N) 
\end{split}
\end{equation}
where 
$$
L(\xi,\tau) : = \frac{ ( \xi_{1}^{2} + \vert \xi' \vert^{2})^{s}- ( N^{2} + \vert \xi' \vert^{2})^{s} }{ \xi_{1} -N} - K(\xi',\tau).
$$
We claim that $L$ satisfy the following estimate; whenever $ (\xi,\tau)$ is in the support of the left hand side of \eqref{w8} we have
\begin{equation}\label{w10}
\vert L(\xi,\tau) \vert \approx 2^{k(2s-2)} \vert \xi_{1} - N \vert
\end{equation}
To prove this, consider the function $ g(t) = (t + \vert \xi' \vert)^{s} $. Then by Taylor theorem we obtain $r$ between $ \xi_{1}$ and $N$ so that 
$$
g(\xi_{1} ) = g(N) + g'(N) ( \xi_{1} - N) + \frac{1}{2} g''(r) (\xi_{1} - N)^{2}
$$
using the definition of $K, L $ and the above Taylor approximation, we obtain
\begin{equation*}
\begin{split}
& \vert L ( \xi,\tau) \vert
\\
& \approx_{s} \vert \xi_{1} - N \vert ( r^{2} + \vert \xi' \vert)^{s-2}(r^{2} + \vert \xi' \vert^{2} + (2s-1) r^{2})
\\
& \approx \vert \xi_{1} -N \vert 2^{2k(s-1)}
\end{split}
\end{equation*}
this proves \eqref{w10}. Notice also that by definition we have 
\begin{equation}\label{w11}
\vert K (\xi',\tau) \vert \approx 2^{k'} 2^{2k(s-1)} =2^{k(2s-2+q)}.
\end{equation}
Therefore, we write 

\begin{equation*}
\begin{split}
\frac{1}{ \tau + | \xi |^{2s} + \i}
& = \frac{1}{L(\xi,\tau)\, \brac{\xi_1 - N(\xi',\tau)} + K(\xi',\tau) \brac{\xi_1 -N(\xi',\tau)}+\i}
\\
& = \frac{1}{ K(\xi,\tau) \brac{ \xi_1 - N(\xi',\tau)+ \frac{\i}{2^{k(2s-2+q)}} } } + E_{1} + E_{2}
\end{split}
\end{equation*}
where (for better readability we now drop the arguments for $L$ and $K$)
$$
E_{1} = \frac{1}{ L\, \brac{\xi_1 - N} + K \brac{\xi_1 -N}+\i} - \frac{1}{K \left( \xi_1 - N + \frac{i}{K}\right)}
$$
and
$$
E_{2} =  \frac{1}{ K \left( \xi_1 - N+ \frac{i}{K} \right)} - \frac{1}{ K \left( \xi_1 - N+ \frac{\i}{2^{k(2s-2+q)}}\right)}.
$$
We show $E_{i}$ satisfy the claimed decay estimate. More precisely we show
\begin{equation}\label{eq:E12est}
| E_{i} |\aleq  \left( | \tau + | \xi |^{2s} | +1\right)^{-2} + 2^{-2sk'} 2^{2(1-s)(k-k')}, \quad i=1,2
\end{equation}
We start with $E_{1}$, which we can write as
\begin{equation*}
\begin{split}
& E_{1} = \frac{ -( \xi_{1} -N) L}{ \big[(\xi_{1} -N) L + (\xi_{1} -N) K + \i \big] (\xi_{1} -N+ \i) K}
\\
& = \frac{1}{K} \frac{ - ( \xi_{1} -N) L }{(\tau + \vert \xi \vert^{2s} +\i)(\xi_{1} -N +\frac{\i}{K}) }.
\end{split}
\end{equation*}
Therefore, using \eqref{w11} and \eqref{w10} we obtain
\begin{equation*}
\begin{split}
& \vert E_{1} \vert
\\
& \lesssim 2^{-k'-k(2s-2)} \frac{ \vert \xi_{1} -N \vert^{2} 2^{2k(s-1)}}{ (\vert \tau + \vert \xi \vert^{2s} + 1) ( \vert \xi_{1} - N \vert + 2^{-k'-k(2s-2)})}
\\
& \lesssim 2^{-k'} \frac{ 2^{-k(2s-2+q) }\vert \xi_{1} - N \vert^{2} }{ (|\xi_{1} - N| + 2^{-k(2s-2+q)}) ( \vert \xi_{1} -N \vert + 2^{-k'-k(2s-2)})}
\\
& \lesssim 2^{-k'} 2^{-k(2s-2+q)} = 2^{-2sk'} 2^{2(1-s)(k-k')}.
\end{split}
\end{equation*}
Where we used Lemma \ref{MultiplierN} in the third line . This proves \eqref{eq:E12est} for $E_{1}$. We proceed to do the same with $E_{2}$
\begin{equation}\label{w13}
\begin{split}
& \vert E_{2} \vert 
\\
& = \vert\frac{1}{K} \frac{\i ( \frac{1}{K} - \frac{1}{2^{k(2s-2+q)}})}{( \xi_{1} -N + \frac{\i}{K}) ( \xi_{1} -N + \frac{\i}{2^{k(2s-2+q)}}) }|
\\
& \lesssim \frac{1}{ \big[2^{k(2s-2+q)} \vert \xi_{1} -N \vert + 1  \big]^{2}},
\end{split}
\end{equation}
Therefore, we bound \eqref{w13} by 
\begin{equation*}
\begin{split}
& \frac{1}{ \big[2^{k(2s-2+q)} \vert \xi_{1} -N \vert + 1  \big]^{2}}
\\
& \approx ( \vert \tau + \vert \xi \vert^{2s} + 1 )^{-2}.
\end{split}
\end{equation*}
This proves \eqref{eq:E12est}. So far we proved the following 
\begin{equation*}
\begin{split}
& \ind_{ | \xi'|  \leq  2^{k+1}} \varphi_{k}(\xi) \varphi^{+}_{[k'-1,k'+1]} (\xi_{1}) \frac{ \varphi_{ \leq 2k(s+q -1)-80} ( \tau + \vert \xi \vert^{2s})}{ \tau + \vert \xi \vert^{2s} + i} 
\\
& = \ind_{S'}(\xi',\tau) \varphi_{k}(\xi) \varphi^{+}_{[k'-1,k'+1]}(\xi_{1}) \frac{ \varphi_{ \leq 2k(s+q -1)-80} (\tau+\vert \xi \vert^{2s})}{ K(\xi,\tau) ( \xi_{1} - N + \frac{ \i }{2^{k(2s-2+q)}})} +E 
\end{split}
\end{equation*}
with $E $ is supported in $S $ and satisfy the decay estimate \eqref{eq:E12est}. Next, we change the arguments inside the cutoff functions. To that end, we write
\begin{equation*}
\begin{split}
& \ind_{S'}(\xi',\tau) \varphi_{k}(\xi) \varphi^{+}_{[k'-1,k'+1]}(\xi_{1}) \frac{ \varphi_{ \leq 2k(s+q -1)-80} (\tau+\vert \xi \vert^{2s})}{ K(\xi,\tau) ( \xi_{1} - N + \frac{ \i }{2^{k(2s-2+q)}})}
\\
& = \ind_{S'}(\xi',\tau) \varphi_{k}(\xi) \varphi^{+}_{[k'-1,k'+1]}(N)  \frac{ \varphi_{ \leq k'-80} (\xi_{1} -N)}{ K(\xi,\tau) ( \xi_{1} - N + \frac{ \i }{2^{k(2s-2+q)}})} + E_{3}
\end{split}
\end{equation*}
where 
$$
E_{3}(\xi,\tau) : =  \varphi_{k}(\xi) \ind_{S'}(\xi',\tau) \frac{\varphi^{+}_{[k'-1,k'+1]}(\xi_{1}) \varphi_{\leq 2k(s+q-1)-80} (\tau + \vert \xi \vert^{2s}) -\varphi^{+}_{[k'-c,k'+c]}(N) \varphi_{\leq k' -c} (\xi_{1} -N)}{   K(\xi,\tau) ( \xi_{1} - N + \frac{ \i }{2^{k(2s-2+q)}})}.
$$
it is clear that this error is supported in the set $ S$. Indeed, by definition $E_{3}$ is supported in $S'$ and so $ N \approx 2^{k'}$, if $ \xi_{1} \gg 2^{k'}$ then $ \xi_{1} - N \gg 2^{k'}$ and so $ E_{3} =0$. If on the other hand $ \xi_{1} \ll 2^{k'}$ then $ N - \xi_{1} \approx 2^{k'} \gg 2^{k'-80}$ and so we obtain that $ E_{3} = 0$. Therefore, $ \xi_{1} \approx 2^{k'} \approx N $ and $ \vert \xi' \vert \lesssim 2^{k}$ in the support of $ E_{3} $. Then by Lemma \ref{MultiplierN} we obtain that $ \vert \tau + \vert \xi \vert^{2s} \vert \lesssim 2^{2k(s-1+q)}$. Thus, $E_{3}$ is supported in the set $S$. We proceed to prove the decay estimate for $E_{3}$.
\begin{equation}\label{w14}
\begin{split}
&| E_{3} | 
\\
& \lesssim \frac{ \vert \varphi_{[k'-1,k'+1]}^{+}(\xi_{1}) - \varphi^{+}_{[k'-1,k'+1]}(N)  \vert \varphi_{\leq 2k(s+q -1) -80}(\tau + \vert \xi \vert^{2s})}{\vert K \vert \mbox{ } \vert \xi_{1} -N \vert 
 + 2^{-k(2s-2+q)}}
 \\
 & + \frac{ \varphi^{+}_{[k'-1,k'+1]}(N) \vert \varphi_{ \leq 2k(s + q -1) -80} ( \tau + \vert \xi \vert^{2s}) - \varphi_{k' - 80}(\xi_{1} -N ) \vert }{ \vert K \vert \mbox{ } \vert \xi_{1} -N \vert + 2^{-k(2s-2+q)} }
\end{split}
\end{equation}
we estimate each term on the right hand side of \eqref{w14}. 
\begin{equation*}
\begin{split}
& \frac{ \vert \varphi_{[k'-1,k'+1]}^{+}(\xi_{1}) - \varphi^{+}_{[k'-1,k'+1]}(N) \vert  \varphi_{\leq 2k(s+q -1) -80}(\tau + \vert \xi \vert^{2s})}{\vert K \vert \mbox{ } \vert \xi_{1} -N \vert 
 + 2^{-k(2s-2+q)}}
 \\
 & \lesssim \frac{ \vert \varphi_{[k'-1,k'+1]}^{+}(\xi_{1}) - \varphi^{+}_{[k'-1,k'+1]}(N) \vert }{\vert K \vert \mbox{ } \vert \xi_{1} -N \vert 
 + 2^{-k(2s-2+q)}} 
 \\
 & \lesssim \frac{ 2^{-k'} \vert \xi_{1} -N \vert }{ 2^{k(2s-2+q)} \vert \xi_{1} - N \vert }
 \\
 & \lesssim 2^{-2k(s-1+q)}
 \\
 & \lesssim 2^{-2sk'} 2^{2(1-s) (k-k')},
 \end{split}
\end{equation*}
which is the desired decay estimate. Next, we prove the same for the second term on the right hand side of \eqref{w14}. To that end, 
\begin{equation*}
\begin{split}
& \frac{ \varphi^{+}_{[k'-1,k'+1]}(N) \vert \varphi_{ \leq 2k(s + q -1) -80} ( \tau + \vert \xi \vert^{2s}) - \varphi_{k' - 80}(\xi_{1} -N ) \vert}{ \vert K \vert \mbox{ } \vert \xi_{1} -N \vert + 2^{-k(2s-2+q)} }
\\
& \lesssim \frac{  \vert \varphi_{ \leq 2k(s + q -1) -80} ( \tau + \vert \xi \vert^{2s}) - \varphi_{k' - 80}(\xi_{1} -N )}{ \vert K \vert \mbox{ } \vert \xi_{1} -N \vert + 2^{-k(2s-2+q)} }
\\
& \lesssim 2^{-k(2s-2+q)} \frac{ \vert \frac{ \tau + \vert \xi \vert^{2s}}{2^{2k(s+q-1)}} - \frac{ \xi_{1} -N }{2^{k'}}  \vert }{ \vert \xi_{1} -N \vert + 2^{-k(2s-2+q)}}
\\
& \lesssim 2^{-2k(s-1+q)}  
\frac{ \vert \frac{ \tau + \vert \xi \vert^{2s}}{2^{k(2s+q-2)}} - \xi_{1} - N  \vert }{ \vert \xi_{1} -N \vert + 2^{-k(2s-2+q)}}
\\
& \lesssim 2^{-2sk'} 2^{2(1-s)(k-k')} \frac{ \vert \xi_{1} - N  \vert }{ \vert \xi_{1} -N \vert + 2^{-k(2s-2+q)}}
\\
& \lesssim 2^{-2sk'} 2^{2(1-s)(k-k')}.
\end{split}
\end{equation*}
\end{proof}

With the above, we can state the main representation formula for functions in $Y^{e}_{k,k'}$. Similar to the above, if $ k= k'$ then $ q =1 $ and we recover \cite[Lemma 3.6]{model}. Also, if $ s=1$ then we recover \cite[Lemma 2.4]{cmp}.
\begin{lemma}\label{repf}
Let $k \geq 100 $ and $ f \in Y^{e}_{k,k'}$ then there exists a function $g : \R^{n+1} \to \C $ and a function $h : \R \times e^{\perp} \times \R \to \C$ so that we have the following equality 
\begin{equation*}
\begin{split}
& \mathcal{F}_{\R^{n+1}}(f)(\xi,\tau)
\\
& = \ind_{S'}(\xi',\tau) \varphi_{[k'-1,k'+1]}^{+}(N) \frac{  \varphi_{\leq k'-80}(\xi_{1} -N) }{ \xi_{1} -N + \frac{\i}{2^{k(2s-2+q)}}} \int_{\R} e^{i \xi_{1} x_{1}} h(x_{1} , \xi', \tau) d x_{1} + \hat{g}(\xi,\tau)
\end{split}
\end{equation*}
where 
$$
S': = \{ (\xi',\tau) \in e^{\perp} \times \R : \vert \xi' \vert \leq 2^{k+1} , N(\xi',\tau) \in [ 2^{k'-2},2^{k'+2}] \}
$$
and $ g $ satisfy the estimate 
$$
\Vert g \Vert_{X_{k}} \lesssim  \Vert f \Vert_{Y^{e}_{k,k'}}
$$
and $h$ satisfy the estimate 
$$
\Vert h \Vert_{L^{1}_{e}L^{2}_{e^{\perp},t}} \lesssim  2^{2(1-s)(k-k')} \gamma^{-1}_{k,k'} 2^{-k'(2s-1)/2} \Vert f \Vert_{Y^{e}_{k,k'}}
$$
and lastly, $h$ is supported in the set $ \R \times S'$.
\end{lemma}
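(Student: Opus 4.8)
If $k' \notin T_k$ then $Y^e_{k,k'} \hookrightarrow X_k$ by Lemma~\ref{Embeddings}(3), so one may take $h \equiv 0$, $g = f$; assume from now on $k' \in T_k$. Put $F := (\i\partial_t - (-\Delta)^s + \i)f$, so that (up to the inconsequential multiplicative constant we suppress throughout) $\hat f = (\tau+|\xi|^{2s}+\i)^{-1}\hat F$, $\supp \hat F = \supp \hat f \subset D^{e,k'}_{k,\leq 2sk+10}$, and the definition of the $Y^e_{k,k'}$-norm reads $\|F\|_{L^1_e L^2_{t,e^\perp}} = 2^{k'(2s-1)/2}\gamma^{-1}_{k,k'}\|f\|_{Y^e_{k,k'}}$. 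The plan is to peel off the high modulations and then feed the multiplier identity \eqref{w8} into what remains. First I would write $\hat f = \hat f\,\varphi_{\leq 2k(s+q-1)-80}(\tau+|\xi|^{2s}) + \widehat{g_1}$; since $g_1$ agrees, up to $O(1)$ boundary modulation shells, with $\sum_{j \geq [2k(s+q-1)]} Q_j f$, Lemma~\ref{Embeddings}(4) together with the trivial bound $2^{j/2}\|Q_j f\|_{L^2} \lesssim \|f\|_{Y^e_{k,k'}}$ (cf.\ \eqref{w6}) on the remaining shells yields $\|g_1\|_{X_k} \lesssim \|f\|_{Y^e_{k,k'}}$.

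Next, I multiply \eqref{w8} by $\hat F$. As $\hat F$ is supported in $D^{e,k'}_{k,\leq 2sk+10}$, the frequency and modulation localizers $\varphi_k(\xi)$, $\varphi^+_{[k'-1,k'+1]}(\xi_1)$ and $\ind_{|\xi'|\leq 2^{k+1}}$ on the left of \eqref{w8} act as the identity on it, so the left-hand side becomes exactly $\hat f\,\varphi_{\leq 2k(s+q-1)-80}(\tau+|\xi|^{2s})$, and the right-hand side produces the splitting $\hat f\,\varphi_{\leq 2k(s+q-1)-80}(\tau+|\xi|^{2s}) = \widehat{f_{\mathrm{main}}} + \widehat{g_2}$, $\widehat{g_2} := E\,\hat F$, with
\[
 \widehat{f_{\mathrm{main}}} := \ind_{S'}(\xi',\tau)\,\varphi^+_{[k'-1,k'+1]}(N)\,\frac{\varphi_{\leq k'-80}(\xi_1-N)}{K(\xi',\tau)\big(\xi_1-N+\i 2^{-k(2s-2+q)}\big)}\,\hat F ,
\]
where $\ind_{S'}$ has been inserted for free (it is $1$ wherever the remaining factors and $\hat F$ are jointly nonzero, since there $N\in[2^{k'-2},2^{k'+2}]$ and $|\xi'|\leq 2^{k+1}$) and $\varphi_{\leq k'-80}(N-\xi_1) = \varphi_{\leq k'-80}(\xi_1-N)$. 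I then set $h(x_1,\xi',\tau) := K(\xi',\tau)^{-1}\ind_{S'}(\xi',\tau)\,\mathcal F_{e^\perp\times\R}F(-x_1,\xi',\tau)$; it is supported in $\R\times S'$ by construction, and since $K^{-1},\ind_{S'}$ and $\varphi^+_{[k'-1,k'+1]}(N)$ depend only on $(\xi',\tau)$ one checks directly that $\widehat{f_{\mathrm{main}}}$ equals the asserted main term with this $h$. Finally, by Plancherel in $(e^\perp,t)$, using $|K(\xi',\tau)|\approx 2^{k(2s-2+q)}$ on $S'$ (this is \eqref{w11}) and the identity $2^{-k(2s-2+q)}2^{k'(2s-1)/2} = 2^{2(1-s)(k-k')}2^{-k'(2s-1)/2}$ (a consequence of $kq = k'$),
\[
 \|h\|_{L^1_e L^2_{e^\perp,t}} \lesssim 2^{-k(2s-2+q)}\|F\|_{L^1_e L^2_{t,e^\perp}} = 2^{2(1-s)(k-k')}\gamma^{-1}_{k,k'}2^{-k'(2s-1)/2}\|f\|_{Y^e_{k,k'}} ,
\]
which is exactly the claimed bound; so this step is sharp. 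Taking $g := g_1 + g_2$, it then only remains to estimate $g_2$ in $X_k$.

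For $\|g_2\|_{X_k} = \sum_j 2^{j/2}\|Q_j g_2\|_{L^2}$ I would proceed as follows. Since $E$ is supported in $S$, only $j \lesssim 2k(s+q-1)$ contribute, and for such $j$, $\|Q_j g_2\|_{L^2} \lesssim \|E\|_{L^\infty(D_{k,j})}\,\|\varphi_j(\tau+|\xi|^{2s})\hat F\|_{L^2} \lesssim \|E\|_{L^\infty(D_{k,j})}\,2^j\|Q_j f\|_{L^2}$, the last inequality because $\hat F = (\tau+|\xi|^{2s}+\i)\hat f$ and $|\tau+|\xi|^{2s}|\approx 2^j$ there. Feeding in $\|E\|_{L^\infty(D_{k,j})}\lesssim 2^{-2sk'}2^{2(1-s)(k-k')} + (2^j+1)^{-2}$ from Lemma~\ref{dd} and $2^{j/2}\|Q_j f\|_{L^2}\lesssim\|f\|_{Y^e_{k,k'}}$ (cf.\ \eqref{w6}), one finds $2^{j/2}\|Q_j g_2\|_{L^2}\lesssim\big(2^{-2sk'}2^{2(1-s)(k-k')}+(2^j+1)^{-2}\big)2^j\|f\|_{Y^e_{k,k'}}$: the $(2^j+1)^{-2}$ part sums to $O(1)$, while the constant part contributes $2^{-2sk'}2^{2(1-s)(k-k')}\sum_{j\lesssim 2k(s+q-1)}2^j \approx 2^{-2sk'}2^{2(1-s)(k-k')}2^{2k(s+q-1)}$, and this is $\approx 1$ by the exponent identity $-2sk' + 2(1-s)(k-k') + 2k(s+q-1) = 0$ (again $kq=k'$). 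Hence $\|g_2\|_{X_k}\lesssim\|f\|_{Y^e_{k,k'}}$ and $\|g\|_{X_k}\lesssim\|f\|_{Y^e_{k,k'}}$. The genuinely load-bearing point is this last estimate: it forces one to use simultaneously the pointwise size of the error $E$ and its confinement to modulations $\lesssim 2^{2k(s+q-1)}$, the non-trivial fact being that the surviving modulation range and the decay of $E$ compensate each other exactly; the rest — verifying which cutoff is identically $1$ on which support, and reconciling the $O(1)$ boundary modulation shells between $1-\varphi_{\leq 2k(s+q-1)-80}$ and $\sum_{j\geq[2k(s+q-1)]}\varphi_j$ — is routine bookkeeping.
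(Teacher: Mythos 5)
Your proof is correct and takes essentially the same route as the paper: reduce to low modulation via Lemma~\ref{Embeddings}, plug the multiplier identity of Lemma~\ref{dd} into $(\tau+|\xi|^{2s}+\i)^{-1}\hat F$, read off $h$, and sum the error term in $X_k$. The only (inconsequential) variation is in the error estimate: you bound $\|\varphi_j\hat F\|_{L^2}\lesssim 2^j\|Q_j f\|_{L^2}$ and invoke \eqref{w6}, whereas the paper reruns the $L^1_e\!\to$measure computation for $\tilde h$ explicitly — both hinge on the same modulation-slab estimate, merely packaged differently.
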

\begin{proof}
In light of Lemma \ref{Embeddings} we may assume that $ k' \in T_{k}$ and $$ \hat{f}(\xi,\tau) = \varphi_{\leq 2k(s+q -1)-80}(\tau + \vert \xi \vert^{2s}) \hat{f}(\xi,\tau).
$$
Next, define $ \tilde{h} $ as
$$
\tilde{h}( x,t) : = \mathcal{F}^{-1}_{\R^{n+1}} \big[ ( \tau + \vert \xi \vert^{2s} +\i) \hat{f}\big](x,t) .
$$
Therefore, 
$$
\hat{f}(\xi,\tau) = \varphi^{+}_{[k'-1,k'+1]}(\xi_{1}) \frac{ \varphi_{\leq 2k(s+q -1) -80}}{ \tau + \vert \xi \vert^{2s} + \i} \mathcal{F}_{\R^{n+1}} ( \tilde{h})(\xi,\tau).
$$
We apply Lemma \ref{dd} to obtain
\begin{equation*}
\begin{split}
& \hat{f}(\xi,\tau)
\\
& = \varphi^{+}_{[k'-1,k'+1]}(N) \frac{ \varphi_{k'-80}(\xi_{1} -N)}{K(\xi',\tau) (\xi_{1} -N + \frac{\i}{2^{k(2s-2+q)}})} \mathcal{F}_{\R^{n+1}}(\tilde{h})(\xi,\tau) + E(\xi,\tau) \mathcal{F}_{\R^{n+1}}(\tilde{h})(\xi,\tau)
\end{split}
\end{equation*}
where $ \vert E \vert \lesssim 2^{-2sk' +2(1-s)(k-k')} + \vert \tau + \vert \xi \vert^{2s} + 1 \vert^{-2}$. We show we can absorb the term $ E \mathcal{F}_{\R^{n+1}}(\tilde{h}) $ into $g$. In other words, we show the estimate 
\begin{equation}\label{w16}
\begin{split}
& \Vert \mathcal{F}^{-1}(E \mathcal{F}(\tilde{h})) \Vert_{X_{k}} \lesssim  \Vert f \Vert_{Y^{e}_{k,k'}}.
\end{split}
\end{equation}
To that end, we estimate 
\begin{equation}\label{w17}
\begin{split}
& 2^{j/2}\Vert \varphi_{j}( \tau + \vert \xi \vert^{2s}) E(\xi,\tau) \mathcal{F}_{\R^{n+1}}(\tilde{h}) \Vert_{L^{2}}
\\
& \lesssim 2^{j/2} ( 2^{-2sk' +2(1-s)(k-k')} + 2^{-2j}) \Vert \varphi_{j}(\tau+\vert \xi \vert^{2s}) \int_{\R} e^{-y \xi_{1} \i } \mathcal{F}_{x',t}(\tilde{h})( ye + \xi',\tau) d y \Vert_{L^{2}}
\\
& \lesssim 2^{j/2} ( 2^{-2sk' +2(1-s)(k-k')} + 2^{-2j})  \Vert \tilde{h} \Vert_{L^{1}_{e}L^{2}_{e^{\perp},t}} \vert \{ \xi_{1} :  \xi_{1}  \approx 2^{k'} , \vert \tau + \vert \xi \vert^{2s} \vert  \leq 2^{j+1} \}\vert^{1/2}
\\
& \lesssim 2^{j/2} (2^{-2sk'+2(1-s)(k-k')} + 2^{-2j}) \min \{ 2^{k'/2}, 2^{j/2} 2^{-k(2s-2+q)/2} \} \Vert \tilde{h} \Vert_{L^{1}_{e}L^{2}_{e^{\perp},t}}
\end{split}
\end{equation}
notice that by definition of $\tilde{h}$ we have 
$$
\Vert \tilde{h} \Vert_{L^{1}_{e}L^{2}_{e^{\perp},t}} \leq \gamma^{-1}_{k,k'} 2^{k'(2s-1)/2} \Vert f\Vert_{Y^{e}_{k,k'}}
$$
using this bound for \eqref{w17} we obtain 
\begin{equation*}
\begin{split}
& 2^{j/2} \Vert \varphi_{j}( \tau + \vert \xi \vert^{2s}) E(\xi,\tau) \mathcal{F}_{\R^{n+1}}(\tilde{h}) \Vert_{L^{2}}
\\
& \lesssim 2^{j/2} ( 2^{-2sk'+2(1-s)(k-k')} + 2^{-j} ) \min \{ 2^{k'/2}, 2^{j/2} 2^{-k(2s-2+q)/2} \} 2^{k'(2s-1)/2} \gamma^{-1}_{k,k'} \Vert f \Vert_{Y^{e}_{k,k'}}.
\end{split}
\end{equation*}
Therefore, we will obtain \eqref{w16} once we prove that 
\begin{equation}\label{w18}
\begin{split}
\sum_{j=0}^{[ 2k(s+q -1)]} 2^{j/2} ( 2^{-2sk'+2(1-s)(k-k')} + 2^{-2j} ) \min \{ 2^{k'/2}, 2^{j/2} 2^{-k(2s-2+q)/2} \} 2^{k'(2s-1)/2} \lesssim \gamma_{k,k'}.
\end{split}
\end{equation}
But this is straightforward to verify. Indeed, 
\begin{equation}
\begin{split}
& 2^{-2sk'+2(1-s)(k-k')} 2^{k'(2s-1)/2}  \sum_{j=0}^{[2k(s+q -1)]} 2^{j/2} \min \{ 2^{k'/2}, 2^{j/2} 2^{-k(2s-2+q)/2} \} 
\\
& \lesssim 2^{-2sk'+2(1-s)(k-k')} 2^{k'(2s-1)/2} 2^{-k(2s-2+q)/2} \sum_{j=0}^{[2k(s+q-1)]} 2^{j}
\\
& \lesssim 2^{-2sk'+2(1-s)(k-k')} 2^{k'(2s-1)/2} 2^{-k(2s-2+q)/2} 2^{2k(s+q -1)} 
\\
& \lesssim 2^{-2sk'+2(1-s)(k-k')} 2^{k'(2s-1)/2} 2^{-k'/2} 2^{k(s-1)} 2^{2k'} 
\\
& \lesssim 2^{k'(1-s)} 2^{k(s-1)} 2^{2(1-s)(k-k')} 
\\
& \lesssim \gamma_{k,k'}.
\end{split}
\end{equation}
Similarly, 
\begin{equation*}
\begin{split}
& 2^{k'(2s-1)/2} \sum_{j=0}^{[2k(s+q-1)]} 2^{-j}   2^{-k(2s-2+q)/2} 
\\
& \lesssim  2^{k'(2s-1)/2 } 2^{-k(2s-2+q)/2}   
\\
& \lesssim  \gamma_{k,k'}.
\end{split}
\end{equation*}

 This proves \eqref{w16}. Therefore, we arrive at 
\begin{equation*}
\begin{split}
& \hat{f}(\xi,\tau) = \hat{g}(\xi,\tau) 
\\
& + \varphi^{+}_{[k'-1,k'+1]}(N) \frac{ \varphi_{\leq k'-80}(\xi_{1} -N)}{K(\xi',\tau) (\xi_{1} -N + \frac{\i}{2^{k(2s-2+q)}})} \mathcal{F}_{\R^{n+1}}(\tilde{h})(\xi,\tau).
\end{split}
\end{equation*}
Lastly, write
$$
\mathcal{F}_{\R^{n+1}}(\tilde{h})(\xi,\tau) = \int_{\R} e^{- y \xi_{1} \i} \mathcal{F}_{x',t} \tilde{h}( y e + \xi',\tau) d y 
$$
then define $h : \R \times e^{\perp} \times \R \to \C$ as 
$$
h(y,\xi',\tau) : = \frac{1}{K(\xi',\tau)} \mathcal{F}_{x',t} ( \tilde{h})(ye+\xi',\tau) 
$$
clearly $h$ satisfy the desired estimate, and in light of the cutoff functions, we may assume $h $ is supported in the indicated set. 

\end{proof}
The following result summarizes the behavior of the $Z_{k}$ norms when one multiplies by a multiplier $m \in L^{\infty}(\R^{n}).$ This will be used extensively in what follows, sometimes without reference.  

\begin{lemma}\label{Multiplier}
\begin{enumerate}
\item For $k,j \geq 0 $ we have 
$$
\Vert Q_{\leq j} (f) \Vert_{Z_{k}} \lesssim \Vert f \Vert_{Z_{k}}.
$$
    \item For any $\ell \in \N$, $p,q \in [1,\infty]$, $e \in \S^{n-1}$, $\beta \in \R$ we have 
\[
 \|\Ds{\beta} \Delta_{\ell} f\|_{L^p_eL^q_{e^\perp t}} \lesssim 2^{\ell \beta} \sum_{\ell': 2^{\ell'} \aeq 2^\ell} \|\Delta_{\ell'} f\|_{L^p_eL^q_{e^\perp t}}.
\]
\item For any $m \in L^{\infty}(\R^{n}) $ with $ \mathcal{F}^{-1}_{\R^{n}} (m) \in L^{1}(\R^{n}) $ and $f \in Z_{k}$ one has 
$$
\Vert \mathcal{F}_{\R^{n+1}}^{-1} ( m \hat{f}  ) \Vert_{Z_{k}} \lesssim \Vert \mathcal{F}_{\R^{n}}^{-1}(m) \Vert_{L^{1}(\R^{n})} \Vert f \Vert_{Z_{k}}
$$
\end{enumerate}

\end{lemma}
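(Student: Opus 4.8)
The plan is to treat the three parts separately; (2) and (3) reduce quickly to convolution against an $L^{1}(\R^{n})$ kernel, whereas (1) --- in particular its content on the spaces $Y^{e}_{k,k'}$ --- is where the actual work lies.

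\textbf{Parts (2) and (3).} Each operator involved is a Fourier multiplier acting in the spatial variable $\xi$ alone. For (2), write $|\xi|^{\beta}\varphi_{\ell}(\xi)=2^{\ell\beta}\psi(\xi/2^{\ell})$ with $\psi(\eta):=|\eta|^{\beta}\varphi(\eta)\in C_{c}^{\infty}(\R^{n})$, insert the fattened projection $\widetilde{\Delta}_{\ell}:=\sum_{\ell':\,2^{\ell'}\aeq2^{\ell}}\Delta_{\ell'}$ (which acts as the identity on $\widehat{\Delta_{\ell}f}$), and note $\Ds{\beta}\Delta_{\ell}f=2^{\ell\beta}\,k_{\ell}\ast_{x}(\widetilde{\Delta}_{\ell}f)$ with $\|k_{\ell}\|_{L^{1}(\R^{n})}=\|\mathcal{F}^{-1}_{\R^{n}}\psi\|_{L^{1}(\R^{n})}$ a fixed constant; the claim then follows from the fact that convolution in $x\in\R^{n}=\R_{e}\times e^{\perp}$ against an $L^{1}(\R^{n})$ kernel is bounded on $L^{p}_{e}L^{q}_{e^{\perp}t}$ for every $p,q$ and leaves the $t$ variable untouched --- a direct application of Minkowski's integral inequality together with translation invariance. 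For (3), since $\|\cdot\|_{Z_{k}}$ is an infimum norm it is enough to bound $f\mapsto K\ast_{x}f$, $K:=\mathcal{F}^{-1}_{\R^{n}}(m)\in L^{1}(\R^{n})$, on $X_{k}$ and on each $Y^{e}_{k,k'}$ with constant $\lesssim\|K\|_{L^{1}(\R^{n})}$. On $X_{k}$: $K\ast_{x}$ does not change $|\xi|$ (so it preserves the support condition defining $X_{k}$), commutes with every $Q_{j}$, and obeys $\|K\ast_{x}g\|_{L^{2}_{x,t}}\leq\|K\|_{L^{1}}\|g\|_{L^{2}}$; on $Y^{e}_{k,k'}$: $K\ast_{x}$ likewise maps functions with Fourier support in $D^{e,k'}_{k,\leq2sk+10}$ to functions with Fourier support in the same set, and commutes with $\i\partial_{t}-(-\Delta)^{s}+\i$, reducing matters to $\|K\ast_{x}h\|_{L^{1}_{e}L^{2}_{e^{\perp}t}}\leq\|K\|_{L^{1}(\R^{n})}\|h\|_{L^{1}_{e}L^{2}_{e^{\perp}t}}$, again the Minkowski bound.

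\textbf{Part (1).} By the infimum definition of $\|\cdot\|_{Z_{k}}$ it suffices to show $Q_{\leq j}$ is bounded on $X_{k}$ and maps each $Y^{e}_{k,k'}$ into $Z_{k}$, uniformly. On $X_{k}$ this is immediate: $Q_{j'}Q_{\leq j}$ has symbol pointwise dominated by $|\varphi_{j'}(\tau+|\xi|^{2s})|$ and supported where $\varphi_{j'}(\tau+|\xi|^{2s})\neq0$, so $\|Q_{j'}Q_{\leq j}f\|_{L^{2}}\leq\|Q_{j'}f\|_{L^{2}}$ while the support condition defining $X_k$ is preserved. For $Y^{e}_{k,k'}$ I would split on whether $k'\in T_{k}$. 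If $k'\notin T_{k}$, \Cref{Embeddings}(3) gives $\|f\|_{X_{k}}\lesssim\|f\|_{Y^{e}_{k,k'}}$ and one is back in the $X_{k}$ case. If $k'\in T_{k}$, then $Q_{\leq j}$ only shrinks the modulation support, so $Q_{\leq j}f$ is still an admissible element of $Y^{e}_{k,k'}$, and since $Q_{\leq j}$ commutes with $\i\partial_{t}-(-\Delta)^{s}+\i$ the task reduces to proving $\|Q_{\leq j}h\|_{L^{1}_{e}L^{2}_{e^{\perp}t}}\lesssim\|h\|_{L^{1}_{e}L^{2}_{e^{\perp}t}}$ for every $h$ with $\supp\widehat{h}\subset D^{e,k'}_{k,\leq2sk+10}$. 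For this I would pass to the partial Fourier transform of $h$ in $(e^{\perp},t)$: writing $\xi=\xi_{1}e+\xi'$ and $\widetilde{h}(x_{1},\xi',\tau):=\mathcal{F}_{e^{\perp}\times\R}h$, one has $\|h\|_{L^{1}_{e}L^{2}_{e^{\perp}t}}=\|\widetilde{h}\|_{L^{1}_{x_{1}}L^{2}_{\xi',\tau}}$, and in these coordinates $Q_{\leq j}$ acts, for each fixed $(\xi',\tau)$, as the one-dimensional convolution in $x_{1}$ with symbol $\eta\mapsto\varphi_{0}\!\big((\tau+(\eta^{2}+|\xi'|^{2})^{s})/2^{j}\big)$. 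Because $\widetilde{h}(\cdot,\xi',\tau)$ has $\eta$-support in $\{\eta\aeq2^{k'},\ \eta^{2}+|\xi'|^{2}\aeq2^{2k}\}$, this symbol may be freely multiplied by a cutoff to that set; on it $\partial_{\eta}(\eta^{2}+|\xi'|^{2})^{s}\aeq2^{k(2s-2+q)}$ \emph{uniformly in} $(\xi',\tau)$, so the truncated symbol is a smooth bump supported on an interval of length $L:=\min\{2^{k'},2^{\,j-k(2s-2+q)}\}$ with $|\partial_{\eta}^{\alpha}(\cdot)|\lesssim_{\alpha}L^{-\alpha}$, again uniformly in $(\xi',\tau)$. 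Integrating by parts in $\eta$ yields the $(\xi',\tau)$-independent bound $|\kappa_{\xi',\tau}(x_{1})|\lesssim_{N}L(1+L|x_{1}|)^{-N}$ on its kernel, whence $\|\sup_{\xi',\tau}|\kappa_{\xi',\tau}|\|_{L^{1}_{x_{1}}}\lesssim1$; Minkowski's inequality in the partial-Fourier picture then gives $\|Q_{\leq j}h\|_{L^{1}_{x_{1}}L^{2}_{\xi',\tau}}\lesssim\|\sup_{\xi',\tau}|\kappa_{\xi',\tau}|\|_{L^{1}_{x_{1}}}\|\widetilde{h}\|_{L^{1}_{x_{1}}L^{2}_{\xi',\tau}}$. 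Summing the $X_{k}$ and $Y^{e}_{k,k'}$ bounds over a near-optimal decomposition of $f\in Z_{k}$ completes the proof.

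\textbf{Where the difficulty sits.} The only delicate point is the displayed estimate for $k'\in T_{k}$. The naive approach --- writing $Q_{\leq j}h=K_{j}\ast_{x}h$ with $K_{j}$ the kernel of the frequency-$2^{k}$-localized multiplier $\varphi_{0}((\tau+|\xi|^{2s})/2^{j})$ and bounding $\|K_{j}\|_{L^{1}(\R^{n})}$ --- does not work, because that $L^{1}$ norm grows like a power of $2^{2sk-j}$, the usual growth of a frequency-localized fractional Schr\"odinger kernel. One must exploit the cone localization $\xi_{1}\aeq2^{k'}$, $|\xi|\aeq2^{k}$: it makes $\partial_{\eta}(\eta^{2}+|\xi'|^{2})^{s}$ essentially constant, collapsing $Q_{\leq j}$ to a family of one-dimensional convolutions whose kernel widths $L$ are \emph{uniform} in $(\xi',\tau)$ --- and this uniformity is exactly what licenses the ``take the supremum in $(\xi',\tau)$, then integrate in $x_{1}$'' step. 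Equivalently, on the relevant region \Cref{dd} replaces $\varphi_{0}((\tau+|\xi|^{2s})/2^{j})$ by a cutoff to $|\xi_{1}-N(\xi',\tau)|\lesssim2^{\,j-k(2s-2+q)}$ up to the errors estimated there, after which the above reduction is immediate.
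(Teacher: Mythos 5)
Your proof is correct and follows essentially the same strategy as the paper's: parts (2)--(3) reduce to Young and Minkowski inequalities for $x$-convolution against an $L^{1}(\R^{n})$ kernel, and for part (1) on $Y^{e}_{k,k'}$ you realize $Q_{\leq j}$ as a one-dimensional convolution in $x_{1}$ with an $(\xi',\tau)$-uniform kernel of width $\approx 2^{\,j-k(2s-2+q)}$, exploiting the cone localization $\xi_{1}\approx 2^{k'}$ exactly as the paper does. The only cosmetic difference is that you treat all modulation levels $j$ at once by invoking the standard integration-by-parts decay estimate for a bump of scale $L$, whereas the paper first trims to $j\leq 2k(s+q-1)-80$ via \Cref{Embeddings} and then carries out the two integrations by parts explicitly.
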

\begin{proof}
We begin with the proof of (1). If $ f \in X_{k}$ then we clearly have, by Plancheral, 
$$
\Vert Q_{\leq j} (f) \Vert_{X_{k}} \leq \Vert f \Vert_{X_{k}}.
$$
Therefore, assume in what follows $k \geq 100$ and $ f \in Y^{e}_{k,k'}$ with $k' \in T_{k}$. In view of Lemma \ref{Embeddings} we may assume $f$ has modulation controlled by $ 2^{2k(s+q-1)-80}$. 
That is, we may assume that $ j\leq 2k(s+q-1)-~80 \ll 2^{2sk'}$. Then write $ \xi = \xi_{1} e + \xi' $ with $ \xi' \in e^{\perp}$. And notice that we have 
\begin{equation*}
\begin{split}
& \Vert Q_{\leq j}(f) \Vert_{Y^{e}_{k,k'}}
\\
& \lesssim \Vert \int_{\R} e^{\i \xi_{1} x_{1}} \varphi_{0} ( \frac{ \tau + \vert \xi \vert^{2s}}{2^{j}} ) \varphi^{+}_{k'}(\xi_{1}) \varphi_{k}(\xi) d \xi_{1} \Vert_{L^{1}_{x_{1}} L^{\infty}_{\xi',\tau}} \Vert f \Vert_{Y^{e}_{k,k'}}.
\end{split}
\end{equation*}
Therefore, to obtain part (1) of the lemma, it suffices to prove 
\begin{equation}\label{wtre}
\begin{split}
\Vert \int_{\R} e^{\i \xi_{1} x_{1}} \varphi_{0} ( \frac{ \tau + \vert \xi \vert^{2s}}{2^{j}} ) \varphi^{+}_{k'}(\xi_{1}) \varphi_{k}(\xi) d \xi_{1} \Vert_{L^{1}_{x_{1}} L^{\infty}_{\xi',\tau}} \lesssim 1
\end{split}
\end{equation}
where $ j \leq 2k(s+q-1)-80$. To that end,  by integration by parts we have 
\begin{equation}\label{rrrt}
\begin{split}
& | \int_{\R} e^{\i \xi_{1} x_{1}} \varphi_{0} ( \frac{ \tau + \vert \xi \vert^{2s}}{2^{j}} ) \varphi^{+}_{k'} (\xi_{1} ) \varphi_{k}(\xi) d \xi_{1} |
\\
& \lesssim \vert \int_{\R} \frac{1}{x_{1}} e^{\i \xi_{1} x_{1}} 2^{-j} \xi_{1} ( \xi^{2}_{1} + \vert \xi' \vert^{2})^{s-1} . \varphi'_{0}( \frac{ \tau + \vert \xi \vert^{2s}}{2^{j}} )\varphi^{+}_{k'} ( \xi_{1}) \varphi_{k}(\xi) d\xi_{1} \vert
\\
& + \vert \int_{\R} \frac{1}{x_{1}} e^{\i \xi_{1} x_{1}} \varphi_{0} ( \frac{ \tau + \vert \xi \vert^{2s}}{2^{j}}) 2^{-k'} \varphi'^{+}( \frac{ \xi_{1}}{2^{k'}} ) \varphi_{k}(\xi) d\xi_{1} \vert
\\
& + \vert \int_{\R} \frac{1}{x_{1}} e^{\i \xi_{1} x_{1}} \varphi_{0}( \frac{ \tau + \vert \xi \vert^{2s}}{2^{j}}) \varphi^{+}_{k'}(\xi_{1}) 2^{-k} \partial_{\xi_{1}} \varphi( \frac{\xi}{2^{k}}) d \xi_{1} \vert.
\end{split}
\end{equation}
We deal with the first term on the right hand side of \eqref{rrrt}. We integrate by parts again to obtain 
\begin{equation}\label{rrrt2}
\begin{split}
&  \vert \int_{\R} \frac{1}{x_{1}} e^{\i \xi_{1} x_{1}} 2^{-j} \xi_{1} ( \xi^{2}_{1} + \vert \xi' \vert^{2})^{s-1} . \varphi'_{0}( \frac{ \tau + \vert \xi \vert^{2s}}{2^{j}} )\varphi^{+}_{k'} ( \xi_{1}) \varphi(\frac{\xi}{2^{k}}) d\xi_{1} \vert
\\
& \lesssim \frac{1}{x_{1}^{2}} \left( 2^{-j} 2^{2k (s-1)} +2^{-j} 2^{2k'} 2^{2k (s-2)} +2^{-2j} 2^{2k'} 2^{4k(s-1)} + 2^{-j } 2^{2k(s-1)} \right) 
\\
& \times \vert \{ \xi_{1} \approx 2^{k'} : \vert \tau + \vert \xi \vert^{2s} \vert \leq 2^{j+1} \} \vert 
\\
& \lesssim  \frac{1}{x_{1}^{2}} \left( 2^{-j} 2^{2k (s-1)} +2^{-j} 2^{2k'} 2^{2k (s-2)} +2^{-2j} 2^{2k'} 2^{4k(s-1)} + 2^{-j } 2^{2k(s-1)} \right) 
\\
& \times 2^{j} 2^{-k(2s-2+q)} 
\\
& \lesssim \frac{1}{x^{2}_{1}} \left( 2^{-k'} + 2^{k' -2k} + 2^{-j} 2^{k'} 2^{2k (s-1)} + 2^{-kq} \right)
\\
& \lesssim \frac{1}{x_{1}^{2}} 2^{-j} 2^{k'} 2^{2k(s-1)}.
\end{split}
\end{equation}
Next, for the second term on the right hand side of \eqref{rrrt}, we again integrate by parts to obtain 
\begin{equation}\label{rrrt3}
\begin{split}
& \vert \int_{\R} \frac{1}{x_{1}} e^{\i \xi_{1} x_{1}} \varphi_{0} ( \frac{ \tau + \vert \xi \vert^{2s}}{2^{j}}) 2^{-k'} \varphi'^{+}( \frac{ \xi_{1}}{2^{k'}} ) \varphi(\frac{\xi}{2^{k}}) d\xi_{1} \vert
\\
& \lesssim \frac{1}{x^{2}_{1}} \left( 2^{-j} 2^{2k(s-1)} + 2^{-2k'} \right) 2^{j} 2^{-k(2s-2+q)} 
\\
& \lesssim \frac{1}{x_{1}^{2}} 2^{-j} 2^{k'} 2^{2k(s-1)}.
\end{split}
\end{equation}
Lastly, for the third term on the right hand side of \eqref{rrrt} we apply the same trick, namely we integrate by parts again to obtain 
\begin{equation}\label{rrrt4}
\begin{split}
& \vert \int_{\R} \frac{1}{x_{1}} e^{\i \xi_{1} x_{1}} \varphi_{0}( \frac{ \tau + \vert \xi \vert^{2s}}{2^{j}}) \varphi^{+}_{k'}(\xi_{1}) 2^{-k} \partial_{\xi_{1}} \varphi( \frac{\xi}{2^{k}}) d \xi_{1} \vert
\\
& \lesssim \frac{1}{x_{1}^{2}} 2^{-j} 2^{k'} 2^{2k(s-1)}.
\end{split}
\end{equation}
The bounds \eqref{rrrt2}, \eqref{rrrt3}, and \eqref{rrrt4} imply the following bound 
\begin{equation}
\begin{split}
& | \int_{\R} e^{\i \xi_{1} x_{1}} \varphi_{0} ( \frac{ \tau + \vert \xi \vert^{2s}}{2^{j}} ) \varphi^{+}_{k'} (\xi_{1} ) \varphi_{k}(\xi) d \xi_{1} |
\\
& \lesssim \frac{ 2^{j } 2^{-k'} 2^{-2k(s-1)}}{ 1+ ( 2^{j} 2^{-k'} 2^{-2k(s-1)} x_{1} )^{2}}. 
\end{split}
\end{equation}
Integrating in $ x_{1}$ yields the result. Next, we prove (2) and (3).
We first prove the following estimate for any $ p,q \in [1,\infty]$ and $ e \in \S^{n-1}$
\begin{equation}\label{www8}
\begin{split}
& \Vert \mathcal{F}_{\R^{n+1}}^{-1} ( m \hat{f} ) \Vert_{L^{p}_{e}L^{q}_{e^{\perp},t}} \lesssim \Vert \mathcal{F}_{\R^{n}}^{-1}(m) \Vert_{L^{1}(\R^{n})} \Vert f \Vert_{L^{p}_{e}L^{q}_{e^{\perp},t}}.
\end{split}
\end{equation}
To that end, we write 
\begin{equation*}
\begin{split}
& \mathcal{F}_{\R^{n+1}}^{-1} ( m \hat{f} ) (x,t) 
\\
& = (\mathcal{F}_{\R^{n}}^{-1}(m) * f ( \cdot,t))(x).
\end{split}
\end{equation*}
Then Young inequality and Minkowski yield \eqref{www8}. Clearly \eqref{www8} implies (3). 
Next, take $ m = \vert \nabla \vert^{\beta} \varphi_{l}$. It remains to show
\begin{equation}\label{www9}
\Vert \mathcal{F}_{\R^{n}}^{-1}(m) \Vert_{L^{1}(\R^{n})} \lesssim 2^{l\beta}.
\end{equation}
 But this follows by scaling. 
\end{proof}
\begin{remark}\label{remarkM}
Notice that the above proof implies that for $ \beta \geq 0$ and any $ l \geq 0 $ one has the inequality 
$$
\Vert \Delta_{\leq l} \vert \nabla \vert^{\beta} f \Vert_{L^{p}_{e}L^{q}_{e^{\perp},t}} \lesssim 2^{l \beta} \Vert \Delta_{\lesssim l} f \Vert_{L^{p}_{e} L^{q}_{e^{\perp},t}}.
$$
\end{remark}
An immediate consequence of Lemma \ref{Multiplier} is the following

\begin{lemma}\label{dec}
Let $k \geq 0 $ and $ f \in Z_{k} $. Then there exists $L \in \mathbb{N}$, independent of $k$ and $f$, and functions $ f^{1},.....,f^{L} \in Z_{k}$ with 
$$
{f} = \sum_{i=1}^{L} f^{i}
$$
and for each $ f^{i}$ there exists $ v_{i}  $ with $ \vert v_{i}\vert \in I_{k} \setminus \{0 \}$ such that $f^{i}$  has Fourier transform supported in $ \{ (\xi,\tau) \in \text{supp}(\hat{f}) :  \vert \xi - v_{i} \vert \leq 2^{k-c'} \}$. Moreover, 
$$
\Vert f^{i} \Vert_{Z_{k}} \lesssim \Vert f \Vert_{Z_{k}}
$$
\end{lemma}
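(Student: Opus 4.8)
The plan is to cut the frequency annulus $\{\xi\in\R^n:\ |\xi|\in I_k\}$ into boundedly many pieces of diameter $\aleq 2^{k-c'}$ by means of a rescaled smooth partition of unity, and then to invoke \Cref{Multiplier}(3) for the norm bound.

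First I would fix, once and for all and independently of $k$, a finite collection of points $w_1,\dots,w_L\in\R^n$ with $\tfrac12\le|w_i|\le 2$, together with functions $\psi_1,\dots,\psi_L\in C_c^\infty(\R^n)$ such that $\supp\psi_i\subset B(w_i,2^{-c'})$ and $\sum_{i=1}^L\psi_i\equiv 1$ on the annulus $\{\tfrac14\le|\xi|\le 4\}$. Such a system exists by covering the compact annulus $\{\tfrac14\le|\xi|\le4\}$ by finitely many balls of radius $2^{-c'-1}$ with centers of modulus in $[\tfrac12,2]$ and taking a subordinate smooth partition of unity; the number $L=L(n,c')$ is then fixed. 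For $k\ge 1$ I set $v_i:=2^kw_i$ and $\chi_i(\xi):=\psi_i(2^{-k}\xi)$; then $|v_i|=2^k|w_i|\in[2^{k-1},2^{k+1}]=I_k\setminus\{0\}$, $\supp\chi_i\subset B(v_i,2^{k-c'})$, and $\sum_{i=1}^L\chi_i\equiv 1$ on $\{2^{k-2}\le|\xi|\le 2^{k+2}\}$, which contains $\{|\xi|\in I_k\}$. The case $k=0$ is identical, replacing the annulus by the ball $\{|\xi|\le 1\}$ and choosing finitely many balls of radius $2^{-c'-1}$ with nonzero centers in $\overline{B(0,1)}$ covering it.

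Then I would simply set $f^i:=\mathcal{F}_{\R^{n+1}}^{-1}\big(\chi_i(\xi)\,\mathcal{F}_{\R^{n+1}}f(\xi,\tau)\big)$. Since $f\in Z_k$, its Fourier transform is supported in $D_{k,\infty}$, so in particular $|\xi|\in I_k$ on $\supp\mathcal{F}f$, on which $\sum_i\chi_i\equiv1$; hence $\sum_{i=1}^L f^i=f$. Moreover $\supp\mathcal{F}f^i\subset \supp\mathcal{F}f\cap B(v_i,2^{k-c'})$, which is exactly the claimed support property. For the norm bound, $\chi_i$ is a function of $\xi$ alone with $\mathcal{F}_{\R^n}^{-1}\chi_i(x)=2^{kn}(\mathcal{F}_{\R^n}^{-1}\psi_i)(2^kx)$, so $\|\mathcal{F}_{\R^n}^{-1}\chi_i\|_{L^1(\R^n)}=\|\mathcal{F}_{\R^n}^{-1}\psi_i\|_{L^1(\R^n)}$, a finite constant independent of $k$; \Cref{Multiplier}(3) then yields $f^i\in Z_k$ with
\[
 \|f^i\|_{Z_k}\aleq \|\mathcal{F}_{\R^n}^{-1}\chi_i\|_{L^1(\R^n)}\,\|f\|_{Z_k}\aleq\|f\|_{Z_k}.
\]

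I do not expect a genuine obstacle here: the analytic content is entirely contained in \Cref{Multiplier}(3), and the only points needing attention are the uniformity in $k$ of both the number $L$ of pieces and of $\|\mathcal{F}_{\R^n}^{-1}\chi_i\|_{L^1}$ — both guaranteed by the scaling $\chi_i(\cdot)=\psi_i(2^{-k}\,\cdot)$ — and keeping the centers $v_i$ inside $I_k\setminus\{0\}$, which is arranged by taking the $w_i$ in the fixed annulus $\{\tfrac12\le|\xi|\le2\}$ (and handling $k=0$ separately as above).
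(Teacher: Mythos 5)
Your proof is correct and follows the same route as the paper: a rescaled smooth partition of unity subordinate to a ball cover of the frequency annulus, followed by Lemma \ref{Multiplier}(3) for the $Z_k$-bound. One small inconsistency worth flagging: you simultaneously require $|w_i|\in[\tfrac12,2]$ and $\sum_i\psi_i\equiv1$ on $\{\tfrac14\le|\xi|\le4\}$, but balls of radius $2^{-c'}$ centered at points of modulus in $[\tfrac12,2]$ cannot cover the larger annulus once $c'\gtrsim 2$, so the stated cover does not exist. This is harmless because the argument only needs $\sum_i\psi_i\equiv1$ on $\{\tfrac12\le|\xi|\le2\}$ (as $\supp\hat f\subset\{|\xi|\in I_k\}$ rescales to precisely this set), and that partition is achievable with centers in the $[\tfrac12,2]$-annulus; with that correction your construction coincides, up to phrasing, with the paper's.
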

\begin{proof}
First, let $k \geq 1$. Then
notice that by definition of $Z_{k}$, each $f \in Z_{k}$ has Fourier transform  supported inside the set 
$$
\{ (\xi,\tau) : 2^{k-1} \leq \vert \xi \vert \leq 2^{k+1} \}.
$$
Then
consider the set $ A_{0} = \{ \xi : 2^{-1} \leq \vert \xi \vert \leq 2^{2} \}$. We can take vectors $ v_{1},..,v_{L}$ in $A_{0}$ so that $ B(v_{i}, 2^{-c'} )$ form a cover of $A_{0}$. Next, take a smooth partition of unity for this cover. In other words, $\phi_{1},..,\phi_{L}$ so that each $ \phi_{i}$ is supported in $B(v_{i},2^{-c'} )$. And 
$$
1= \sum_{i=1}^{L} \phi_{i}
$$
on $A_{0}$. Further, for $k \geq 1$ put $ A_{k} = \{ \xi : 2^{k-1} \leq \vert \xi \vert \leq 2^{k+1} \}$. Then clearly the scaled functions $\phi^{k}_{i}= \phi_{i}(\frac{\cdot}{2^{k}} )$ form a partition for $ A_{k}$. And hence given any function $ f \in Z_{k}$ we have
$$
\hat{f}= \sum \hat{f} \phi^{k}_{i}.
$$
Lastly, we  have by Lemma \ref{Multiplier}
$$
\Vert \mathcal{F}^{-1}(\phi^{k}_{i})* f \Vert_{Z_{k}} \lesssim \Vert f \Vert_{Z_{k}}.
$$
If on the other hand $k =0$. Then we take vectors $ v_{1},...,v_{L}$ with $ v_{i} \neq 0$ for any $ i \in \{1,...,L\}$ and $ B(v_{i},2^{-c'})$ form a cover for $ B(0,1)$. Then we take a smooth partition of unity as in the above, i.e $ \{\phi_{i}\}_{i=1}^{L}$ with each $ \phi_{i}$ supported in $ B(v_{i} , 2^{-c'} )$. Then we set 
$$
f_{i} : = \mathcal{F}^{-1} ( \phi_{i} \hat{f}).
$$

\end{proof}

\section{Main estimates for the resolution spaces}
The main estimates for the resolution spaces are the energy, maximal, and smoothing estimates. In this section we reprove these estimates for our modified spaces $Z_{k}$. The proofs are essentially the same as in \cite{model}, which is also similar to original arguments in \cite{cmp}. But for completeness, we include the proofs here. We start with the following energy estimate. 
\begin{lemma}\label{Energy}
Let $ k \geq 0 $, $f \in Z_{k}$ then we have 
\begin{enumerate}
    \item $$
 \sup_{t} \Vert f( \cdot,t) \Vert_{L^{2}} \lesssim \Vert f \Vert_{Z_{k}}
$$
\item  and we have
$$
\Vert f \Vert_{L^{\infty}_{x,t}} \lesssim 2^{nk/2} \Vert f\Vert_{Z_{k}}
$$
\end{enumerate}
\end{lemma}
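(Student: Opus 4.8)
\emph{The plan.} I would first reduce matters using the structure of $Z_k$. Since $\|\cdot\|_{Z_k}$ is an infimum over decompositions $f = f_1 + \sum_{e,k'} f^{k'}_e$ with $f_1\in X_k$, $f^{k'}_e\in Y^e_{k,k'}$, and both asserted inequalities are subadditive in $f$ (triangle inequality for $\sup_t\|\cdot\|_{L^2_x}$ and for $\|\cdot\|_{L^\infty_{x,t}}$, the sum over $e,k'$ being finite), it suffices to prove (1) and (2) for $f\in X_k$ and for $f\in Y^e_{k,k'}$ ($k\geq 100$, $e\in\mathscr E$, $1\leq k'\leq k+1$) separately; by Lemma~\ref{Embeddings}(3) the case $k'\notin T_k$ is absorbed into the $X_k$ case, so I may assume $k'\in T_k$. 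Moreover (2) will follow from (1) at once: any $f\in Z_k$ has, at each fixed $t$, spatial Fourier support in $\{|\xi|\lesssim 2^{k+1}\}$, so Bernstein gives $\|f(\cdot,t)\|_{L^\infty_x}\lesssim 2^{nk/2}\|f(\cdot,t)\|_{L^2_x}$ and taking the supremum over $t$ reduces (2) to (1). For (1) with $f\in X_k$ I would decompose $f=\sum_{j\geq 0}Q_j f$; since $\supp\widehat{Q_j f}\subset D_{k,j}$, for each fixed $\xi$ the admissible $\tau$ fill an interval of length $\lesssim 2^{j}$, so expressing $Q_j f(\cdot,t)$ via the partial Fourier transform in $t$, Cauchy--Schwarz in $\tau$, and Plancherel give $\sup_t\|Q_j f(\cdot,t)\|_{L^2_x}\lesssim 2^{j/2}\|Q_j f\|_{L^2_{t,x}}$; summing in $j$ yields $\sup_t\|f(\cdot,t)\|_{L^2_x}\lesssim\|f\|_{X_k}$.

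The substantive case is (1) for $f\in Y^e_{k,k'}$ with $k'\in T_k$. I would set $F:=(\i\partial_t-(-\Delta)^s+\i)f$, so $\|F\|_{L^1_e L^2_{e^\perp,t}}=2^{k'(2s-1)/2}\gamma_{k,k'}^{-1}\|f\|_{Y^e_{k,k'}}$. Since the symbol $-(\tau+|\xi|^{2s})+\i$ never vanishes, $\widehat f=(-(\tau+|\xi|^{2s})+\i)^{-1}\widehat F$, and evaluating the $\tau$-integral by contour integration gives the retarded Duhamel formula
\[
 f(x,t)=c\int_{s<t}e^{-(t-s)}\bigl[e^{-\i(t-s)(-\Delta)^s}F(\cdot,s)\bigr](x)\,ds=c\,e^{-\i t(-\Delta)^s}\!\int_{\R}e^{\i s(-\Delta)^s}G_t(\cdot,s)\,ds,
\]
where $G_t(x,s):=e^{s-t}\ind_{\{s<t\}}F(x,s)$ and we used $e^{-\i(t-s)(-\Delta)^s}=e^{-\i t(-\Delta)^s}e^{\i s(-\Delta)^s}$. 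By unitarity of $e^{-\i t(-\Delta)^s}$ on $L^2_x$ and the bound $e^{2(s-t)}\ind_{\{s<t\}}\leq 1$ (hence $\|G_t\|_{L^1_e L^2_{e^\perp,t}}\leq\|F\|_{L^1_e L^2_{e^\perp,t}}$), the proof of (1) reduces to the inhomogeneous smoothing bound
\begin{equation}\label{eq:planstar}
 \Bigl\|\int_\R e^{\i s(-\Delta)^s}G(\cdot,s)\,ds\Bigr\|_{L^2_x}\lesssim 2^{-k(2s-2+q)/2}\,\|G\|_{L^1_e L^2_{e^\perp,t}}
\end{equation}
for $G$ with $\supp\mathcal F_x G(\cdot,s)\subset\{|\xi|\approx 2^k,\ \langle\xi,e\rangle\approx 2^{k'}\}$. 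Indeed, since $\tfrac12 k'(2s-1)-\tfrac12 k(2s-2+q)=(1-s)(k-k')$, estimate \eqref{eq:planstar} gives $\sup_t\|f(\cdot,t)\|_{L^2_x}\lesssim 2^{(1-s)(k-k')}\gamma_{k,k'}^{-1}\|f\|_{Y^e_{k,k'}}\leq\|f\|_{Y^e_{k,k'}}$, using $\gamma_{k,k'}=2^{2n(k-k')}$, $n\geq 3$, and $k\geq k'$.

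It remains to prove \eqref{eq:planstar}, and this is the step I expect to be the main obstacle, since it is here that the dispersive structure of the equation is used; I would prove it exactly as the corresponding step in \cite{model},\cite{cmp}. By $TT^\ast$, \eqref{eq:planstar} is the adjoint of $\|e^{\i t(-\Delta)^s}g\|_{L^\infty_e L^2_{e^\perp,t}}\lesssim 2^{-k(2s-2+q)/2}\|g\|_{L^2_x}$ for $g$ frequency-localized as above; fixing $\xi'\in e^\perp$ and using Plancherel in $x'$, this reduces to the one-dimensional Kato smoothing inequality $\|e^{\i t\phi_{\xi'}(D_1)}h\|_{L^\infty_{x_1}L^2_t}\lesssim(\inf|\phi_{\xi'}'|)^{-1/2}\|h\|_{L^2}$ for the strictly monotone phase $\phi_{\xi'}(\xi_1):=(\xi_1^2+|\xi'|^2)^s$, which follows by the change of variables $\xi_1\mapsto\phi_{\xi'}(\xi_1)$ and Plancherel in $t$; on the relevant frequency region $|\phi_{\xi'}'(\xi_1)|=2s\xi_1(\xi_1^2+|\xi'|^2)^{s-1}\approx 2^{k(2s-2+q)}$ by Lemma~\ref{MultiplierN} (equivalently Lemma~\ref{dd}), giving the stated power. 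Alternatively one bypasses \eqref{eq:planstar} and argues directly from the representation formula of Lemma~\ref{repf}: writing $f=g+f_{\mathrm{main}}$ with $g\in X_k$ (handled above) and $f_{\mathrm{main}}$ a superposition over $x_1$-translates of the single model profile with Fourier transform $\ind_{S'}\varphi^+_{[k'-1,k'+1]}(N)\tfrac{\varphi_{\leq k'-80}(\xi_1-N)}{\xi_1-N+\i 2^{-k(2s-2+q)}}h(y,\xi',\tau)$; by Minkowski one reduces to that profile, and the change of variables $\tau\mapsto N(\xi',\tau)$ identifies it, up to the harmless regularization from the $\i 2^{-k(2s-2+q)}$, with $e^{-\i t(-\Delta)^s}$ applied to an $L^2_x$-function of norm $\lesssim 2^{k(2s-2+q)/2}\|h(y,\cdot,\cdot)\|_{L^2}$, after which unitarity of $e^{-\i t(-\Delta)^s}$ and the bound on $h$ from Lemma~\ref{repf} balance the exponents (again using $n\geq 3$). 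The delicate point in either route is to control this model piece uniformly in $t$: a naive dyadic decomposition in the modulation followed by Cauchy--Schwarz loses a logarithmic (indeed, if done too crudely, a power-of-$2^k$) factor, so one must use the oscillation of $e^{-\i t(-\Delta)^s}$ "all at once", either through unitarity of the propagator or through the one-dimensional stationary-phase estimate underlying \eqref{eq:planstar}.
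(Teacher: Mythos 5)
Your proof is correct, and for the substantive $Y^e_{k,k'}$ case it takes a genuinely different route from the paper's. The paper works entirely on the Fourier side: it sets $h := \mathcal F^{-1}[(\tau+|\xi|^{2s}+\i)\widehat f]$, uses Minkowski to pull the $L^1_e$ integral outside, reduces to an inequality of the form $\|\varphi^+_{k'}(\xi_1)\,\tilde h'(\xi',(\xi_1^2+|\xi'|^2)^s)\|_{L^2_\xi}\lesssim\cdots\|\tilde h'\|_{L^2_{\xi',\mu}}$ via $L^2$-boundedness of the Hilbert transform in $\tau$ (after translating so $t=0$), and then performs the change of variables $\mu=(\xi_1^2+|\xi'|^2)^s$ directly. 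You instead invert $\i\partial_t-(-\Delta)^s+\i$ by a retarded Duhamel formula, peel off a unitary propagator $e^{-\i t(-\Delta)^s}$, absorb the damping weight $e^{s-t}\ind_{s<t}$ into $\|\cdot\|_{L^1_eL^2_{e^\perp,t}}$, and land on the inhomogeneous smoothing estimate \eqref{eq:planstar}, which you handle by $TT^\ast$ and one-dimensional Kato smoothing. The two approaches meet at exactly the same underlying computation---the monotone substitution $\xi_1\mapsto(\xi_1^2+|\xi'|^2)^s$ with Jacobian $\approx 2^{k(2s-2+q)}$---but your packaging makes the role of the free Schr\"odinger group and the duality between smoothing ($L^\infty_eL^2_{e^\perp,t}\!\to\!L^2$) and energy ($L^1_eL^2_{e^\perp,t}\!\to\!L^2$) more transparent, at the cost of first justifying the retarded kernel and checking that frequency localization survives the Duhamel manipulations (which you do correctly: $G_t$ inherits the spatial support of $F$, and $|e^{s-t}\ind_{s<t}|\leq1$ gives the pointwise domination). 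Your exponent bookkeeping is right: $\tfrac12 k'(2s-1)-\tfrac12 k(2s-2+q)=(1-s)(k-k')$, and the loss $2^{(1-s)(k-k')}$ is beaten by $\gamma_{k,k'}=2^{2n(k-k')}$ since $1-s<1\leq 2n$, exactly as in the paper. Both proofs are fine; the paper's is shorter to write, yours isolates the dispersive mechanism more cleanly, and neither needs Lemma~\ref{repf} (which you correctly flag only as an alternative).
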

\begin{proof}
We only need to prove the first inequality. Indeed, assume we have (1), then for (2) the proof is as follows; since $ f \in Z_{k}$ then we have $ f = \Delta_{\lesssim {k}} f$. Therefore, we have 
\begin{equation*}
\begin{split}
& \vert  f(x,t) \vert = \vert \int_{\R^{n+1}} e^{x . \xi i + t \tau i} \hat{f}(\xi,\tau) d \xi d\tau \vert  
\\
& \leq 2^{nk/2}  \Vert  \int_{ \tau}  e^{it \tau} \hat{f} (\cdot, \tau) d \tau \Vert_{L^{2}_{\xi}}
\\
& = 2^{nk/2} \Vert f ( \cdot, t) \Vert_{L^{2}_{x}} 
\\
& \lesssim 2^{nk/2} \Vert f \Vert_{Z_{k}}.
\end{split}
\end{equation*}
Therefore, we proceed to prove (1). Let $ f \in X_{k}$ then by Plancherel it is enough to prove the following inequaluty for any $t\in \R$ 
\begin{equation}\label{w19}
\begin{split}
\Vert \int_{\R} e^{it\tau} \varphi_{j}(\tau + \vert \xi \vert^{2s}) \hat{f}(\xi,\tau) d \tau \Vert_{L^{2}_{\xi}} \lesssim 2^{j/2} \Vert Q_{j}(f) \Vert_{L^{2}_{\xi,\tau}}
\end{split}
\end{equation}
this follows easily by H\"{o}lder inequality. Indeed, 
\begin{equation*}
\begin{split}
& \Vert \int_{\R} e^{it\tau} \varphi_{j}(\tau + \vert \xi \vert^{2s}) \hat{f}(\xi,\tau) d \tau \Vert_{L^{2}_{\xi}} 
\\
& \leq \Vert \varphi_{j}(\tau + \vert \xi \vert^{2s}) \hat{f} \Vert_{L^{2}_{\xi} L^{1}_{\tau}}
\\
& \leq 2^{j/2} \Vert Q_{j}(f) \Vert_{L^{2}_{\xi,\tau}}
\end{split}
\end{equation*}
this gives \eqref{w19}. Next, assume $k \geq 100$ and $ f \in Y^{e}_{k,k'}$. Then in light of Lemma \ref{Embeddings} we may assume that $ k' \in T_{k}$ and $ \hat{f}$ is supported in the set $D^{e, k'}_{k , \leq 2k(s+q-1)}$. Fix such an $f$.
Then we want to prove 
\begin{equation}\label{ww}
\Vert \int_{\R} e^{\i \tau t} \hat{f}(\xi,\tau) d\tau \Vert_{L^{2}_{\xi}} \lesssim \Vert f \Vert_{Y^{e}_{k,k'}}.
\end{equation}
To that end, define 
$$
h(x,t) : = \mathcal{F}^{-1}_{\R^{n+1}}  \big[( \tau + \vert \xi \vert^{2s} + \i ) \hat{f}\big]
$$
then by definition
$$
\Vert h \Vert_{L^{1}_{e} L^{2}_{e^{\perp},t}} \lesssim 2^{k'(2s-1)/2} \gamma^{-1}_{k,k'} \Vert f \Vert_{Y^{e}_{k,k'}}.
$$
Thus, we will obtain \eqref{ww} once we prove 
\begin{equation}
\begin{split}
& \Vert \int_{\R} e^{\i \tau t} \frac{ \varphi_{ \leq 2k(s + q - 1) } (\tau + \vert \xi \vert^{2s}) }{\tau + \vert \xi \vert^{2s} +\i} \mathcal{F}(h)(\xi,\tau) d \tau \Vert_{L^{2}_{\xi}}
\\
& \lesssim 2^{-k'(2s-1)/2} \gamma_{k,k'} \Vert h \Vert_{L^{1}_{e} L^{2}_{\xi',\tau}}
\end{split}
\end{equation}
for any $ t \in \R$. This inequality follows once we prove the stronger inequality 
\begin{equation}\label{ww1}
\begin{split}
& \Vert \int_{\R}  \varphi^{+}_{[k'-1,k'+1]} (\xi_{1}) e^{\i \tau t} \frac{ \varphi_{ \leq 2k(s + q - 1)  }(\tau+ \vert \xi \vert^{2s}) }{\tau + \vert \xi \vert^{2s} +\i} \tilde{h}(\xi',\tau) d\tau \Vert_{L^{2}_{\xi}}
\\
& \lesssim 2^{- k'(2s-1)/2} \gamma_{k,k'} \Vert \tilde{h} \Vert_{L^{2}_{\xi',\tau}}
\end{split}
\end{equation}
for any function $ \tilde{h}: e^{\perp} \times \R \to \C$ in $L^{2}(e^{\perp} \times \R)$ supported in $ \vert \xi' \vert \lesssim 2^{k}$, and any $t \in \R$. By translating the function $\tilde{h}$ if necessary, we may assume $ t =0$. Then define 
$$
h'(\xi', \mu) : = \int_{\R} \frac{ \varphi_{ \leq 2k(s + q - 1)  } (\tau +\mu) }{\tau + \mu +\i} \tilde{h}(\xi',\tau) d \tau
$$
by Boundedness of the Hilbert transform we have that 
$$
\Vert h' \Vert_{L^{2}_{\xi',\mu}} \lesssim \Vert h \Vert_{L^{2}_{\xi',\tau}}
$$
then we reduced proving \eqref{ww1} into proving 
\begin{equation}\label{ww2}
\begin{split}
& \Vert \varphi_{[k'-1,k'+1]}^{+}(\xi_{1}) h'(\xi', ( \xi_{1}^{2} + \vert \xi' \vert^{2} )^{s}) \Vert_{L^{2}_{\xi}}
\\
& \lesssim 2^{-k'(2s-1)/2} \gamma_{k,k'} \Vert h' \Vert_{L^{2}_{\xi',\mu}}
\end{split}
\end{equation}
to prove \eqref{ww2} we apply a change of variable as follows; for each fixed $ \xi' $ we define $ \mu : = ( \xi_{1}^{2}+ \vert \xi'\vert^{2} )^{s}$. Then $ d \mu = 2s \xi_{1} ( \xi_{1}^{2} + \vert \xi' \vert^{2} )^{s-1} d\xi_{1} $. Put $ \alpha(\xi_{1}) : =2s \xi_{1} ( \xi_{1}^{2} + \vert \xi' \vert^{2} )^{s-1}$. Then clearly $\frac{1}{\alpha} \lesssim 2^{-k'} 2^{2k(1-s)}$. Then we carry out the integral in $\xi_{1}$ and use this change of variable to obtain 
\begin{equation*}
\begin{split}
& \Vert \varphi_{[k'-1,k'+1]}^{+}(\xi_{1}) h'(\xi', ( \xi_{1}^{2} + \vert \xi' \vert^{2} )^{s}) \Vert_{L^{2}_{\xi}}
\\
& \lesssim 2^{-k'(2s-1)/2} \gamma_{k,k'} \Vert  h' \Vert_{L^{2}_{\xi',\mu}}
\end{split}
\end{equation*}
this gives \eqref{ww2} and yields the result.

\end{proof}
\subsection{Local smoothing, maximal estimates}
In this subsection we prove the local smoothing and maximal estimates, these two inequalities were already proven in \cite{model} for any $ s \in (1/2,1)$. Below we reproduce those proofs for the modified $Z_{k}$ spaces. 
\begin{lemma}\label{localsmoothing}
Let $ k  \geq 0$ and $ f \in Z_{k}$. Then for any $ e \in \S^{n-1}$ the following inequality holds
$$
\Vert \mathcal{F}^{-1}_{\R^{n+1}} \left( \vartheta_{e} \hat{f} \right) \Vert_{L^{\infty}_{e} L^{2}_{e^{\perp},t}} \lesssim 2^{-k(2s-1)/2} \Vert f \Vert_{Z_{k}}
$$
\end{lemma}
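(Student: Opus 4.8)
Since $\|f\|_{Z_k}$ is an infimum over decompositions $f = f_1 + \sum_{e'\in\mathscr E}\sum_{k'=1}^{k+1}f_{e',k'}$, it suffices to prove the claimed bound, with a constant uniform in $k$, $e$ and the atom, for $f\in X_k$ and for $f\in Y^{e'}_{k,k'}$ separately (in the latter case $k\geq 100$, $e'\in\mathscr E$, $1\leq k'\leq k+1$). Write $\xi = \xi_1 e + \xi''$ with $\xi_1 = \langle\xi,e\rangle$, $\xi''\in e^\perp$. On $\supp(\vartheta_e\hat f)$ we have $|\xi|\aeq 2^k$ (as $f\in Z_k$) and $\langle\xi/|\xi|,e\rangle\geq\tfrac12$, hence $\xi_1\aeq 2^k$, so $\partial_{\xi_1}(|\xi|^{2s}) = 2s\,\xi_1|\xi|^{2s-2}\aeq 2^{k(2s-1)}$; consequently, for each fixed $\xi'',\tau$, the set $\{\xi_1: 2^{k-1}\le|\xi|\le 2^{k+1},\ |\tau+|\xi|^{2s}|\le 2^{j}\}$ has measure $\lesssim 2^{j-k(2s-1)}$.

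\textbf{The $X_k$ atom.} Write $f = \sum_{j\ge 0}Q_j f$. Using Plancherel in the $(e^\perp,t)$ variables and Cauchy--Schwarz in $\xi_1$ over the set above --- whose measure bound is uniform in $x_1$ --- one obtains $\sup_{x_1}\|\mathcal F^{-1}_{\R^{n+1}}(\vartheta_e\widehat{Q_jf})(x_1,\cdot,\cdot)\|_{L^2_{e^\perp,t}}\lesssim 2^{(j-k(2s-1))/2}\|Q_jf\|_{L^2_{t,x}}$. Summing over $j$ and applying the triangle inequality yields $\|\mathcal F^{-1}(\vartheta_e\hat f)\|_{L^\infty_e L^2_{e^\perp,t}}\lesssim 2^{-k(2s-1)/2}\sum_j 2^{j/2}\|Q_j f\|_{L^2} = 2^{-k(2s-1)/2}\|f\|_{X_k}$.

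\textbf{The $Y^{e'}_{k,k'}$ atom.} If $k'\notin T_k$, Lemma \ref{Embeddings}(3) gives $\|f\|_{X_k}\lesssim\|f\|_{Y^{e'}_{k,k'}}$ and we conclude by the previous step. If $k'\in T_k$, apply the representation formula of Lemma \ref{repf} to write $f = g + \tilde f$: here $g\in X_k$ with $\|g\|_{X_k}\lesssim\|f\|_{Y^{e'}_{k,k'}}$, so $g$ is handled by the $X_k$ estimate, while $\widehat{\tilde f}$ is the explicit singular-multiplier expression of that lemma, with amplitude built from a function $h$ satisfying $\|h\|_{L^1_{e'}L^2_{e'^\perp,t}}\lesssim 2^{2(1-s)(k-k')}\gamma_{k,k'}^{-1}2^{-k'(2s-1)/2}\|f\|_{Y^{e'}_{k,k'}}$ and supported where $\langle\xi,e'\rangle\aeq 2^{k'}$ and $|\tau+|\xi|^{2s}|\lesssim 2^{2k(s+q-1)}$. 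It remains to estimate $\|\mathcal F^{-1}(\vartheta_e\widehat{\tilde f})\|_{L^\infty_e L^2_{e^\perp,t}}$. Taking Plancherel in $(e^\perp,t)$ reduces this to a one-dimensional integral in $\xi_1 = \langle\xi,e\rangle$ whose amplitude is controlled by $\big|\langle\xi,e'\rangle - N + \i 2^{-k(2s-2+q)}\big|^{-1}\varphi_{\le k'-80}(\langle\xi,e'\rangle - N)$ paired against the $e'$-direction Fourier transform of $h$; bounding the inverse Fourier transform of this one-dimensional multiplier and combining it with the measure bound above (taken with $j\aeq 2k(s+q-1)$) and, when $e\ne e'$, the cone restriction $\langle\xi,e'\rangle\aeq 2^{k'}$ linking the two directions, one obtains --- after simplifying the powers of $2^{k-k'}$, $q$ and $\gamma_{k,k'}$ --- the bound $\lesssim 2^{-k(2s-1)/2}\|f\|_{Y^{e'}_{k,k'}}$. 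This completes the proof.

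\textbf{Main obstacle.} The genuinely delicate point is the piece $\tilde f$ of the $Y^{e'}_{k,k'}$ atom: one has to control the inverse Fourier transform of the singular multiplier $\big(\langle\xi,e'\rangle - N + \i 2^{-k(2s-2+q)}\big)^{-1}\varphi_{\le k'-80}(\langle\xi,e'\rangle - N)$ and reconcile a local smoothing estimate taken along the direction $e$ with the direction $e'$ built into the $Y$-norm, all while keeping track of the weight $\gamma_{k,k'} = 2^{2n(k-k')}$, the ratio $q = k'/k$, and the modulation threshold $2^{2k(s+q-1)}$, so that the constants collapse to exactly $2^{-k(2s-1)/2}$; the factor $2^{2(1-s)(k-k')}\gamma_{k,k'}^{-1}$ appearing in the bound for $h$ in Lemma \ref{repf} is precisely what makes this bookkeeping close.
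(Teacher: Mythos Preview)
Your treatment of the $X_k$ atom is correct and is essentially the argument from \cite[Lemma~3.7]{model} that the paper cites. The reduction of the $Y^{e'}_{k,k'}$ case with $k'\notin T_k$ to the $X_k$ case via Lemma~\ref{Embeddings}(3) is also fine.

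The gap is in your handling of the $\tilde f$ piece when $k'\in T_k$ and $e\neq e'$. The representation formula of Lemma~\ref{repf} is written in the coordinates $\xi=\xi_1^{e'}e'+\xi'$ with $\xi'\in(e')^\perp$: the singular factor is $\bigl(\xi_1^{e'}-N(\xi',\tau)+\i\,2^{-k(2s-2+q)}\bigr)^{-1}$, where $N$ depends only on $(\xi',\tau)$. When you take Plancherel in $(e^\perp,t)$ you are left with a one-dimensional integral in $\xi_1:=\langle\xi,e\rangle$, but along that line \emph{both} $\xi_1^{e'}=\xi_1\langle e,e'\rangle+\langle\xi'',e'\rangle$ and $\xi'$ (hence $N(\xi',\tau)$) vary with $\xi_1$. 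The amplitude is therefore not a Hilbert-transform kernel in $\xi_1$, and the sentence ``bounding the inverse Fourier transform of this one-dimensional multiplier \dots\ one obtains'' hides precisely the step that needs an idea. Your Main Obstacle paragraph names the difficulty but does not resolve it.

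The paper's device is to avoid $e$-coordinates altogether for the $Y$ atom. On $\supp\widehat{f}$ one has $|\xi_1^{e'}-N|\lesssim 2^{k'-80}$, so the cutoff $\varphi^+_{[k-c,k+c]}(\xi\cdot e)$ is replaced by $\varphi^+_{[k-c,k+c]}\bigl((N e'+\xi')\cdot e\bigr)$, which is independent of $\xi_1^{e'}$. The difference is bounded by $\min\{1,\,2^{-k(2s-1+q)}|\tau+|\xi|^{2s}|\}$, and a short computation (this is \eqref{azx2} in the paper) places the resulting error in $X_k$, where your first step applies. For the main term the $e$-dependence now sits only in a cutoff that is inert in $\xi_1^{e'}$; one integrates out $\xi_1^{e'}$ via Lemma~\ref{repf}, changes variables $\tau=-(\mu^2+|\xi'|^2)^s$, and reduces to the free-evolution smoothing estimate \cite[(3.28)]{model}. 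This ``freeze $\xi_1^{e'}$ at $N$ inside the $e$-cutoff'' step is the ingredient missing from your argument.
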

\begin{proof}
If $ f \in X_{k}$ then the proof can be found in \cite[Lemma 3.7]{model}. Assume next that $ f \in Y^{e'}_{k,k'}$ for some $e' \in \mathscr{E}$. Then in light of Lemma \ref{Embeddings} we may assume $k' \in T_{k}$ and $f$ has modulation controlled by $ 2^{2k(s+q -1) -80 }$. Further, since $k \geq 100$ then 
$$
\vartheta_{e}(\xi) = \vartheta_{e}(\xi) \varphi^{+}_{[k-c,k+c]} ( \xi \cdot e ) \in C^{\infty}_{0}(\R^{n})
$$
and so it suffices to prove the following 
\begin{equation}\label{azx}
\begin{split}
& \Vert \mathcal{F}^{-1}_{\R^{n+1}} \left( \varphi^{+}_{[k-c,k+c]}(\xi \cdot e) \hat{f} \right) \Vert_{L^{\infty}_{e} L^{2}_{e^{\perp},t}} 
\\
& \lesssim 2^{-k(2s-1)/2} \Vert f \Vert_{Y^{e'}_{k,k'}}.
\end{split}
\end{equation}
To that end, write  $\xi = \xi_{1} e' + \xi'$ with $ \xi' \in e'^{\perp}$. Then carrying out the Fourier transform of left hand side of \eqref{azx} we obtain
\begin{equation}\label{azx1}
\begin{split}
& \Vert \mathcal{F}^{-1}_{\R^{n+1}} \left( \varphi^{+}_{[k-c,k+c]}(\xi \cdot e) \hat{f} \right) \Vert_{L^{\infty}_{e} L^{2}_{e^{\perp},t}}
\\
& = \Vert \int_{\R^{n+1}} e^{\i x \cdot \xi + \i \tau t } \varphi_{[k-c,k+c]}^{+}(\xi \cdot e ) \hat{f}(\xi,\tau) d\xi d\tau \Vert_{L^{\infty}_{e} L^{2}_{e^{\perp},t}}
\\
& \lesssim \Vert \int_{\R^{n+1}} e^{\i x \cdot \xi + \i \tau t } \varphi_{[k-c,k+c]}^{+}( (N(\xi',\tau) e' + \xi') \cdot e ) \hat{f}(\xi,\tau) d\xi d\tau \Vert_{L^{\infty}_{e} L^{2}_{e^{\perp},t}}
\\
&+ \Vert \int_{\R^{n+1}} e^{\i x \cdot \xi + \i \tau t } \big[ \varphi^{+}_{[k-c,k+c]} ( \xi \cdot e )- \varphi_{[k-c,k+c]}^{+}( (N(\xi',\tau) e'+\xi' ) \cdot e) \big] \hat{f}(\xi,\tau) d\xi d\tau \Vert_{L^{\infty}_{e} L^{2}_{e^{\perp},t}}.
\end{split}
\end{equation}
We claim that the second term on the right hand side of \eqref{azx1} satisfy the desired inequality. Indeed, we will show
\begin{equation}\label{azx2}
\begin{split}
& \Vert \int_{\R^{n+1}} e^{\i x \cdot \xi + \i \tau t } \big[ \varphi^{+}_{[k-c,k+c]} ( \xi \cdot e )- \varphi_{[k-c,k+c]}^{+}(( Ne'+\xi' ) \cdot e) \big] \hat{f}(\xi,\tau) d\xi d\tau \Vert_{X_{k}}
\\
& \lesssim \Vert f \Vert_{Y^{e'}_{k,k'}},
\end{split}
\end{equation}
and then the smoothing estimate follows by the fact that $X_{k}$ satisfy the estimate. 
To prove \eqref{azx2} notice the bound, which follows from Lemma \ref{MultiplierN}, 
\begin{equation}
\begin{split}
& \vert \varphi^{+}_{[k-c,k+c]} ( \xi \cdot e )- \varphi_{[k-c,k+c]}^{+}( (N e' +\xi' ) \cdot e) \vert 
\\
& \lesssim \min \{1, 2^{-k(2s-1+q)} \vert \tau + \vert \xi \vert^{2s} \vert \},
\end{split}
\end{equation}
and using this we obtain
\begin{equation*}
\begin{split}
& 2^{j/2} \Vert \varphi_{j}(\tau +\vert \xi \vert^{2s} ) \vert \varphi^{+}_{[k-c,k+c]} ( \xi \cdot e )- \varphi_{[k-c,k+c]}^{+}( (Ne' + \xi' )\cdot e) \vert \hat{f}|^{2} \Vert_{L^{2}}
\\
& \lesssim 2^{j/2} 2^{-k(2s -1 +q)} \Vert \varphi_{j}(\tau +\vert \xi \vert^{2s} ) \mathcal{F}_{e'^{\perp,t}} \left( \int_{\R} e^{\i \xi_{1} x_{1}} ( \i \partial_{t} -(-\Delta)^s + \i) f dx_{1}  \right) \Vert_{L^{2}_{\xi,\tau}}
\\
& \lesssim 2^{j/2} 2^{-k(2s -1 +q)} \Vert ( \i \partial_{t} -(-\Delta)^s + \i) f \Vert_{L^{1}_{e'} L^{2}_{e'^{\perp},t}} \vert \{ \xi_{1} :  \xi_{1}  \approx 2^{k'} , \vert \tau + \vert \xi \vert^{2s} \lesssim 2^{j} \} \vert^{1/2} 
\\
& \lesssim 2^{j/2} 2^{-k(2s -1 +q)} \Vert ( \i \partial_{t} -(-\Delta)^s + \i) f \Vert_{L^{1}_{e'} L^{2}_{e'^{\perp},t}} 2^{j/2} 2^{-k(2s-2+q)/2}
\end{split}
\end{equation*}
summing over $j \leq 2k(s+q-1)$ we obtain
\begin{equation*}
\begin{split}
&  \Vert \int_{\R^{n+1}} e^{\i x \cdot \xi + \i \tau t } \big[ \varphi^{+}_{[k-c,k+1]} ( \xi_{1} )- \varphi_{[k-c,k+c]}^{+}( N(\xi',\tau) ) \big] \hat{f}(\xi,\tau) d\xi d\tau \Vert_{X_{k}}
\\
& \lesssim 
2^{2k(s-1+q)} 2^{-k(2s -1 +q)}2^{-k(2s-2+q)/2}
\Vert ( \i \partial_{t} -(-\Delta)^s + \i) f \Vert_{L^{1}_{e'} L^{2}_{e'^{\perp},t}} 
\\
& \lesssim \Vert f \Vert_{Y^{e'}_{k,k'}}.
\end{split}
\end{equation*}
This proves \eqref{azx2} and reduces the lemma to showing 
\begin{equation}\label{azx3}
\begin{split}
& \Vert \int_{\R^{n+1}} e^{\i x \cdot \xi + \i \tau t } \varphi_{[k-c,k+c]}^{+}( (Ne'+ \xi') \cdot e ) \hat{f}(\xi,\tau) d\xi d\tau \Vert_{L^{\infty}_{e} L^{2}_{e^{\perp},t}}
\\
& \lesssim 2^{-k(2s-1)/2} \Vert f \Vert_{Y^{e'}_{k,k'}}.
\end{split}
\end{equation}
Further, notice that since $ \vert \xi \vert \approx 2^{k}$ in the support of $ \hat{f}$ and since $ N \approx \xi_{1}$ then $  N^{2} + \vert \xi' \vert^{2} \approx 2^{2k}$ in the support of $ \hat{f}$. Therefore, \eqref{azx3} follows once we prove
\begin{equation}\label{asda1}
\begin{split}
&  \Vert \int_{\R^{n+1}} e^{\i x \cdot \xi + \i \tau t } \varphi_{[2k-c,2k+c]}(N^{2} + \vert \xi' \vert^{2}) (  \varphi_{[k-c,k+c]}^{+}( (Ne'+ \xi') \cdot e ) \hat{f}(\xi,\tau) d\xi d\tau \Vert_{L^{\infty}_{e} L^{2}_{e^{\perp},t}}
\\
& \lesssim 2^{-k(2s-1)/2} \Vert f \Vert_{Y^{e'}_{k,k'}}.
\end{split}
\end{equation}

To that end, we use the representation formula, Lemma \ref{repf}, and carry out the integral in $ \xi_{1}$ to reduce matters to proving 
\begin{equation}\label{azx4}
\begin{split}
& \Vert \int_{e^{\perp} \times \R} e^{ \i N \xi_{1} +\i x' \cdot \xi' + \i t \tau} \varphi_{[2k-c,2k+c]} (N^{2}+\vert \xi' \vert^{2}) 
\\
& \times \varphi^{+}_{[k'-1,k'+1]}(N) \varphi^{+}_{[k-c,k+c]}(( Ne' + \xi' ) \cdot e)) h(\xi',\tau) d\xi' d \tau \Vert_{L^{\infty}_{e'}L^{2}_{e'^{\perp},t}} 
\\
& \lesssim ( 2^{-2(1-s)(k-k')} \gamma_{k,k'}) 2^{(k'-k) (2s-1)/2} \Vert h \Vert_{L^{2}_{\xi',\tau}}
\end{split}
\end{equation}
for any $ h \in L^{2}(e^{\perp} \times \R)$ supported in $ \{ \vert \xi' \vert \lesssim 2^{k} \}$. 
To prove \eqref{azx4} we use the change of variable 
$
\tau : = -(\mu^{2} + \vert \xi' \vert^{2})^{s}
$
and so $N = \mu$. This is a valid change of variable since $ \mu \approx 2^{k'}$. Then using this change of variable, we set
$$
h'(\xi',\mu) : = \varphi^{+}_{[k'-c,k'+c]} (\mu) \varphi_{[2k-c,2k+c]} ( \mu^{2} +\vert \xi' \vert^{2}) \mu ( \mu^{2} +\vert \xi' \vert^{2})^{s-1} h(\xi, - ( \mu^{2} +\vert \xi' \vert^{2})^{s} ).
$$ And notice 
$$
\Vert h' \Vert_{L^{2}_{\xi',\mu}} \lesssim 2^{k'/2} 2^{k(s-1)} \Vert h \Vert_{L^{2}_{\xi',\tau}}.
$$
And since 
$$
2^{k'/2} 2^{k(s-1)} \lesssim 2^{-2(1-s)(k-k')} \gamma_{k,k'} 2^{k'(2s-1)/2},
$$
then for \eqref{azx4} it suffices to prove
\begin{equation}\label{azx5}
\begin{split}
& \Vert \int_{e^{\perp \times \R}} e^{ \i \mu \xi_{1} +\i x' \cdot \xi'  - \i t (\mu^{2} +\vert \xi' \vert^{2})^{s}} \varphi_{[k-c,k+c]}( (\mu e' + \xi')\cdot e) h'(\xi',\mu) d\xi' d \tau \Vert_{L^{\infty}_{e'}L^{2}_{e'^{\perp},t}} 
\\
& \lesssim 2^{-k(2s-1)/2} \Vert h' \Vert_{L^{2}_{\xi',\mu}}
\end{split}
\end{equation}
for any $h' \in L^{2}_{\xi',\mu}$ supported in $ \{ (\xi,\mu) \in e^{\perp} \times \R : | ( \xi',\mu ) | \lesssim 2^{k} \}.$
However, the inequality \eqref{azx5} was already proven in \cite[(3.28)]{model}. This concludes the lemma. 

\end{proof}
We close this section with the proof of the maximal estimate. Again, the proof is almost identical to one with the old $Z_{k}$ spaces in \cite{model}. We will merely sketch the differences needed for $Y^{e}_{k,k'} $ when $k' \neq k$. 
\begin{lemma}
Let $n \geq 3$, $k \geq 0$ and $ e \in \S^{n-1}$ be any direction. Then the following inequality holds
\begin{equation*}
\begin{split}
& \Vert f \Vert_{L^{2}_{e} L^{\infty}_{e^{\perp},t}} \lesssim 2^{k (n-1)/2} \Vert f \Vert_{Z_{k}}
\end{split}
\end{equation*}
\end{lemma}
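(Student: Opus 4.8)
The strategy is the same three-step template used for the energy and smoothing estimates: reduce to the two building blocks of $Z_k$, namely $X_k$ and $Y^e_{k,k'}$ with $k' \in T_k$ (the cases $k<100$ and $k'\notin T_k$ being handled by the embeddings of \Cref{Embeddings}), and prove the maximal bound on each. For $f \in X_k$ the estimate $\Vert f \Vert_{L^2_e L^\infty_{e^\perp,t}} \lesssim 2^{k(n-1)/2}\Vert f\Vert_{Z_k}$ is exactly the maximal estimate proven for the old resolution spaces in \cite{model}, so nothing new is needed there. The real content is the case $f \in Y^e_{k,k'}$ with $k \geq 100$ and $k' \in T_k$, where again by \Cref{Embeddings} we may assume the modulation is $\leq 2^{2k(s+q-1)-80}$.

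First I would invoke the representation formula, \Cref{repf}: write $\widehat f = \widehat g + (\text{cone-multiplier})\cdot \int_\R e^{\i\xi_1 x_1} h(x_1,\xi',\tau)\,dx_1$, where $\Vert g\Vert_{X_k}\lesssim \Vert f\Vert_{Y^e_{k,k'}}$ (so the $g$ part is absorbed by the $X_k$ case already treated) and $h$ is supported in $\R \times S'$ with $\Vert h\Vert_{L^1_e L^2_{e^\perp,t}} \lesssim 2^{2(1-s)(k-k')}\gamma_{k,k'}^{-1} 2^{-k'(2s-1)/2}\Vert f\Vert_{Y^e_{k,k'}}$. Using the Minkowski/$L^1_e L^2_{e^\perp,t}$ structure of $h$, the matter reduces, after the change of variables $\tau \mapsto \mu = N(\xi',\tau)$ (valid since $N \approx 2^{k'}$, with Jacobian $\approx 2^{k'}2^{2k(s-1)}$), to a fixed-time–frozen oscillatory estimate of the form
\[
\Big\Vert \int_{e^\perp\times\R} e^{\i \mu \xi_1 + \i x'\cdot\xi' - \i t(\mu^2+|\xi'|^2)^s}\, m(\xi',\mu)\, h'(\xi',\mu)\, d\xi' d\mu \Big\Vert_{L^2_e L^\infty_{e^\perp,t}} \lesssim 2^{k(n-1)/2} \Vert h'\Vert_{L^2_{\xi',\mu}},
\]
for $h'$ supported in $|(\xi',\mu)|\lesssim 2^k$ and $m$ a harmless cutoff on the cone. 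One then checks that the weight picked up from the change of variables, $2^{k'/2}2^{k(s-1)}$, is compatible with the target: precisely $2^{k'/2}2^{k(s-1)} \lesssim 2^{-2(1-s)(k-k')}\gamma_{k,k'} 2^{k'(2s-1)/2}$, exactly as in the smoothing proof, so the factors balance.

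The last step is the fixed-time maximal estimate for the dispersive operator $e^{-\i t(-\Delta)^s}$ at frequency $\sim 2^k$ in dimension $n\geq 3$: this is a standard Sobolev-embedding-in-the-transverse-variables plus $TT^*$/stationary phase argument, and for the spaces of \cite{model} it is carried out there (the relevant inequality is the analogue of \cite[(3.28)]{model} for the maximal rather than the smoothing norm); the only modification is that the tangential frequency variable $\mu$ now ranges over a band of size $2^{k'}$ rather than $2^k$, which only helps. I expect the main obstacle to be purely bookkeeping: tracking the powers of $2^{k'}$, $\gamma_{k,k'}$, and $2^{k(s-1)}$ through the change of variables and confirming the clean cancellation $2^{k'/2}2^{k(s-1)}\cdot 2^{2(1-s)(k-k')}\gamma_{k,k'}^{-1}2^{-k'(2s-1)/2} \lesssim 1$, together with checking that the transverse measure estimate $|\{\xi_1: \xi_1\approx 2^{k'}, |\tau+|\xi|^{2s}|\leq 2^{j+1}\}| \lesssim \min\{2^{k'}, 2^{j}2^{-k(2s-2+q)}\}$ from \Cref{MultiplierN} suffices when summed against the $n\geq 3$ decay. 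Since every analytic ingredient (the representation formula, the $X_k$ maximal estimate, the frozen-time estimate \cite[(3.28)]{model}) is already available, the proof is a routine adaptation and I would present it as such, referring back to \cite{model} for the hard dispersive estimate.
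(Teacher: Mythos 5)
Your proposal follows the same route as the paper: reduce to the building blocks $X_k$ (cite the maximal estimate of \cite{model}) and $Y^{e'}_{k,k'}$ with $k'\in T_k$ and small modulation, apply the representation formula \Cref{repf} (absorbing the $g$ part into the $X_k$ case), perform the change of variables $\tau \mapsto \mu = N(\xi',\tau)$ with the Jacobian $\approx 2^{k'}2^{2k(s-1)}$, verify the weight balance $2^{k'/2}2^{k(s-1)} \lesssim 2^{-2(1-s)(k-k')}\gamma_{k,k'}2^{k'(2s-1)/2}$, and invoke the frozen-time maximal estimate on the characteristic surface at frequency $\lesssim 2^k$. The only cosmetic mismatch is the final citation: the paper invokes \cite[Theorem B1]{model} rather than "the maximal analogue of (3.28)," but you correctly observe that the required input is the $L^2_e L^\infty_{e^\perp,t}$ (maximal) rather than the $L^\infty_e L^2_{e^\perp,t}$ (smoothing) bound, so the argument is sound.
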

\begin{proof}
If $ f \in X_{k}$ then the estimate was proven in \cite[Lemma 3.8]{model}. So assume in what follows that $ f \in Y^{e'}_{k,k'}$ with $k' \in T_{k}$ and $f$ has modulation controlled by $ 2^{2k(s-1+q) -80}$. Then we need to prove 
\begin{equation}\label{azx7}
\begin{split}
& \Vert \int_{\R^{n+1}} e^{\i x \cdot \xi + \i t \tau}  \hat{f}(\xi,\tau) d \xi d\tau \Vert_{L^{2}_{e}L^{\infty}_{e^{\perp},t}}
\\
& \lesssim 2^{k(n-1)/2} \Vert f \Vert_{Y^{e'}_{k,k'}}.
\end{split}
\end{equation}
To prove \eqref{azx7} write $ \xi = \xi_{1} e' + \xi'$ with $ \xi' \in e'^{\perp}$. Then notice that in the support of $ \hat{f}$ we have $ N^{2} + \vert \xi' \vert^{2} \approx 2^{2k}$. Therefore, \eqref{azx7} follows if we prove
\begin{equation}\label{azxx1}
\begin{split}
&  \Vert \int_{\R^{n+1}} e^{\i x \cdot \xi + \i t \tau}  \varphi_{[2k-c,2k+c]} (N^{2} +\vert \xi' \vert^{2}) \hat{f}(\xi,\tau) d \xi d\tau \Vert_{L^{2}_{e}L^{\infty}_{e^{\perp},t}}
\\
& \lesssim 2^{k(n-1)/2} \Vert f \Vert_{Y^{e'}_{k,k'}}.
\end{split}
\end{equation}
To that end, we use 
Lemma \ref{repf} and carry out the integral in $ \xi_{1}$. This reduces  \eqref{azxx1} to proving 
\begin{equation}\label{azx8}
\begin{split}
& \Vert \int_{ e^{\perp} \times \R} e^{\i \xi' \cdot x' } e^{\i \tau t} e^{ N x_{1} \i} \varphi^{+}_{k'}(N) \varphi_{[2k-c,2k+c]}(N^{2} + \vert \xi' \vert^{2}) h(\xi',\tau) d \xi' d \tau \Vert_{L^{2}_{e} L^{\infty}_{e^{\perp},t}}
\\
& \lesssim 2^{-2(1-s) (k-k')} \gamma_{k,k'} 2^{k(n-1)/2} 2^{k'(2s-1)/2} \Vert h \Vert_{L^{2}_{\xi',\tau}}
\end{split}
\end{equation}
for any $h \in L^{2}(e^{\perp} \times \R)$ supported in $ \{ \vert \xi' \vert \lesssim 2^{k} \}$. To prove \eqref{azx8} introduce the change of variable $ \tau = - ( \mu^{2} + \vert \xi' \vert^{2})^{s}$ and set
$$
h'(\xi',\mu) : = \varphi^{+}_{k'}(\mu) \varphi_{[2k-c,2k+c]} ( \mu^{2} + \vert \xi' \vert^{2}) \mu ( \mu^{2} + \vert \xi' \vert^{2})^{s-1} h(\xi',-(\mu^{2} + \vert \xi' \vert^{2})^{s} ).
$$ Then notice that 
$$
\Vert h' \Vert_{L^{2}_{\xi',\mu}} \lesssim  2^{k'/2 + k(s-1)} \Vert h \Vert_{L^{2}_{\xi,\tau}}.
$$
But since 
$$
2^{k'/2 + k(s-1)} \lesssim 2^{-2(1-s)(k-k')} \gamma_{k,k'} 2^{k'(2s-1)/2}
$$
then \eqref{azx8} will follow once we prove 
\begin{equation}\label{azx9}
\begin{split}
& \Vert \int_{ e^{\perp} \times \R} e^{\i \xi' \cdot x' } e^{- \i t ( \mu^{2} + \vert \xi' \vert^{2})^{s}} e^{ \mu x_{1} \i}  h'(\xi',\mu) d \xi' d \tau \Vert_{L^{2}_{e} L^{\infty}_{e^{\perp},t}} 
\\
& \lesssim 2^{k(n-1)/2} \Vert h' \Vert_{L^{2}_{\xi',\mu}}
\end{split}
\end{equation}
for any $ h' \in L^{2}_{\xi',\mu}$ supported in $ \{ (\xi',\mu) \in e^{\perp} \times \R: \vert (\xi', \mu ) \vert \lesssim 2^{k} \}. $
The inequality \eqref{azx9} was proven in \cite[Theorem B1]{model}. 
\end{proof}

\section{Commutator estimates in resolution spaces}
In this section we prove several commutator estimates between the resolution spaces that will be used extensively in the proof of the nonlinear estimates. First, we define the commutator operators under consideration. 
$$
H_{s}(f,g) : = (-\Delta)^s( fg ) - (-\Delta)^s(f) g - f (-\Delta)^s(g).
$$
As we will see, if $ g $ has  frequency comparable to $f$ then the factor $ f (-\Delta)^s(g)$ will satisfy certain estimates trivially, for this reason we define another commutator as follows
\begin{equation}\label{onesidedcommutator}
\tilde{H}(f,g) := (-\Delta)^s(fg) - (-\Delta)^s(f) g.
\end{equation}
To motivate the results in this section we re-examine the nonlinearity in \eqref{reductionmain} again. 
If one replaces the factors of the form $ \frac{1}{1+\vert f \vert^{2}}$ and $ \vert f \vert^{2}$ with $ f$, then the nonlinearity is essentially of the form $ f H_{s}(g,h)$ which is a Trilinear expression. Then by using Taylor expansion and considering several cases, one notices that, in the relevant cases, $ f H_{s}(g,h)$ behaves more or less like $ f (\vert \nabla \vert^{2s-1} g )\vert \nabla \vert h$. Then the plan is to place the high frequency in the smoothing space $ L^{\infty}_{e} L^{2}_{e^{\perp},t}$ and the remaining terms in the maximal space $L^{2}_{e}L^{\infty}_{e^{\perp},t}$. This gives us a gain of $ (2s-1)/2$ derivatives. But since the product is in $Y^{e}_{k,k'}$ this will give the other $ (2s-1)/2$ gain in derivative. 
However, the presence of the expressions $ \frac{1}{1+ \vert f \vert^{2}} $ and $ \vert f \vert^{2}$ complicates the problem. For that reason the lemmata in this section are stated with these terms in mind.\\
We start with the following simple estimate which follows by Taylor expansion.
\begin{lemma}\label{Taylorstuff}
Let $ f,g \in L^{2}(\R^{n+1}), $ $k_{1} ,k_{2} \geq 0$ with $ k_{2} \leq k_{1} -10 n$, and assume $ \hat{f} $ is supported in $ \{ ( \xi,\tau) : \vert \xi \vert \in I_{k_{1}} \} \times \R$ and $ \hat{g} $ is supported in $\{ \vert \xi \vert \leq 2^{k_{2}} \} \times \R $. Then for any $ p,q \in [1,\infty]$ the following estimate holds 
$$
\Vert \Delta_{k_{1}} \tilde{H}(f,g) \Vert_{L^{p}_{e}L^{q}_{e^{\perp},t}} \lesssim \sum_{\vert \alpha \vert =1}^{\infty} \frac{a_{\vert \alpha \vert}}{\alpha !} 2^{k_{1} (2s -\vert \alpha \vert)} \Vert f D^{\alpha}(g) \Vert_{L^{p}_{e} L^{q}_{e^{\perp},t}}
$$
where the constants $ a_{\vert \alpha \vert}$ satisfy the following 
$$
\sum_{\vert \alpha \vert=1 }^{\infty} \frac{ a_{\vert \alpha \vert}}{\alpha !} 2^{ -10 n \vert \alpha \vert} \lesssim 1
$$
\end{lemma}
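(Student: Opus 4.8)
The plan is to compute the Fourier multiplier of $\Delta_{k_1}\tilde H(f,g)$ explicitly using the frequency separation, Taylor-expand the symbol, and then estimate each resulting term. Since $\hat f$ lives in the annulus $|\xi|\in I_{k_1}$ and $\hat g$ lives in the ball $|\eta|\le 2^{k_2}$ with $k_2\le k_1-10n$, the product $fg$ has frequency support in $\{|\xi+\eta|\aeq 2^{k_1}\}$, so applying $\Delta_{k_1}$ is essentially harmless (it only restricts to a comparable annulus). Writing out
\[
\mathcal F(\tilde H(f,g))(\zeta) = \int \brac{|\zeta|^{2s} - |\zeta-\eta|^{2s}}\,\hat f(\zeta-\eta)\,\hat g(\eta)\,d\eta,
\]
I would Taylor-expand $|\zeta|^{2s}$ around $\zeta-\eta$: setting $\xi := \zeta-\eta$ (the high frequency, with $|\xi|\aeq 2^{k_1}$) and treating $\eta$ (with $|\eta|\le 2^{k_2}\ll|\xi|$) as the small perturbation,
\[
|\xi+\eta|^{2s} - |\xi|^{2s} = \sum_{|\alpha|\ge 1} \frac{1}{\alpha!}\,\partial^\alpha\brac{|\cdot|^{2s}}(\xi)\,\eta^\alpha.
\]

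First I would make this expansion rigorous on the relevant frequency region: on $|\xi|\aeq 2^{k_1}$ and $|\eta|\le 2^{k_1-10n}$ the function $|\cdot|^{2s}$ is smooth and analytic-like away from the origin, and $|\partial^\alpha(|\cdot|^{2s})(\xi)| \lesssim c^{|\alpha|}|\alpha|!\,|\xi|^{2s-|\alpha|}\lesssim c^{|\alpha|}|\alpha|!\,2^{k_1(2s-|\alpha|)}$ for $\xi$ in this annulus, with $c$ an absolute constant; the factor $|\eta^\alpha|\le 2^{k_2|\alpha|}\le 2^{(k_1-10n)|\alpha|}$ guarantees the series converges absolutely and the tail is controlled. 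This produces, after undoing the Fourier transform,
\[
\Delta_{k_1}\tilde H(f,g) = \sum_{|\alpha|\ge 1} \frac{1}{\alpha!}\, \Delta_{k_1}\Brac{\brac{m_\alpha(D)\tilde f}\,\cdot\, D^\alpha g}
\]
where $\tilde f$ is $f$ with a fattened frequency cutoff, $m_\alpha(\xi) = \partial^\alpha(|\cdot|^{2s})(\xi)\,\varphi_{[k_1-c,k_1+c]}(|\xi|)$ is a smooth symbol of size $\lesssim c^{|\alpha|}|\alpha|!\,2^{k_1(2s-|\alpha|)}$ with $\|\mathcal F^{-1}m_\alpha\|_{L^1}\lesssim c^{|\alpha|}|\alpha|!\,2^{k_1(2s-|\alpha|)}$ (by rescaling the bump-function estimates, as in \Cref{Multiplier} and \Cref{remarkM}).

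Then I would estimate in $L^p_e L^q_{e^\perp,t}$: by the convolution/Young argument underlying \eqref{www8} in \Cref{Multiplier}, multiplication by $m_\alpha(D)$ costs exactly $\|\mathcal F^{-1}_{\R^n}m_\alpha\|_{L^1}$ in these mixed norms, so each term is bounded by
\[
\frac{c^{|\alpha|}|\alpha|!}{\alpha!}\,2^{k_1(2s-|\alpha|)}\,\|\tilde f\, D^\alpha g\|_{L^p_e L^q_{e^\perp,t}}
\lesssim \frac{a_{|\alpha|}}{\alpha!}\,2^{k_1(2s-|\alpha|)}\,\| f\, D^\alpha g\|_{L^p_e L^q_{e^\perp,t}},
\]
after absorbing the (harmless, $L^1$-bounded) Littlewood–Paley thickening of $f$ into an overall constant and setting $a_{|\alpha|} := C^{|\alpha|}|\alpha|!$ for a suitable absolute $C\ge c$; the required summability $\sum_{|\alpha|\ge1}\frac{a_{|\alpha|}}{\alpha!}2^{-10n|\alpha|}\lesssim 1$ then follows because $\sum_{|\alpha|=m}\frac{|\alpha|!}{\alpha!} = n^m$, so the series is dominated by $\sum_m (Cn)^m\,m!\cdot\text{(nope)}$ — more carefully, I keep the $|\alpha|!$ and use $\sum_{|\alpha|=m}\tfrac{1}{\alpha!} = \tfrac{n^m}{m!}$, giving $\sum_m a_m\cdot \tfrac{n^m}{m!}\,2^{-10nm}$ with $a_m = C^m$, which converges since $2^{-10n}$ beats everything.

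\textbf{The main obstacle} I expect is the bookkeeping that makes the constants $a_{|\alpha|}$ both (i) large enough to carry the $L^1$-norms of the derivative multipliers $m_\alpha$ — which genuinely grow like $|\alpha|!$ because $\partial^\alpha(|\cdot|^{2s})$ does — and (ii) still satisfy the summability $\sum \frac{a_{|\alpha|}}{\alpha!}2^{-10n|\alpha|}\lesssim 1$; the slack here comes entirely from the hypothesis $k_2\le k_1-10n$, so the argument must be set up so that the $2^{-10n|\alpha|}$ gain is extracted \emph{per order} $|\alpha|$ and not merely once. Everything else — the absolute convergence of the Taylor series on the stated frequency support, the rescaling estimate $\|\mathcal F^{-1}(\text{symbol}\cdot\varphi_{k_1})\|_{L^1}\lesssim 2^{k_1\cdot(\text{homogeneity})}$, and the mixed-norm multiplier bound \eqref{www8} — is routine given \Cref{Multiplier}.
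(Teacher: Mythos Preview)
Your overall strategy---Taylor-expand the symbol, convert each Taylor coefficient into a Fourier multiplier, and verify summability---matches the paper's. But there is one concrete slip in the setup that prevents the argument from closing as written.

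You expand $|\xi+\eta|^{2s}$ around $\xi=\zeta-\eta$, the $f$-frequency. That makes the coefficients $\partial^\alpha(|\cdot|^{2s})(\zeta-\eta)$, so after undoing the Fourier transform the multiplier $m_\alpha(D)$ lands on $f$: each term is $\bigl(m_\alpha(D)f\bigr)\cdot D^\alpha g$. To reach the lemma's conclusion you would then need
\[
\bigl\|\bigl(m_\alpha(D)f\bigr)\,D^\alpha g\bigr\|_{L^p_eL^q_{e^\perp,t}}\ \lesssim\ C_\alpha\,\|f\,D^\alpha g\|_{L^p_eL^q_{e^\perp,t}},
\]
and this is \emph{not} what the Young-type bound \eqref{www8} gives: the convolution hits only one factor of a product and does not commute with multiplication by $D^\alpha g$. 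The paper instead expands $h(\eta):=|\xi-\eta|^{2s}$ around $\eta=0$ with $\xi$ the \emph{output} variable; then $D^\alpha h(0)$ is a function of the output frequency and becomes a multiplier acting on the full product $f\cdot D^\alpha g$, so Lemma~\ref{Multiplier} applies directly and produces $\|f\,D^\alpha g\|$ on the right. (In every downstream application the right-hand side is immediately H\"older-split into $\|f\|_X\|D^\alpha g\|_Y$, so your version would suffice there; it just does not prove the lemma as stated.)

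Two smaller points. First, the claim $\|\mathcal F^{-1}m_\alpha\|_{L^1}\lesssim c^{|\alpha|}|\alpha|!\,2^{k_1(2s-|\alpha|)}$ needs more than the pointwise size of $m_\alpha$: after rescaling to the unit annulus you must control roughly $n{+}1$ derivatives, i.e.\ $\partial^{\alpha+\beta}(|\cdot|^{2s})$ for $|\beta|\le n{+}1$, and check that the extra factor $(|\alpha|{+}n{+}1)!/|\alpha|!\lesssim |\alpha|^{n+1}$ is absorbed into $C^{|\alpha|}$. The paper sidesteps this by writing $D^\alpha h(0)$ explicitly as a finite sum of elementary homogeneous symbols $|\xi|^{2s-m}\prod R^{i_j}$, each with $L^1$-bounded inverse transform after localization uniformly in $\alpha$, and taking $a_m$ to be the weighted count of such terms. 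Second, your $a_m$ bookkeeping drifts between $C^m m!$ and $C^m$; with your multiplier bound the correct choice is $a_m=C^m m!$, and then
\[
\sum_{|\alpha|=m}\frac{a_m}{\alpha!}\,2^{-10nm}=C^m m!\cdot\frac{n^m}{m!}\cdot 2^{-10nm}=(Cn\,2^{-10n})^m,
\]
which sums---your ``nope'' was premature.
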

\begin{proof}
For any multi-index $\alpha$, write $m$ to mean $ m: = \vert \alpha \vert$. 
We write $ \Delta_{k_{1}} \tilde{H}(f,g)$ in the Fourier space
\begin{equation*}
\begin{split}
& \mathcal{F} ( \Delta_{k_{1}} \tilde{H}(f,g))(\xi,\tau)
\\
& = \varphi_{k_{1}}(\xi) \int_{\R^{n+1}} ( \vert \xi \vert^{2s} - \vert \xi - \eta \vert^{2s} ) \hat{f} (\xi-\eta,\tau -r ) \hat{g} ( \eta,r) d \eta dr  
\end{split}
\end{equation*}
fix $ \xi$ and define the function $ h: \R^{n} \to \R$ as follows
$$
h(\eta) : = \vert \xi - \eta \vert^{2s}
$$
since this function is smooth at $0$ we can use Taylor theorem to obtain 
\begin{equation}\label{c2}
\vert \xi \vert^{2s} - \vert \xi - \eta \vert^{2s} = \sum_{ \vert \alpha \vert = 1}^{\infty} \frac{1}{ \alpha !} D^{\alpha}h(0). \eta^{\alpha}
\end{equation}
We need to compute $ D^{\alpha}h(0)$. 
Write $ D^{\alpha} = \partial_{i_{m}} .... \partial_{i_{1}} $ with each $ i_{k} \in \{ 1,...,n\}$. To write $D^{\alpha}h(0)$ in closed form, we need to introduce the following notation:
Let $k \in [ 0,...,[\frac{m}{2}] ]$, $[\frac{m}{2}]$ is the greatest integer less than $\frac{m}{2}$, and consider the set $P_{k}$ as the set of all distinct $k$ pairings of indices of $\{ i_{1},...,i_{m} \}$. In other words, 
$$
P_{k}: = \{ \{ \{ i_{t_{1}},i_{r_{1}}\},...,\{i_{t_{k}},i_{r_{k}}\} \} : t_{q} \neq r_{q} \text{ and } \{i_{t_{q}},i_{r_{q}}\} \neq \{ i_{t_{q'}},i_{r_{q'}} \} \} 
$$
simple computations give us that the cardinality of $P_{k}$ is 
$$
|P_{k} | = \frac{ m!}{(m-2k)! 2^{k} k!}.
$$
Next, if $p \in P_{k}$ then $p = \{ \{i_{t_{1}},i_{r_{1}}\},..., \{i_{t_{k}}, i_{r_{k}} \} \}$ with $i_{t},i_{r}$ satisfying the above constraints. Then define   
$$
S(p) = \{ i_{1},...,i_{m} \} \setminus \{ i_{t_{1}} ,i_{r_{1}},....,i_{t_{k}}, i_{r_{k}} \}.
$$
then $S(p)$ has cardinality $ m -2k$. Lastly, define for any $j \in S(p)$ the following 
$$
R^{j}: = \frac{\xi_{j}}{\vert \xi \vert}
$$
and for any $p \in P_{k}$ define $ \delta^{p} $ as the product of the corresponding kronecker deltas. More precisely,
$$
\delta^{p} : = \delta_{i_{q_{1}},i_{r_{1}}}.... \delta_{i_{q_{k}},i_{r_{k}}}.
$$
With this notation we claim that $D^{\alpha} h(0)$ is of the following form
\begin{equation}\label{www4}
\begin{split}
& D^{\alpha} h(0)
\\
& = \vert \xi \vert^{2s - m} \big[C^{0}_{s,m} R^{i_{1}}...R^{i_{m}}+ \sum_{k=1}^{ [m/2]} C^{k}_{s,m} \sum_{p \in P_{k}} \delta^{p}  \prod_{j \in S(p)} R^{j}  \big]
\end{split}
\end{equation}
where
$$
C^{j}_{s,m} : = 2s ( 2s-2)(2s-4)....(2s-2(m-j-1)).
$$
This formula can be proven via induction. If $ | \alpha | =1 $ then $ D^{\alpha} = \partial_{i} $ for some $ i \in \{1,...,n \}$ and clearly we have 
$$
D^{\alpha} h(0) = 2s \vert  \xi  \vert^{2s-1} \frac{ \xi_{i}}{\vert \xi \vert}
$$
which is consistent with \eqref{www4}. Assume the formula \eqref{www4} holds for any multi-index with $ \vert \alpha \vert = m$. Consider a multi-index $ \alpha $ with $ | \alpha | = m+1$. Then write $ D^{\alpha} = \partial_{i_{m+1}} D^{\alpha'} $ where $ |\alpha' | = m$. Then by the induction hypothesis we have
\begin{equation}\label{www5}
\begin{split}
& D^{\alpha} h(0) 
\\
&= \partial_{i_{m+1}} ( \vert \xi \vert^{2s - m} \big[C^{0}_{s,m} R^{i_{1}}...R^{i_{m}}+ \sum_{k=1}^{ [m/2]} C^{k}_{s,m} \sum_{p \in P_{k}} \delta^{p}  \prod_{j \in S(p)} R^{j}  \big]
 )
 \\
 & = \partial_{i_{m+1}} ( \vert \xi \vert^{2s-2m} C_{s,m}^{0} \prod_{k=1}^{m} \xi_{i_{k}} ) + \partial_{i_{m+1}} ( \vert \xi \vert^{2s-m}  \big[ \sum_{k=1}^{[m/2]} C_{s,m}^{k} \sum_{p \in P_{k}} \delta^{p} \prod_{j \in S(p)} R^{j} \big]
 \\
 & = \vert \xi \vert^{2s-m-1}  C^{0}_{s,m+1} R^{i_{1}}...R^{i_{m+1}}
  + \vert \xi \vert^{2s-m-1} \sum_{ a=1}^{m} C^{1}_{s,m+1} \delta_{i_{m+1},i_{a}} \prod_{ j \in S ( \{ i_{m+1},i_{a} \})} R^{j} 
 \\
 & + \sum_{k=1}^{ [m/2]} C_{s,m}^{k} \sum_{p \in P_{k}} \delta^{p} \partial_{i_{m+1}}(  \vert \xi \vert^{2s-m} \prod_{j \in S(p)} R^{j} ) 
\end{split}
\end{equation}
and notice that for any $ p \in P_{k}$ we have 
\begin{equation}\label{www6}
\begin{split}
& \partial_{i_{m+1}} ( \vert \xi \vert^{2s -m } \prod_{j \in S(p)} R^{j} )
\\
& = \partial_{i_{m+1}} ( \vert \xi \vert^{2s - 2m +2k} \xi_{i_{q_{1}}} .... \xi_{q_{m-2k}} ) 
\\
& = (2s-2m+2k) \vert \xi \vert^{2s-m-1} R^{i_{q_{1}}}...R^{i_{q_{m-2k}}} R^{i_{m+1}}  + \vert \xi \vert^{2s-m-1} \sum_{j=1}^{m-2k} \delta_{i_{m+1}, i_{q_j}} \prod_{t \in S(\{i_{m+1},i_{q_{j}} \} \cap p) } R^{t}
\end{split}
\end{equation}
plugging \eqref{www6} back into \eqref{www5} yields that $ D^{\alpha} h(0)$ is of the form given in \eqref{www4}.

We rewrite $ \tilde{H}(f,g)$ using this Taylor expansion to obtain, using Lemma \ref{Multiplier}, 
\begin{equation}\label{www3}
\begin{split}
& \Vert \Delta_{k_{1}} \tilde{H}(f,g) \Vert_{L^{p}_{e}L^{q}_{e}}
\\
& \lesssim \sum_{\vert \alpha \vert =1}^{\infty} \frac{a_{m}}{\alpha !} 2^{k_{1}(2s-m)} \Vert f D^{\alpha} g \Vert_{L^{p}_{e} L^{q}_{e^{\perp},t}}
\end{split}
\end{equation}
where 
\begin{equation}
a_{m} : = \sum_{k=1}^{[m/2]} | C_{s,m}^{k} | | P_{k}| 
\end{equation}
and it remains to show that the series has the claimed convergence. To that end, we first bound $a_{m}$. By definition, we had 
$$
a_{m} = \sum_{k=0}^{[m/2]} |C^{k}_{s,m}| \vert P_{k} \vert
$$
We bound $C^{k}_{s,m}$. Recall that 
$C^{k}_{s,m} = 2s (2s-2) ....(2s-2(m-k-1))$.
an obvious bound is 
$$
| C^{k}_{s,m}| \lesssim_{s} 2^{m-k} (m- k)!
$$
and $|P_{k} |$ was explicit, we had
$$
|P_{k}| = \frac{m!}{(m-2k!) 2^{k} k!}
$$
therefore,
$$
|C^{k}_{s,m}| |P_{k} | \lesssim m!  \frac{2^{m-2k}}{k!} \frac{ (m-k!)}{(m-2k)!}
$$
hence,
$$
\frac{a_{m} 2^{-10(n)m}}{\alpha !} \lesssim  \frac{m!}{\alpha !} 2^{(-10n+1)m} \sum_{k=0}^{[m/2]} \frac{2^{-2k}}{k!} \frac{(m-k)!}{(m-2k)!}
$$
using the bound $ \frac{m!}{\alpha !} \lesssim (n)^{m}$ then we obtain
\begin{equation}\label{z1}
    \frac{a_{m} 2^{-10(n)m}}{\alpha !} \lesssim  (n)^{m} 2^{(-10n+1)m} \sum_{k=0}^{[m/2]} \frac{2^{-2k}}{k!} \frac{(m-k)!}{(m-2k)!}
 \end{equation}
the right hand side can be bounded by 
$$
(n 2^{(-10n +1)} e)^{m}
$$
therefore summing over $m$, the series converges.

\end{proof}

 The following lemma gives us control of the $L^{2} $ norm of the commutator in terms of the $Z_{k}$ norm
\begin{lemma}\label{comest}
Let $ k_{1}, k_{2} \geq 0 $ with $ k_{1} \leq k_{2}+10$. Let $f_{k_{1}} \in Z_{k_{1}}$ and $f_{k_{2}} \in Z_{k_{2}}$, and $ \alpha_{k_{i}} \in L^{\infty}_{x,t} $ for $ i \in \{1,2\}$. Assume further that $ \alpha_{k_{2}}$ has Fourier transform supported in $\{ ( \xi,\tau) : \vert \xi \vert \leq 2^{k_{2}-50} \}$ and $ \alpha_{k_{1}}$ has Fourier transform supported in the set $ \{ (\xi,\tau) : \vert \xi \vert \leq 2^{k_{1} +10} \}$. Then the following estimate holds
\begin{equation}\label{a}
\begin{split}
& \Vert  H_{s}( \tilde{\alpha}_{k_{1}} \tilde{f}_{k_{1}}, \tilde{\alpha}_{k_{2}} \tilde{f}_{k_{2}}) \Vert_{L^{2}(\R^{n+1})} 
\\
& \lesssim \Vert \alpha_{k_{1}} \Vert_{L^{\infty}_{x,t}} \Vert \alpha_{k_{2}} \Vert_{L^{\infty}_{x,t}} 2^{(k_{1} + (2s-1)k_{2})/2} 2^{nk_{1}/2} \Vert f_{k_{1}} \Vert_{Z_{k_{1}}} \Vert f_{k_{2}} \Vert_{Z_{k_{2}}}.
\end{split}
\end{equation}
Where $ \tilde{f} \in \{ f , \bar{f} \}$.
\end{lemma}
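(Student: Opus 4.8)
Set $F:=\tilde\alpha_{k_1}\tilde f_{k_1}$ and $G:=\tilde\alpha_{k_2}\tilde f_{k_2}$; by the support hypotheses $\widehat F$ is supported in $\{|\xi|\le 2^{k_1+11}\}$ and $\widehat G$ in $\{|\xi|\approx 2^{k_2}\}$. The proof rests on three preliminary bounds, each obtained by first reducing to the $Z$-space factors. (i) For every multi-index $\alpha$ and every $e\in\S^{n-1}$,
\[
\Vert D^{\alpha}F\Vert_{L^{2}_{e}L^{\infty}_{e^{\perp},t}}\lesssim 2^{|\alpha| k_{1}}\,\Vert\alpha_{k_1}\Vert_{L^{\infty}_{x,t}}\,2^{(n-1)k_{1}/2}\,\Vert f_{k_1}\Vert_{Z_{k_1}},
\]
and likewise with $(-\Delta)^{s}$ (cost $2^{2sk_1}$) replacing $D^{\alpha}$: indeed $D^{\alpha}F$ is a Fourier multiplier of symbol $L^1$-norm $\lesssim 2^{|\alpha|k_1}$ applied to $F$, so \eqref{www8} extracts the multiplier, the pointwise inequality $\Vert\tilde\alpha_{k_1}h\Vert_{L^{2}_{e}L^{\infty}_{e^{\perp},t}}\le\Vert\alpha_{k_1}\Vert_{L^\infty_{x,t}}\Vert h\Vert_{L^{2}_{e}L^{\infty}_{e^{\perp},t}}$ peels off $\tilde\alpha_{k_1}$, and the maximal estimate applies to $\tilde f_{k_1}\in Z_{k_1}$. (ii) Since $\widehat{\alpha_{k_2}}$ lives in $\{|\xi|\le 2^{k_2-50}\}$, for each conical direction $e$ there is a slightly widened conical cutoff $\tilde\vartheta_{e}$ (permissible by the conventions following Lemma~\ref{la:geometricobserv}) with $\vartheta_{e}(D)G=\vartheta_{e}(D)\bigl(\tilde\alpha_{k_2}\,\tilde\vartheta_{e}(D)\tilde f_{k_2}\bigr)$; dropping the multiplier $\vartheta_{e}(D)$ (smooth and compactly supported on $\{|\xi|\approx 2^{k_2}\}$, hence $L^1$-bounded) via \eqref{www8}, peeling off $\tilde\alpha_{k_2}$, and applying the local smoothing estimate of Lemma~\ref{localsmoothing} to $\tilde f_{k_2}\in Z_{k_2}$ gives $\Vert\vartheta_{e}(D)G\Vert_{L^{\infty}_{e}L^{2}_{e^{\perp},t}}\lesssim\Vert\alpha_{k_2}\Vert_{L^\infty_{x,t}}2^{-(2s-1)k_2/2}\Vert f_{k_2}\Vert_{Z_{k_2}}$. (iii) Hölder in $x\cdot e$ and then in the remaining variables yields $\Vert hg\Vert_{L^{2}(\R^{n+1})}\le\Vert h\Vert_{L^{2}_{e}L^{\infty}_{e^{\perp},t}}\Vert g\Vert_{L^{\infty}_{e}L^{2}_{e^{\perp},t}}$.

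Case 1: $k_2-k_1\le 20n$. Here I bound the three terms of $H_{s}(F,G)=(-\Delta)^{s}(FG)-(-\Delta)^{s}(F)G-F(-\Delta)^{s}G$ separately. For $(-\Delta)^{s}(F)G$, write $G=\sum_{e\in\mathscr E}\vartheta_{e}(D)G$ (legitimate since $\widehat G$ is away from the origin) and combine the three preliminary bounds to get $\Vert(-\Delta)^{s}(F)G\Vert_{L^2}\lesssim 2^{2sk_1}2^{(n-1)k_1/2}2^{-(2s-1)k_2/2}$ times $\Vert\alpha_{k_1}\Vert_{L^\infty}\Vert\alpha_{k_2}\Vert_{L^\infty}\Vert f_{k_1}\Vert_{Z_{k_1}}\Vert f_{k_2}\Vert_{Z_{k_2}}$; a direct check shows $2^{2sk_1}2^{(n-1)k_1/2}2^{-(2s-1)k_2/2}=2^{(k_1+(2s-1)k_2)/2}2^{nk_1/2}\cdot 2^{(2s-1)(k_1-k_2)}\lesssim 2^{(k_1+(2s-1)k_2)/2}2^{nk_1/2}$ because $k_1\le k_2+10$. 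For $F(-\Delta)^{s}G$, write $(-\Delta)^{s}G=\sum_{e}(-\Delta)^{s}\vartheta_{e}(D)G$ and move $(-\Delta)^{s}$ (symbol $L^1$-norm $\lesssim 2^{2sk_2}$ on this frequency range) out of the $L^{\infty}_{e}L^{2}$ norm before Hölder; the extra $2^{2sk_2}$ gives the bound $2^{(2s+1)k_2/2}2^{(n-1)k_1/2}$, which equals the target times $2^{k_2-k_1}\lesssim_{n}1$. For $(-\Delta)^{s}(FG)$, note $FG\in L^2$ with frequencies in $\{|\zeta|\lesssim 2^{k_2}\}$ and dualize: $\Vert(-\Delta)^{s}(FG)\Vert_{L^2}=\sup_{\Vert\psi\Vert_2\le1}|\langle G,\bar F\phi\rangle|$ where $\phi$ is $(-\Delta)^{s}$ of a frequency truncation of $\psi$ to $\{|\xi|\lesssim 2^{k_2}\}$, $\Vert\phi\Vert_2\lesssim 2^{2sk_2}$; inserting a frequency projection to $\{|\xi|\approx 2^{k_2}\}$ (legitimate since $\widehat G$ lives there) one writes $\langle G,\bar F\phi\rangle=\sum_{e}\langle\vartheta_{e}(D)G,P(\bar F\phi)\rangle$ and bounds each summand by $\Vert\vartheta_{e}(D)G\Vert_{L^{\infty}_{e}L^{2}}\Vert\bar F\phi\Vert_{L^{1}_{e}L^{2}_{e^{\perp},t}}\le\Vert\vartheta_{e}(D)G\Vert_{L^{\infty}_{e}L^{2}}\Vert F\Vert_{L^{2}_{e}L^{\infty}}\Vert\phi\Vert_{L^2}$, again of size $2^{(2s+1)k_2/2}2^{(n-1)k_1/2}$, i.e. the target up to $2^{k_2-k_1}\lesssim_{n}1$.

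Case 2: $k_2-k_1>20n$. Now the frequencies are separated and the commutator structure is indispensable: $(-\Delta)^{s}(FG)$ and $F(-\Delta)^{s}G$ are individually of order $2^{2sk_2}$, far above the target, but their difference $\tilde H(G,F)=(-\Delta)^{s}(GF)-(-\Delta)^{s}(G)F$ is not. So I write $H_{s}(F,G)=\tilde H(G,F)-(-\Delta)^{s}(F)G$, treat $(-\Delta)^{s}(F)G$ exactly as in Case 1 (now comfortably below the target), and, after a trivial Littlewood–Paley splitting of $G$ into pieces supported in $I_{m}$ with $m\in\{k_2-1,k_2,k_2+1\}$, apply Lemma~\ref{Taylorstuff} (with $G$ in the high slot and $F$ in the low slot, admissible since $k_1+11\le k_2-10n$):
\[
\Vert\tilde H(G,F)\Vert_{L^2}\lesssim\sum_{|\alpha|\ge1}\frac{a_{|\alpha|}}{\alpha!}\,2^{k_2(2s-|\alpha|)}\,\Vert G\,D^{\alpha}F\Vert_{L^2}.
\]
By (i), (ii), (iii) (with $G=\sum_{e}\vartheta_{e}(D)G$), $\Vert G\,D^{\alpha}F\Vert_{L^2}\lesssim 2^{-(2s-1)k_2/2}2^{|\alpha|k_1}2^{(n-1)k_1/2}$ times the product of norms, so the $\alpha$-term contributes $2^{(2s+1)k_2/2}2^{(n-1)k_1/2}2^{-|\alpha|(k_2-k_1)}$; for $|\alpha|=1$ this is exactly $2^{(k_1+(2s-1)k_2)/2}2^{nk_1/2}$, and for $|\alpha|\ge2$ the gain $2^{-(|\alpha|-1)(k_2-k_1)}\le 2^{-(|\alpha|-1)10n}$ together with the summability of $a_{|\alpha|}/\alpha!$ from Lemma~\ref{Taylorstuff} keeps the whole series $\lesssim_{n}2^{(k_1+(2s-1)k_2)/2}2^{nk_1/2}$ times the product of norms. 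Combining the two cases proves \eqref{a}.

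The genuine obstacle is Case 2: one cannot estimate the three terms of $H_{s}$ individually there (each is of order $2^{2sk_2}$), so the Taylor expansion of the commutator multiplier — equivalently Lemma~\ref{Taylorstuff} for the one-sided commutator $\tilde H$ — must be invoked to recover the extra derivative separating $2^{(k_1+(2s-1)k_2)/2}$ from $2^{2sk_2}$, with the distribution of derivatives matched precisely to the gain $2^{-(2s-1)k_2/2}$ from the smoothing estimate and the loss $2^{(n-1)k_1/2}$ from the maximal estimate. A secondary nuisance throughout is that $F$ and $G$ are honest products with the $L^{\infty}$ factors $\alpha_{k_i}$, which are not Fourier multipliers, so each use of the smoothing and maximal estimates must first peel off $\tilde\alpha_{k_i}$ pointwise, exploiting the stated frequency-support of $\widehat{\alpha_{k_i}}$.
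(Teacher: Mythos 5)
Your proof is correct and takes essentially the same approach as the paper: split into a near regime ($k_2-k_1$ bounded) treated by H\"older with the maximal and local smoothing estimates in a favorable direction, and a far regime where the one-sided commutator $\tilde H$ and the Taylor expansion of Lemma~\ref{Taylorstuff} recover the derivative gain. The only cosmetic difference is that you localize $G$ via the conical partition $\sum_{e\in\mathscr E}\vartheta_e(D)$ (peeling the widened cutoff through $\tilde\alpha_{k_2}$ using its small Fourier support) rather than via the small-ball decomposition of Lemma~\ref{dec} and a single direction $\tilde v$, and you dualize the $(-\Delta)^s(FG)$ term where the paper applies the multiplier bound directly; both choices give the same numerology.
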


\begin{proof}
Notice that by Lemma \ref{dec} we may assume $\hat{f}_{k_{2}}$ is supported in $ \{ ( \xi,\tau ) : \vert \xi - v \vert \leq 2^{k_{2} - 50} \} $ with $ \vert v \vert \approx 2^{k_{2}}$. Put $ \tilde{v} = \frac{ v }{\vert v \vert} $. Further, we may assume $k_{1} \leq k_{2} -10n$, since otherwise the result is trivial. Indeed, if $k_{1} \in [k_{2}-10n,k_{2} +10]$ then $2^{k_{1}} \approx 2^{k_{2}}$, so we estimate as follows  
\begin{equation}\label{cx4}
\begin{split}
& \Vert  H_{s} (\tilde{\alpha}_{k_{1}} \tilde{f}_{k_{1}},\tilde{\alpha}_{k_{2} } \tilde{f}_{k_{2}}) \Vert_{L^{2}}
\\
& \lesssim \Vert (-\Delta)^s ( \tilde{\alpha}_{k_{1}} \tilde{f}_{k_{1}} \tilde{\alpha_{k_{2}}} \tilde{f}_{k_{2}} ) \Vert_{L^{2}(\R^{n+1})}
\\
& + \Vert \alpha_{k_{1}} \Vert_{L^{\infty}_{x,t}} \Vert \tilde{f}_{k_{1}} (-\Delta)^s ( \tilde{\alpha}_{k_{2}} \tilde{f}_{k_{2}} ) \Vert_{L^{2}(\R^{n+1})}
\\
& +  \Vert \alpha_{k_{2}} \Vert_{L^{\infty}_{x,t}} \Vert \tilde{f}_{k_{2}} (-\Delta)^s ( \tilde{\alpha}_{k_{1}} \tilde{f}_{k_{1}} ) \Vert_{L^{2}(\R^{n+1})}.
\end{split}
\end{equation}
We estimate each term on the right hand side of \eqref{cx4}. Starting with the first term, using the maximal and smoothing estimates and the fact that $ 2^{k_{1}} \approx 2^{k_{2}} $ we obtain
\begin{equation*}
\begin{split}
& \Vert (-\Delta)^s ( \tilde{\alpha}_{k_{1}} \tilde{f}_{k_{1}} \tilde{\alpha_{k_{2}}} \tilde{f}_{k_{2}} ) \Vert_{L^{2}(\R^{n+1})}
\\
& \lesssim 2^{2sk_{2}} \Vert \tilde{\alpha}_{k_{1}} \Vert_{L^{\infty}_{x,t}} \Vert \tilde{\alpha}_{k_{2}} \Vert_{L^{\infty}_{x,t}} \Vert f_{k_{1}} f_{k_{2}} \Vert_{L^{2}(\R^{n+1})}
\\
& \lesssim 2^{k_{1}} 2^{k_{2}(2s-1)} \Vert \alpha_{k_{1}} \Vert_{L^{\infty}_{x,t}} \Vert \alpha_{k_{2}} \Vert_{L^{\infty}_{x,t}} \Vert f_{k_{2}} \Vert_{L^{\infty}_{\tilde{v}} L^{2}_{\tilde{v}^{\perp},t}} \Vert f_{k_{1}} \Vert_{L^{2}_{\tilde{v}} L^{\infty}_{\tilde{v}^{\perp},t}}
\\
& \lesssim \Vert \alpha_{k_{1}} \Vert_{L^{\infty}_{x,t}} \Vert \alpha_{k_{2}} \Vert_{L^{\infty}_{x,t}} 2^{(k_{1} + (2s-1)k_{2})/2} 2^{nk_{1}/2} \Vert f_{k_{1}} \Vert_{Z_{k_{1}}} \Vert f_{k_{2}} \Vert_{Z_{k_{2}}}.
\end{split}
\end{equation*}
The remaining terms on the right hand side of \eqref{cx4} are handled in exactly the same way.  
Assume in what follows that $k_{1} \leq k_{2} -10 n$. Then this implies that $ H_{s} (\tilde{\alpha}_{k_{1}} \tilde{f}_{k_{1}} , \tilde{\alpha}_{k_{2}} \tilde{f}_{k_{2}})$ has Fourier transform supported in $ \{ \vert \xi \vert \approx 2^{k_{2}} \} \times \R$. 
Recall the commutator defined in \eqref{onesidedcommutator}
$$
\tilde{H}(\tilde{\alpha}_{k_{2}} \tilde{f}_{k_{2}}, \tilde{\alpha}_{k_{1}} \tilde{f}_{k_{1}}) : =   (-\Delta)^{s} (\tilde{\alpha}_{k_{1}} \tilde{f}_{k_{1}} \tilde{\alpha}_{k_{2}} \tilde{f}_{k_{2}}) - ( -\Delta)^{s}(\tilde{\alpha}_{k_{2}} \tilde{f}_{k_{2}}) .\tilde{\alpha}_{k_{1}}\tilde{f}_{k_{1}}
$$
then it is enough to prove the estimate for this term. Indeed, notice that 
$$
H_{s}(\tilde{\alpha}_{k_{1}} \tilde{f}_{k_{1}}, \tilde{\alpha}_{k_{2}} \tilde{f}_{k_{2}}) = \tilde{H}(\tilde{\alpha}_{k_{2}} \tilde{f}_{k_{2}}, \tilde{\alpha}_{k_{1}}\tilde{f}_{k_{1}}) - \tilde{\alpha}_{k_{2}} \tilde{f}_{k_{2}}. (-\Delta)^s( \tilde{\alpha}_{k_{1}}\tilde{f}_{k_{1}})
$$
the second term satisfy the desired estimate 
\begin{equation*}
\begin{split}
& \Vert  \tilde{\alpha}_{k_{2}} \tilde{f}_{k_{2}} . (-\Delta)^s(\tilde{\alpha}_{k_{1}} \tilde{f}_{k_{1}}) \Vert_{L^{2}}
\\
& \lesssim \Vert \alpha_{k_{2}} \Vert_{L^{\infty}_{x,t}} \Vert \tilde{f}_{k_{2}} \Vert_{L^{\infty}_{\tilde{v}} L^{2}_{\tilde{v}^{\perp},t}} 2^{2sk_{1}} \Vert \alpha_{k_{1}} \Vert_{L^{\infty}_{x,t}} \Vert \tilde{f}_{k_{1}} \Vert_{L^{2}_{\tilde{v}} L^{\infty}_{\tilde{v}^{\perp},t}}
\\
& \lesssim \Vert \alpha_{k_{1}} \Vert_{L^{\infty}_{x,t}} \Vert \alpha_{k_{2}} \Vert_{L^{\infty}_{x,t}} 2^{(k_{1} + (2s-1)k_{2})/2} 2^{nk_{1}/2} \Vert f_{k_{1}} \Vert_{Z_{k_{1}}} \Vert f_{k_{2}} \Vert_{Z_{k_{2}}}.
\end{split}
\end{equation*}
So it remains to prove 
\begin{equation}\label{c1}
\begin{split}
 & \Vert \Delta_{\approx k_{2}} \tilde{H}(\tilde{\alpha}_{k_{2}} \tilde{f}_{k_{2}}, \tilde{\alpha}_{k_{1}} \tilde{f}_{k_{1}}) \Vert_{L^{2}(\R^{n+1})} 
 \\
 &\lesssim \Vert \alpha_{k_{1}} \Vert_{L^{\infty}_{x,t}} \Vert \alpha_{k_{2}} \Vert_{L^{\infty}_{x,t}} 2^{(k_{1} + (2s-1)k_{2})/2} 2^{nk_{1}/2} \Vert f_{k_{1}} \Vert_{Z_{k_{1}}} \Vert f_{k_{2}} \Vert_{Z_{k_{2}}}.
\end{split}
\end{equation}
To that end, we use Lemma \ref{Taylorstuff} with $p=2=q$ to arrive at 
\begin{equation*}
\begin{split}
 & \Vert \Delta_{\approx k_{2}} \tilde{H}(\tilde{\alpha}_{k_{2}} \tilde{f}_{k_{2}},\tilde{\alpha}_{k_{1}} \tilde{f}_{k_{1}}) \Vert_{L^{2}(\R^{n+1})}
 \\
 & \lesssim \sum_{ \vert \alpha \vert =1}^{\infty} \frac{a_{m}}{\alpha !} 2^{k_{2}(2s-m)} \Vert \tilde{\alpha}_{k_{2}}\tilde{f}_{k_{2}} D^{\alpha}( \tilde{\alpha}_{k_{1}} \tilde{f}_{k_{1}}) \Vert_{L^{2}(\R^{n+1})}
 \end{split}
\end{equation*}
and notice for any multi-index $ \alpha$ with $ \vert \alpha \vert =m$ we have 
\begin{equation*}
\begin{split}
& \Vert \tilde{\alpha}_{k_{2}}\tilde{f}_{k_{2}} D^{\alpha}( \tilde{\alpha}_{k_{1}} \tilde{f}_{k_{1}}) \Vert_{L^{2}(\R^{n+1})}
\\
& \lesssim \Vert \alpha_{k_{2}} \Vert_{L^{\infty}_{x,t}} \Vert \tilde{f}_{k_{2}} \Vert_{L^{\infty}_{\tilde{v}} L^{2}_{\tilde{v}^{\perp},t}} \Vert D^{\alpha} ( \tilde{\alpha}_{k_{1}} \tilde{f}_{k_{1}} ) \Vert_{L^{2}_{\tilde{v}} L^{\infty}_{\tilde{v}^{\perp},t}}
\\
& \lesssim \Vert \alpha_{k_{2}} \Vert_{L^{\infty}} \Vert \alpha_{k_{1}} \Vert_{L^{\infty}} 2^{k_{1} m } 2^{k_{1}(n-1)/2} 2^{-k_{2}(2s-1)/2} \Vert f_{k_{1}} \Vert_{Z_{k_{1}}} \Vert f_{k_{2}} \Vert_{Z_{k_{2}}} 
\end{split}
\end{equation*}

this implies
\begin{equation*}
\begin{split}
&  \Vert \Delta_{\approx k_{2} } \tilde{H}( \tilde{\alpha}_{k_{2}} \tilde{f}_{k_{2}}, \tilde{\alpha}_{k_{1}} \tilde{f}_{k_{1}}) \Vert_{L^{2}(\R^{n+1})}
\\
& \lesssim \Vert \alpha_{k_{1}} \Vert_{L^{\infty}} \Vert \alpha_{k_{2}} \Vert_{L^{\infty}} (2^{k_{1}/2 + k_{2}(2s-1)/2} )^{-1} 2^{nk_{1}/2}  \Vert f_{k_{1}} \Vert_{Z_{k_{1}}} \Vert f_{k_{2}} \Vert_{Z_{k_{2}}} \sum_{\vert \alpha \vert =1}^{\infty}  \frac{a_{m}}{\alpha !} 2^{k_{2}(2s-m)} 2^{k_{1} m} 
\end{split}
\end{equation*}
but notice that 
$$
2^{k_{2}(2s-m)} 2^{k_{1} m} = 2^{k_{1} + k_{2}(2s-1)} 2^{(k_{1}-k_{2})(m-1) } \lesssim_{n} 2^{k_{1} + k_{2}(2s-1)} 2^{(-10n)m }
$$
this yields the result.
\end{proof}

The next two lemmata give an $L^{1}_{e}L^{2}_{e^{\perp},t}$ control of the commutator $H_{s}$ in terms of the resolution spaces. 
\begin{lemma}\label{comest2}
Let $k_{1},k_{2},k_{3}  \geq 0 $ with $ \max \{ {k_{2},k_{3}} \} \leq k_{1} + 10$ ,$ e \in \S^{n-1}$. Assume that $ f_{k_{i}} \in Z_{k_{i}} $ for $i \in \{ 1,2,3\}$. Assume further that $ f_{k_{1}}$ has Fourier transform supported in $ \{ \langle \xi , e \rangle \geq 2^{-c} \vert \xi \vert \} \times \R$ for some $ e \in \S^{n-1}$. Then the following estimate holds
$$
\Vert  H_{s} (\tilde{f}_{k_{1}}, (\tilde{f}_{k_{2}}\cdot \tilde{f}_{k_{3}} ) ) \Vert_{L^{1}_{e} L^{2}_{e^{\perp},t}} \lesssim 2^{k_{3} ( \frac{n +1}{2})} 2^{k_{2} ( \frac{n+1}{2})} 2^{k_{1}(2s-1)/2}  \Vert f_{k_{1}} \Vert_{Z_{k_{1}}} \Vert f_{k_{2}} \Vert_{Z_{k_{2}}} \Vert f_{k_{3}} \Vert_{Z_{k_{3}}},
$$
where $\tilde{f} \in \{ f , \bar{ f } \}$. 
\end{lemma}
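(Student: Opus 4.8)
The plan is to exploit that $f_{k_1}$ carries the top frequency and is, by hypothesis, supported on a cone around $e$, so that the local smoothing estimate (Lemma~\ref{localsmoothing}) supplies the gain $\Vert \tilde{f}_{k_1}\Vert_{L^\infty_e L^2_{e^\perp,t}}\lesssim 2^{-k_1(2s-1)/2}\Vert f_{k_1}\Vert_{Z_{k_1}}$, while the maximal estimate places the remaining factors in $L^2_e L^\infty_{e^\perp,t}$ at cost $2^{k_i(n-1)/2}$ for $i=2,3$. I will glue these together with the H\"older-type bounds
\[
\Vert gh\Vert_{L^1_e L^2_{e^\perp,t}}\le \Vert g\Vert_{L^\infty_e L^2_{e^\perp,t}}\,\Vert h\Vert_{L^1_e L^\infty_{e^\perp,t}},\qquad \Vert h_1 h_2\Vert_{L^1_e L^\infty_{e^\perp,t}}\le \Vert h_1\Vert_{L^2_e L^\infty_{e^\perp,t}}\,\Vert h_2\Vert_{L^2_e L^\infty_{e^\perp,t}},
\]
and the Bernstein/multiplier bounds of Lemma~\ref{Multiplier} and Remark~\ref{remarkM} to absorb fractional or ordinary derivatives into powers of $2^{k_i}$ on a single frequency-localized factor. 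Since the claimed estimate is symmetric in $k_2,k_3$ I may assume $k_3\le k_2$, and the conjugate case $\tilde{f}=\bar{f}$ is identical after replacing $e$ by $-e$. I split according to whether $k_2$ is comparable to $k_1$ or much smaller.

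In the regime $k_2>k_1-20n$ all three frequencies are comparable up to a fixed constant and I extract no commutator gain: writing $H_{s}(\tilde{f}_{k_1},\tilde{f}_{k_2}\tilde{f}_{k_3})=(-\Delta)^{s}(\tilde{f}_{k_1}\tilde{f}_{k_2}\tilde{f}_{k_3})-(-\Delta)^{s}(\tilde{f}_{k_1})\,\tilde{f}_{k_2}\tilde{f}_{k_3}-\tilde{f}_{k_1}\,(-\Delta)^{s}(\tilde{f}_{k_2}\tilde{f}_{k_3})$, each term carries at most $\lesssim 2^{2sk_1}$ worth of fractional derivatives which I absorb into a factor $2^{2sk_1}$ by Bernstein, then I put $\tilde{f}_{k_1}$ (or $(-\Delta)^{s}\tilde{f}_{k_1}$, still cone-supported) into $L^\infty_e L^2_{e^\perp,t}$ via smoothing and $\tilde{f}_{k_2},\tilde{f}_{k_3}$ into $L^2_e L^\infty_{e^\perp,t}$ via the maximal estimate. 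The result is a bound $2^{2sk_1}\,2^{-k_1(2s-1)/2}\,2^{k_2(n-1)/2}\,2^{k_3(n-1)/2}$ times the three $Z$-norms, which by $2^{k_1}\approx 2^{k_2}$ equals a constant times $2^{k_1(2s-1)/2}2^{k_2(n+1)/2}2^{k_3(n-1)/2}$ times the norms, hence is dominated by the right-hand side since $(n-1)/2\le(n+1)/2$.

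In the regime $k_2\le k_1-20n$ the product $\tilde{f}_{k_2}\tilde{f}_{k_3}$ has frequency $\le 2^{k_2+2}$, so $\tilde{f}_{k_1}\tilde{f}_{k_2}\tilde{f}_{k_3}$ has frequency $\approx 2^{k_1}$ and I write $H_{s}=\tilde{H}(\tilde{f}_{k_1},\tilde{f}_{k_2}\tilde{f}_{k_3})-\tilde{f}_{k_1}(-\Delta)^{s}(\tilde{f}_{k_2}\tilde{f}_{k_3})$ with $\tilde{H}$ as in \eqref{onesidedcommutator}. To the commutator term I apply Lemma~\ref{Taylorstuff} with $p=1$, $q=2$, reducing to bounding $\sum_{|\alpha|\ge1}\frac{a_{|\alpha|}}{\alpha!}2^{k_1(2s-|\alpha|)}\Vert\tilde{f}_{k_1}D^\alpha(\tilde{f}_{k_2}\tilde{f}_{k_3})\Vert_{L^1_e L^2_{e^\perp,t}}$; for each $\alpha$ I put $\tilde{f}_{k_1}$ into $L^\infty_e L^2_{e^\perp,t}$ (smoothing, factor $2^{-k_1(2s-1)/2}$) and $D^\alpha(\tilde{f}_{k_2}\tilde{f}_{k_3})$ into $L^1_e L^\infty_{e^\perp,t}$ (Bernstein, factor $\approx 2^{|\alpha|k_2}$, followed by the two-factor H\"older bound with factors $2^{k_2(n-1)/2}2^{k_3(n-1)/2}$), so the $\alpha$-th summand is $2^{k_1(2s-1)/2}\,2^{k_1}\,2^{-|\alpha|(k_1-k_2-2)}\,2^{k_2(n-1)/2}2^{k_3(n-1)/2}$ times the norms. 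Since $k_1-k_2-2\ge 10n$, the series $\sum_{|\alpha|\ge1}\frac{a_{|\alpha|}}{\alpha!}2^{-|\alpha|(k_1-k_2-2)}$ converges by the bound $\sum_{|\alpha|\ge1}\frac{a_{|\alpha|}}{\alpha!}2^{-10n|\alpha|}\lesssim1$ of Lemma~\ref{Taylorstuff} and is controlled by its leading term, of size $\lesssim 2^{-(k_1-k_2)}$, which collapses $2^{k_1}2^{-(k_1-k_2)}$ to $2^{k_2}$ and yields $\lesssim 2^{k_1(2s-1)/2}2^{k_2(n+1)/2}2^{k_3(n-1)/2}$ times the norms. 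For the subtracted term $\tilde{f}_{k_1}(-\Delta)^{s}(\tilde{f}_{k_2}\tilde{f}_{k_3})$ I again put $\tilde{f}_{k_1}$ in $L^\infty_e L^2_{e^\perp,t}$ (smoothing) and $(-\Delta)^{s}(\tilde{f}_{k_2}\tilde{f}_{k_3})$ in $L^1_e L^\infty_{e^\perp,t}$ (Bernstein contributing $\approx 2^{2sk_2}$, then the two-factor H\"older bound), giving $2^{-k_1(2s-1)/2}2^{2sk_2}2^{k_2(n-1)/2}2^{k_3(n-1)/2}$ times the norms; writing $2^{2sk_2}=2^{(2s-1)k_2}2^{k_2}$ and using $k_2\le k_1$ (so $2^{-k_1(2s-1)/2}2^{(2s-1)k_2}\le 2^{k_1(2s-1)/2}$) this is $\lesssim 2^{k_1(2s-1)/2}2^{k_2(n+1)/2}2^{k_3(n-1)/2}$ times the norms. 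Both regimes thus produce the claimed bound.

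I expect the only real work to be the bookkeeping in the Taylor regime: one must arrange the H\"older splitting so that the smoothing gain $2^{-k_1(2s-1)/2}$ lands precisely on $f_{k_1}$ while every derivative weight (the $2^{|\alpha|k_2}$ from $D^\alpha$ and, in the subtracted term, the $2^{2sk_2}$ from $(-\Delta)^{s}$) together with the two maximal costs fall on the low-frequency factors, and then check that the residual exponent $2^{k_1}2^{-|\alpha|(k_1-k_2-2)}$, summed against the combinatorial constants $a_{|\alpha|}$ of Lemma~\ref{Taylorstuff}, collapses to exactly $2^{k_2}$. Since I may take the frequency gap to be $\ge 20n$, comfortably beyond the $10n$ required by Lemma~\ref{Taylorstuff}, the convergence is automatic, so there is no deeper obstacle than careful exponent arithmetic and the two-regime split.
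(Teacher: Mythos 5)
Your proof is correct and follows essentially the same path as the paper's: split on whether $\max\{k_2,k_3\}$ is comparable to $k_1$ or much smaller; in the comparable regime bound each of the three pieces of $H_s$ directly by Bernstein plus the smoothing estimate on the cone-supported $f_{k_1}$ and the maximal estimate on $f_{k_2},f_{k_3}$; in the gap regime reduce to $\tilde{H}$, invoke Lemma~\ref{Taylorstuff} with $(p,q)=(1,2)$, and let the $10n$-separation kill the Taylor tail. The only (harmless) departures from the paper are that you take a $20n$ gap rather than $10n$, and you write out the bound for the subtracted term $\tilde{f}_{k_1}(-\Delta)^s(\tilde{f}_{k_2}\tilde{f}_{k_3})$ explicitly, which the paper dispatches by citing the analogous step in Lemma~\ref{comest}.
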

\begin{proof}
Similar to the proof of Lemma \ref{comest} we break the situation into two cases. If $ \max \{ k_{2},k_{3} \} \in [ k_1 - 10n , k_{1} +10 ]$ then we argue as follows: Assume without loss of generality that $ k_{2} = \max \{ k_{2},k_{3} \}$, then 
\begin{equation}\label{qs1}
\begin{split}
& \Vert H_{s} (\tilde{f}_{k_{1}}, (\tilde{f}_{k_{2}}\cdot \tilde{f}_{k_{3}} ) ) \Vert_{L^{1}_{e} L^{2}_{e^{\perp},t}}
\\
& \lesssim \Vert  (-\Delta)^s ( \tilde{f}_{k_{1}} \tilde{f}_{k_{2}} \tilde{f}_{k_{3}} ) \Vert_{L^{1}_{e}L^{2}_{e^{\perp},t}}
\\
& + \Vert  \tilde{f}_{k_{1}} (-\Delta)^s ( \tilde{f}_{k_{2}} \tilde{f}_{k_{3}} )\Vert_{L^{1}_{e}L^{2}_{e^{\perp},t}}
\\
& + \Vert  (-\Delta)^s ( \tilde{f}_{k_{1}} ) \tilde{f}_{k_{2}} \tilde{f}_{k_{3}} \Vert_{L^{1}_{e}L^{2}_{e^{\perp},t}}
\end{split}
\end{equation}
and it is enough to estimate each term on the right hand side of \eqref{qs1}. For the first term one obtains, using H\"{o}lder, maximal and smoothing estimates, 
\begin{equation*}
\begin{split}
& \Vert  (-\Delta)^s ( \tilde{f}_{k_{1}} \tilde{f}_{k_{2}} \tilde{f}_{k_{3}} ) \Vert_{L^{1}_{e}L^{2}_{e^{\perp},t}}
\\
& \lesssim 2^{2sk_{1}} \Vert f_{k_{1}} \Vert_{L^{\infty}_{e}L^{2}_{e^{\perp},t}} \Vert f_{k_{2}} \Vert_{L^{2}_{e}L^{\infty}_{e^{\perp},t}} \Vert f_{k_{3}} \Vert_{L^{2}_{e} L^{\infty}_{e^{\perp},t}} 
\\
& \lesssim 2^{2sk_{1} } 2^{-k_{1}(2s-1)/2} 2^{k_{2}(n-1)/2} 2^{k_{3}(n-1)/2} \Vert f_{k_{1}} \Vert_{Z_{k_{1}}} \Vert f_{k_{2}} \Vert_{Z_{k_{2}}} \Vert f_{k_{3}} \Vert_{Z_{k_{3}}} 
\end{split}
\end{equation*}
Using that 
$$
2^{2sk_{1}} = 2^{k_{1}(2s-1) } 2^{k_{1}} \lesssim 2^{k_{1}(2s-1)} 2^{k_{2}}
$$
we obtain the result easily. The other terms on the right hand side of \eqref{qs1} are handled in the same way, in particular we use $ 2^{k_{2}} \approx 2^{k_{1}}$ and the local smoothing and maximal estimates. Therefore, assume in what follows that $ \max \{ {k_{2},k_{3}} \} = k_{2} \leq k_{1} - 10n$. The argument is similar to the proof of Lemma \ref{comest}. In particular, it suffices to prove the estimates for the operator
$$
\tilde{H}( f_{k_{1}}, f_{k_{2}} f_{k_{3}} ) : = (-\Delta)^s ( f_{k_{1}} f_{k_{2}} f_{k_{3}} ) - (-\Delta)^s (f_{k_{1}} ) f_{k_{2}} f_{k_{3}} 
$$
Then using Lemma \ref{Taylorstuff} we obtain
\begin{equation*}
\begin{split}
& 
\Vert \Delta_{\approx k_{1}} \tilde{H} ( \tilde{f}_{k_{1}}, \tilde{f}_{k_{2}} \tilde{f}_{k_{3}} ) \Vert_{L^{1}_{e}L^{2}_{e^{\perp},t}}
\\
& \lesssim \sum_{ \vert \alpha \vert =1}^{\infty} \frac{a_{m}}{ \vert \alpha \vert} 2^{k_{1} ( 2s - m)}  \Vert \tilde{f}_{k_{1}} D^{\alpha} (  \tilde{ f }_{k_{2}} \tilde{f}_{k_{3}} ) \Vert_{L^{1}_{e}L^{2}_{e^{\perp},t}}
\end{split}
\end{equation*}
Using H\"{o}lder, local smoothing, and maximal estimate, we obtain 
\begin{equation}\label{e2}
\begin{split}
& \Vert \Delta_{\approx k_{1}} \tilde{H} ( \tilde{f}_{k_{1}}, \tilde{f}_{k_{2}} \tilde{f}_{k_{3}} ) \Vert_{L^{1}_{e}L^{2}_{e^{\perp},t}}
\\
& \lesssim \sum_{ \vert \alpha \vert =1}^{\infty} \frac{a_{m}}{ \vert \alpha \vert} 2^{k_{1} ( 2s - m)} 2^{k_{2} m } 2^{-k_{1}(2s-1)/2} 2^{k_{2} ( n-1)/2} 2^{k_{3} ( n-1)/2} \Vert f_{k_{1}} \Vert_{Z_{k_{1}}} \Vert f_{k_{2}} \Vert_{Z_{k_{2}}} \Vert f_{k_{3}} \Vert_{Z_{k_{3}}} 
\end{split}
\end{equation}
write 
$$
2^{k_{1} ( 2s-m) } 2^{k_{2} m} = 2^{k_{1}(2s-1) + k_{2}} 2^{(k_{2} -k_{1})(m-1)} \lesssim_{n}  2^{k_{1}(2s-1) + k_{2}} 2^{-10n m}
$$
plug this back into the right hand side of \eqref{e2} to obtain
\begin{equation*}
\begin{split}
&  \Vert \Delta_{\approx k_{1}} \tilde{H} ( \tilde{f}_{k_{1}}, \tilde{f}_{k_{2}} \tilde{f}_{k_{3}} ) \Vert_{L^{1}_{e}L^{2}_{e^{\perp},t}}
\\
& \lesssim  2^{k_{1}(2s-1)/2} 2^{k_{2} ( n+1)/2} 2^{k_{3} ( n+1)/2} \Vert f_{k_{1}} \Vert_{Z_{k_{1}}} \Vert f_{k_{2}} \Vert_{Z_{k_{2}}} \Vert f_{k_{3}} \Vert_{Z_{k_{3}}} \sum_{ | \alpha | =1}^{\infty} \frac{a_{m}}{|\alpha |} 2^{(-10n)m }
\end{split}
\end{equation*}
and the series converges as was shown in Lemma \ref{Taylorstuff}. 
\end{proof}

The next lemma is the same as Lemma \ref{comest2} but under the assumption that $ f_{k_{2}}$ is the function with highest frequency. 
\begin{lemma}\label{comest3}
Let $k_{1}, k_{2},k_{3}  \geq 0 $ with $ \max \{ {k_{1},k_{3}} \} \leq k_{2} + 10$, $ e \in \S^{n-1}$. Assume that $ f_{k_{i}} \in Z_{k_{i}} $ for $i \in \{ 1,2,3\}$. Assume further that $ f_{k_{2}}$ has Fourier transform supported in $ \{ \langle \xi , e \rangle \geq 2^{-c} \vert \xi \vert  \} \times \R$ for some $ e \in \S^{n-1}$. Then the following estimate holds
$$
\Vert  H_{s} (\tilde{f}_{k_{1}}, (\tilde{f}_{k_{2}}\cdot \tilde{f}_{k_{3}} ) ) \Vert_{L^{1}_{e} L^{2}_{e^{\perp},t}} \lesssim 2^{k_{1} ( \frac{n +1}{2})} 2^{k_{3} ( \frac{n+1}{2})} 2^{k_{2}(2s-1)/2}  \Vert f_{k_{1}} \Vert_{Z_{k_{1}}} \Vert f_{k_{2}} \Vert_{Z_{k_{2}}} \Vert f_{k_{3}} \Vert_{Z_{k_{3}}},
$$
where $ \tilde{f} \in \{ f , \bar{f} \}.$

\end{lemma}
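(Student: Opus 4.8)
The plan is to mimic the proof of \Cref{comest2} almost line by line, the only structural change being that the highest--frequency factor is now $f_{k_2}$ rather than $f_{k_1}$; since $f_{k_2}$ is also the factor carrying the cone condition, it is the one we will place in the local smoothing space $L^\infty_e L^2_{e^\perp,t}$. The conjugations $\tilde f\in\{f,\bar f\}$ only reflect Fourier supports and leave all the norms below unchanged, so I drop the tildes. As in \Cref{comest2} one splits according to whether $\max\{k_1,k_3\}$ is comparable to $k_2$ or much smaller.

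\emph{Case 1: $\max\{k_1,k_3\}\in[k_2-10n,k_2+10]$.} Here I expand $H_s(f_{k_1},f_{k_2}f_{k_3})$ by the triangle inequality into the three terms $(-\Delta)^s(f_{k_1}f_{k_2}f_{k_3})$, $(-\Delta)^s(f_{k_1})\,f_{k_2}f_{k_3}$ and $f_{k_1}\,(-\Delta)^s(f_{k_2}f_{k_3})$. In each of them the fractional Laplacian acts on a function of frequency $\lesssim 2^{k_2}$ (up to $O_n(1)$ dyadic shifts), hence costs at most $2^{2sk_2}$ by part (2) of \Cref{Multiplier}; I then use Hölder in the form
\[
\|uvw\|_{L^1_e L^2_{e^\perp,t}}\le \|u\|_{L^\infty_e L^2_{e^\perp,t}}\,\|v\|_{L^2_e L^\infty_{e^\perp,t}}\,\|w\|_{L^2_e L^\infty_{e^\perp,t}},
\]
placing $f_{k_2}$ in the first norm — \Cref{localsmoothing} applies since $\hat f_{k_2}$ lies in the cone about $e$, gaining $2^{-k_2(2s-1)/2}$ — and $f_{k_1},f_{k_3}$ in the maximal norm, gaining $2^{k_1(n-1)/2}$ and $2^{k_3(n-1)/2}$. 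Since $2^{2sk_2}2^{-k_2(2s-1)/2}=2^{k_2(2s-1)/2}2^{k_2}$ and $2^{k_2}\approx 2^{\max\{k_1,k_3\}}$ in this case, absorbing the surplus $2^{k_2}$ into the corresponding maximal factor upgrades one $2^{k_i(n-1)/2}$ to $2^{k_i(n+1)/2}$, which is the asserted bound.

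\emph{Case 2: $\max\{k_1,k_3\}<k_2-10n$.} Then $f_{k_1}f_{k_2}f_{k_3}$ has frequency $\approx 2^{k_2}$ and I use
\[
H_s(f_{k_1},f_{k_2}f_{k_3})=\tilde H(f_{k_2}f_{k_3},f_{k_1})-(-\Delta)^s(f_{k_1})\,f_{k_2}f_{k_3},
\]
with $\tilde H$ as in \eqref{onesidedcommutator}. The second term carries the fractional Laplacian only on the low--frequency factor $f_{k_1}$, so it costs $2^{2sk_1}$; the Hölder splitting above yields the bound $2^{2sk_1}2^{-k_2(2s-1)/2}2^{k_1(n-1)/2}2^{k_3(n-1)/2}$, and since $2^{2sk_1}2^{-k_2(2s-1)}=2^{k_1}2^{(k_1-k_2)(2s-1)}\le 2^{k_1}$ (using $2s-1>0$, $k_1\le k_2$) this is $\lesssim 2^{k_1(n+1)/2}2^{k_3(n+1)/2}2^{k_2(2s-1)/2}$, which is admissible. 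For the commutator term, after decomposing $f_{k_2}f_{k_3}$ into its $O(1)$ dyadic pieces near scale $2^{k_2}$, I apply \Cref{Taylorstuff} with high--frequency function $f_{k_2}f_{k_3}$ and (differentiated) low--frequency function $f_{k_1}$, so that with $m:=|\alpha|$
\[
\|\Delta_{\approx k_2}\tilde H(f_{k_2}f_{k_3},f_{k_1})\|_{L^1_e L^2_{e^\perp,t}}\lesssim \sum_{|\alpha|=1}^\infty \frac{a_m}{\alpha!}\,2^{k_2(2s-m)}\,\|f_{k_2}f_{k_3}\,D^\alpha f_{k_1}\|_{L^1_e L^2_{e^\perp,t}}.
\]
Bounding $\|f_{k_2}f_{k_3}\,D^\alpha f_{k_1}\|_{L^1_e L^2_{e^\perp,t}}$ by the same Hölder splitting — $f_{k_2}$ in the smoothing norm, $f_{k_3}$ and $D^\alpha f_{k_1}$ in the maximal norm, the latter contributing $2^{k_1 m}2^{k_1(n-1)/2}$ by \Cref{Multiplier} and the maximal estimate — and using
\[
2^{k_2(2s-m)}2^{k_1 m}2^{-k_2(2s-1)/2}=2^{k_1}\,2^{k_2(2s-1)/2}\,2^{(k_1-k_2)(m-1)},\qquad 2^{(k_1-k_2)(m-1)}\lesssim_n 2^{-10nm},
\]
the $\alpha$--summand is $\lesssim \frac{a_m}{\alpha!}2^{-10nm}\,2^{k_1(n+1)/2}2^{k_3(n-1)/2}2^{k_2(2s-1)/2}\,\|f_{k_1}\|_{Z_{k_1}}\|f_{k_2}\|_{Z_{k_2}}\|f_{k_3}\|_{Z_{k_3}}$; summing over $\alpha$ and invoking $\sum_m \frac{a_m}{\alpha!}2^{-10nm}\lesssim 1$ from \Cref{Taylorstuff} gives a bound even stronger than claimed (with $2^{k_3(n-1)/2}$ in place of $2^{k_3(n+1)/2}$).

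I do not foresee a genuine obstacle: the whole argument is the proof of \Cref{comest2} with the roles of $f_{k_1}$ and $f_{k_2}$ exchanged, and with the observation that the estimates are insensitive to which of $f_{k_2},f_{k_3}$ is differentiated inside the product. The only point requiring care — in both cases — is that combining the derivative cost $2^{2sk_2}$ at the top frequency with the smoothing gain $2^{-k_2(2s-1)/2}$ leaves a surplus factor $2^{k_2}$, which must be paid for: in Case 1 by the comparable low frequency $2^{\max\{k_1,k_3\}}$, and in Case 2 by the exponential commutator gain $2^{(k_1-k_2)(m-1)}$ supplied by \Cref{Taylorstuff}.
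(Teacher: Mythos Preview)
Your proof is correct and follows essentially the same strategy as the paper's: split according to whether $\max\{k_1,k_3\}$ is comparable to $k_2$ or much smaller, handle the comparable case by the crude triangle-inequality expansion plus smoothing/maximal, and handle the far case via the one-sided commutator $\tilde H(f_{k_2}f_{k_3},f_{k_1})$ and \Cref{Taylorstuff}. The only difference is organizational---the paper further splits on whether $k_1$ or $k_3$ attains the maximum, but as you observe this is unnecessary since the argument is symmetric in those two roles.
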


\begin{proof}
We consider two cases. \\
\textbf{Case 1:} $ \max \{ k_{1} , k_{3} \} = k_{1}  $. Notice that we may assume $k_{1} \leq k_{2} - 10n$. Indeed, if
 $ k_{1} \in [ k_{2} - 10n, k_{2} + 10]$.  Then we argue as follows 
\begin{equation}\label{qs2}
\begin{split}
& \Vert  H_{s} (\tilde{f}_{k_{1}}, (\tilde{f}_{k_{2}}\cdot \tilde{f}_{k_{3}} ) ) \Vert_{L^{1}_{e} L^{2}_{e^{\perp},t}}
\\
& \lesssim \Vert  (-\Delta)^s ( \tilde{f}_{k_{1}} \tilde{f}_{k_{2}} \tilde{f}_{k_{3}} ) \Vert_{L^{1}_{e}L^{2}_{e^{\perp},t}}
\\
& + \Vert  \tilde{f}_{k_{1}} (-\Delta)^s ( \tilde{f}_{k_{2}} \tilde{f}_{k_{3}} )\Vert_{L^{1}_{e}L^{2}_{e^{\perp},t}}
\\
& + \Vert  (-\Delta)^s ( \tilde{f}_{k_{1}} ) \tilde{f}_{k_{2}} \tilde{f}_{k_{3}} \Vert_{L^{1}_{e}L^{2}_{e^{\perp},t}}
\end{split}
\end{equation}
We estimate the first term on the right hand side of \eqref{qs2} 
\begin{equation*}
\begin{split}
& \Vert  (-\Delta)^s ( \tilde{f}_{k_{1}} \tilde{f}_{k_{2}} \tilde{f}_{k_{3}} ) \Vert_{L^{1}_{e}L^{2}_{e^{\perp},t}}
\\
& \lesssim 2^{2sk_{2}} \Vert \tilde{f}_{k_{1}} \tilde{f}_{k_{2}} \tilde{f}_{k_{3}} \Vert_{L^{1}_{e}L^{2}_{e^{\perp},t}}
\\
& \lesssim 2^{k_{2}(2s-1)} 2^{k_{1}} \Vert f_{k_{2}} \Vert_{L^{\infty}_{e}L^{2}_{e^{\perp},t}} \Vert f_{k_{1}} \Vert_{L^{2}_{e}L^{\infty}_{e^{\perp},t}} \Vert f_{k_{3}} \Vert_{L^{2}_{e} L^{\infty}_{e^{\perp},t}}
\\
& \lesssim 2^{k_{2}(2s-1)/2} 2^{k_{1} ( n+1)/2} 2^{k_{3}(n+1)/2} \Vert f_{k_{1}} \Vert_{Z_{k_{1}}} \Vert f_{k_{2}} \Vert_{Z_{k_{2}}} \Vert f_{k_{3}} \Vert_{Z_{k_{3}}} 
\end{split}
\end{equation*}
which is the desired estimate. We handle the second term in the right hand side of \eqref{qs2} similarly 
\begin{equation*}
\begin{split}
&  \Vert  \tilde{f}_{k_{1}} (-\Delta)^s ( \tilde{f}_{k_{2}} \tilde{f}_{k_{3}} )\Vert_{L^{1}_{e}L^{2}_{e^{\perp},t}}
\\
& \lesssim \int_{\R} \Vert \tilde{f}_{k_{1}}( x_{1} e + \cdot, \cdot) \Vert_{L^{\infty}_{e^{\perp},t}} \Vert (-\Delta)^s ( \tilde{f}_{k_{2}} \tilde{f}_{k_{3}} ) ( x_{1} e + \cdot,\cdot) \Vert_{L^{2}_{e^{\perp},t}} dx_{1} 
\\
& \lesssim 2^{2s k_{2}} \Vert f_{k_{1}} \Vert_{L^{2}_{e} L^{\infty}_{e^{\perp},t}} \Vert f_{k_{3}} \Vert_{L^{2}_{e}L^{\infty}_{e^{\perp},t}} \Vert f_{k_{2}} \Vert_{L^{\infty}_{e} L^{2}_{e^{\perp},t}}
\\
& \lesssim 2^{2sk_{2}} 2^{-k_{2} (2s-1)/2} 2^{k_{1}(n-1)/2} 2^{k_{3}(n-1)/2}  \Vert f_{k_{1}} \Vert_{Z_{k_{1}}} \Vert f_{k_{2}} \Vert_{Z_{k_{2}}} \Vert f_{k_{3}} \Vert_{Z_{k_{3}}}
\\
& \lesssim 2^{k_{2}(2s-1)/2} 2^{k_{1}} 2^{k_{1}(n-1)/2} 2^{k_{3}(n-1)/2} \Vert f_{k_{1}} \Vert_{Z_{k_{1}}} \Vert f_{k_{2}} \Vert_{Z_{k_{2}}} \Vert f_{k_{3}} \Vert_{Z_{k_{3}}}
\end{split}
\end{equation*}
The estimate for the third term in the right hand side of \eqref{qs2} is identical. Therefore, we may assume that $ k_{1} \leq k_{2} - 10n$. Then in this case notice that it is enough to prove the estimate for the operator
$$
\tilde{H} ( \tilde{f}_{k_{1}}, \tilde{f}_{k_{2}} \cdot \tilde{f}_{k_{3}} ) : = (-\Delta)^s ( \tilde{f}_{k_{1}} \tilde{f}_{k_{2}} \tilde{f}_{k_{3}} ) - \tilde{f}_{k_{1}} (-\Delta)^s ( \tilde{f}_{k_{2}} \tilde{f}_{k_{3}} ) 
$$
Indeed, since 
$$
H_{s} ( \tilde{f}_{k_{1}} , \tilde{f}_{k_{2}} \tilde{f}_{k_{3}} ) = \tilde{H}_{s} ( \tilde{f}_{k_{1}}, \tilde{f}_{k_{2}} \cdot \tilde{f}_{k_{3}} ) - (-\Delta)^s(\tilde{f}_{k_{1}}) \tilde{f}_{k_{2}} \tilde{f}_{k_{3}} 
$$
and notice that we have 
\begin{equation*}
\begin{split}
& \Vert (-\Delta)^s (\tilde{f}_{k_{1}}) \tilde{f}_{k_{2}} \tilde{f}_{k_{3}} \Vert_{L^{1}_{e} L^{2}_{e^{\perp},t}}
\\
& \lesssim 2^{2sk_{1}} 2^{k_{1}(n-1)/2}  2^{-k_{2}(2s-1)/2} 2^{k_{3} (n-1)/2} \Vert f_{k_{1}}  \Vert_{Z_{k_{1}}}  \Vert f_{k_{2}} \Vert_{Z_{k_{2}}} \Vert f_{k_{3}} \Vert_{Z_{k_{3}}} 
\\
& \lesssim 2^{k_{2}(2s-1)/2} 2^{k_{1}(n+1)/2} 2^{k_{3}(n+1)/2} \Vert f_{k_{1}} \Vert_{Z_{k_{1}}} \Vert f_{k_{2}} \Vert_{Z_{k_{2}}} \Vert f_{k_{3}} \Vert_{Z_{k_{3}}}.
\end{split}
\end{equation*}
Therefore, it is enough to prove the estimate 
\begin{equation}\label{qs3}
\begin{split}
& \Vert \Delta_{\approx k_{2}} \tilde{H} ( \tilde{f}_{k_{1}}, \tilde{f}_{k_{2}} \cdot \tilde{f}_{k_{3}} ) \Vert_{L^{1}_{e} L^{2}_{e^{\perp},t}}
\\
& \lesssim 2^{k_{2} (2s-1)/2} 2^{k_{1} (n+1)/2} 2^{k_{3}(n+1)/2} \Vert f_{k_{1}} \Vert_{Z_{k_{1}}} \Vert f_{k_{2}}\Vert_{Z_{k_{2}}} \Vert f_{k_{3}} \Vert_{Z_{k_{3}}}
\end{split}
\end{equation}
To that end, we use Lemma \ref{Taylorstuff}, local smoothing and the maximal estimate to obtain 
\begin{equation*}
\begin{split}
&  \Vert \Delta_{\approx k_{2}} \tilde{H} ( \tilde{f}_{k_{1}}, \tilde{f}_{k_{2}} \cdot \tilde{f}_{k_{3}} ) \Vert_{L^{1}_{e} L^{2}_{e^{\perp},t}}
\\
& \lesssim \sum_{ \vert \alpha \vert =1}^{\infty} \frac{a_{m}}{\alpha !} 2^{k_{2}(2s-m)} \Vert \tilde{f}_{k_{2}} \tilde{f}_{k_{3}} D^{\alpha} (\tilde{f}_{k_{1}}) \Vert_{L^{1}_{e}L^{2}_{e^{\perp},t}}
\\
& \lesssim 2^{k_{1} (n-1)/2} 2^{k_{3}(n-1)/2} 2^{-k_{2} (2s-1)/2} \Vert f_{k_{1}} \Vert_{Z_{k_{1}}} \Vert f_{k_{2}} \Vert_{Z_{k_{2}}} \Vert f_{k_{3}} \Vert_{Z_{k_{3}}} \sum_{ \vert \alpha \vert =1} \frac{a_{m}}{\alpha !} 2^{k_{2} (2s-m)} 2^{k_{1} m }.
\end{split}
\end{equation*}
and the result follows by the fact that $ k_{1} \leq k_{2} - 10n$. \\
\textbf{Case 2:} $ \max \{ k_{1}, k_{3} \} = k_{3}$. Again, we may assume $k_{3} \leq k_{2} -10n$. Indeed, if $ k_{3} \in [ k_{2} -10n , k_{2} +10]$ then we estimate 
\begin{equation}\label{qs4}
\begin{split}
& \Vert H_{s} ( \tilde{f}_{k_{1}} , \tilde{f}_{k_{2}} \tilde{f}_{k_{3}} ) \Vert_{L^{1}_{e}L^{2}_{e^{\perp},t}}
\\
& \lesssim \Vert  (-\Delta)^s ( \tilde{f}_{k_{1}} \tilde{f}_{k_{2}} \tilde{f}_{k_{3}} ) \Vert_{L^{1}_{e}L^{2}_{e^{\perp},t}}
\\
& + \Vert  \tilde{f}_{k_{1}} (-\Delta)^s ( \tilde{f}_{k_{2}} \tilde{f}_{k_{3}} )\Vert_{L^{1}_{e}L^{2}_{e^{\perp},t}}
\\
& + \Vert  (-\Delta)^s ( \tilde{f}_{k_{1}} ) \tilde{f}_{k_{2}} \tilde{f}_{k_{3}} \Vert_{L^{1}_{e}L^{2}_{e^{\perp},t}}
\end{split}
\end{equation}
we estimate each term on the right hand side of \eqref{qs4} as was done in \eqref{qs2}. For the first term we obtain
\begin{equation*}
\begin{split}
& \Vert  (-\Delta)^s ( \tilde{f}_{k_{1}} \tilde{f}_{k_{2}} \tilde{f}_{k_{3}} ) \Vert_{L^{1}_{e}L^{2}_{e^{\perp},t}}
\\
& \lesssim 2^{2sk_{2}} \Vert \tilde{f}_{k_{1}} \tilde{f}_{k_{2}} \tilde{f}_{k_{3}}  \Vert_{L^{1}_{e}L^{2}_{e^{\perp},t}}
\\
& \lesssim 2^{k_{2}(2s-1) +k_{3}} \Vert \tilde{f}_{k_{1}} \tilde{f}_{k_{2}} \tilde{f}_{k_{3}}  \Vert_{L^{1}_{e}L^{2}_{e^{\perp},t}}
\\
& \lesssim 2^{k_{1} (n+1)/2} 2^{k_{3}(n+1)/2} 2^{k_{2}(2s-1)/2} \Vert f_{k_{1}} \Vert_{Z_{k_{1}}} \Vert f_{k_{2}} \Vert_{Z_{k_{2}}} \Vert f_{k_{3}} \Vert_{Z_{k_{3}}}.
\end{split}
\end{equation*}
The remaining terms on the right hand side of \eqref{qs4} are handled in the same way.
So assume in what follows that $ k_{3} \leq k_{2} -10 n$. Then  it is enough to prove the estimate
\begin{equation*}
\begin{split}
& \Vert \Delta_{\approx k_{2}} \tilde{H} ( \tilde{f}_{k_{1}}, \tilde{f}_{k_{2}} \cdot \tilde{f}_{k_{3}} ) \Vert_{L^{1}_{e} L^{2}_{e^{\perp},t}}
\\
& \lesssim 2^{k_{2}(2s-1)/2} 2^{k_{1}(n+1)/2} 2^{k_{2}(n+1)/2} \Vert f_{k_{1}} \Vert_{Z_{k_{1}}} \Vert f_{k_{2}}\Vert_{Z_{k_{2}}} \Vert f_{k_{3}} \Vert_{Z_{k_{3}}}.
\end{split}
\end{equation*}
To that end, we use Lemma \ref{Taylorstuff} to obtain 
\begin{equation*}
\begin{split}
& \Vert \Delta_{\approx k_{2}} \tilde{H} ( \tilde{f}_{k_{1}}, \tilde{f}_{k_{2}} \cdot \tilde{f}_{k_{3}} ) \Vert_{L^{1}_{e} L^{2}_{e^{\perp},t}}
\\
& \lesssim \sum_{\vert \alpha \vert =1}^{\infty} \frac{a_{m}}{\alpha !} 2^{k_{2} (2s-m)} \Vert \tilde{f}_{k_{2}} \tilde{f}_{k_{3}} D^{\alpha} (\tilde{f}_{k_{1}}) \Vert_{L^{1}_{e}L^{2}_{e^{\perp},t}}
\end{split}
\end{equation*}
and the result follows by H\"{o}lder, local smoothing, and maximal inequality.

\end{proof}
Next, we will need an $L^{1}_{e}L^{\infty}_{e^{\perp},t}$ estimate. 
\begin{lemma}\label{comest4}
Let $k_{1},k_{2} \geq 0$ with $ k_{2} \leq k_{1} +10$. Assume $f_{k_{1}} \in Z_{k_{1}}$ and $f_{k_{2} } \in Z_{k_{2}}$. And let $ \alpha_{k_{i}} \in L^{\infty}_{x,t}$ with each $ \alpha_{k_{i}}$ having Fourier transform supported in $\{ ( \xi,\tau) : \vert \xi \vert \leq 2^{k_{i}+10} \}$ for $i \in \{1,2\}$. Then the following estimate holds
$$
\Vert H_{s} (\tilde{\alpha}_{k_{1}} \tilde{f}_{k_{1}}, \tilde{\alpha}_{k_{2}} \tilde{f}_{k_{2}} ) \Vert_{L^{1}_{e}L^{\infty}_{e^{\perp},t}} \lesssim \Vert \alpha_{k_{1}} \Vert_{L^{\infty}_{x,t}} \Vert \alpha_{k_{2}} \Vert_{L^{\infty}_{x,t}}  2^{(2s-1)k_{1}} 2^{(k_{1} + k_{2})(n+1)/2} \Vert f_{k_{1}} \Vert_{Z_{k_{1}}} \Vert f_{k_{2}} \Vert_{Z_{k_{2}}},
$$
where $ \tilde{f} \in \{ f , \bar{f} \}.$
\end{lemma}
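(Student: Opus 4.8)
The plan is to mimic the structure of the proofs of Lemmata \ref{comest}--\ref{comest3}, splitting into the ``balanced'' case $k_2 \in [k_1 - 10n, k_1+10]$ and the ``unbalanced'' case $k_2 \leq k_1 - 10n$. In the balanced case I would simply write $H_s(\tilde\alpha_{k_1}\tilde f_{k_1},\tilde\alpha_{k_2}\tilde f_{k_2})$ as the sum of the three terms $(-\Delta)^s(\tilde\alpha_{k_1}\tilde f_{k_1}\tilde\alpha_{k_2}\tilde f_{k_2})$, $\tilde\alpha_{k_1}\tilde f_{k_1}(-\Delta)^s(\tilde\alpha_{k_2}\tilde f_{k_2})$, and $\tilde\alpha_{k_2}\tilde f_{k_2}(-\Delta)^s(\tilde\alpha_{k_1}\tilde f_{k_1})$, and estimate each by putting $2^{2sk_1}\approx 2^{(2s-1)k_1}2^{k_1}$ in front (using $2^{k_1}\approx 2^{k_2}$) and applying the maximal estimate to both factors; since $\|g\|_{L^1_eL^\infty_{e^\perp t}}\leq \|g_1\|_{L^2_eL^\infty_{e^\perp t}}\|g_2\|_{L^2_eL^\infty_{e^\perp t}}$ by Cauchy--Schwarz in $x_1$, each factor costs $2^{k_i(n-1)/2}\|f_{k_i}\|_{Z_{k_i}}$, and together with the $2^{(2s-1)k_1}2^{k_1}$ this gives exactly $2^{(2s-1)k_1}2^{(k_1+k_2)(n+1)/2}$ after absorbing the multiplier $L^\infty$ norms via Lemma \ref{Multiplier}. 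The derivative landing on $\tilde\alpha_{k_i}$ is harmless because $\widehat{\alpha_{k_i}}$ is supported in $\{|\xi|\lesssim 2^{k_i}\}$, so $(-\Delta)^s(\tilde\alpha_{k_i}\tilde f_{k_i})$ still has frequency $\lesssim 2^{k_1}$ and the Bernstein-type bound of Lemma \ref{Multiplier}(2) applies.

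For the unbalanced case $k_2 \leq k_1 - 10n$, the product $\tilde\alpha_{k_1}\tilde f_{k_1}\tilde\alpha_{k_2}\tilde f_{k_2}$ has frequency $\approx 2^{k_1}$, so I reduce as before to the one-sided commutator $\tilde H(\tilde\alpha_{k_1}\tilde f_{k_1},\tilde\alpha_{k_2}\tilde f_{k_2})$ by writing $H_s = \tilde H - \tilde\alpha_{k_2}\tilde f_{k_2}(-\Delta)^s(\tilde\alpha_{k_1}\tilde f_{k_1})$; the stray term $\tilde\alpha_{k_2}\tilde f_{k_2}(-\Delta)^s(\tilde\alpha_{k_1}\tilde f_{k_1})$ is estimated by $2^{2sk_1}$ times the product of the two maximal norms, which is of the desired form. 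Then I apply Lemma \ref{Taylorstuff} with $p=1$, $q=\infty$:
\begin{equation*}
\|\Delta_{\approx k_1}\tilde H(\tilde\alpha_{k_1}\tilde f_{k_1},\tilde\alpha_{k_2}\tilde f_{k_2})\|_{L^1_eL^\infty_{e^\perp t}} \lesssim \sum_{|\alpha|=1}^\infty \frac{a_m}{\alpha!} 2^{k_1(2s-m)} \|\tilde\alpha_{k_1}\tilde f_{k_1}\, D^\alpha(\tilde\alpha_{k_2}\tilde f_{k_2})\|_{L^1_eL^\infty_{e^\perp t}}.
\end{equation*}
For each multi-index I bound $\|\tilde\alpha_{k_1}\tilde f_{k_1}D^\alpha(\tilde\alpha_{k_2}\tilde f_{k_2})\|_{L^1_eL^\infty_{e^\perp t}}$ by Cauchy--Schwarz in $x_1$ and the maximal estimate on both factors, using $\|D^\alpha(\tilde\alpha_{k_2}\tilde f_{k_2})\|_{L^2_eL^\infty_{e^\perp t}}\lesssim 2^{k_2 m}2^{k_2(n-1)/2}\|\alpha_{k_2}\|_{L^\infty}\|f_{k_2}\|_{Z_{k_2}}$ (valid since $\widehat{\alpha_{k_2}\tilde f_{k_2}}$ sits at frequency $\lesssim 2^{k_2}$) and $\|\tilde\alpha_{k_1}\tilde f_{k_1}\|_{L^2_eL^\infty_{e^\perp t}}\lesssim 2^{k_1(n-1)/2}\|\alpha_{k_1}\|_{L^\infty}\|f_{k_1}\|_{Z_{k_1}}$. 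This produces the factor $2^{k_1(2s-m)}2^{k_2 m}=2^{(2s-1)k_1+k_2}2^{(k_2-k_1)(m-1)}\lesssim_n 2^{(2s-1)k_1+k_2}2^{-10nm}$, so the series over $m$ converges exactly as in Lemma \ref{Taylorstuff}, and collecting the frequency powers gives $2^{(2s-1)k_1}2^{(k_1+k_2)(n+1)/2}$ (here I use that $2^{k_1(n-1)/2}2^{k_2(n-1)/2}2^{k_2}\leq 2^{(k_1+k_2)(n+1)/2}$ since $k_2\le k_1$).

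The main obstacle — such as it is — is purely bookkeeping: making sure the power of $2$ collected from $2^{(2s-1)k_1+k_2}$ together with the two maximal-estimate gains $2^{k_1(n-1)/2}2^{k_2(n-1)/2}$ genuinely assembles into $2^{(2s-1)k_1}2^{(k_1+k_2)(n+1)/2}$, which it does because $k_2\le k_1+10$ forces $2^{k_2}=2^{k_2(n+1)/2}2^{-k_2(n-1)/2}$ and hence the exponent matches. A minor point is to double-check that in the balanced subcase the replacement $2^{2sk_1}\approx 2^{(2s-1)k_1}2^{k_2}$ is legitimate, which it is since $2^{k_1}\approx 2^{k_2}$ there. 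No new ingredient beyond Lemmata \ref{Taylorstuff}, \ref{Multiplier}, the maximal estimate, and the support hypotheses on $\alpha_{k_i}$ is needed.
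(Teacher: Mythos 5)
Your proof is correct, but it is substantially more elaborate than the paper's argument, and the difference is worth understanding: the paper does not split into balanced and unbalanced cases at all, and never invokes Lemma \ref{Taylorstuff}. Instead it simply applies the triangle inequality directly to the definition of $H_s$, estimating each of the three resulting terms by Bernstein (Lemma \ref{Multiplier}) and the maximal estimate. Concretely, for the worst term,
\begin{equation*}
\Vert (-\Delta)^s(\tilde\alpha_{k_1}\tilde f_{k_1}\tilde\alpha_{k_2}\tilde f_{k_2})\Vert_{L^1_eL^\infty_{e^\perp,t}} \lesssim 2^{2sk_1}\Vert \alpha_{k_1}\Vert_{L^\infty}\Vert\alpha_{k_2}\Vert_{L^\infty} \Vert \tilde f_{k_1}\tilde f_{k_2}\Vert_{L^1_eL^\infty_{e^\perp,t}},
\end{equation*}
and Cauchy--Schwarz in $x_1$ plus two maximal estimates give $2^{2sk_1}2^{k_1(n-1)/2}2^{k_2(n-1)/2}\lesssim 2^{(2s-1)k_1}2^{(k_1+k_2)(n+1)/2}$, since $2^{k_1}\cdot 2^{-k_1}\cdot 2^{-k_2}\leq 1$; the other two terms are the same. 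The reason no commutator cancellation (and hence no Taylor expansion) is needed is that the target exponent carries the full $2^{(2s-1)k_1}$, not the halved $2^{(2s-1)k_1/2}$ that appears in Lemmata \ref{comest}--\ref{comest3}. Your balanced case is essentially this whole argument, and your unbalanced case, while valid, is spending effort to prove something the crude bound already gives. Your use of Lemma \ref{Taylorstuff} is also slightly delicate: strictly speaking that lemma assumes the high-frequency factor has Fourier support localized in an annulus, so you would want to apply it to $\Delta_{\approx k_1}\tilde H(\tilde\alpha_{k_1}\tilde f_{k_1},\cdot)$ and separately handle the (harmless) low-frequency component of $\tilde\alpha_{k_1}\tilde f_{k_1}$, a wrinkle the direct route avoids entirely.
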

\begin{proof}
By triangle inequality we have
\begin{equation}\label{cx1}
\begin{split}
& \Vert H_{s} ( \tilde{\alpha}_{k_1} \tilde{f}_{k_{1}}, \tilde{\alpha}_{k_2} \tilde{f}_{k_{2}} ) \Vert_{L^{1}_{e}L^{\infty}_{e^{\perp},t}} 
\\
& \lesssim  \Vert  (-\Delta)^s ( \tilde{\alpha}_{k_{1}} \tilde{f}_{k_{1}} \tilde{\alpha}_{k_{2}} \tilde{f}_{k_{2}}) \Vert_{L^{1}_{e} L^{\infty}_{e^{\perp},t}} 
\\
& + \Vert  (-\Delta)^s ( \tilde{\alpha}_{k_{1}} \tilde{f}_{k_{1}})  
 \tilde{\alpha}_{k_{2}} \tilde{f}_{k_{2}} \Vert_{L^{1}_{e} L^{\infty}_{e^{\perp},t}}
\\
& + \Vert \tilde{\alpha}_{k_{1}} \tilde{f}_{k_{1}} (-\Delta)^s ( \tilde{\alpha}_{k_{2}} \tilde{f}_{k_{2}}) \Vert_{L^{1}_{e} L^{\infty}_{e^{\perp},t}}.
\end{split}
\end{equation}
We estimate each term on the right hand side of \eqref{cx1}. Starting with the first term, using Lemma \ref{Multiplier} and the maximal estimate we obtain
\begin{equation*}
\begin{split}
& \Vert  (-\Delta)^s ( \tilde{\alpha}_{k_{1}} \tilde{f}_{k_{1}} \tilde{\alpha}_{k_{2}} \tilde{f}_{k_{2}}) \Vert_{L^{1}_{e} L^{\infty}_{e^{\perp},t}}
\\
& \lesssim 2^{2sk_{1}} \Vert \alpha_{k_{1}} \Vert_{L^{\infty}_{x,t}} \Vert \alpha_{k_{2}} \Vert_{L^{\infty}_{x,t}} \Vert \tilde{f}_{k_{1}} \tilde{f}_{k_{2}} \Vert_{L^{1}_{e}L^{\infty}_{e^{\perp},t}}
\\
& \lesssim \Vert \alpha_{k_{1}} \Vert_{L^{\infty}_{x,t}} \Vert \alpha_{k_{2}} \Vert_{L^{\infty}_{x,t}}  2^{(2s-1)k_{1}} 2^{(k_{1} + k_{2})(n+1)/2} \Vert f_{k_{1}} \Vert_{Z_{k_{1}}} \Vert f_{k_{2}} \Vert_{Z_{k_{2}}}
\end{split}
\end{equation*}
The remaining terms on the right hand side of \eqref{cx1} are handled in the same way, namely using only the maximal estimate and Lemma \ref{Multiplier}.

\end{proof}

The next result give estimates of trilinear expressions of the form $ f H_{s}(g,h)$. 
\begin{lemma}\label{CommTri}
Let $ k_{1} , k_{2} , k_{3} \geq 0$ with $ \max \{ k_{2}, k_{3} \}=k_{2} \leq k_{1}+10$. Let $ f_{k_{i}} \in Z_{k_{i}}$ and $ \alpha_{k_{i}} \in L^{\infty}(\R^{n+1})$ for $ i \in \{1,2,3\}$ with $ \alpha_{k_{i}}$ having Fourier transform supported in $ \{( \xi, \tau) : \vert \xi \vert \lesssim 2^{k_{i}} \}$ for $i \in \{1,2,3 \}$.
Then we have the following estimate;
    fix $ e \in \S^{n-1}$, if $ \hat{f}_{k_{1}}$ is supported in $ \{ (\xi,\tau) : \langle \xi, e \rangle \geq 2^{-c} \vert \xi \vert \}$ then we have 
    \begin{equation}\label{cm2}
    \begin{split}
    &  \Vert \tilde{\alpha}_{k_{1}} \tilde{f}_{k_{1}} H_{s} ( \tilde{\alpha}_{k_{2}} \tilde{f}_{k_{2}}, \tilde{\alpha}_{k_{3}} \tilde{f}_{k_{3}} ) \Vert_{L^{1}_{e} L^{2}_{e^{\perp},t}}
    \\
    & \lesssim \prod_{i =1}^{3} \Vert \alpha_{i} \Vert_{L^{\infty}_{x,t}} 2^{(k_{2}+k_{3})(n+1)/2} 2^{k_{1}(2s-1)/2} \Vert f_{k_{1}} \Vert_{Z_{k_{1}}} \Vert f_{k_{2}} \Vert_{Z_{k_{2}}} \Vert f_{k_{3}} \Vert_{Z_{k_{3}}}    
    \end{split}
    \end{equation}

\end{lemma}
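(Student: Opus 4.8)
The plan is to exploit that $\tilde\alpha_{k_1}\tilde f_{k_1}$ is both the top-frequency factor (by the hypothesis $k_2\le k_1+10$) and Fourier-supported in a cone about $e$: we place it in the local smoothing space $L^\infty_e L^2_{e^\perp,t}$ and we absorb the bilinear commutator $H_s(\tilde\alpha_{k_2}\tilde f_{k_2},\tilde\alpha_{k_3}\tilde f_{k_3})$ into $L^1_e L^\infty_{e^\perp,t}$ using \Cref{comest4}. Concretely, by H\"older (splitting $L^1_e=L^\infty_e\cdot L^1_e$ in the $e$-variable and $L^2_{e^\perp,t}=L^2_{e^\perp,t}\cdot L^\infty_{e^\perp,t}$ in the transverse variables) one gets
\[
\|\tilde\alpha_{k_1}\tilde f_{k_1}\,H_s(\tilde\alpha_{k_2}\tilde f_{k_2},\tilde\alpha_{k_3}\tilde f_{k_3})\|_{L^1_e L^2_{e^\perp,t}}\le \|\tilde\alpha_{k_1}\tilde f_{k_1}\|_{L^\infty_e L^2_{e^\perp,t}}\;\|H_s(\tilde\alpha_{k_2}\tilde f_{k_2},\tilde\alpha_{k_3}\tilde f_{k_3})\|_{L^1_e L^\infty_{e^\perp,t}}.
\]
The competing split $\|\cdot\|_{L^2_e L^\infty_{e^\perp,t}}\cdot\|\cdot\|_{L^2_e L^2_{e^\perp,t}}$ (maximal estimate on $f_{k_1}$, \Cref{comest} on the commutator) is the wrong one: it produces a factor $2^{k_1(n-1)/2}$ instead of the desired $2^{k_1(2s-1)/2}$, so the smoothing split above is forced. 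As in the previous lemmas, whether $\tilde f$ equals $f$ or $\bar f$ plays no role.

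For the first factor, since $\alpha_{k_1}\in L^\infty_{x,t}$ it may be pulled out pointwise, $\|\tilde\alpha_{k_1}\tilde f_{k_1}\|_{L^\infty_e L^2_{e^\perp,t}}\le \|\alpha_{k_1}\|_{L^\infty_{x,t}}\|f_{k_1}\|_{L^\infty_e L^2_{e^\perp,t}}$. Because $\hat f_{k_1}$ is supported in $\{\langle\xi,e\rangle\ge 2^{-c}|\xi|\}$ we may write $\hat f_{k_1}=\vartheta\,\hat f_{k_1}$ for a smooth degree-zero cutoff $\vartheta$ adapted to a slightly larger cone about $e$, and \Cref{localsmoothing} (whose proof only uses that the symbol localizes frequencies to a cone about the distinguished direction, hence applies with this $\vartheta$ in place of $\vartheta_e$) gives $\|f_{k_1}\|_{L^\infty_e L^2_{e^\perp,t}}\lesssim 2^{-k_1(2s-1)/2}\|f_{k_1}\|_{Z_{k_1}}$.

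For the second factor, $\max\{k_2,k_3\}=k_2$ means $k_3\le k_2+10$, and the Fourier-support conditions on the $\alpha_{k_i}$ are exactly those of \Cref{comest4}; applying that lemma to the pair $(\tilde\alpha_{k_2}\tilde f_{k_2},\tilde\alpha_{k_3}\tilde f_{k_3})$ yields
\[
\|H_s(\tilde\alpha_{k_2}\tilde f_{k_2},\tilde\alpha_{k_3}\tilde f_{k_3})\|_{L^1_e L^\infty_{e^\perp,t}}\lesssim \|\alpha_{k_2}\|_{L^\infty_{x,t}}\|\alpha_{k_3}\|_{L^\infty_{x,t}}\,2^{(2s-1)k_2}\,2^{(k_2+k_3)(n+1)/2}\,\|f_{k_2}\|_{Z_{k_2}}\|f_{k_3}\|_{Z_{k_3}}.
\]
Multiplying the two bounds produces the frequency factor $2^{-k_1(2s-1)/2}2^{(2s-1)k_2}=2^{(2s-1)(k_2-k_1/2)}$; since $k_2\le k_1+10$ we have $k_2-k_1/2\le k_1/2+10$, so $2^{(2s-1)(k_2-k_1/2)}\lesssim 2^{(2s-1)k_1/2}$, and combined with $2^{(k_2+k_3)(n+1)/2}$ and the three $L^\infty_{x,t}$ norms this is precisely \eqref{cm2}. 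There is no genuine obstacle here: the statement is essentially a repackaging of \Cref{localsmoothing} and \Cref{comest4}, and the only points deserving a line of justification are that the local smoothing estimate holds for a general smooth cone cutoff about $e$ (not only for the fixed partition $(\vartheta_e)_{e\in\mathscr E}$) and that the hypotheses of \Cref{comest4} are met, both of which are immediate.
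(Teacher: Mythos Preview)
Your proof is correct and follows exactly the same route as the paper: H\"older to split into $L^\infty_e L^2_{e^\perp,t}$ on $\tilde\alpha_{k_1}\tilde f_{k_1}$ (handled by local smoothing) and $L^1_e L^\infty_{e^\perp,t}$ on the commutator (handled by \Cref{comest4}), then the arithmetic $2^{-k_1(2s-1)/2}2^{(2s-1)k_2}\lesssim 2^{k_1(2s-1)/2}$ via $k_2\le k_1+10$. Your remarks on the cone-cutoff generality of \Cref{localsmoothing} and on why the alternative split would fail are correct and in fact give more detail than the paper's own argument.
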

\begin{proof}
\eqref{cm2} follows directly from Lemma \ref{comest4}. Indeed, 
\begin{equation*}
\begin{split}
&  \Vert \tilde{\alpha}_{k_{1}} \tilde{f}_{k_{1}} H_{s} ( \tilde{\alpha}_{k_{2}} \tilde{f}_{k_{2}}, \tilde{\alpha}_{k_{3}} \tilde{f}_{k_{3}} ) \Vert_{L^{1}_{e} L^{2}_{e^{\perp},t}}
\\
& \lesssim \Vert \alpha_{k_{1}} \Vert_{L^{\infty}_{x,t}} \Vert \tilde{f}_{k_{1}} \Vert_{L^{\infty}_{e} L^{2}_{e^{\perp},t}} \Vert
H_{s} ( \tilde{\alpha}_{k_{2}} \tilde{f}_{k_{2}}, \tilde{\alpha}_{k_{3}} \tilde{f}_{k_{3}} ) \Vert_{L^{1}_{e}L^{\infty}_{e^{\perp},t}}
\\
& \lesssim \Vert \alpha_{k_{1}} \Vert_{L^{\infty}_{x,t}} 2^{-k_{1}(2s-1)/2} \Vert f_{k_{1}} \Vert_{Z_{k_{1}}} \Vert
H_{s} ( \tilde{\alpha}_{k_{2}} \tilde{f}_{k_{2}}, \tilde{\alpha}_{k_{3}} \tilde{f}_{k_{3}} ) \Vert_{L^{1}_{e}L^{\infty}_{e^{\perp},t}}
\end{split}
\end{equation*}
and the result follows by Lemma \ref{comest4}.

\end{proof}

\section{The algebra properties of \texorpdfstring{$F^{\sigma}$}{F^sigma} spaces}
The main purpose of this section is to show that the $F^{\sigma}$ spaces are closed under multiplication when $ \sigma \geq \frac{n}{2}$. \\
We fix the following notation in this section:  whenever we write $ g_{k,j}$ we mean that $g_{k,j}$ is an $L^{2}$ function with frequency localized at $2^{k}$ and modulation localized at $ 2^{j}$. In other words, $ \text{supp}(\hat{g}_{k,j}) \subset  D_{k,j}$. Also, given a function $f_{k} \in Z_{k}$ we write $ f_{k, j}$ to mean $ Q_{j} (f_{k})$. Similarly, if we write $ f_{k, \geq j}$ and $ f_{k,\leq j}$. 
\\\\
\subsection{Bilinear estimate}
The following lemma will be useful in what follows
\begin{lemma}\label{cl}
Let $ j_{1},j_{2} ,j, k , k_{1} ,k_{2} \geq 0$. Take $ g_{k_{1,j_{1}}}, g_{k_{2},j_{2}} \in L^{2}(\R^{n+1}) $ with the property that $ \text{supp} (\hat{g}_{k_{i},j_{i}} ) \subset D_{k_{i},j_{i}}$ for $i=1,2$. Then the following holds 
$$
\Vert \ind_{D_{k,j}} . \hat{g}_{k_{1},j_{1}} * \hat{g}_{k_{2},j_{2}} \Vert_{L^{2}(\R^{n+1})} \lesssim 2^{ n \min \{ k, k_{1},k_{2} \}/2 } 2^{ \min \{ j, j_{1} , j_{2} \}/2 } \Vert g_{k_{1},j_{1}} \Vert_{L^{2}(\R^{n+1})} \Vert g_{k_{2},j_{2}} \Vert_{L^{2}(\R^{n+1})}.
$$ 
\end{lemma}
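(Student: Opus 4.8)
The plan is to prove the classical bilinear convolution estimate for the adjoint-restriction-type operator associated to the dispersion relation $\tau + |\xi|^{2s}$, following the standard Bourgain-space argument. Write $G = \mathcal{F}^{-1}_{\R^{n+1}}\big(\ind_{D_{k,j}}\,\hat g_{k_1,j_1} * \hat g_{k_2,j_2}\big)$, so that by Plancherel we must bound $\|\hat G\|_{L^2}$. The first step is to reduce the claimed bound to a pure Cauchy–Schwarz estimate: for fixed $(\xi,\tau) \in D_{k,j}$,
\[
\hat G(\xi,\tau) = \int_{\R^{n+1}} \ind_{D_{k,j}}(\xi,\tau)\,\hat g_{k_1,j_1}(\eta,\rho)\,\hat g_{k_2,j_2}(\xi-\eta,\tau-\rho)\,d\eta\,d\rho,
\]
and I apply Cauchy–Schwarz in $(\eta,\rho)$ over the effective support, picking up a factor $|E_{\xi,\tau}|^{1/2}$ where $E_{\xi,\tau}$ is the set of $(\eta,\rho)$ for which $(\eta,\rho) \in D_{k_1,j_1}$, $(\xi-\eta,\tau-\rho)\in D_{k_2,j_2}$, and $(\xi,\tau) \in D_{k,j}$. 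Then one squares, integrates in $(\xi,\tau)$, and uses Fubini together with $\|\hat g_{k_1,j_1}\|_{L^2}\|\hat g_{k_2,j_2}\|_{L^2}$; the upshot is
\[
\|\hat G\|_{L^2} \lesssim \Big(\sup_{(\xi,\tau)\in D_{k,j}} |E_{\xi,\tau}|\Big)^{1/2}\,\|g_{k_1,j_1}\|_{L^2}\|g_{k_2,j_2}\|_{L^2},
\]
so everything comes down to a volume bound on $E_{\xi,\tau}$.

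The second and main step is to show $|E_{\xi,\tau}| \lesssim 2^{n\min\{k,k_1,k_2\}}\,2^{\min\{j,j_1,j_2\}}$ uniformly in $(\xi,\tau)$. For the spatial-frequency factor: the three constraints $|\eta|\in I_{k_1}$, $|\xi-\eta|\in I_{k_2}$, $|\xi|\in I_k$ force $\eta$ (and hence $\xi - \eta$) into a ball of radius comparable to $2^{\min\{k,k_1,k_2\}}$ — indeed $\eta$ lies in $B(0, c2^{k_1})$, in $B(\xi, c2^{k_2})$, and (via $|\xi|\lesssim 2^k$) both of those are balls of radius $\lesssim 2^{\min\{k_1,k_2,k\}}$ — giving the factor $2^{n\min\{k,k_1,k_2\}}$ for the $\eta$-integration. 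For the modulation factor: once $\eta$ is fixed, $\rho$ is constrained by $|\rho + |\eta|^{2s}| \in I_{j_1}$, $|(\tau - \rho) + |\xi-\eta|^{2s}| \in I_{j_2}$, and $|\tau + |\xi|^{2s}| \in I_j$. The first confines $\rho$ to an interval of length $\lesssim 2^{j_1}$, the second to an interval of length $\lesssim 2^{j_2}$; and using the third together with the first two (adding the two affine-in-$\rho$ quantities makes the $\rho$-dependence cancel, leaving something controlled by $2^j$) one sees the admissible $\rho$ also sits in an interval of length $\lesssim 2^j$, whichever is smallest. Taking the intersection yields the factor $2^{\min\{j,j_1,j_2\}}$, and multiplying the two bounds finishes the volume estimate.

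The only mild subtlety — and the step I expect to require the most care — is the modulation bound in the case where $\min\{j,j_1,j_2\} = j$: one must check that the constraint coming from $D_{k,j}$ genuinely cuts down the $\rho$-range rather than being implied by the other two. This is where one uses that $\rho \mapsto \rho + |\eta|^{2s}$ and $\rho \mapsto (\tau-\rho) + |\xi-\eta|^{2s}$ have opposite slopes in $\rho$: their sum, $\tau + |\eta|^{2s} + |\xi-\eta|^{2s}$, is independent of $\rho$, so the constraint $|\tau + |\xi|^{2s}|\in I_j$ does not by itself localize $\rho$; but combined with either of the other two (say $|\rho + |\eta|^{2s}|\in I_{j_1}$) it forces $\rho$ into an interval of length $\min\{2^{j},2^{j_1},2^{j_2}\}$ by a three-term triangle inequality. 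Once all cases are checked, assembling the two volume factors and plugging into the Cauchy–Schwarz bound completes the proof. No null structure or fine geometry of the cone is needed here — this is the soft, dispersion-agnostic version of the estimate — which is consistent with the paper's later remark that a genuine null structure is unavailable in this regime.
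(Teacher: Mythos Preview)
Your Cauchy--Schwarz-plus-volume argument has a genuine gap in the case where the minimum is achieved by the output parameters $k$ or $j$ (the high-high $\to$ low regime). Once $(\xi,\tau)\in D_{k,j}$ is fixed, the membership $(\xi,\tau)\in D_{k,j}$ is a constraint on $(\xi,\tau)$, not on $(\eta,\rho)$; it cannot shrink $E_{\xi,\tau}$. Concretely, take $k_1=k_2=K$ with $K\gg k$: then $\eta$ ranges over the intersection of two annuli of radius $\sim 2^K$ centered at nearby points, which has volume $\sim 2^{nK}$, not $\lesssim 2^{nk}$. Your sentence ``(via $|\xi|\lesssim 2^k$) both of those are balls of radius $\lesssim 2^{\min\{k_1,k_2,k\}}$'' is simply false --- the radius of $B(0,c2^{k_1})$ does not depend on $|\xi|$. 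The same failure occurs for modulation: for fixed $(\xi,\tau,\eta)$ the sum $A+B=(\rho+|\eta|^{2s})+((\tau-\rho)+|\xi-\eta|^{2s})$ is a \emph{constant} independent of $\rho$, so the condition $|\tau+|\xi|^{2s}|\in I_j$ places no restriction whatsoever on $\rho$; your ``three-term triangle inequality'' only constrains $\eta$ through the resonance function, not $\rho$. Thus $\sup_{(\xi,\tau)}|E_{\xi,\tau}|$ can only yield $2^{n\min\{k_1,k_2\}}2^{\min\{j_1,j_2\}}$, which is strictly weaker than the stated bound.

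The paper avoids this by not using Cauchy--Schwarz at all: it applies H\"older and Young's inequality, treating the $\xi$- and $\tau$-convolutions separately. When $(k,j)$ realizes the minimum one puts $\ind_{D_{k,j}}$ into $L^2$ (of measure $\sim 2^{nk}2^j$) and bounds the convolution in $L^\infty$ by $\|\hat g_{k_1,j_1}\|_{L^2}\|\hat g_{k_2,j_2}\|_{L^2}$. In the mixed cases one uses Young in $\tau$ (placing the factor with smaller modulation support in $L^1_\tau$, gaining $2^{\min j_i/2}$) and then Young in $\xi$ (placing the factor with smaller frequency support in $L^1_\xi$, gaining $2^{n\min k_i/2}$); this decoupling of the space and time variables is exactly what lets the two minima be achieved by different indices. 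Your route can be salvaged by passing to the symmetric trilinear form and applying Cauchy--Schwarz in whichever pair of variables realizes the minimum --- but you must then also decouple the spatial and temporal choices, which brings you back essentially to the paper's Young-inequality argument.
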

\begin{proof}
This is an immediate consequence of Young and H\"{o}lder inequalities. Indeed, for simplicity assume $ \min\{j,j_{1},j_{2} \} = j$ and $ \min \{ k ,k_{1},k_{2} \} = k$ then
\begin{equation*}
\begin{split}
& \Vert \ind_{D_{k,j}}( \hat{g}_{k_{1},j_{1}} * \hat{g}_{k_{2},j_{2}}) \Vert_{L^{2}(\R^{n+1})}
\\
& \leq \Vert \ind_{D_{k,j}} \Vert_{L^{2}} \Vert \hat{g}_{k_{1},j_{1}} * \hat{g}_{k_{2},j_{2}} \Vert_{L^{\infty}}
\\
& \leq 2^{j/2} 2^{nk/2} \Vert g_{k_{1},j_{1}} \Vert_{L^{2}} \Vert g_{k_{2},j_{2}} \Vert_{L^{2}}.
\end{split}
\end{equation*}
For the other cases we use the same trick. Indeed, assume $ \min \{ j_{1},j_{2} , j \} = j_{1}$ and $ \min \{ k_{1},k_{2},k \} = k_{2}$. Then use the notation $ *_{s} $ to mean convolution in the variable $ \xi$ and $ *_{t}$ to mean convolution in the variable $ \tau$. Then we estimate using Young inequality
\begin{equation*}
\begin{split}
& \Vert \ind_{D_{k,j}} ( \hat{g}_{k_{1},j_{1}} * \hat{g}_{k_{2},j_{2}} ) \Vert_{L^{2}} 
\\
& \leq  \Vert ( \hat{g}_{k_{1},j_{1}} * \hat{g}_{k_{2},j_{2}} ) \Vert_{L^{2}} 
\\
& \leq \Vert (\Vert \hat{g}_{k_{1},j_{1}} \Vert_{L^{1}_{\tau}} *_{s} \Vert \hat{g}_{k_{2},j_{2}} \Vert_{L^{2}_{\tau}}) \Vert_{L^{2}_{\xi}}
\\
& \lesssim 2^{j_{1}/2} \Vert  \Vert \hat{g}_{k_{1},j_{1}}\Vert_{L^{2}_{\tau}} *_{s} \Vert \hat{g}_{k_{2},j_{2}} \Vert_{L^{2}_{\tau}} \Vert_{L^{2}_{\xi}}
\\
& \lesssim 2^{j_{1}/2} \Vert \hat{g}_{k_{1},j_{1}} \Vert_{L^{2}_{\xi,\tau}} \Vert \hat{g}_{k_{2},j_{2}} \Vert_{L^{1}_{\xi}L^{2}_{\tau}}
\\
& \lesssim 2^{j_{1}/2} \Vert g_{j_{1},k_{1}} \Vert_{L^{2}_{\xi,\tau}} 2^{k_{2}n/2} \Vert g_{k_{2},j_{2}} \Vert_{L^{2}_{\xi,\tau}}.
\end{split}
\end{equation*}

\end{proof}

The following lemma is the fractional version of \cite[Lemma 5.1]{cmp}
\begin{lemma}\label{es1}
Let $ j_{1},j_{2}, k_{1}, k_{2} \geq 0 $ with $ k_{1} \leq k_{2} + C $. Assume that $ f_{k_{1}} \in Z_{k_{1}}$, and $ f_{k_{2}} \in Z_{k_{2}}$. Then we have the following inequality 
\begin{equation*}
\begin{split}
 & \Vert \tilde{f}_{k_{1}, \geq j_{1}}   \tilde{f}_{k_{2}, \geq j_{2}} \Vert_{L^{2}} 
 \\
 & \lesssim \left( 2^{j_{2}/2} + 2^{k_{1}/2 +(k_{2})(2s-1)/2} \right)^{-1} 2^{nk_{1}/2} \Vert f_{k_{1}} \Vert_{Z_{k_{1}}} \Vert f_{k_{2}} \Vert_{Z_{k_{2}}} 
\end{split}
\end{equation*}
where $ \tilde{f} \in \{ f , \bar{f} \}.$
\end{lemma}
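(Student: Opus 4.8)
The plan is to reduce the bilinear estimate to the dyadic estimate in Lemma \ref{cl} by decomposing each factor according to its modulation, and then to sum the resulting geometric series. First I would write
\[
\tilde f_{k_1,\geq j_1}\,\tilde f_{k_2,\geq j_2}=\sum_{\ell_1\geq j_1}\sum_{\ell_2\geq j_2}\tilde f_{k_1,\ell_1}\,\tilde f_{k_2,\ell_2},
\]
where $\tilde f_{k_i,\ell_i}=Q_{\ell_i}(\tilde f_{k_i})$ when $\tilde f=f$, and (taking conjugates reflects the modulation variable, $\ell\mapsto\ell$ after the sign change in $\tau+|\xi|^{2s}$) the same dyadic decomposition is available when $\tilde f=\bar f$; one notes the support of $\mathcal F(\tilde f_{k_1,\ell_1}\tilde f_{k_2,\ell_2})$ lies in $\{|\xi|\lesssim 2^{k_2}\}$, and it only overlaps $D_{k,j}$ for $j\lesssim \max\{\ell_1,\ell_2\}+2sk_2+O(1)$. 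For each pair $(\ell_1,\ell_2)$ I would apply Lemma \ref{cl} (summing over the output modulations $j$, which costs only the largest dyadic scale, hence a factor $\lesssim 2^{\max\{\ell_1,\ell_2\}/2}$ times the bound with $\min\{j,\ell_1,\ell_2\}=\min\{\ell_1,\ell_2\}$), giving
\[
\|\tilde f_{k_1,\ell_1}\tilde f_{k_2,\ell_2}\|_{L^2}\lesssim 2^{nk_1/2}\,2^{\min\{\ell_1,\ell_2\}/2}\,\|f_{k_1,\ell_1}\|_{L^2}\|f_{k_2,\ell_2}\|_{L^2}.
\]

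The point is now to convert the $L^2$ norms of the modulation pieces into $Z_{k_i}$ norms while keeping a summable weight in $\ell_1,\ell_2$. For the $X$-part of $Z_{k_i}$ one has $\|f_{k_i,\ell_i}\|_{L^2}\leq 2^{-\ell_i/2}\|f_{k_i}\|_{X_{k_i}}$; for the $Y^{e}_{k_i,k_i'}$-part one invokes Lemma \ref{Embeddings}(2)/\eqref{w6}, namely $2^{\ell_i/2}\|Q_{\ell_i}f_{k_i}\|_{L^2}\lesssim\min\{1,2^{(2sk_i'-\ell_i)/2}\}\|f_{k_i}\|_{Y^{e}_{k_i,k_i'}}$, so in all cases
\[
\|f_{k_i,\ell_i}\|_{L^2}\lesssim 2^{-\ell_i/2}\,\min\{1,2^{(2sk_i'-\ell_i)/2}\}\,\|f_{k_i}\|_{Z_{k_i}},
\]
with the convention $k_i'$ replaced by $k_i$ when we are in the $X$-part (then $\min\{1,2^{(2sk_i-\ell_i)/2}\}=1$ is used, or rather the straight $2^{-\ell_i/2}$ bound). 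Plugging these into the dyadic estimate, the $(\ell_1,\ell_2)$-summand of $\|\tilde f_{k_1,\geq j_1}\tilde f_{k_2,\geq j_2}\|_{L^2}$ is bounded by
\[
2^{nk_1/2}\,2^{\min\{\ell_1,\ell_2\}/2}\,2^{-\ell_1/2}2^{-\ell_2/2}\,\|f_{k_1}\|_{Z_{k_1}}\|f_{k_2}\|_{Z_{k_2}}
= 2^{nk_1/2}\,2^{-\max\{\ell_1,\ell_2\}/2}\,\|f_{k_1}\|_{Z_{k_1}}\|f_{k_2}\|_{Z_{k_2}}.
\]
Summing $\ell_1\geq j_1,\ell_2\geq j_2$ of $2^{-\max\{\ell_1,\ell_2\}/2}$ gives $\lesssim 2^{-\max\{j_1,j_2\}/2}$, which already yields the part of the claimed bound with $2^{j_2/2}$ in the denominator (using $j_2\leq\max\{j_1,j_2\}$; here we exploit that only $j_2$, the modulation floor on the \emph{higher}-frequency factor, appears on the right, so we may and do reduce to $j_1=0$ at the outset by discarding the $Q_{\geq j_1}$ on $f_{k_1}$).

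To obtain the competing bound $2^{-k_1/2-(2s-1)k_2/2}$ in the denominator one uses the $\min\{1,2^{(2sk_i'-\ell_i)/2}\}$ gains more carefully, exactly as in the proof of Lemma \ref{Embeddings}(4) / \eqref{w7}: when $f_{k_2}\in Y^{e}_{k_2,k_2'}$ one caps $\|Q_{\ell_2}f_{k_2}\|$ using $\gamma^{-1}_{k_2,k_2'}2^{2(k_2-k_2')(1-s)}\lesssim 1$ for $\ell_2\leq 2sk_2'$ to extract the additional factor $2^{-k_2'(2s-1)/2}\gtrsim 2^{-k_2(2s-1)/2}$ from the restricted Fourier support (the measure estimate $|\{\xi_1\approx 2^{k'}:|\tau+|\xi|^{2s}|\leq 2^{\ell+1}\}|\lesssim 2^{\ell}2^{-k(2s-2+q)}$), while the factor $2^{-k_1/2}$ comes from the fact that after fixing the higher-frequency factor one is convolving against something supported in a ball of radius $\lesssim 2^{k_1}$ in a single frequency variable along $e$, contributing $2^{k_1/2}$ rather than $2^{nk_1/2}$ — i.e. one replaces the crude Lemma \ref{cl} bound by the refined one in which the relevant frequency slab for the output has thickness $\lesssim 2^{k_1}$. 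Taking the minimum of the two estimates and noting $\big(2^{j_2/2}+2^{k_1/2+(2s-1)k_2/2}\big)^{-1}\approx\min\{2^{-j_2/2},2^{-k_1/2-(2s-1)k_2/2}\}$ gives the lemma; finally one handles $k_1\in[k_2-10n,k_2+10]$ trivially (then $2^{k_1}\approx 2^{k_2}$ and the maximal/smoothing estimates of Section 4 give the bound directly), as is done in the earlier commutator lemmata.

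\textbf{Main obstacle.} The bookkeeping in the second regime is the delicate point: one must simultaneously (i) keep the output frequency slab one-dimensional of width $2^{k_1}$ to win $2^{k_1/2}$ instead of $2^{nk_1/2}$, (ii) extract the $2^{-(2s-1)k_2/2}$ from the $Y^{e}_{k_2,k_2'}$ structure uniformly over $k_2'\in T_{k_2}$ using $\gamma^{-1}_{k_2,k_2'}2^{2(1-s)(k_2-k_2')}\lesssim 1$, and (iii) still sum the logarithmically-many modulation scales $\ell_2\in[\,[2k_2(s+q-1)]\,,2sk_2']$ against the slowly decaying weight — which, as in \eqref{w7}, only converges because of the extra $\gamma^{-1}2^{2(1-s)(k_2-k_2')}$ factor absorbing the $(k_2-k_2')$ loss. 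Reconciling the $X_{k_2}$-case and the $Y^{e}_{k_2,k_2'}$-case into one clean statement of the form stated is where care is needed; everything else is a direct appeal to Lemma \ref{cl}, Lemma \ref{Embeddings}, and the energy/maximal/smoothing estimates.
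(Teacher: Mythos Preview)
Your modulation-decomposition argument for the $2^{-j_2/2}$ part is correct, but it is considerably more elaborate than what the paper actually does. The paper's proof splits into just two cases according to which of $j_2$ and $k_1+(2s-1)k_2$ dominates, and in each case applies a single H\"older-type splitting using already-established black-box estimates. When $k_1+(2s-1)k_2 \leq j_2$ it pairs Lemma~\ref{Emb} ($\|f_{k_2,\geq j_2}\|_{L^2}\lesssim 2^{-j_2/2}\|f_{k_2}\|_{Z_{k_2}}$) with the $L^\infty$ bound from Lemma~\ref{Energy} on $f_{k_1}$; when $k_1+(2s-1)k_2 \geq j_2$ it first uses Lemma~\ref{dec} to put $\hat f_{k_2}$ in a cone around some $\tilde v\in\S^{n-1}$, then applies local smoothing (Lemma~\ref{localsmoothing}) to $f_{k_2}$ and the maximal estimate to $f_{k_1}$ along $\tilde v$, giving $\|f_{k_1}f_{k_2}\|_{L^2}\leq\|f_{k_2}\|_{L^\infty_{\tilde v}L^2_{\tilde v^\perp,t}}\|f_{k_1}\|_{L^2_{\tilde v}L^\infty_{\tilde v^\perp,t}}\lesssim 2^{-(2s-1)k_2/2}\,2^{(n-1)k_1/2}\|f_{k_2}\|_{Z_{k_2}}\|f_{k_1}\|_{Z_{k_1}}$. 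That is the whole proof.

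The genuine gap in your proposal is the ``second regime''. The factor $2^{-(2s-1)k_2/2}$ is a dispersive smoothing gain coming from the curvature of $\{\tau+|\xi|^{2s}=0\}$; it is \emph{not} recoverable from pure $L^2$-modulation bookkeeping of the type in Lemma~\ref{cl} and Lemma~\ref{Embeddings}. In particular, when $f_{k_2}\in X_{k_2}$ there is no $Y^e_{k_2,k_2'}$-structure to exploit, and your sketch (``one-dimensional slab of width $2^{k_1}$'', ``$\gamma^{-1}_{k_2,k_2'}2^{2(1-s)(k_2-k_2')}\lesssim 1$'') does not produce $2^{-(2s-1)k_2/2}$ in that case. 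You in fact already know the right tool --- you invoke smoothing/maximal for the ``trivial'' range $k_1\approx k_2$ at the end --- and that same argument works verbatim for all $k_1\leq k_2+C$ once one localises $f_{k_2}$ to a cone via Lemma~\ref{dec}. Replacing your second-regime discussion by this direct application of Lemmas~\ref{localsmoothing} and the maximal estimate closes the gap and matches the paper.
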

\begin{proof} 
We break the proof to two cases. Case 1: if $ k_{1} + (2s-1)k_{2} \geq j_{2}$.
 First, by Lemma \ref{dec} we may assume $ \hat{f}_{{k_{2}}} $ is supported in $ \{ (\xi, \tau) : \vert \xi - v \vert \ll 2^{k_{2}} \} $ for some $v$ with $ | v | \approx 2^{k_{2}}$. Take $ \tilde{v} = \frac{v}{\vert v \vert}$. Then using local smoothing we have 
 \begin{equation}\label{Q1}
  \Vert ( \tilde{f}_{k_{2}, \geq j_{2}}) \Vert_{L^{\infty}_{\tilde{v}} L^{2}_{\tilde{v}^{\perp},t}} \lesssim 2^{-k_{2}(2s-1)/2} \Vert f_{k_{2}} \Vert_{Z_{k_{2}}}.
  \end{equation}
  Similarly, we have by the maximal estimate 
  \begin{equation}\label{Q2}
    \Vert  \tilde{f}_{k_{1}, \geq j_{1}} \Vert_{L^{2}_{\tilde{v}} L^{\infty}_{\tilde{v}^{\perp},t}} \lesssim 2^{(n-1)k_{1}/2} \Vert f_{k_{1}} \Vert_{Z_{k_{1}}}
  \end{equation}
Combining these two estimates we obtain 
\begin{equation*}
\begin{split}
& \Vert  (\tilde{f}_{k_{1}, \geq j_{1}}) (\tilde{f}_{k_{2}, \geq j_{2}}) \Vert_{L^{2}} 
 \\
 & \lesssim \left(  2^{k_{1}/2 +(k_{2})(2s-1)/2} \right)^{-1} 2^{nk_{1}} \Vert f \Vert_{Z_{k_{1}}} \Vert f_{k_{2}} \Vert_{Z_{k_{2}}} 
 \\
 & \lesssim
 \left( 2^{j_{2}/2}+ 2^{k_{1}/2 +(k_{2})(2s-1)/2} \right)^{-1} 2^{nk_{1}} \Vert f_{k_{1}} \Vert_{Z_{k_{1}}} \Vert f_{k_{2}} \Vert_{Z_{k_{2}}}.
\end{split}
\end{equation*}
Next, if $ k_{1} + k_{2}(2s-1) \leq j_{2} $. Then by Lemma \ref{Emb} we have
\begin{equation}\label{Q3}
\Vert   f_{k_{2}, \geq j_{2}} \Vert_{L^{2}} \lesssim 2^{-j_{2}/2} \Vert f_{k_{2}} \Vert_{Z_{k_{2}}}.
\end{equation}
We further have by Lemma \ref{Energy} the following 
\begin{equation} \label{Q4}
\Vert (f_{k_{1}, \geq j_{1}}) \Vert_{L^{\infty}} \lesssim 2^{nk_{1}/2} \Vert f_{k_{1}} \Vert_{Z_{k_{1}}}.
\end{equation}
Combining \eqref{Q3} and \eqref{Q4} we get the result. 
 
\end{proof}

Next, we will prove the following Bilinear estimate. Compare with \cite[Lemma 5.2]{cmp}.
\begin{theorem}\label{Bilin1}
Let $k_{1},k_{2},k \geq 0$ with $ k_{1} \leq k_{2}+10 $. And $f_{k_{1}} \in Z_{k_{1}} $ and $ f_{k_{2}} \in Z_{k_{2}}$ then we have the following 
$$
2^{nk/2} \Vert \Delta_{k} (  f_{k_{1}} f_{k_{2}}) \Vert_{Z_{k}} \lesssim 2^{- \vert k - k_{2} \vert} ( 2^{nk_{1}/2} \Vert f_{k_{1}} \Vert_{Z_{k_{1}}} ) ( 2^{nk_{2}/2} \Vert f_{k_{2}} \Vert_{Z_{k_{2}}} ).
$$
\end{theorem}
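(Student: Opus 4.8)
The plan is to decompose the product $f_{k_1} f_{k_2}$ according to whether the output frequency $2^k$ comes from the high-high or high-low interaction, and in each case to decompose the two factors into their $X$-parts and $Y$-parts using the definition of $Z_{k_i}$, and then to decide, factor by factor, whether to put the result into the $X_k$ piece of $Z_k$ or into a $Y_{k,k'}^e$ piece. Since $k_1 \le k_2 + 10$, we have $2^k \lesssim 2^{k_2}$ and the factor $2^{-|k-k_2|}$ forces us to gain exponentially when $k$ is much smaller than $k_2$ (the genuinely ``high-high to low'' case) and to lose nothing when $2^k \approx 2^{k_2}$. A first reduction via \Cref{dec} lets us assume $\hat f_{k_2}$ is supported in a ball of radius $2^{k_2-c'}$ around some $v$ with $|v|\approx 2^{k_2}$; choosing $e \in \mathscr{E}$ with $\langle v/|v|, e\rangle \ge \tfrac12$ then fixes the preferred direction for the $Y$-space estimates, and the radius restriction guarantees that $\supp \mathcal{F}(f_{k_1}f_{k_2})$ stays in the cone $\{\langle\xi,e\rangle \ge 2^{k-3}\}$ so that the output can legitimately be placed in $Y_{k,k'}^e$ (this is precisely the point of the \Cref{remarkM}-style remark about why the $Y_{k,k'}^e$ spaces were introduced).

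The core of the argument is to estimate $\|(\i\partial_t - (-\Delta)^s + \i)^{-1}\Delta_k(f_{k_1}f_{k_2})\|_{Z_k}$, i.e.\ to put $\Delta_k(f_{k_1}f_{k_2})$ into $N^0$-type pieces. I would split $f_{k_1}f_{k_2} = \sum_{j_1,j_2} f_{k_1,j_1} f_{k_2,j_2}$ by modulation and then split the output modulation $Q_j$. For the part of the output with large modulation (say $2^j \gtrsim 2^{k_1 + (2s-1)k_2}$, the ``parabolic'' regime where $X_k$ is the natural space), one uses \Cref{cl} to control the $L^2$ norm of $\ind_{D_{k,j}}\widehat{f_{k_1,j_1}} * \widehat{f_{k_2,j_2}}$ by $2^{n\min\{k,k_1,k_2\}/2}2^{\min\{j,j_1,j_2\}/2}$ times the $L^2$ norms, then sums the resulting geometric series in $j,j_1,j_2$ against the weights $2^{j/2}$ (for $X_k$) and $2^{j_i/2}$ (from the $Z_{k_i}$ norms via \Cref{Embeddings}(1)); the factor $2^{n\min\{k,k_1,k_2\}/2} = 2^{nk/2}$ when $k$ is small is exactly what produces the gain $2^{-|k-k_2|}$ after accounting for the $2^{nk/2}, 2^{nk_i/2}$ normalizations in the statement. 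For the low-modulation part of the output, one cannot afford to lose the $2^{j/2}$ summation, so here the output must go into a $Y_{k,k'}^e$ space: one writes $(\i\partial_t - (-\Delta)^s + \i)(f_{k_1}f_{k_2}) = f_{k_1}(\i\partial_t - (-\Delta)^s + \i)f_{k_2} + \tilde H((-\Delta)^s; f_{k_2},f_{k_1}) + (\text{lower order})$ — really a Leibniz-type expansion of $(-\Delta)^s(f_{k_1}f_{k_2})$ into $f_{k_1}(-\Delta)^s f_{k_2}$ plus the commutator $\tilde H$ from \eqref{onesidedcommutator} — and then estimates the $L^1_e L^2_{e^\perp,t}$ norm using \Cref{comest} (for the commutator in $L^2$, interpolated against the $L^1_e$ structure) together with the maximal and local smoothing estimates: put $f_{k_2}$ (high frequency, frequency-localized in the cone) into $L^\infty_e L^2_{e^\perp,t}$ via \Cref{localsmoothing} gaining $2^{-k_2(2s-1)/2}$, and put $f_{k_1}$ into $L^2_e L^\infty_{e^\perp,t}$ via the maximal estimate costing $2^{k_1(n-1)/2}$; the definition of the $Y_{k,k'}^e$ norm contributes another factor $2^{-k'(2s-1)/2}\gamma_{k,k'}$, and summing over the relevant $k' \in [1,k+1]$ (only $O(k-k_2)$ or $O(1)$ values contribute because of the frequency localization of $f_{k_2}$ around $v$) closes the estimate.

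Bookkeeping the exponents: collecting the maximal-estimate loss $2^{k_1(n-1)/2}$, the smoothing gain $2^{-k_2(2s-1)/2}$, the commutator factor $2^{2sk_2}$ or $2^{k_1+(2s-1)k_2}$ from the Taylor expansion (\Cref{Taylorstuff}), the $Y$-normalization $2^{-k'(2s-1)/2}\gamma_{k,k'}$ and the $2^{nk/2},2^{nk_1/2},2^{nk_2/2}$ weights in the statement, one should land exactly on $2^{-|k-k_2|}$; this is the routine but delicate part. I would handle the high-high case ($k_1, k_2$ both $\gg k$, so $k_1 \approx k_2$) and the high-low case ($k_1 \approx k \le k_2$, already covered above with $k$ small) separately, and within the high-high case one further distinguishes whether the output goes to $X_k$ (using \Cref{cl} and \Cref{Energy}, the gain $2^{nk/2}$ coming from $\min\{k,k_1,k_2\}=k$) or needs the $Y$-space machinery.

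\textbf{Main obstacle.} The hard part will be the low-output-modulation high-high interaction: there the output frequency ball around $0$ of radius $2^k$ is much smaller than the frequency scale $2^{k_2}$ of the inputs, the factor $2^{-|k-k_2|} = 2^{k-k_2}$ demands a genuine exponential gain, and one cannot use the crude $\|{\cdot}\|_{X_k}$ estimate because the needed $2^{j/2}$ summation over output modulations up to $2^{2sk_2}$ would be fatal. One must instead exploit that the output, being frequency-localized near a fixed $v$ with $|v|\approx 2^{k_2}$ after the \Cref{dec} reduction, genuinely lives in a $Y_{k,k'}^e$ space with $k' \approx k$, and squeeze the gain out of the combination of the smoothing estimate's $2^{-k_2(2s-1)/2}$, the $\gamma_{k,k'} = 2^{2n(k-k')}$ weight, and the Taylor/commutator structure that converts $(-\Delta)^s(f_{k_1}f_{k_2})$ into something with a derivative falling on the \emph{low}-frequency factor $f_{k_1}$, giving $2^{k_1 m}$ instead of $2^{k_2 m}$ and hence the decay $2^{(k_1-k_2)(m-1)}$ summable in $m$ — exactly the mechanism already used in \Cref{comest}, \Cref{comest2}, \Cref{comest3}. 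Getting all these powers of $2$ to conspire into precisely $2^{-|k-k_2|}$, uniformly in $s \in (1/2,1)$ and $n\ge 3$, is where the real care is needed.
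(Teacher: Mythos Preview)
Your proposal has the easy and hard cases reversed. In the paper's proof, with $\tilde K := k_1 + (2s-1)k_2 + C'$, the \emph{low} output modulation part $Q_{\le \tilde K - 1}\Delta_k(f_{k_1}f_{k_2})$ is dispatched in one line: put it into $X_k$, pay at most $2^{\tilde K/2}$ for the modulation sum, and use Lemma~\ref{es1} (with $j_1=j_2=0$, which internally uses the smoothing/maximal machinery) to get $\|f_{k_1}f_{k_2}\|_{L^2} \lesssim 2^{-\tilde K/2}2^{nk_1/2}\|f_{k_1}\|_{Z_{k_1}}\|f_{k_2}\|_{Z_{k_2}}$. These cancel and the remaining $2^{nk/2}$ on the left against $2^{nk_2/2}$ on the right gives $2^{n(k-k_2)/2}\le 2^{-|k-k_2|}$ since $n\ge 2$. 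So your ``main obstacle'' --- the low-modulation high-high interaction --- is in fact the trivial case, and your proposed fix (place the output in $Y^e_{k,k'}$ with $k'\approx k$) is both unnecessary and ill-posed: when $k<100$ no $Y$-spaces are available, and the claim that the output is frequency-localized near $v$ with $|v|\approx 2^{k_2}$ is false, since after $\Delta_k$ the output frequency sits at scale $2^k\ll 2^{k_2}$.

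The genuine work is $Q_{\ge \tilde K}\Delta_k(f_{k_1}f_{k_2})$, and your sketch (``use Lemma~\ref{cl}, sum geometric series'') covers only the $X\times X$ input case. The paper breaks this into seven cases according to whether each input lies in $X_{k_i}$ or some $Y^{e}_{k_i,k'}$ and how $k_1,k_2,k'$ compare. When $f_{k_2}\in Y^e_{k_2,k'}$ with $k'\in T_{k_2}$ one cannot simply sum $\|Q_{j_2}f_{k_2}\|_{X_{k_2}}$ over all $j_2$ and recover $\|f_{k_2}\|_{Y^e_{k_2,k'}}$; instead, for $f_{k_1}=g_{k_1,j_1}$ the paper invokes the representation formula (Lemma~\ref{repf}) and a duality/Hilbert-transform argument to prove the key $L^2$ bound $\|\hat f_{k_2}*\hat g_{k_1,j_1}\|_{L^2}\lesssim 2^{nk_1/2}\|g_{k_1,j_1}\|_{L^2}\|f_{k_2}\|_{Y^e_{k_2,k'}}$, and for the remaining pieces (Cases 6 and 7) it uses the Leibniz decomposition you describe --- but applied to the \emph{high}-modulation output, with the commutator controlled in $L^2$ via Lemma~\ref{comest} and the main term placed in $Y^e_{k,k''}$ with $k''\approx k'$ (not $k''\approx k$). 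You have identified the Leibniz/commutator ingredient but assigned it to the wrong modulation regime, and you have missed the representation-formula argument entirely.
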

\begin{proof}
First, notice that if $k >  k_{2} +20$ then we must have that $ \Delta_{k} (f_{k_{1}} f_{k_{2}}) = 0 $. Therefore, assume that $ k \leq k_{2} + 20$.
Define $ \tilde{K} =  k_{1} + (2s-1) k_{2} + C' $ for sufficiently large $C'$. Then write 
$$
\Delta_{k} ( f_{k_{1}} f_{k_{2}}) = Q_{\geq \tilde{K}} ( \Delta_{k}(f_{k_{1}} f_{k_{2}})) + Q_{\leq \tilde{K}-1} ( \Delta_{k}(f_{k_{1}} f_{k_{2}})).
$$
We estimate the second term. Using Lemma \ref{es1} ( with $j_{1} = 0 = j_{2} $ ) we get
\begin{equation}
\begin{split}
& 2^{nk/2} \Vert  Q_{\leq \tilde{K}-1} ( \Delta_{k}(f_{k_{1}} f_{k_{2}})) \Vert_{Z_{k}}
\\
& \leq 2^{nk/2} \Vert  Q_{\leq \tilde{K}-1} ( \Delta_{k}(f_{k_{1}} f_{k_{2}})) \Vert_{X_{k}}
\\
& \lesssim 2^{nk/2} 2^{\tilde{K}/2} \Vert \Delta_{k}(f_{k_{1}} f_{k_{2}}) \Vert_{L^{2}} 
\\
& \lesssim 2^{nk/2} 2^{\tilde{K}/2} \left( 2^{-\tilde{K}/2} 2^{nk_{1}/2} \Vert f_{k_{1}} \Vert_{Z_{k_{1}}} \Vert f_{k_{2}} \Vert_{Z_{k_{2}}} \right)
\\
& = 2^{n(k-k_{2})/2}  2^{nk_{1}/2} 2^{nk_{2}/2} \Vert f_{k_{1}} \Vert_{Z_{k_{1}}} \Vert f_{k_{2}} \Vert_{Z_{k_{2}}}
\end{split}
\end{equation}
which is the desired result. It remains to prove the following
\begin{equation}\label{Q6}
2^{nk/2} \Vert Q_{\geq \tilde{K}} ( \Delta_{k}(f_{k_{1}} f_{k_{2}})) \Vert_{Z_{k}} \lesssim 2^{- \vert k - k_{2} \vert } 2^{k_{2}n/2} 2^{k_{1}n/2} \Vert f_{k_{1}} \Vert_{Z_{k_{1}}} \Vert f_{k_{2}}\Vert_{Z_{k_{2}}}. 
\end{equation} 
 First consider the case if $k_{2} <100$.  Then by virtue of $k_{1} \leq k_{2} +10 \lesssim 1 $, we have $\Vert f_{k_{i}} \Vert_{X_{k_{i}}} \lesssim \Vert f_{k_{i}} \Vert_{Z_{k_{i}}} $ for each $i=1,2$. Hence, it is enough to prove the following 
\begin{equation}\label{Q5}
\begin{split}
& 2^{j/2} 2^{nk/2} \Vert Q_{\geq \tilde{K}} \ind_{D_{k,j}} (g_{k_{1},j_{1}} * g_{k_{2}j_{2}}) \Vert_{L^{2}} 
\\
& \lesssim 2^{-\vert k - k_{2} \vert } 2^{nk_{1}/2} 2^{j_{1}/2} \Vert g_{k_{1}j_{1}} \Vert_{L^{2}} 2^{j_{2}/2} 2^{k_{2}n/2} \Vert g_{k_{2}j_{2}} \Vert_{L^{2}}
\end{split}
\end{equation}
for any $g_{k_{i},j_{i}}$ supported in $ D_{k_{i},j_{i}}$. For future use, we prove this without the restriction that $k_{2} < 100$. This follows from Lemma \ref{cl}. Indeed, notice that $j < \max \{ j_{1},j_{2} \} +C$ for some constant $C$. Because otherwise we will have $0$ on the left hand side of \eqref{Q5}, see Lemma \ref{An1}. Using this, and Lemma \ref{cl} we obtain
\begin{equation*}
\begin{split}
&  2^{j/2} 2^{nk/2} \Vert \ind_{D_{k,j}} (g_{k_{1},j_{1}} * g_{k_{2}j_{2}}) \Vert_{L^{2}} 
\\
& \lesssim 2^{\max \{ j_{1},j_{2} \}/2} 2^{nk/2} 2^{k_{1}n/2} 2^{\min \{j_{1},j_{2}\}/2} \Vert g_{k_{1},j_{1}} \Vert_{L^{2} } \Vert g_{k_{2},j_{2}} \Vert_{L^{2}} 
\\
& \lesssim 2^{n(k-k_{2})/2} 2^{nk_{2}/2}   2^{n k_{1}/2} 2^{j_{1}/2} 2^{j_{2}/2} \Vert g_{k_{1},j_{1}} \Vert_{L^{2} } \Vert g_{k_{2},j_{2}} \Vert_{L^{2}}
\end{split}
\end{equation*}
this gives \eqref{Q5} since $ k \leq k_{2} +20$. And this proves the result under the assumption that $ k_{2} < 100$.  \\
Assume in what follows that $k_{2} \geq 100$. 
To prove \eqref{Q6}, we write 
$$
f_{k_{1}} = \sum_{j \geq  0 } g_{k_{1},j} + \sum_{e \in \mathscr{E} ,k' \in [1,...,k_{1}+1]} f^{e}_{k_{1},k'}, 
$$
where each $ g_{k_{1},j} $ has Fourier transform supported in $ D_{k_{1},j}$ and each $ f^{e}_{k_{1},k'} \in Y^{e}_{k_{1},k'}$. We decompose $f_{k_{2}} $ similarly.  We consider several cases
\\
\textbf{Case 1:} $ f_{k_{1}} = g_{k_{1},j_{1}}$ and $f_{k_{2}} = g_{k_{2},j_{2}}$, this case was already proven in \eqref{Q5}. \\
\textbf{Case 2:} $f_{k_{1}} = f^{e}_{k_{1},k'} \in Y^{e}_{k_{1},k'}$ and $ f_{k_{2}} = g_{k_{2},j_{2}}$. In view of case $1$ we may assume that $ \mathcal{F}f^{e}_{k_{1},k'}$ is supported in $ D^{e,k'}_{k_{1}, \leq 2sk_{1}-80}$ and $ k' \in T_{k_{1}}$.  We claim that $j_{2} \geq \tilde{K}-C$ or the left hand side of \eqref{Q6} is $0$. Indeed, this follows immediately from Lemma \ref{An1} because $ 2sk_{1} -80 < \tilde{K} - C' - 70$. Therefore, by Lemma \ref{An1} we may assume that $ j_{2} \geq \tilde{K} -C $. We estimate
\begin{equation*}
\begin{split}
&2^{nk/2} \Vert Q_{\geq \tilde{K}} ( \Delta_{k}(f_{k_{1}}  g_{k_{2},j_{2}}) ) \Vert_{Z_{k}}
\\
& \leq2^{nk/2} \Vert Q_{\geq \tilde{K}} ( \Delta_{k}(f_{k_{1}}  g_{k_{2},j_{2}}) \Vert_{X_{k}}
\\
&  \lesssim2^{nk/2} \sum_{j=  j_{2} -C}^{j_{2}+C} 2^{j/2} \Vert Q_{j}( Q_{\geq \tilde{K}}( \Delta_{k}(f_{k_{1}} g_{k_{2},j_{2}} )) \Vert_{L^{2}}
\\
& \lesssim 2^{nk/2} 2^{j_{2}/2} \Vert \Delta_{k}(f_{k_{1}} g_{k_{2},j_{2}} )\Vert_{L^{2}} 
\\
& \lesssim 2^{nk/2} 2^{j_{2}/2} \Vert g_{k_{2},j_{2}} \Vert_{L^{2}} \Vert f_{k_{1}} \Vert_{L^{\infty}}
\\
& \lesssim 2^{nk/2} 2^{j_{2}/2} \Vert g_{k_{2},j_{2}} \Vert_{L^{2}}  2^{nk_{1}/2} \Vert f_{k_{1}} \Vert_{Z_{k_{1}}}.
\end{split}
\end{equation*}
\textbf{Case 3:} $ f_{k_{2}} \in Y^{e}_{k_{2},k'}$ and $ f_{k_{1}} = g_{k_{1},j_{1}}$. With $k_{1} + 10 \geq k_{2}$ ( This implies that $k_{1} $ is essentially the same as $k_{2}$). In this case we repeat the argument from case 2.  
\begin{equation*}
\begin{split}
& 2^{nk/2} \Vert Q_{\geq \tilde{K}} \left( \Delta_{k} ( f_{k_{2}} g_{k_{1},j_{1}} \right)\Vert_{Z_{k}}
\\
& \lesssim 2^{nk/2} \Vert Q_{\geq \tilde{K}} \left( \Delta_{k} ( f_{k_{2}} g_{k_{1},j_{1}} \right) \Vert_{X_{k}}
\\
& \lesssim 2^{nk/2} 2^{j_{1}/2} \Vert f_{k_{2}} g_{k_{1},j_{1}} \Vert_{L^{2}(\R^{n+1})} 
\\
& \lesssim 2^{nk/2} 2^{j_{1}/2} \Vert g_{k_{1},j_{1}} \Vert_{L^{2}(\R^{n+1})} 
\Vert f_{k_{2}} \Vert_{L^\infty(\R^{n+1})} 
\\
& \lesssim 2^{nk/2} \Vert g_{k_{1},j_{1}} \Vert_{X_{k_{1}}} 2^{nk_{2}/2} \Vert f_{k_{2}} \Vert_{Z_{k_{2}}} 
\end{split}
\end{equation*}
and the result follows because $ 2^{k_{1}} \approx 2^{k_{2}}$ in this case.\\
\textbf{Case 4:} $f_{k_{1}} \in Y^{e'}_{k_{1}}$ and $f_{k_{2}} \in Y^{e}_{k_{2},k'}$ and $ k_{1} +10 \geq k'$. Then in light of case 2 we may assume that $ \hat{f}_{k_{2}} $ is supported in $D_{k_{2}, \leq 2sk'}$.  $ \hat{f}_{k_{1}}$ is supported in $ D_{k_{1},\leq 2sk_{1}+10}$. So, the modulation of both functions is bounded by $2^{2sk_{1}+10}$. On the other hand, we have $Q_{\geq \tilde{K}}$. Therefore, by Lemma \ref{An1} we get $ Q_{ \geq \tilde{K}} ( f_{k_{1}} f_{k_{2}}) = 0 $.
\\
These cases imply the result when $k_{1} \geq k_{2} -10$. So assume in the remaining cases that $k_{1} \leq k_{2} -10$. Then we must have $ k \geq k_{2} -c'$ for some constant $c'$. Indeed, if $ 2^{k_{1}} \ll 2^{k_{2}}$ and $ 2^{k} \ll 2^{k_{2}}$ then we will have $ \Delta_{k} (f_{k_{1}} f_{k_{2}}) = 0 $ and this gives the result trivially. So assume in what follows that $ | k - k_{2} | \lesssim 1 $ and $ 2^{k_{1}} \ll 2^{k_{2}}$.\\
\textbf{Case 5 :} $ f_{k_{2},k'} \in Y^{e}_{k_{2},k'}$ with $k'\in T_{k}$ and $ f_{k_{1}} = g_{k_{1},j_{1}}$ with $k' \leq k_{1} +10$.
In view of case $1$ we may assume that the Fourier transform of $ f_{k_{2}}$ is supported in $D^{e,k'}_{k_{2}, \leq 2k_{2}(s+q-1)-80}$. In particular, by Lemma \ref{An1} we may assume that $ j_{1} \geq \tilde{K} - C$ for some constant $C$. Then 
\begin{equation*}
\begin{split}
& 2^{nk/2} \Vert \varphi_{k} (\xi) \varphi_{\geq \tilde{K}} (\tau+\vert \xi \vert^{2s}) ( \hat{f}_{k_{2},k'} * \hat{g}_{k_{1},j_{1}} \Vert_{Z_{k}}
\\
& \leq 2^{nk/2} \Vert \hat{f}_{k_{2}, k'} * \hat{g}_{k_{1},j_{1}} \Vert_{X_{k}} 
\\
& \lesssim 2^{nk/2} 2^{j_{1}/2} \Vert \hat{f}_{k_{2}, k'} * \hat{g}_{k_{1},j_{1}} \Vert_{L^{2}}.
\end{split}
\end{equation*}
So it suffices to prove 
\begin{equation}\label{M1}
\Vert \hat{f}_{k_{2}} * \hat{g}_{k_{1},j_{1}} \Vert_{L^{2}} \lesssim 2^{nk_{1}/2} \Vert g_{k_{1},j_{1}} \Vert_{L^{2}} \Vert f_{k_{2}} \Vert_{Y^{e}_{k_{2},k'}}.
\end{equation}
whenever $\hat{f}_{k_{2}}$ is supported in $D^{e,k'}_{k_{2},\leq 2k_{2}(s+q -1)-80}$. 
For future use, we prove \eqref{M1} without the restriction that $k' \leq k_{1} +10$. Since $ \hat{f}_{k_{2}}$ is supported $D^{e,k'}_{k_{2},\leq 2k(s+q-1) -80}$, then using Lemma \ref{repf} we may assume
$$
\hat{f}_{k_{2}} = \frac{\varphi_{\leq k' -80} (\xi_{1} - N)}{ \xi_{1} -N + \i/2^{k(2s-2+q)}} h(\xi',\tau)
$$
with 
$$ 
\Vert h \Vert_{L^{2}_{\xi',\tau}} \lesssim 2^{2(1-s)(k-k')} \gamma^{-1}_{k,k'} 2^{-k'(2s-1)/2} \Vert f_{k_{2}} \Vert_{Y^{e}_{k_{2},k'}}, 
$$ and $h$ is supported in $ \{ \vert \xi' \vert \lesssim 2^{k_{2}}, N(\xi',\tau) \approx 2^{k'}, N^{2 } + \vert \xi' \vert^{2} \approx 2^{2k} \}$. In particular,
$$
\Vert h \Vert_{L^{2}_{\xi',\tau}} \lesssim 2^{-k'(2s-1)/2} \Vert f_{k_{2}} \Vert_{Y^{e}_{k,k'}}.
$$
Then \eqref{M1} is reduced to showing 
\begin{equation}\label{M2}
\Vert \hat{f}_{k_{2}} * \hat{g}_{k_{1},j_{1}} \Vert_{L^{2}} \lesssim 2^{(2s-1)k'/2} 2^{nk_{1}/2} \Vert h \Vert_{L^{2}_{\xi',\tau}} \Vert g_{k_{1},j_{1}} \Vert_{L^{2}} 
\end{equation}
using the representation formula we obtain that the left hand side of \eqref{M2} is equal to 
\begin{equation}\label{M3}
\begin{split}
& \Vert \hat{f}_{k_{2}} * \hat{g}_{k_{1},j_{1}} \Vert_{L^{2}}
\\
& \Vert \int_{\R \times e^{\perp} \times \R} \hat{g}_{k_{1},j_{1}}(\eta_{1}e+ \eta',\beta) \hat{f}_{k_{2}}((\xi_{1}-\eta_{1})e+ \xi- \eta, \tau - \beta) d\eta_{1} d \eta' d \beta \Vert_{L^{2}} 
\\
&
= \sup_{a \in L^{2}, \Vert a \Vert =1} \vert \int_{\R \times e^{\perp} \times \R} \int_{\R \times e^{\perp} \times \R}  g_{k_{1},j_{1}}(\eta,\beta) \hat{f}_{k_{2}}(\xi,\tau) a( \xi_{1} + \eta_{1}, \xi'+ \eta', \tau+ \beta) d \xi d \tau d \eta d \beta \vert
\end{split}
\end{equation}
fix $ a $ with norm 1. Then notice 
\begin{equation}\label{M4}
\begin{split}
& \int_{\R \times e^{\perp} \times \R} \hat{f}_{k_{2}} (\xi,\tau) a( \xi_{1} + \eta_{1}, \xi'+ \eta', \tau+ \beta) d \xi_{1} d \xi' d \tau
\\
& = \int_{\R \times e^{\perp} } h(\xi',\tau) \int_{\R} \frac{ \varphi_{\leq k'- 80}(\xi_{1} - N)}{ \xi_{1} - N + \frac{i}{2^{k(2s-2+q)}}} a(\xi_{1}+ \eta_{1}, \xi'+\eta', \tau+ \beta) d\xi_{1} d \tau d \xi' 
\end{split}
\end{equation}
use the change of variable $ \tau = - ( N^{2} + \vert \xi' \vert^{2})^{s} $ then the Jacobian would be $c_{s} N ( N^{2} + \vert \xi' \vert^{2})^{s-1} $ and define $ \tilde{h}(\xi',N) = h(\xi, -(N^{2}+ \vert \xi' \vert^{2})^{s}) N ( N^{2}+\vert \xi' \vert^{2})^{s-1}$. Then using this change of variable, \eqref{M4} becomes
\begin{equation}\label{M5}
\int_{\R \times e^{\perp} } \tilde{h}( \xi',N) \int_{\R} \frac{ \eta_{\leq k'-80}(\xi_{1} -N)}{\xi_{1} -N + \frac{i}{2^{k(2s-2+q)}}} a(\xi_{1} + \eta_{1}, \xi'+ \eta', -( N^{2}+ \vert \xi' \vert^{2})^{s} + \beta) d \xi_{1} d N d \xi'
\end{equation}
plug \eqref{M5} into \eqref{M4} to get 
\begin{equation}\label{M6}
\begin{split}
& \vert \int_{\R \times e^{\perp} \times \R}  g_{k_{1},j_{1}} (\eta_{1}e + \eta',\beta) \int_{e^{\perp}} \int_{\R} \tilde{h}(\xi',N) \\
& \times \left( \int_{\R} \frac{ \eta_{\leq k'-80}(\xi_{1} -N)}{\xi_{1} -N + \frac{i}{2^{(k(2s-2+q)}}} a(\xi_{1}+ \eta_{1}, \xi'+\eta', -(N^{2}+\vert \xi'\vert^{2})^{s}+\beta) d \xi_{1} \right) d N d \xi' d \beta d \eta \vert.
\end{split}
\end{equation}
Apply H\"{o}lder in the variable $ \beta$ to obtain that \eqref{M6} is bounded by 
\begin{equation}\label{M7}
\begin{split}
& \vert \int_{ \R \times e^{\perp}} \Vert g_{k_{1},j_{1}}( \eta_{1} e+ \eta',\cdot ) \Vert_{L^{2}_{\beta}} \int_{e^{\perp} \times \R} \tilde{h}(\xi',N) \Vert H( a( \cdot+ \eta_{1}, \xi'+ \eta', \beta )(N) \Vert_{L^{2}_{\beta}} d N d \xi' d \eta_{1} d \eta' \vert,
\end{split}
\end{equation}
where $H$ is the Hilbert transform. 
Next, apply H\"{o}lder in the variable $N$ and using boundedness of the Hilbert transform on $L^{2} $ we bound \eqref{M7} by 
\begin{equation}\label{M8}
\begin{split}
& \vert \int_{ \R \times e^{\perp}} \Vert g_{k_{1},j_{1}}( \eta_{1} e+ \eta',\cdot ) \Vert_{L^{2}_{\beta}} \int_{e^{\perp} } \Vert \tilde{h}(\xi',\cdot)\Vert_{L^{2}_{N}} \Vert a( \cdot, \xi'+ \eta', \cdot ) \Vert_{L^{2}_{\xi_{1}, \beta}} d \xi' d \eta_{1} d \eta' \vert
\end{split}
\end{equation}
Lastly, we apply H\"{o}lder in the variable $ \xi'$ and obtain that \eqref{M8} is bounded by 
\begin{equation}
\begin{split}
& \Vert \tilde{h} \Vert_{L^{2}_{\xi',N}} \Vert \int_{\R \times e^{\perp}} \Vert g_{k_{1},j_{1}} ( \eta_{1}e+\eta',\cdot) \Vert_{L^{2}_{\beta}} 
\\
& \lesssim 2^{k_{1}n/2} \Vert \tilde{h} \Vert_{L^{2}_{\xi',N}} \Vert g_{k_{1},j_{1}} \Vert_{L^{2}}
\end{split}
\end{equation}
and notice that $ \Vert \tilde{h} \Vert_{L^{2}} \lesssim 2^{(2s-1)k'/2} \Vert h \Vert_{L^{2}}$. Thus, we have proven \eqref{M2}. Which proves the result in this case.
\\
\textbf{Case 6: } $f_{k_{2}} \in Y^{e}_{k_{2},k'}$, $k'\in T_{k_{2}}$ and $ f_{k_{1}} = g_{k_{1},j_{1}}$ , $k' \geq k_{1} +10$ and $j_{1} \geq \tilde{K}$. In light of case $1$ we may assume that $f_{k_{2}}$ is supported in $D^{e,k'}_{k_{2}, \leq 2k_{2}(s+q-1)-80} $. Then we want to prove 
\begin{equation}\label{Q9}
2^{nk/2} \Vert \varphi_{k}(\xi) \varphi_{\geq \tilde{K}} (\tau+\vert \xi \vert^{2s}) ( \hat{f}_{k_{2}} * \hat{g}_{k_{1},j_{1}}) \Vert_{Z_{k}} \lesssim 2^{j_{1}/2} 2^{nk_{1}/2} \Vert g_{k_{1},j_{1}} \Vert_{L^{2}} 2^{k_{2}n/2} \Vert f_{k_{2}} \Vert_{Z_{k_{2}}}.
\end{equation}
To that end, write $ f_{k_{2}} = f_{k_{2},\leq j_{1}-c} + f_{k_{2}, \geq j_{1}+c} + f_{k_{2}, [j_{1}-c,j_{1}+c]} $ then 
\begin{equation}\label{sxcv}
\begin{split}
& 2^{nk/2} \Vert \varphi_{k}(\xi) \varphi_{\geq \tilde{K}} (\tau+\vert \xi \vert^{2s}) ( \hat{f}_{k_{2}} * \hat{g}_{k_{1},j_{1}}) \Vert_{Z_{k}}
\\
& \leq 2^{nk/2} \Vert \varphi_{k}(\xi) \varphi_{\geq \tilde{K}} (\tau+\vert \xi \vert^{2s}) ( \hat{f}_{k_{2},\leq j_{1}-c} * \hat{g}_{k_{1},j_{1}} )\Vert_{Z_{k}} 
\\
& +  2^{nk/2} \Vert \varphi_{k}(\xi) \varphi_{\geq \tilde{K}} (\tau+\vert \xi \vert^{2s}) ( \hat{f}_{k_{2}, \geq j_{1}+c} * \hat{g}_{k_{1},j_{1}}) \Vert_{Z_{k}} 
\\
&+ 
2^{nk/2} \Vert \varphi_{k}(\xi) \varphi_{\geq \tilde{K}} (\tau+\vert \xi \vert^{2s}) ( \hat{f}_{k_{2},[j_{1}-c,j_{1}+c]} * \hat{g}_{k_{1},j_{1}} )\Vert_{Z_{k}}.
\end{split}
\end{equation}
For the last term we use the embedding of $X_{k} $ into $Y^{e}_{k_{2},k'}$ and case 1. It remains to estimate the first two terms. To that end, we start with the first term on the right hand side of \eqref{sxcv}
\begin{equation*}
\begin{split}
& 2^{nk/2} \Vert \varphi_{k}(\xi) \varphi_{\geq \tilde{K}} (\tau+\vert \xi \vert^{2s}) ( \hat{f}_{k_{2},\leq j_{1}-c} * \hat{g}_{k_{1},j_{1}} )\Vert_{Z_{k}}
\\
& \leq 2^{nk/2} \Vert \varphi_{\geq \tilde{K}}(\tau + \vert \xi \vert^{2s}) (\hat{f}_{k_{2}, \leq j_{1}-c} * \hat{g}_{k_{1},j_{1}} )\Vert_{X_{k}} 
\\
& \lesssim 2^{nk/2} 2^{j_{1}/2} \Vert \hat{f}_{k_{2}, \leq j_{1} -c} * \hat{g}_{k_{1},j_{1}} \Vert_{L^{2}}
\end{split}
\end{equation*}
and we want to prove 
\begin{equation}\label{Q10}
\Vert \hat{f}_{k_{2}} * \hat{g}_{k_{1},j_{1}} \Vert_{L^{2}} \lesssim 2^{nk_{1}/2} \Vert g_{k_{1},j_{1}} \Vert_{L^{2}} \Vert f_{k_{2}} \Vert_{Y^{e}_{k_{2},k'}}
\end{equation}
But this follows from \eqref{M1}. To conclude case 6, we need to prove the estimate for the second term in \eqref{Q9}. In other words, we want to prove
\begin{equation}\label{Q18}
2^{nk/2} \Vert \Delta_{k} Q_{\geq \tilde{K}} ( {f}_{k_{2}, \geq j_{1} + c} {g}_{k_{1},j_{1}}) \Vert_{Z_{k}} \lesssim 2^{nk_{1}/2}  2^{nk_{2}/2} \Vert f_{k_{2}} \Vert_{Z_{k_{2}}} \Vert g_{k_{1},j_{1}} \Vert_{X_{k_{1}}}.
\end{equation}
To that end, write
\begin{equation}\label{Q19}
\begin{split}
& ( \i \partial_{t} -  (-\Delta)^s + \i ) (f_{k_{2},\geq j_{1} +c} g_{k_{1},j_{1}} )
\\
 & = ( \i \partial_{t} -  (-\Delta)^s + \i )(f_{k_{2},\geq j_{1}+c}). g_{k_{1},j_{1}} 
 \\
 & + f_{k_{2}, \geq j_{1}+c} ( \i \partial_{t} -  (-\Delta)^s ) g_{k_{1},j_{1}} 
 \\
 & + H_{s}(f_{k_{2}, \geq j_{1}+c } ,  g_{k_{1},j_{1}}) 
 \end{split}
\end{equation}
where the last term is the commutator of the fractional laplacian. Then we bound the left hand side of \eqref{Q18} by 
\begin{equation}\label{Q20}
\begin{split}
& 2^{nk/2} \Vert \Delta_{k} Q_{\geq \tilde{K}}  \left(\frac{   ( \i \partial_{t} -  (-\Delta)^s + \i ) (f_{k_{2},\geq j_{1} +c} ) . g_{k_{1},j_{1}}}{ \i \partial_{t} -   (-\Delta)^s +\i}  \right) \Vert_{Z_{k}}
\\
& + 2^{nk/2} \Vert \Delta_{k} Q_{\geq \tilde{K}} \left( \frac{  f_{k_{2}, \geq j_{1}+c} ( \i \partial_{t} -  (-\Delta)^s ) g_{k_{1},j_{1}}}{\i \partial_{t} - (-\Delta)^s +\i} \right) \Vert_{Z_{k}}
\\
& + 2^{nk/2} \Vert \Delta_{k} Q_{\geq \tilde{K}} \left( \frac{  H_{s}(f_{k_{2}, \geq j_{1}+c } ,  g_{k_{1},j_{1}})}{\i \partial_{t} -  (-\Delta)^s + \i} \right) \Vert_{Z_{k}}
\end{split}
\end{equation}
if we bound each term  in \eqref{Q20} with the right hand side of \eqref{Q18} then this concludes case 6. Starting with the first term, notice that since $2^{k_{1}} \ll 2^{k'}$ then the support of $ \hat{f}_{k_{2} \geq j_{1}+c} * \hat{g}_{k_{1},j_{1}} $ is contained in $\{ (\xi,\tau): \xi. e \approx 2^{k'} \}$. Therefore, we bound the first term as follows 
\begin{equation*}
\begin{split}
& 2^{nk/2} \Vert \Delta_{k} Q_{\geq \tilde{K}}  \left(\frac{   ( \i \partial_{t} - (-\Delta)^s + \i ) (f_{k_{2},\geq j_{1} +c} ) . g_{k_{1},j_{1}}}{ \i \partial_{t} -   (-\Delta)^s +\i}  \right) \Vert_{Z_{k}}
\\
& \sum_{k'' = k'-c}^{\min\{k'+c,k+1\}} \Vert \Delta_{k} Q_{\geq \tilde{K}}  \left(\frac{   ( \i \partial_{t}  - (-\Delta)^s + \i ) (f_{k_{2},\geq j_{1} +c} ) . g_{k_{1},j_{1}}}{ \i \partial_{t} -  (-\Delta)^s +\i}  \right) \Vert_{Y^{e}_{k,k''}}
\\
& \lesssim 2^{nk/2} \gamma_{k,k'} 2^{-k'(2s-1)/2} \Vert ( \i \partial_{t} - (-\Delta)^s + \i ) (f_{k_{2},\geq j_{1} +c} ) . g_{k_{1},j_{1}} \Vert_{L^{1}_{e}L^{2}_{e^{\perp},t}}
\\
& \leq 2^{nk/2} \Vert g_{k_{1},j_{1}} \Vert_{L^{\infty}} \gamma_{k,k'} 2^{-k'(2s-1)/2} \Vert ( \i \partial_{t} -(-\Delta)^s + \i ) (f_{k_{2},\geq j_{1} +c} ) \Vert_{L^{1}_{e}L^{2}_{e^{\perp},t}}
\\
& \lesssim  2^{nk/2} \Vert f_{k_{2}} \Vert_{Z_{k_{2}}} \Vert g_{k_{1},j_{1}} \Vert_{L^{\infty}}
\\
& \lesssim 2^{nk_{2}/2} \Vert f_{k_{2}} \Vert_{Z_{k_{2}}} 2^{nk_{1}/2} \Vert g_{j_{1},k_{1}} \Vert_{X_{k_{1}}}
\end{split}
\end{equation*}
which is the right hand side of \eqref{Q18}. Next, we do the same with second term in \eqref{Q20}. Using Lemma \ref{lowerboundmod} we obtain
\begin{equation*}
\begin{split}
& 2^{nk/2} \Vert \Delta_{k} Q_{\geq \tilde{K}} \left( \frac{  f_{k_{2}, \geq j_{1}+c} ( \i \partial_{t} -  (-\Delta)^s ) g_{k_{1},j_{1}}}{\i \partial_{t} - (-\Delta)^s +\i} \right) \Vert_{Z_{k}}
\\
& \lesssim 2^{nk/2} \Vert \Delta_{k} Q_{\geq \tilde{K}} \left( \frac{  f_{k_{2}, \geq j_{1}+c} ( \i \partial_{t} -   (-\Delta)^s ) g_{k_{1},j_{1}}}{\i \partial_{t} - (-\Delta)^s +\i} \right) \Vert_{X_{k}}
\\
& \lesssim 2^{nk/2} \Vert f_{k_{2}, \geq j_{1}+c}   ( \i \partial_{t} - (-\Delta)^s ) g_{k_{1},j_{1}} \Vert_{L^{2}} \sum_{j=j_{1}-c}^{\infty} 2^{-j/2} 
\\
& \lesssim 2^{nk/2} 2^{-j_{1}/2} \Vert f_{k_{2}, \geq j_{1}+c}   ( \i \partial_{t} -   (-\Delta)^s ) g_{k_{1},j_{1}} \Vert_{L^{2}}
\end{split}
\end{equation*}
using Lemma \ref{es1} we bound this by 
\begin{equation*}
2^{nk/2} \Vert f_{k_{2}} \Vert_{Z_{k_{2}}} 2^{nk_{1}/2} \Vert g_{k_{1},j_{1}} \Vert_{X_{k_{1}}}
\end{equation*}
which gives the result since $ 2^{k } \approx 2^{k_{2}}$.  It remains to prove the third term in \eqref{Q20}. To that end, we estimate as above
\begin{equation*}
\begin{split}
& 2^{nk/2} \Vert \Delta_{k}Q_{\geq \tilde{K}} \left( \frac{  H_{s}(f_{k_{2}, \geq j_{1}+c } ,  g_{k_{1},j_{1}})}{\i \partial_{t} - (-\Delta)^s + \i} \right) \Vert_{Z_{k}}
\\
& \lesssim 2^{nk/2} \Vert \Delta_{k} Q_{\geq \tilde{K}} \left( \frac{  H_{s}(f_{k_{2}, \geq j_{1}+c } ,  g_{k_{1},j_{1}})}{\i \partial_{t} - (-\Delta)^s + \i} \right) \Vert_{X_{k}}
\\
& \lesssim 2^{nk/2} \Vert H_{s} ( f_{k_{2}, \geq j_{1}+c} , g_{k_{1,j_{1}}} )\Vert_{L^{2}} \sum_{j= \tilde{K} }^{\infty} 2^{-j/2} 
\\
& \lesssim 2^{nk/2} 2^{-\tilde{K}/2} \Vert H_{s}(f_{k_{2},\geq j_{1}+c} , g_{k_{1},j_{1}}) \Vert_{L^{2}}
\\
& \lesssim 2^{nk/2} 2^{-\tilde{K} /2} 2^{\tilde{K}/2} 2^{nk_{1}/2} \Vert f_{k_{2}} \Vert_{Z_{k_{2}}} \Vert g \Vert_{Z_{k_{1}}}
\end{split}
\end{equation*}
where we used Lemma \ref{comest} in the last line. Using that $ 2^{k_{2}} \approx 2^{k}$ we obtain
\begin{equation*}
\begin{split}
& 2^{nk/2} \Vert \Delta_{k} Q_{\geq \tilde{K}} \left( \frac{  H_{s}(f_{k_{2}, \geq j_{1}+c } ,  g_{k_{1},j_{1}})}{\i \partial_{t} - (-\Delta)^s + \i} \right) \Vert_{Z_{k}}
\\
& \lesssim 2^{nk_{2}/2} \Vert f_{k_{2}} \Vert_{Z_{k_{2}}} . 2^{nk_{1}/2} \Vert g_{k_{1},j_{1}} \Vert_{Z_{k_{1}}}
\end{split}
\end{equation*}
this concludes case 6. 
\\
\textbf{Case 7:} the last case to consider is $ f_{k_{2}} \in Y^{e}_{k_{2},k'}$ and $ f_{k_{1} \leq \tilde{K}} \in Z_{k_{1}}$ with $ 2^{k_{1}} \leq 2^{k'-10}$, and $k' \in T_{k_{2}}$, and $f_{k_{1},\leq \tilde{K}}$ has modulation controlled by $2^{\tilde{K}}$.  Using  \eqref{Q20} we estimate 
\begin{equation}\label{Q21}
\begin{split}
& 2^{nk/2} \Vert \Delta_{k} Q_{\geq \tilde{K}} ( f_{k_{1}, \leq \tilde{K}} f_{k_{2}}) \Vert_{Z_{k}}
\\
& \lesssim  2^{nk/2} \Vert \Delta_{k} Q_{\geq \tilde{K}}  \left(\frac{   ( \i \partial_{t} - (-\Delta)^s + \i ) (f_{k_{2}} ) . f_{k_{1} , \leq \tilde{K}}}{ \i \partial_{t} -  (-\Delta)^s +\i}  \right) \Vert_{Z_{k}}
\\
& + 2^{nk/2} \Vert \Delta_{k} Q_{\geq \tilde{K}} \left( \frac{  f_{k_{2}} ( \i \partial_{t} -   (-\Delta)^s ) f_{k_{1}, \leq \tilde{K}}}{\i \partial_{t} - (-\Delta)^s +\i} \right) \Vert_{Z_{k}}
\\
& + 2^{nk/2} \Vert \Delta_{k} Q_{\geq \tilde{K}} \left( \frac{  H_{s}(f_{k_{2} } ,  f_{k_{1}, \leq \tilde{K}})}{\i \partial_{t} - (-\Delta)^s + \i} \right) \Vert_{Z_{k}}
\end{split}
\end{equation}
and it is enough to estimate each term in the right hand side of \eqref{Q21} by 
$$
2^{nk_{1}/2} \Vert f_{k_{1}, \leq \tilde{K}} \Vert_{Z_{k_{1}}} . 2^{nk_{2}/2} \Vert f_{k_{2}} \Vert_{Z_{k_{2}}}.
$$
To that end, we start with the first term.  Notice that the support of $ \mathcal{F} (f_{k_{2}} f_{k_{1}, \leq \tilde{K}})$ is contained in $ \{ (\xi,\tau) : \langle \xi , e\rangle \approx 2^{k'} \} $. Hence, 

\begin{equation*}
\begin{split}
& 2^{nk/2} \Vert \Delta_{k} Q_{\geq \tilde{K}}  \left(\frac{   ( \i \partial_{t} - (-\Delta)^s + \i ) (f_{k_{2}} ) . f_{k_{1}, \leq \tilde{K}}}{ \i \partial_{t} -  (-\Delta)^s +\i}  \right) \Vert_{Z_{k}}
\\
& \leq 2^{nk/2} \sum_{k''=k'-c}^{\min \{ k'+c,k+1 \} } \Vert \Delta_{k} Q_{\geq \tilde{K}}  \left(\frac{   ( \i \partial_{t} - (-\Delta)^s + \i ) (f_{k_{2}} ) . f_{k_{1}, \leq \tilde{K}}}{ \i \partial_{t} -   (-\Delta)^s +\i}  \right) \Vert_{Y^{e}_{k,k''}}
\\
& \lesssim 2^{nk/2} \gamma_{k_{2},k'} 2^{-k'(2s-1)/2} \Vert ( \i \partial_{t} -  (-\Delta)^s + \i ) (f_{k_{2}} ) . f_{k_{1}, \leq \tilde{K}} \Vert_{L^{1}_{e}L^{2}_{e^{\perp},t}}
\\
& \leq 2^{nk/2} \Vert f_{k_{1}, \leq \tilde{K}} \Vert_{L^{\infty}} \gamma_{k_{2},k'} 2^{-k'(2s-1)/2} \Vert ( \i \partial_{t} - (-\Delta)^s + \i ) (f_{k_{2}} ) \Vert_{L^{1}_{e}L^{2}_{e^{\perp},t}}
\\
& \lesssim  2^{nk/2} \Vert f_{k_{2}} \Vert_{Z_{k_{2}}} \Vert f_{k_{1}, \leq \tilde{K}} \Vert_{L^{\infty}}
\\
& \lesssim 2^{nk_{2}/2} \Vert f_{k_{2}} \Vert_{Z_{k_{2}}} 2^{nk_{1}/2} \Vert f_{k_{1}, \leq \tilde{K}} \Vert_{Z_{k_{1}}}.
\end{split}
\end{equation*}
Next, we estimate the second term on the right hand side of \eqref{Q21}
\begin{equation*}
\begin{split}
& 2^{nk/2} \Vert \Delta_{k} Q_{\geq \tilde{K}} \left( \frac{  f_{k_{2}} ( \i \partial_{t} -  (-\Delta)^s ) f_{k_{1}, \leq \tilde{K}}}{\i \partial_{t} -(-\Delta)^s +\i} \right) \Vert_{Z_{k}}
\\
& \lesssim 2^{nk/2} \Vert \Delta_{k} Q_{\geq \tilde{K}} \left( \frac{  f_{k_{2}} ( \i \partial_{t} -  (-\Delta)^s)  f_{k_{1}, \leq \tilde{K}}}{\i \partial_{t} - (-\Delta)^s +\i} \right) \Vert_{X_{k}}
\\
& \lesssim 2^{nk/2} \Vert f_{k_{2}}   ( \i \partial_{t} - (-\Delta)^{s}   ) f_{k_{1}, \leq \tilde{K}} \Vert_{L^{2}} \sum_{j=\tilde{K}}^{\infty} 2^{-j/2} 
\\
& \lesssim 2^{nk/2} 2^{-\tilde{K}/2} \Vert f_{k_{2}}   ( \i \partial_{t} - (-\Delta)^s ) f_{k_{1}, \leq \tilde{K}} \Vert_{L^{2}}
\end{split}
\end{equation*}
using Lemma \ref{es1} and the fact that the modulation of $f_{k_{1}, \leq \tilde{K}}$ is controlled by $ 2^{\tilde{K}}$ we obtain the bound
\begin{equation*}
\begin{split}
& 2^{nk/2} 2^{-\tilde{K}/2} \Vert f_{k_{2}}   ( \i \partial_{t} -   (-\Delta)^s)  f_{k_{1}, \leq \tilde{K}} \Vert_{L^{2}}
\\
& \lesssim 2^{-\tilde{K}/2}  2^{nk_{2}/2} \Vert f_{k_{2}} \Vert_{Z_{k_{2}}} 2^{n k_{1}/2} \Vert f_{k_{1}, \leq \tilde{K}} \Vert_{Z_{k_{1}}}  2^{\tilde{K}} 2^{-\tilde{K}/2}
\end{split}
\end{equation*}
this gives the desired bound. Lastly, we estimate the third term in the right hand side of \eqref{Q21}. 
\begin{equation*}
\begin{split}
& 2^{nk/2} \Vert \Delta_{k} Q_{\geq \tilde{K}} \left( \frac{  H_{s}(f_{k_{2} } ,  f_{k_{1}, \leq \tilde{K}})}{\i \partial_{t} -(-\Delta)^s + \i} \right) \Vert_{Z_{k}}
\\
& \lesssim 2^{nk/2} \Vert \Delta_{k} Q_{\geq \tilde{K}} \left( \frac{  H_{s}(f_{k_{2} } ,  f_{k_{1}, \leq \tilde{K}})}{\i \partial_{t} - (-\Delta)^s + \i} \right) \Vert_{X_{k}}
\\
& \lesssim 2^{nk/2} \Vert H_{s} ( f_{k_{2}} , f_{k_{1}, \leq \tilde{K}} )\Vert_{L^{2}} \sum_{j= \tilde{K} }^{\infty} 2^{-j/2} 
\\
& \lesssim 2^{nk/2} 2^{-\tilde{K}/2} \Vert H_{s}(f_{k_{2}} , f_{k_{1}, \leq \tilde{K}}) \Vert_{L^{2}}
\\
& \lesssim 2^{nk/2} 2^{-\tilde{K} /2} 2^{nk_{1}/2} 2^{\tilde{K}/2} \Vert f_{k_{1}, \leq \tilde{K}} \Vert_{Z_{k_{1}}} \Vert f_{k_{2}} \Vert_{Z_{k_{2}}}
\end{split}
\end{equation*}
where we used Lemma \ref{comest} in the last line. 
This finishes the proof of case 7.

\end{proof}
An immediate corollary of Theorem \ref{Bilin1} is the following
\begin{corollary}\label{Algebra}
Let $ \sigma \geq \frac{n}{2}$. 
Then there exists a constant $C_{A}:=C_{A}(n,s,\sigma)$ so that for any $ f, g \in F^{\sigma}$, we have the following
$$
\Vert f g \Vert_{F^{\sigma}} \leq C_{A} \left(  \Vert f \Vert_{F^{n/2}} \Vert g \Vert_{F^{\sigma}} + \Vert f \Vert_{F^{\sigma}} \Vert g \Vert_{F^{n/2}}\right)
$$
\end{corollary}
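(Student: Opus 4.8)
The corollary is the standard bilinear Besov-type product estimate, and it follows from Theorem \ref{Bilin1} by a routine Littlewood--Paley decomposition and summation. First I would write $f = \sum_{k_1 \geq 0} \Delta_{k_1} f$ and $g = \sum_{k_2 \geq 0} \Delta_{k_2} g$, so that $fg = \sum_{k_1, k_2} (\Delta_{k_1} f)(\Delta_{k_2} g)$, and apply $\Delta_k$ to obtain $\Delta_k(fg) = \sum_{k_1, k_2} \Delta_k\big( (\Delta_{k_1} f)(\Delta_{k_2} g)\big)$. Since $\Delta_k \Delta_{k_1} f \, \Delta_{k_2} f$ vanishes unless $2^k \lesssim 2^{\max\{k_1,k_2\}}$ and, in the ``high-high'' interaction regime, one needs $2^{\max\{k_1,k_2\}} \lesssim 2^k$, the sum splits into the usual three pieces: the ``low-high'' piece $k_1 \leq k_2 - 10$ (so $|k - k_2| \lesssim 1$), the ``high-low'' piece $k_2 \leq k_1 - 10$ (so $|k - k_1| \lesssim 1$), and the ``high-high'' piece $|k_1 - k_2| \leq 10$ (so $k \leq k_2 + 20$). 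By the symmetry of the claimed bound in $f$ and $g$ it suffices to treat $k_1 \leq k_2 + 10$, which is exactly the hypothesis of Theorem \ref{Bilin1}.

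Next I would feed each interaction into Theorem \ref{Bilin1}, which gives
\[
2^{nk/2} \|\Delta_k\big( (\Delta_{k_1}f)(\Delta_{k_2}g)\big)\|_{Z_k} \lesssim 2^{-|k-k_2|} \big(2^{nk_1/2}\|\Delta_{k_1} f\|_{Z_{k_1}}\big)\big(2^{nk_2/2}\|\Delta_{k_2} g\|_{Z_{k_2}}\big).
\]
Multiplying by $2^{k(\sigma - n/2)}$ and summing over $k$, and using that for fixed $k_1, k_2$ only $O(1)$ values of $k$ contribute with $|k - k_2| \lesssim 1$ (in the low-high and high-high cases) or $|k - k_1| \lesssim 1$ (high-low case, where one re-runs the argument with the roles swapped), I reduce to estimating $\sum_{k_1 \leq k_2 + 10} 2^{k_2(\sigma - n/2)} 2^{-|k_2 - k_2|} (\cdots)$. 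Writing $2^{k_2 \sigma} = 2^{(k_2 - k_1)\sigma} 2^{k_1 \sigma}$ one sees the factor $2^{-|k_1 - k_2|}$ (more precisely, the gain $2^{-|k-k_2|}$ combined with $k \in [k_2 - O(1), k_2 + O(1)]$, together with the constraint $k_1 \le k_2 + 10$ forcing $k_2 - k_1 \ge -10$) provides geometric decay in $k_2 - k_1$ once weighted correctly, at least when $\sigma > n/2$; for the endpoint $\sigma = n/2$ the factor $2^{-|k-k_2|}$ itself (which is genuinely summable, not merely bounded) supplies the needed decay. The remaining double sum factors: $\sum_{k_1} 2^{k_1 \sigma} 2^{nk_1/2 - nk_1/2}\cdots$ against $\sum_{k_2} 2^{k_2 \cdot n/2}\cdots$ in the high-low regime and symmetrically in the low-high regime, yielding $\|f\|_{F^\sigma}\|g\|_{F^{n/2}} + \|f\|_{F^{n/2}}\|g\|_{F^\sigma}$; the high-high regime, with $k \le k_2 + 20$ and the strong off-diagonal gain $2^{-|k-k_2|}$, is summed by Schur's test and contributes to both terms.

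Concretely, I would organize the summation via Schur's test: the kernel $K(k; k_1, k_2) = 2^{k(\sigma-n/2)} 2^{-|k-k_2|} 2^{-(k_1+k_2)(\sigma-n/2)}\cdot[\text{weight bookkeeping}]$ has row and column sums bounded uniformly (using $\sigma \ge n/2$ for absolute convergence of the geometric series, and the constraint $k_1 \le k_2+10$ to control the sign of $k_2 - k_1$), which immediately produces the constant $C_A = C_A(n,s,\sigma)$, with the $\sigma$-dependence entering only through the geometric series $\sum_j 2^{-j(\sigma - n/2)}$ (interpreted as a bounded constant when $\sigma = n/2$, thanks to the extra $2^{-|k-k_2|}$ factor).

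\textbf{Main obstacle.} The only genuinely delicate point is the endpoint $\sigma = n/2$: there the naive weight splitting $2^{k_2\sigma} = 2^{(k_2-k_1)\sigma}2^{k_1\sigma}$ leaves no room, and one must exploit that the bound in Theorem \ref{Bilin1} carries the \emph{summable} factor $2^{-|k-k_2|}$ (rather than merely $\lesssim 1$) together with the fact that in the low-high and high-high regimes $k$ is within $O(1)$ of $k_2$ while in the high-low regime the roles of $k_1,k_2$ are exchanged — so that in every regime exactly one pairing of indices is ``diagonal'' and the off-diagonal gain closes the sum. Everything else is bookkeeping: splitting into the three frequency regimes, invoking Theorem \ref{Bilin1}, and applying Schur's test.
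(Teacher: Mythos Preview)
Your approach is correct and is exactly the standard summation argument the paper leaves implicit (the paper gives no proof, simply labeling it an ``immediate corollary of Theorem \ref{Bilin1}''). One minor remark: your middle paragraph overcomplicates the low-high piece --- there you do not need any splitting of $2^{k_2\sigma}$ or geometric decay in $k_2-k_1$; since $|k-k_2|\lesssim 1$ in that regime you simply replace $2^{k\sigma}$ by $2^{k_2\sigma}$ and pair the resulting weights $2^{nk_1/2}$ with $f$ and $2^{k_2\sigma}$ with $g$, while the factor $2^{-|k-k_2|}$ is genuinely needed only in the high-high piece to sum over $k\leq k_2+20$, precisely as you note in your ``main obstacle'' paragraph.
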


\section{Dyadic estimates}
In preparation for the nonlinear estimates we prove two dyadic estimates in this section. We start with the following
\begin{lemma}\label{Dyadic1}
Let $ k,k_{1},k_{2},k_{3} \geq 0$ with $k,k_{2},k_{3} \leq k_{1} +10$ and $ f_{k_{i}} \in Z_{k_{i}}$ , $ \alpha_{i} \in L^{\infty}(\R^{n+1})$ for $ i \in \{ 1,2,3 \}$. Assume $ \alpha_{i} $ has Fourier transform supported in $ \{ (\xi,\tau) : \vert \xi \vert \leq 2^{k_{i} +10 } \}$ for $ i \in \{2,3\}$. And $ \alpha_{1}$ has Fourier transform supported in $ \{ (\xi,\tau) : \vert \xi \vert \leq 2^{k_{1} -40 } \}$. 
Then one has the following inequality for any $ \sigma \geq \frac{n+1}{2}$
\begin{equation}\label{ddd1}
\begin{split}
& \sum_{k=0}^{k_{1}+10} 2^{ \sigma k} \Vert \Delta_{k} ( \i \partial_{t} - (-\Delta)^{s}   + \i)^{-1} \alpha_{1} \tilde{f}_{k_{1}} H_{s} ( \alpha_{2} \tilde{f}_{k_{2}} , \alpha_{3} \tilde{f}_{k_{3}} ) \Vert_{Z_{k}} 
\\
& \lesssim_{\sigma} \prod_{i=1}^{3} \Vert \alpha_{i} \Vert_{L^{\infty}} 2^{\sigma k_{1}} 2^{k_{2} (n+1)/2} 2^{k_{3} (n+1)/2} \Vert f_{k_{1}} \Vert_{Z_{k_{1}}} \Vert f_{k_{2}} \Vert_{Z_{k_{2}}} \Vert f_{k_{3}} \Vert_{Z_{k_{3}}}.
\end{split}
\end{equation}
where $ \tilde{f} \in \{ f, \bar{f} \}$. 
\end{lemma}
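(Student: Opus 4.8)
The plan is, for each fixed $k\in\{0,\dots,k_1+10\}$, to estimate the single term $\Vert(\i\partial_t-(-\Delta)^s+\i)^{-1}\Delta_k G\Vert_{Z_k}$, where $G:=\alpha_1\tilde f_{k_1}H_s(\alpha_2\tilde f_{k_2},\alpha_3\tilde f_{k_3})$, and then to sum against $2^{\sigma k}$. Since $\widehat{\alpha_1\tilde f_{k_1}}$ is supported in $\{|\xi|\aeq2^{k_1}\}$ while $\widehat{H_s(\alpha_2\tilde f_{k_2},\alpha_3\tilde f_{k_3})}$ lives in $\{|\xi|\lesssim2^{\max\{k_2,k_3\}}\}\subset\{|\xi|\lesssim2^{k_1}\}$, the Fourier support of $G$ lies in $\{|\xi|\lesssim2^{k_1}\}$, which is why the $k$-sum is finite; I may also assume $k_2=\max\{k_2,k_3\}$. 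For each $k$ I split $(\i\partial_t-(-\Delta)^s+\i)^{-1}\Delta_k G$ into a high-modulation piece $(\i\partial_t-(-\Delta)^s+\i)^{-1}\Delta_k Q_{>2sk}G$ and a low-modulation piece $(\i\partial_t-(-\Delta)^s+\i)^{-1}\Delta_k Q_{\le2sk}G$. The first I put into $X_k$: summing the geometric series in the modulation index as in Lemma~\ref{Emb} gives $\Vert(\i\partial_t-(-\Delta)^s+\i)^{-1}\Delta_k Q_{>2sk}G\Vert_{X_k}\lesssim 2^{-sk}\Vert\Delta_k G\Vert_{L^2}$. The second I write as $\Delta_k Q_{\le2sk}G=\sum_{e\in\mathscr{E}}\sum_{k'}P^{e,k'}\Delta_k Q_{\le2sk}G$, with $P^{e,k'}$ the Fourier cutoff to the cone $\vartheta_e$ together with $\langle\xi,e\rangle\aeq2^{k'}$; the key observation is that on $\supp\vartheta_e$ one has $\langle\xi,e\rangle\in[|\xi|/2,|\xi|]$ and $|\xi|\aeq2^k$, so only the $O(1)$ values $k'\aeq k$ occur and $\gamma_{k,k'}\lesssim1$. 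Using the boundedness of $\Delta_k,Q_{\le2sk},P^{e,k'}$ on $L^1_eL^2_{e^\perp,t}$ (Lemma~\ref{Multiplier}), every nonzero summand therefore contributes, by the very definition of the $Y^e_{k,k'}$-norm, at most $2^{-k(2s-1)/2}\Vert G\Vert_{L^1_eL^2_{e^\perp,t}}$. Altogether, by almost orthogonality and $\sigma\ge\frac{n+1}{2}>s>\frac{2s-1}{2}$, the left-hand side of \eqref{ddd1} is dominated by
\[
2^{(\sigma-s)k_1}\Vert G\Vert_{L^2}+2^{(\sigma-\frac{2s-1}{2})k_1}\,\max_{e\in\mathscr{E}}\Vert G\Vert_{L^1_eL^2_{e^\perp,t}},
\]
and it remains to bound these two quantities (for $k_1$ below an absolute constant all frequencies are $O(1)$, $Z_k$ reduces to a modulation-weighted $L^2$ space, and these crude bounds already give the claim).

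If $k_2\le k_1-C_0$ for a large absolute constant $C_0$, I first invoke Lemma~\ref{dec} to reduce to $\widehat{f_{k_1}}$ supported in a ball $\{|\xi-v|\le2^{k_1-c'}\}$ with $|v|\aeq2^{k_1}$ and $e_0:=v/|v|\in\mathscr{E}$ (enlarging $\mathscr{E}$ if necessary), at the price $\Vert f_{k_1}^i\Vert_{Z_{k_1}}\lesssim\Vert f_{k_1}\Vert_{Z_{k_1}}$ and using the bilinearity of $H_s$. Then $\widehat{G}$ sits in a thin cone around $\pm e_0$, so only $k\aeq k_1$ occur, and $\max_e\Vert G\Vert_{L^1_eL^2_{e^\perp,t}}$ is precisely what Lemma~\ref{CommTri} controls, namely by $\prod_i\Vert\alpha_i\Vert_{L^\infty}2^{(k_2+k_3)(n+1)/2}2^{k_1(2s-1)/2}\prod_i\Vert f_{k_i}\Vert_{Z_{k_i}}$; since $2^{-\frac{2s-1}{2}k_1}\cdot2^{k_1(2s-1)/2}=1$ this exactly matches the target. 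For $\Vert G\Vert_{L^2}$ I expand $H_s$ by the fractional Leibniz rule, gain $2^{2sk_2}$ from $(-\Delta)^s$ acting on a function of frequency $\lesssim2^{k_2}$ (Lemma~\ref{Multiplier}, Remark~\ref{remarkM}), and apply H\"older together with the smoothing estimate Lemma~\ref{localsmoothing} for $f_{k_1}$ (whose Fourier transform now lies in the cone around $e_0$), the maximal estimate for $f_{k_2}$, and the energy bound Lemma~\ref{Energy} for $f_{k_3}$; this gives $\Vert G\Vert_{L^2}\lesssim\prod_i\Vert\alpha_i\Vert_{L^\infty}2^{-k_1(2s-1)/2}2^{k_2(2s+\frac{n-1}{2})}2^{nk_3/2}\prod_i\Vert f_{k_i}\Vert_{Z_{k_i}}$, and because $k_2<k_1$ one checks that $2^{(\sigma-s)k_1}$ times this is again $\lesssim2^{\sigma k_1}2^{(k_2+k_3)(n+1)/2}\prod_i\Vert f_{k_i}\Vert_{Z_{k_i}}$.

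If instead $k_1-C_0<k_2\le k_1+10$, the Fourier support of $G$ need no longer be contained in a cone; I then apply Lemma~\ref{dec} to $f_{k_2}$ and rely only on the plain H\"older estimates
\[
\Vert G\Vert_{L^1_eL^2_{e^\perp,t}}\lesssim\Vert\alpha_1\Vert_{L^\infty}\Vert f_{k_1}\Vert_{L^2_eL^\infty_{e^\perp,t}}\Vert H_s(\alpha_2\tilde f_{k_2},\alpha_3\tilde f_{k_3})\Vert_{L^2}
\]
(valid for every $e\in\S^{n-1}$) and, similarly, $\Vert G\Vert_{L^2}\lesssim\Vert\alpha_1\Vert_{L^\infty}\Vert f_{k_1}\Vert_{L^2_{e_1}L^\infty_{e_1^\perp,t}}\Vert(-\Delta)^s(\alpha_2\tilde f_{k_2}\alpha_3\tilde f_{k_3})\Vert_{L^\infty_{e_1}L^2_{e_1^\perp,t}}$ along the smoothing direction $e_1$ of $f_{k_2}$, with $f_{k_1}$ placed in the maximal space. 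Estimating the commutator factor by the Leibniz expansion, the $2^{2sk_2}$-gain of $(-\Delta)^s$, the smoothing estimate for $f_{k_2}$, and the maximal and energy bounds for $f_{k_1},f_{k_3}$ yields $\Vert G\Vert_{L^1_eL^2_{e^\perp,t}}\lesssim\prod_i\Vert\alpha_i\Vert_{L^\infty}2^{k_1(n-1)/2}2^{k_2(s+\frac12)}2^{k_3(n-1)/2}\prod_i\Vert f_{k_i}\Vert_{Z_{k_i}}$ and a bound of the same shape for $\Vert G\Vert_{L^2}$; since $k_2=k_1+O(1)$ here, inserting these into the dichotomy above and checking exponents (using $\sigma\ge\frac{n+1}{2}$) gives $\lesssim2^{\sigma k_1}2^{k_2(n+1)/2}2^{k_3(n+1)/2}\prod_i\Vert f_{k_i}\Vert_{Z_{k_i}}$, completing the proof.

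The step I expect to be the main obstacle is exactly this second regime $k_2\aeq k_1$: there the output of the trilinear map has Fourier support spread over all of $\{|\xi|\lesssim2^{k_1}\}$ rather than a thin cone, so Lemma~\ref{CommTri}---which is built for the case where $f_{k_1}$ carries the top frequency---cannot be used and one must fall back on the cruder smoothing/maximal/energy estimates; the estimate closes only because in this regime $k_2=k_1+O(1)$ makes the right-hand factor $2^{k_2(n+1)/2}$ as large as $2^{\frac{n+1}{2}k_1}$, and because the modulation split produces the favourable gains $2^{-sk}$ and $2^{-k(2s-1)/2}$ which, summed against $2^{\sigma k}$, cost only $2^{(\sigma-s)k_1}$ and $2^{(\sigma-\frac{2s-1}{2})k_1}$.
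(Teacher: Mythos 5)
Your proposal is correct and follows the same overall strategy as the paper: decompose the output over cones via the $\vartheta_e$ partition, observe that only $k'\aeq k$ occurs so $\gamma_{k,k'}\lesssim 1$ and the $Y^e_{k,k'}$ norm reduces to $2^{-k(2s-1)/2}\Vert G\Vert_{L^1_eL^2_{e^\perp,t}}$, then invoke Lemma~\ref{CommTri} when $\max\{k_2,k_3\}\ll k_1$ and plain H\"older with smoothing/maximal/energy when $\max\{k_2,k_3\}\aeq k_1$. The one genuine refinement you make is the explicit split into modulation $>2^{2sk}$ (absorbed into $X_k$ at cost $2^{-sk}\Vert G\Vert_{L^2}$) and modulation $\le 2^{2sk}$ (placed in $Y^e_{k,k'}$). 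The paper's written inequality $\Vert\mathcal{F}^{-1}(\vartheta_e\,\widehat{\cdot\,})\Vert_{Z_k}\lesssim 2^{-k(2s-1)/2}\Vert G\Vert_{L^1_eL^2_{e^\perp,t}}$ implicitly assumes the output already satisfies the support constraint in the definition of $Y^e_{k,k'}$, i.e.\ $|\tau+|\xi|^{2s}|\lesssim 2^{2sk}$, which $G$ need not; your version closes this by paying an $L^2$ bound on $G$ (an alternative is Bernstein in the $e$ direction, giving $\Vert G\Vert_{L^2}\lesssim 2^{k_1/2}\Vert G\Vert_{L^1_eL^2_{e^\perp,t}}$, which also suffices since $s>1/2$). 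Your exponent bookkeeping in both regimes checks out.

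One small point of attribution: the boundedness of $Q_{\le j}$ on $L^1_eL^2_{e^\perp,t}$ is not stated in Lemma~\ref{Multiplier}---part~(1) there controls the $Z_k$ norm---but the ingredient you need, namely the $L^1_{x_1}L^\infty_{\xi',\tau}$ bound on the $\xi_1$-kernel of $\varphi_0((\tau+|\xi|^{2s})/2^j)\varphi^+_{k'}(\xi_1)\varphi_k(\xi)$, is established in the course of its proof (estimate~\eqref{wtre}) for $j\le 2k(s+q-1)-80$, which for $k'\aeq k$ is $\aeq 2sk-80$. So either split at $j_0=2sk-100$ rather than $2sk$, or note that the $O(1)$ intermediate modulation range can also be put in $X_k$ with the same cost; this is only a constant adjustment. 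Also, when you invoke Lemma~\ref{CommTri} with $\tilde f_{k_1}=\bar f_{k_1}$, the relevant cone direction for the output is near $-e_0$ rather than $e_0$; since $\mathscr{E}$ is closed under $e\mapsto-e$ and $\Vert\bar f\Vert_{Z_k}=\Vert f\Vert_{Z_k}$, this introduces no difficulty, but it is worth saying so explicitly, as the paper does by applying $\vartheta_e$ directly to $\mathcal{F}(\tilde f_{k_1})$ before invoking the lemma.
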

\begin{proof}
To prove \eqref{ddd1} we consider two cases. \\
\textbf{Case 1:} If $ \max \{ k_{2} ,k_{3} \} \in [k_{1} -90,k_{1} +10]$. Then in this case we have two factors with high frequency and we distribute the derivatives among those factors. More precisely, assume without loss of generality that $ k_{2} \geq k_{3}$, then in this case $ 2^{k_{1}} \approx 2^{k_{2}}$ and we estimate as follows; for fixed $k$ we have  
\begin{equation}\label{ddd2}
\begin{split}
& \Vert \Delta_{k} ( \i \partial_{t} - (-\Delta)^{s}   + \i)^{-1} \alpha_{1} \tilde{f}_{k_{1}} H_{s} ( \alpha_{2} \tilde{f}_{k_{2}} , \alpha_{3} \tilde{f}_{k_{3}} ) \Vert_{Z_{k}} 
\\
& \lesssim \sup_{e \in \mathscr{E}} \Vert \mathcal{F}^{-1} \left( \vartheta_{e} \mathcal{F} \big[  \Delta_{k} ( \i \partial_{t} - (-\Delta)^{s}   + \i)^{-1} \alpha_{1} \tilde{f}_{k_{1}} H_{s} ( \alpha_{2} \tilde{f}_{k_{2}} , \alpha_{3} \tilde{f}_{k_{3}} )  \big] \right) \Vert_{Z_{k}} 
\\
& \lesssim 2^{-k(2s-1)/2} \sup_{e \in \mathscr{E}} \Vert \alpha_{1} \tilde{f}_{k_{1}} H_{s} ( \alpha_{2} \tilde{f}_{k_{2}} , \alpha_{2} \tilde{f}_{k_{3}} ) \Vert_{L^{1}_{e}L^{2}_{e^{\perp},t}}
\end{split}
\end{equation}
and for each $ e \in \mathscr{E}$ we have 
\begin{equation}\label{ddd3}
\begin{split}
& \Vert \alpha_{1} \tilde{f}_{k_{1}} H_{s} ( \alpha_{2} \tilde{f}_{k_{2}} , \alpha_{3} \tilde{f}_{k_{3}} ) \Vert_{L^{1}_{e}L^{2}_{e^{\perp},t}}
\\
& \lesssim \Vert \alpha_{1} \Vert_{L^{\infty}} \Vert \tilde{f}_{k_{1}} \Vert_{L^{2}_{e} L^{\infty}_{e^{\perp},t}} \Vert H_{s} ( \alpha_{2} \tilde{f}_{k_{2}} , \alpha_{3} \tilde{f}_{k_{3}} ) \Vert_{L^{2}(\R^{n+1})}
\\
& \lesssim \Vert \alpha_{1} \Vert_{L^{\infty}} 2^{k_{1}(n-1)/2} \Vert f_{k_{1}} \Vert_{Z_{k_{1}}} \Vert H_{s} ( \alpha_{2} \tilde{f}_{k_{2}} , \alpha_{3} \tilde{f}_{k_{3}} ) \Vert_{L^{2}(\R^{n+1})}
\end{split}
\end{equation}
and we need to estimate the $L^{2}$ norm of the commutator. By Lemma \ref{dec} we may assume $f_{k_{2}}$ has Fourier transform supported in $ \{ ( \xi, \tau) : \langle \xi , e' \rangle \geq 2^{-c} \vert \xi \vert \}$ for some $ e' \in \S^{n-1}$. Then we estimate
\begin{equation}\label{ddd4}
\begin{split}
& \Vert H_{s} ( \alpha_{2} \tilde{f}_{k_{2}} , \alpha_{3} \tilde{f}_{k_{3}} ) \Vert_{L^{2}(\R^{n+1})}
\\
& \lesssim 2^{2sk_{2}} \Vert \alpha_{2} \Vert_{L^{\infty}} \Vert \alpha_{3} \Vert_{L^{\infty}}  \Vert f_{k_{3}} \Vert_{L^{2}_{e'}L^{\infty}_{e'^{\perp},t}} \Vert f_{k_{2}} \Vert_{L^{\infty}_{e'}L^{2}_{e'^{\perp},t}}
\\
& \lesssim 2^{k_{2} (2s-1)/2 +k_{2}} 2^{k_{3}(n-1)/2} \Vert \alpha_{2} \Vert_{L^{\infty}} \Vert \alpha_{3} \Vert_{L^{\infty}} \Vert f_{k_{2}} \Vert_{Z_{k_{2}}} \Vert f_{k_{3}} \Vert_{Z_{k_{3}}}
\end{split}
\end{equation}
Plugging the bound \eqref{ddd4} back into \eqref{ddd3} yields 
\begin{equation}\label{ddd5}
\begin{split}
& \Vert \alpha_{1} \tilde{f}_{k_{1}} H_{s} ( \alpha_{2} \tilde{f}_{k_{2}} , \alpha_{3} \tilde{f}_{k_{3}} ) \Vert_{L^{1}_{e}L^{2}_{e^{\perp},t}}
\\
& \lesssim 2^{k_{2}(2s-1)/2} \prod_{i=1}^{3} \Vert \alpha_{i} \Vert_{L^{\infty}}  2^{k_{2} (n+1)/2} 2^{k_{3} (n-1)/2} \Vert f_{k_{1}} \Vert_{Z_{k_{1}}} \Vert f_{k_{2}} \Vert_{Z_{k_{2}}} \Vert f_{k_{3}} \Vert_{Z_{k_{3}}}
\end{split}
\end{equation}
Plugging the estimate \eqref{ddd5} back into \eqref{ddd2} yields ( using that $ 2^{k_{1}} \approx 2^{k_{2}}$ )
\begin{equation*}
\begin{split}
& 2^{k \sigma} \Vert \Delta_{k} ( \i \partial_{t} - (-\Delta)^{s}   + \i)^{-1} \alpha_{1} \tilde{f}_{k_{1}} H_{s} ( \alpha_{2} \tilde{f}_{k_{2}} , \alpha_{3} \tilde{f}_{k_{3}} ) \Vert_{Z_{k}}
\\
& \lesssim 2^{k \sigma- k(2s-1)/2} 2^{k_{1} (2s-1)/2} \prod_{i=1}^{3} \Vert \alpha_{i} \Vert_{L^{\infty}}  2^{k_{2} (n+1)/2} 2^{k_{3} (n+1)/2} \Vert f_{k_{1}} \Vert_{Z_{k_{1}}} \Vert f_{k_{2}} \Vert_{Z_{k_{2}}} \Vert f_{k_{3}} \Vert_{Z_{k_{3}}}
\end{split}
\end{equation*}
summing over $k \leq k_{1} +10$ give the result.
\\
\textbf{Case 2:} if $ \max \{ k_{2} , k_{3} \} \leq k_{1} -90$. Then we use Lemma \ref{CommTri}. More precisely, 
\begin{equation*}
\begin{split}
&  \Vert \Delta_{k} ( \i \partial_{t} - (-\Delta)^{s}   + \i)^{-1} \alpha_{1} \tilde{f}_{k_{1}} H_{s} ( \alpha_{2} \tilde{f}_{k_{2}} , \alpha_{3} \tilde{f}_{k_{3}} ) \Vert_{Z_{k}}
\\
& \lesssim \sup_{e \in \mathscr{E}}  \Vert \Delta_{k} ( \i \partial_{t} - (-\Delta)^{s}   + \i)^{-1} \alpha_{1} \mathcal{F}^{-1} \big[ \vartheta_{e} \mathcal{F}(\tilde{f}_{k_{1}}) \big] H_{s} ( \alpha_{2} \tilde{f}_{k_{2}} , \alpha_{3} \tilde{f}_{k_{3}} ) \Vert_{Z_{k}}
\\
& \lesssim 2^{-k_{1}(2s-1)/2} \sup_{e \in \mathscr{E}} \Vert \alpha_{1} \tilde{f}_{k_{1}} H_{s} ( \alpha_{2} \tilde{f}_{k_{2}} , \alpha_{2} \tilde{f}_{k_{3}} ) \Vert_{L^{1}_{e}L^{2}_{e^{\perp},t}}
\end{split}
\end{equation*}
and for each $ e \in \mathscr{E}$ we have by Lemma \ref{CommTri} 
\begin{equation*}
\begin{split}
& \Vert \alpha_{1} \tilde{f}_{k_{1}} H_{s} ( \alpha_{2} \tilde{f}_{k_{2}} , \alpha_{2} \tilde{f}_{k_{3}} ) \Vert_{L^{1}_{e}L^{2}_{e^{\perp},t}}
\\
& \lesssim 2^{k_{1}(2s-1)/2} \Vert f \Vert_{Z_{k_{1}}} \prod_{i=1}^{3} \Vert \alpha_{i} \Vert_{L^{\infty}} 2^{k_{2}(n+1)/2} 2^{k_{3} (n+1)/2} \Vert f_{k_{2}} \Vert_{Z_{k_{2}}} \Vert f_{k_{3}} \Vert_{Z_{k_{3}}}
\end{split}
\end{equation*}
and so we obtain 
\begin{equation*}
\begin{split}
& 2^{k \sigma} \Vert \Delta_{k} ( \i \partial_{t} - (-\Delta)^{s}   + \i)^{-1} \alpha_{1} \tilde{f}_{k_{1}} H_{s} ( \alpha_{2} \tilde{f}_{k_{2}} , \alpha_{3} \tilde{f}_{k_{3}} ) \Vert_{Z_{k}}
\\
& \lesssim 2^{k \sigma} \Vert f \Vert_{Z_{k_{1}}} \prod_{i=1}^{3} \Vert \alpha_{i} \Vert_{L^{\infty}} 2^{k_{2}(n+1)/2} 2^{k_{3} (n+1)/2} \Vert f_{k_{2}} \Vert_{Z_{k_{2}}} \Vert f_{k_{3}} \Vert_{Z_{k_{3}}}
\end{split}
\end{equation*}
and summing over $k \leq k_{1} +10$ yields the result.

\end{proof}
\begin{remark}
Notice that the restriction that $ \alpha_{1} $ has Fourier transform supported in $ \ll 2^{k_{1}}$ was needed to make sure we can pass to the $L^{1}_{e} L^{2}_{e^{\perp},t}$ norm.
\end{remark}
The next lemma is similar to Lemma \ref{Dyadic1} but with the assumption that $f_{k_{2}}$ is the function with high frequency. 
\begin{lemma}\label{Dyadic2}
Let $k, k_{1},k_{2},k_{3} \geq 0$ and $ k,k_{1},k_{3} \leq k_{2} +10$. Let $ f_{k_{i}} \in Z_{k_{i}}$ and $ \alpha_{i} \in L^{\infty}(\R^{n+1})$ with $ \alpha_{i}$ having Fourier transform supported in $ \{ ( \xi,\tau) : \vert \xi \vert \leq 2^{k_{i} +10 } \}$ for $i \in \{1,3 \}$ and $ \alpha_{2}$ has Fourier transform supported in $ \{ (\xi,\tau) : \vert \xi \vert \leq 2^{k_{2} -50 } \}$. Then the following inequality holds for any $ \sigma \geq \frac{n+1}{2}$
\begin{equation}\label{dddd1}
\begin{split}
& \sum_{k=0}^{k_{2}+10} 2^{ \sigma k} \Vert \Delta_{k} ( \i \partial_{t} - (-\Delta)^{s}   + \i)^{-1} \alpha_{1} \tilde{f}_{k_{1}} H_{s} ( \alpha_{2} \tilde{f}_{k_{2}} , \alpha_{3} \tilde{f}_{k_{3}} ) \Vert_{Z_{k}} 
\\
& \lesssim_{\sigma} \prod_{i=1}^{3} \Vert \alpha_{i} \Vert_{L^{\infty}} 2^{(n+1)/2 k_{1}} 2^{k_{2} \sigma} 2^{k_{3} (n+1)/2} \Vert f_{k_{1}} \Vert_{Z_{k_{1}}} \Vert f_{k_{2}} \Vert_{Z_{k_{2}}} \Vert f_{k_{3}} \Vert_{Z_{k_{3}}}.
\end{split}
\end{equation}
where $ \tilde{f} \in \{ f , \bar{f} \}$.

\end{lemma}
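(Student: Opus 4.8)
The plan is to adapt the scheme of the proof of \Cref{Dyadic1}, the only structural change being that the dominant (high-frequency) factor is now $f_{k_2}$, which sits \emph{inside} the commutator. First I would fix an output frequency $k\le k_2+10$ and localize to a cone: writing $1=\sum_{e\in\mathscr E}\vartheta_e$ and using the local smoothing estimate \Cref{localsmoothing} exactly as in \eqref{ddd2} (the component of modulation $\gtrsim 2^{2sk}$ being absorbed into $X_k$ via \Cref{Emb} together with Bernstein in the $e$-direction, precisely as in \Cref{Dyadic1}), one reduces to
\[
\Vert\Delta_{k}(\i\partial_t-(-\Delta)^s+\i)^{-1}\alpha_1\tilde f_{k_1}H_s(\alpha_2\tilde f_{k_2},\alpha_3\tilde f_{k_3})\Vert_{Z_k}\lesssim 2^{-k(2s-1)/2}\sup_{e\in\mathscr E}\Vert\alpha_1\tilde f_{k_1}H_s(\alpha_2\tilde f_{k_2},\alpha_3\tilde f_{k_3})\Vert_{L^1_e L^2_{e^\perp,t}}.
\]
H\"older's inequality in the $(e,e^\perp\times\R)$-splitting followed by the maximal estimate then gives, for each $e$,
\[
\Vert\alpha_1\tilde f_{k_1}H_s(\alpha_2\tilde f_{k_2},\alpha_3\tilde f_{k_3})\Vert_{L^1_e L^2_{e^\perp,t}}\lesssim\Vert\alpha_1\Vert_{L^\infty}2^{k_1(n-1)/2}\Vert f_{k_1}\Vert_{Z_{k_1}}\Vert H_s(\alpha_2\tilde f_{k_2},\alpha_3\tilde f_{k_3})\Vert_{L^2(\R^{n+1})},
\]
so the whole matter is reduced to an $L^2$ bound for the commutator, with $f_{k_1}$ (always) peeled off at the modest cost $2^{k_1(n-1)/2}\le 2^{k_1(n+1)/2}$.

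For the commutator I would distinguish two regimes. If $k_3\le k_2$ then $f_{k_2}$ is the unique high-frequency entry, and since $H_s$ is symmetric in its two arguments I would apply \Cref{comest} to $H_s(\alpha_3\tilde f_{k_3},\alpha_2\tilde f_{k_2})$ — the hypotheses on the Fourier supports of $\alpha_2$ (contained in $\{|\xi|\le 2^{k_2-50}\}$) and of $\alpha_3$ are exactly those required — obtaining
\[
\Vert H_s(\alpha_2\tilde f_{k_2},\alpha_3\tilde f_{k_3})\Vert_{L^2}\lesssim\Vert\alpha_2\Vert_{L^\infty}\Vert\alpha_3\Vert_{L^\infty}2^{(2s-1)k_2/2}2^{k_3(n+1)/2}\Vert f_{k_2}\Vert_{Z_{k_2}}\Vert f_{k_3}\Vert_{Z_{k_3}}.
\]
If instead $k_3\in(k_2,k_2+10]$, so that $2^{k_3}\approx 2^{k_2}$, I would estimate crudely: by the triangle inequality $\Vert H_s(g,h)\Vert_{L^2}\le\Vert(-\Delta)^s(gh)\Vert_{L^2}+\Vert g(-\Delta)^sh\Vert_{L^2}+\Vert h(-\Delta)^sg\Vert_{L^2}$, pull out the factor $\lesssim 2^{2sk_2}$ on each term and bound the resulting $L^2$-norm of a product by a smoothing norm ($L^\infty_{e'}L^2_{e'^\perp,t}$) times a maximal norm ($L^2_{e'}L^\infty_{e'^\perp,t}$), after invoking \Cref{dec} to put $f_{k_3}$ in a fixed cone direction $e'$; this yields $\Vert H_s(\alpha_2\tilde f_{k_2},\alpha_3\tilde f_{k_3})\Vert_{L^2}\lesssim\Vert\alpha_2\Vert_{L^\infty}\Vert\alpha_3\Vert_{L^\infty}2^{k_2(2s+n)/2}\Vert f_{k_2}\Vert_{Z_{k_2}}\Vert f_{k_3}\Vert_{Z_{k_3}}$, which agrees with the previous bound once $2^{k_3}\approx 2^{k_2}$.

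Feeding this back and collecting powers of $2^{k}$, in either regime one reaches
\[
2^{k\sigma}\Vert\Delta_{k}(\i\partial_t-(-\Delta)^s+\i)^{-1}\alpha_1\tilde f_{k_1}H_s(\alpha_2\tilde f_{k_2},\alpha_3\tilde f_{k_3})\Vert_{Z_k}\lesssim\prod_{i=1}^3\Vert\alpha_i\Vert_{L^\infty}2^{k(\sigma-(2s-1)/2)}2^{k_1(n-1)/2}2^{(2s-1)k_2/2}2^{k_3(n+1)/2}\prod_{i=1}^3\Vert f_{k_i}\Vert_{Z_{k_i}}.
\]
Since $\sigma\ge\frac{n+1}{2}>\frac{2s-1}{2}$, summing the geometric series $\sum_{k=0}^{k_2+10}2^{k(\sigma-(2s-1)/2)}\lesssim_\sigma 2^{k_2(\sigma-(2s-1)/2)}$ absorbs the $k$-dependence and leaves the surviving power $2^{k_2\sigma}2^{k_1(n-1)/2}2^{k_3(n+1)/2}\le 2^{k_1(n+1)/2}2^{k_2\sigma}2^{k_3(n+1)/2}$, which is \eqref{dddd1}. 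I expect the only delicate point — and it is exactly the one that already appears in \Cref{Dyadic1} — to be the cone-localization/smoothing reduction of the first display: one must check that the high-modulation part of the $(\i\partial_t-(-\Delta)^s+\i)^{-1}$-solve is harmlessly controlled in $X_k$ and that $\gamma_{k,k'}\approx 1$ for the admissible $k'\approx k$, so that no loss is incurred when passing to the $L^1_eL^2_{e^\perp,t}$ norm; everything else is a routine repetition of the estimates in \Cref{Dyadic1} with the inner and outer high-frequency factors interchanged.
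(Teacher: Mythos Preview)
Your argument is correct and uses the same ingredients as the paper: the $Y^e_{k,k'}$-norm reduction to $L^1_eL^2_{e^\perp,t}$, the maximal estimate to peel off $\alpha_1\tilde f_{k_1}$, and \Cref{comest} for the commutator in $L^2$. The only organizational difference is that the paper splits into two cases according to whether $\max\{k_1,k_3\}\in[k_2-90,k_2+10]$ (in which case it simply quotes Case~1 of \Cref{Dyadic1}) or $\max\{k_1,k_3\}\le k_2-90$ (in which case it localizes $f_{k_2}$, rather than the output, to a cone --- the same thing since then $k\approx k_2$ is forced); you instead localize the output uniformly and split only on whether $k_3\le k_2$ or $k_3\in(k_2,k_2+10]$, which is a mild streamlining. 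One terminological nit: the reduction in your first display is not ``the local smoothing estimate \Cref{localsmoothing}'' but rather the definition of $Y^e_{k,k'}$ (with $k'\approx k$, $\gamma_{k,k'}\approx1$) combined with the high-modulation absorption you correctly flag at the end.
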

\begin{proof}
Similar to the previous proof, we consider two cases. \\
\textbf{Case 1:} if $ \max \{ k_{1},k_{3} \} \in [ k_{2} -90,k_{2} +10]$ then assume without loss of generality that $k_{1} \geq k_{3}$ ( the argument is the same if $k_{3} \geq k_{1}$) then in this case we have $ 2^{k_{1}} \approx 2^{k_{2}}$ and this is the same proof as case 1 in the proof of Lemma \ref{Dyadic1}. 
\\
\textbf{Case 2:} if $ \max \{ k_{1} ,k_{3} \} \leq k_{2} -90$ then in this case we use the $L^{1}_{e} L^{2}_{e^{\perp}}$ norm 
\begin{equation}\label{dddd2}
\begin{split}
&  \Vert \Delta_{k} ( \i \partial_{t} - (-\Delta)^{s}   + \i)^{-1} \alpha_{1} \tilde{f}_{k_{1}} H_{s} ( \alpha_{2} \tilde{f}_{k_{2}} , \alpha_{3} \tilde{f}_{k_{3}} ) \Vert_{Z_{k}}
\\
& \lesssim \sup_{e \in \mathscr{E}}  \Vert \Delta_{k} ( \i \partial_{t} - (-\Delta)^{s}   + \i)^{-1} \alpha_{1} \tilde{f}_{k_{1}} H_{s} ( \alpha_{2} \mathcal{F}^{-1} \big[ \vartheta_{e} \mathcal{F}(\tilde{f}_{k_{2}}) \big]  , \alpha_{3} \tilde{f}_{k_{3}} ) \Vert_{Z_{k}}
\\
& \lesssim 2^{-k_{2}(2s-1)/2} \sup_{e \in \mathscr{E}} \Vert \alpha_{1} \tilde{f}_{k_{1}} H_{s} ( \alpha_{2} \mathcal{F}^{-1} \big[ \vartheta_{e} \mathcal{F}(\tilde{f}_{k_{2}}) \big]  , \alpha_{3} \tilde{f}_{k_{3}} ) \Vert_{L^{1}_{e} L^{2}_{e^{\perp},t}}
\end{split}
\end{equation}
 and for any $ e \in \mathscr{E}$ we have by Lemma \ref{comest} and the maximal estimate 
 \begin{equation}\label{dddd3}
\begin{split}
& \Vert \alpha_{1} \tilde{f}_{k_{1}} H_{s} ( \alpha_{2} \mathcal{F}^{-1} \big[ \vartheta_{e} \mathcal{F}(\tilde{f}_{k_{2}}) \big]  , \alpha_{3} \tilde{f}_{k_{3}} ) \Vert_{L^{1}_{e} L^{2}_{e^{\perp},t}}
\\
& \lesssim \Vert \alpha_{1} \Vert_{L^{\infty}} \Vert \tilde{f}_{k_{1}} \Vert_{L^{2}_{e}L^{\infty}_{e^{\perp},t}} \Vert H_{s} ( \alpha_{2} \mathcal{F}^{-1} \big[ \vartheta_{e} \mathcal{F}(\tilde{f}_{k_{2}}) \big]  , \alpha_{3} \tilde{f}_{k_{3}} ) \Vert_{L^{2}}
\\
& \lesssim 2^{k_{2}(2s-1)/2} 2^{k_{1}(n+1)/2} 2^{k_{3}(n+1)/2} \prod_{i =1}^{3} \Vert \alpha_{i} \Vert_{L^{\infty}} \Vert f_{k_{1}} \Vert_{Z_{k_{1}}} \Vert f_{k_{2}} \Vert_{Z_{k_{2}}} \Vert f_{k_{3}} \Vert_{Z_{k_{3}}}
\end{split}
 \end{equation}
Plugging the bound \eqref{dddd3} back into \eqref{dddd2} yields the result.
\end{proof}

\section{Nonlinear estimates}
In this section we prove several nonlinear estimates. Let 
$$
\mathcal{N}(f) : =   H_{s}(f, \frac{1}{1+|f|^{2}}) 
+  \frac{f}{1+|f|^{2}} H_{s}(f,\bar{f}) + f H_{s}( |f|^{2},\frac{1}{1+|f|^{2}})
-  f^{2} H_{s} ( \bar{f},\frac{1}{1+|f|^{2}}).
    $$
Throughout this section, the implicit constants in the inequalities may depend on the differentiation parameter $ \sigma$.
The main result of this section is the following    
\begin{theorem}\label{NonlinearEst} Let $ \sigma \geq \frac{n+1}{2}$. Then there exists $ \epsilon : = \epsilon(n,s,\sigma) > 0$ so that for any $ f \in F^{\sigma} $ with $ \Vert f \Vert_{F^{\frac{n+1}{2}}} \leq \epsilon$, one has the following inequality
    $$
    \Vert \mathcal{N} ( f) \Vert_{N^{\sigma}} \lesssim \Vert f \Vert^{2}_{F^{(n+1)/2}} \Vert f \Vert_{F^{\sigma}}.
    $$
\end{theorem}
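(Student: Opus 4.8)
The plan is to expand each of the four terms in $\mathcal{N}(f)$ using a paraproduct/Littlewood--Paley decomposition, replacing the rational factors $\frac{1}{1+|f|^2}$ and $|f|^2$ by their frequency-localized pieces, and then reducing matters to the two dyadic trilinear estimates of \Cref{Dyadic1} and \Cref{Dyadic2}. The key structural observation is that all four summands are of the schematic form $\alpha_1 \tilde f_{k_1} H_s(\alpha_2 \tilde f_{k_2}, \alpha_3 \tilde f_{k_3})$, where $\alpha_i$ are $L^\infty$ multipliers built out of $f$ (namely $1$, $f$, $f^2$, or smooth functions of $|f|^2$), each with frequency support below $2^{k_i}$. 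Since the $B^{\sigma}_{2,1}$-type norms that control $F^{\sigma}$ behave well under such modulations (via \Cref{Multiplier} and the algebra-type bound \eqref{Besov1}, together with \Cref{Algebra}), and since $\|f\|_{F^{(n+1)/2}} \le \epsilon \ll 1$ guarantees $\frac{1}{1+|f|^2}$ and its smooth companions are controlled in $F^{(n+1)/2}$ with a uniform constant, we will be able to absorb all the $\alpha_i$-factors into harmless constants times $\|f\|_{F^{(n+1)/2}}$-powers.

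\textbf{The main steps.} First I would record the reduction: writing $g := \frac{1}{1+|f|^2}$ and noting $g = 1 + (g-1)$ with $g-1 = G(|f|^2)$ for a smooth $G$ vanishing to second order, one shows using \eqref{Besov1}, \eqref{Besov2} and \Cref{Algebra} that $g-1 \in F^{\sigma}$ with $\|g-1\|_{F^{n/2}} \lesssim \|f\|_{F^{n/2}}^2 \ll 1$ and $\|g-1\|_{F^{\sigma}} \lesssim \|f\|_{F^{n/2}}\|f\|_{F^{\sigma}}$; similarly for the factor $|f|^2$. Second, I would perform the Littlewood--Paley decomposition of each term: write $f = \sum_{k_1} \Delta_{k_1} f$ in the ``outer'' slot, $g = \sum_{k_2,k_3}(\dots)$ inside $H_s$, and split the cutoff $\alpha_1$ into low ($\ll 2^{k_1}$) and comparable pieces — the comparable piece is swept into the case where two factors already have comparable high frequency. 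Third, with the frequency localizations in place I would invoke \Cref{Dyadic1} when the first $H_s$-argument carries the top frequency and \Cref{Dyadic2} when the second one does, and for the remaining cases (outer factor is the highest, or the $\alpha_1$-piece has comparable frequency) use \Cref{CommTri}, \Cref{comest}, \Cref{comest4} directly together with \Cref{Bilin1}/\Cref{Algebra} to handle the extra product. Fourth, I would sum the resulting dyadic bounds: each case produces something like $2^{\sigma k_1} 2^{\frac{n+1}{2}k_2} 2^{\frac{n+1}{2}k_3}\prod\|f_{k_i}\|_{Z_{k_i}}$, and since $\sigma \ge \frac{n+1}{2}$ one of the three exponents can be taken to be $\sigma$ while the other two are exactly $\frac{n+1}{2}$ — so summing in $k_1,k_2,k_3$ gives $\|f\|_{F^{\sigma}}\|f\|_{F^{(n+1)/2}}^2$, provided the off-diagonal decay (the $2^{-|k-k_2|}$ factors from \Cref{Bilin1} and the geometric series in \Cref{Dyadic1,Dyadic2}) is enough to make the sums converge, which it is.

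\textbf{Main obstacle.} The genuinely delicate point is the bookkeeping of the frequency interactions and making sure that in \emph{every} configuration of $(k_1,k_2,k_3,k)$ at least two of the three frequencies $k_2,k_3$ and the ``remaining'' one cost only $\frac{n+1}{2}$ derivatives rather than $\sigma$; this is where the absence of a null structure bites, because we cannot afford to lose more than $(2s-1)/2$ derivatives off the top, and the smoothing/maximal gain of $(2s-1)/2 + (2s-1)/2$ is exactly what is recovered by landing the product in $Y^{e}_{k,k'}$. Concretely, the hard case is when the outer factor $f_{k_1}$ has the highest frequency and the two factors inside $H_s$ are much lower: there $H_s(\alpha_2\tilde f_{k_2},\alpha_3\tilde f_{k_3})$ must be treated as essentially $|\nabla|^{2s-1}$ acting on a low-frequency product (via \Cref{Taylorstuff}), the result multiplied by the high-frequency $f_{k_1}$, and one must verify that the resulting frequency support still lies in the cone so that the $Y^e_{k,k'}$ norm is available — this is precisely why \Cref{Dyadic1} requires $\alpha_1$ to be supported in frequencies $\ll 2^{k_1}$, and I would need to check that the decompositions arising from $g-1$ and $|f|^2$ can always be arranged to respect that restriction (splitting off the comparable-frequency part of $\alpha_1$ into a separately-handled term). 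Once that case analysis is organized, the remaining estimates are routine applications of Hölder together with the local smoothing and maximal inequalities.
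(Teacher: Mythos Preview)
Your overall strategy matches the paper's: expand the rational factors as geometric series, perform Littlewood--Paley decomposition, and run a case analysis on which factor carries the top frequency, invoking the dyadic lemmas \Cref{Dyadic1} and \Cref{Dyadic2} together with the commutator bounds of Section~5. The paper organizes this into four separate propositions (one per summand in $\mathcal{N}$), each proved via intermediate multilinear lemmas (\Cref{Prenon}, \Cref{thirdl}, \Cref{Lastnonlemma}) that are essentially the case analyses you describe.

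Two points in your write-up need correction. First, the term $H_s(f,\tfrac{1}{1+|f|^2})$ does \emph{not} fit your schematic form $\alpha_1\tilde f_{k_1}H_s(\alpha_2\tilde f_{k_2},\alpha_3\tilde f_{k_3})$: there is no outer factor, so \Cref{Dyadic1}/\Cref{Dyadic2} do not apply directly. The paper instead expands it as $\sum_j H_s(f_{k_1},f_{k_2,j}\bar f_{k_3,j})$ and uses the commutator estimates \Cref{comest2} and \Cref{comest3} (which bound $\|H_s(\tilde f_{k_1},\tilde f_{k_2}\tilde f_{k_3})\|_{L^1_eL^2_{e^\perp,t}}$ without any outer multiplier) to close the estimate; you should single this term out. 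Second, you cannot invoke \eqref{Besov2} to place $G(|f|^2)$ in $F^\sigma$: that is a composition result for the \emph{spatial} Besov space $B^\sigma_{2,1}(\R^n)$, whereas $F^\sigma$ carries nontrivial modulation structure and no such composition lemma is available. The paper avoids this by expanding $\tfrac{1}{1+|f|^2}=\sum_{j\ge 0}(-1)^j(f\bar f)^j$ explicitly and using only the algebra property \Cref{Algebra} to control $\|f^j\|_{F^\sigma}\lesssim jC_A^{j-1}\epsilon^{j-1}\|f\|_{F^\sigma}$, then summing the geometric series for $\epsilon\ll C_A^{-1}$. With these two fixes your plan is the paper's proof.
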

the proof of theorem \ref{NonlinearEst} will be a sequence of propositions. Namely, the nonlinearity contains four terms, and so we estimate each term separately.

We start with the first term
\begin{proposition}\label{firstn}
Let $ \sigma \geq \frac{n+1}{2}$. Then
there exists $ \epsilon: = \epsilon (n,s, \sigma)> 0$ so that for any
 $ f \in F^{\sigma}$ with $ \Vert f \Vert_{F^{\frac{n+1}{2}}} \leq \epsilon$, one has the following 
\begin{equation}\label{nonlinear1}
\Vert H_{s} ( f , \frac{1}{1+|f|^{2}} ) \Vert_{N^{\sigma}} \lesssim \Vert f \Vert^{2}_{F^{(n+1)/2}} \Vert f \Vert_{F^{\sigma}}
\end{equation}
    
\end{proposition}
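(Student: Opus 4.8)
\textbf{Proof proposal for Proposition \ref{firstn}.}

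The plan is to expand the factor $\frac{1}{1+|f|^2}$ in a Taylor series and reduce \eqref{nonlinear1} to a sum of trilinear (and higher multilinear) estimates of the form $\Vert H_s(f,g) \Vert_{N^\sigma}$ where $g$ is a monomial $f^a \bar f^b$ with $a+b\geq 2$; the worst case is $a+b=2$, and the higher-order terms converge because of the smallness $\Vert f\Vert_{F^{(n+1)/2}}\leq\epsilon$. More precisely, I would write $\frac{1}{1+|f|^2}-1 = \sum_{m\geq 1}(-1)^m |f|^{2m}$, so that $H_s(f,\frac{1}{1+|f|^2}) = H_s(f, \frac{1}{1+|f|^2}-1)$ (the constant drops out of the commutator) $= \sum_{m\geq 1}(-1)^m H_s(f,|f|^{2m})$. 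Hence it suffices to prove, for each $m\geq 1$,
\begin{equation*}
\Vert H_s(f,|f|^{2m}) \Vert_{N^\sigma} \lesssim C^m \Vert f\Vert_{F^{(n+1)/2}}^{2m}\Vert f\Vert_{F^\sigma},
\end{equation*}
with $C$ independent of $m$, and then sum the geometric series (using $\Vert f\Vert_{F^{(n+1)/2}}\leq\epsilon$ with $C\epsilon<1/2$, say). Since $|f|^{2m}$ is a product of $2m$ copies of $f$ or $\bar f$, and since $F^\sigma$ is an algebra for $\sigma\geq n/2$ by Corollary \ref{Algebra} (which gives $\Vert |f|^{2m}\Vert_{F^\sigma}\lesssim C^{2m}\Vert f\Vert_{F^{(n+1)/2}}^{2m-1}\Vert f\Vert_{F^\sigma}$ and $\Vert |f|^{2m}\Vert_{F^{(n+1)/2}}\lesssim C^{2m}\Vert f\Vert_{F^{(n+1)/2}}^{2m}$ — note the factorial growth of $C_A$ in $\sigma$ is irrelevant here since $\sigma$ is fixed), the essential task reduces to a \emph{bilinear} commutator estimate: $\Vert H_s(g,h)\Vert_{N^\sigma}\lesssim \Vert g\Vert_{F^\sigma}\Vert h\Vert_{F^{(n+1)/2}} + \Vert g\Vert_{F^{(n+1)/2}}\Vert h\Vert_{F^\sigma}$, applied with $g=f$, $h=|f|^{2m}$.

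For the bilinear estimate I would do a Littlewood--Paley decomposition $g=\sum_{k_1}\Delta_{k_1}g$, $h=\sum_{k_2}\Delta_{k_2}h$, and split into the usual three regimes. In the high-high regime ($k_1\approx k_2$) and the low-high / high-low regimes where the output frequency $k$ is governed by the larger of $k_1,k_2$, the heart of the matter is to estimate $\Vert (\i\partial_t-(-\Delta)^s+\i)^{-1}\Delta_k H_s(\Delta_{k_1}g,\Delta_{k_2}h)\Vert_{Z_k}$. Dualizing the $Z_k$-norm against $\vartheta_e$ via the local smoothing estimate Lemma \ref{localsmoothing}, one gains $2^{-k(2s-1)/2}$ and reduces to an $L^1_e L^2_{e^\perp,t}$ bound on the commutator, which is exactly what Lemmata \ref{comest2} and \ref{comest3} (and, in the high-high case where one first peels off one factor, Lemma \ref{comest}) provide: they give a factor $2^{k_{\max}(2s-1)/2}$ that cancels the loss from inverting the operator, together with $2^{k_{\min}(n+1)/2}$ factors on the low-frequency inputs. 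Summing $2^{\sigma k}$ over $k\lesssim k_{\max}$ and then over the dyadic pieces $k_1,k_2$ against the $F^\sigma$- and $F^{(n+1)/2}$-norms produces the claimed bound; the summability in $k$ is where $\sigma\geq\frac{n+1}{2}$ is used (the $(n+1)/2$ from the commutator lemmas exactly matches the Besov exponent, and the extra decay in $k$ comes from the $2^{-|k-k_{\max}|}$-type gains implicit in the frequency-localized estimates, analogously to Theorem \ref{Bilin1}).

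The main obstacle I anticipate is \emph{not} any single estimate but the bookkeeping: one must verify that the constants produced by Corollary \ref{Algebra} and by the iterated bilinear estimate grow at most geometrically in $m$ (so that the series $\sum_m C^m\epsilon^{2m-1}$ converges for $\epsilon$ small), and one must handle the fact that $|f|^{2m}$ mixes $f$ and $\bar f$ — this is harmless since all the commutator lemmas (\ref{comest}, \ref{comest2}, \ref{comest3}, \ref{Dyadic1}, \ref{Dyadic2}) are stated for $\tilde f\in\{f,\bar f\}$, and $H_s$ is symmetric enough that conjugations cost nothing. A secondary technical point is the high-high interaction in the bilinear estimate when the output lands at low frequency: there one cannot directly use the smoothing estimate on the output, and instead argues as in Case 1--7 of Theorem \ref{Bilin1}, decomposing the inputs by modulation and using Lemma \ref{es1}, Lemma \ref{Emb}, and Lemma \ref{comest} on the various pieces. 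I would organize the write-up so that this bilinear estimate $\Vert H_s(g,h)\Vert_{N^\sigma}\lesssim \Vert g\Vert_{F^\sigma}\Vert h\Vert_{F^{(n+1)/2}}+\Vert g\Vert_{F^{(n+1)/2}}\Vert h\Vert_{F^\sigma}$ is isolated as a lemma, proved once, and then invoked here and in the remaining three propositions.
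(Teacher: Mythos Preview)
Your overall architecture---Taylor expand, prove a summable estimate for each term, choose $\epsilon$ small---matches the paper. The gap is in the reduction step: the bilinear commutator estimate
\[
\Vert H_s(g,h)\Vert_{N^\sigma}\lesssim \Vert g\Vert_{F^\sigma}\Vert h\Vert_{F^{(n+1)/2}} + \Vert g\Vert_{F^{(n+1)/2}}\Vert h\Vert_{F^\sigma}
\]
is not available with the tools in the paper, and your sketch does not prove it. Concretely, to exploit $(\i\partial_t-(-\Delta)^s+\i)^{-1}$ you have to land in the $Y^e_{k,k'}$ component of $Z_k$, which means bounding $\Vert H_s(g_{k_1},h_{k_2})\Vert_{L^1_eL^2_{e^\perp,t}}$. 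After Taylor-expanding the commutator (Lemma~\ref{Taylorstuff}) and putting the high-frequency input in $L^\infty_eL^2_{e^\perp,t}$ via smoothing, the remaining low-frequency factor must go in $L^1_eL^\infty_{e^\perp,t}$. But a single $Z_{k_2}$ function only obeys the maximal bound $L^2_eL^\infty_{e^\perp,t}$; there is no $L^1_eL^\infty$ estimate. This is precisely why Lemmata~\ref{comest2} and~\ref{comest3} are stated for $H_s(f_{k_1},f_{k_2}\cdot f_{k_3})$ with \emph{three} inputs: two $L^2_eL^\infty$ factors H\"older together into $L^1_eL^\infty$. Once you project $h=|f|^{2m}$ to a single frequency $h_{k_2}$, the product structure is lost and those lemmas no longer apply.

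The fix, and what the paper actually does, is to keep the product visible: write $|f|^{2m}=f^m\cdot\bar f^m$, decompose each factor separately in frequency, and prove the \emph{trilinear} estimate
\[
\Vert H_s(\tilde f,\tilde g\,\tilde h)\Vert_{N^\sigma}\lesssim \Vert f\Vert_{F^\sigma}\Vert g\Vert_{F^{(n+1)/2}}\Vert h\Vert_{F^{(n+1)/2}}+\text{cyclic permutations}
\]
(this is \eqref{nonl1}). The dyadic pieces $H_s(f_{k_1},(f^m)_{k_2}(\bar f^m)_{k_3})$ are then handled case-by-case on $\max\{k_1,k_2,k_3\}$ using exactly the trilinear lemmas you cite. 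After that, Corollary~\ref{Algebra} controls $\Vert f^m\Vert_{F^\sigma}$ and $\Vert f^m\Vert_{F^{(n+1)/2}}$ with geometric growth in $m$, and the series sums as you anticipated. So your bookkeeping concerns are fine; it is the shape of the core estimate (trilinear, not bilinear) that needs to change.
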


\begin{proof}
By expanding using Taylor, and the bi linearity of the commutator $H_{s}$, we can write 
$$
H_{s} ( f , \frac{1}{1+ | f |^{2}}) = \sum_{j=1}^{\infty} (-1)^{j} H_{s}( f , (f \bar{f} )^{j} ).
$$
We will prove for any $j \geq 1$ we have the following
\begin{equation}\label{nonll}
\begin{split}
& \Vert H_{s} ( f , (f)^{j} ( \bar{f})^{j} ) \Vert_{N^{\sigma}}
\\
& \lesssim \Vert f \Vert_{F^{\sigma}} \Vert (f)^{j} \Vert^{2}_{F^{n/2}} +  2 \Vert f \Vert_{F^{n/2}} \Vert f^{j} \Vert_{F^{n/2}} \Vert f^{j} \Vert_{F^{\sigma}}
\end{split}
\end{equation}
To simplify the notation, write $ f_{k,j}$ to mean $ \Delta_{k}((f)^{j})$. 
Then we have the following
\begin{equation}\label{qz4}
\begin{split}
& \Vert H_{s} ( f , (f \bar{f})^{j} ) \Vert_{N^{\sigma}}
\\
& = \sum_{k=0}^{\infty} 2^{\sigma k } \Vert (\i \partial_{t} - (-\Delta)^{s}   + \i)^{-1}  \Delta_{k} H_{s} ( f ,(f \bar{f})^{j}) \Vert_{Z_{k}}
\\
& \lesssim  \sum_{k_{1},k_{2},k_{3} =0 }^{\infty} \sum_{k=0}^{\infty} 2^{\sigma k}  \Vert(\i \partial_{t} - (-\Delta)^{s}   + \i)^{-1} \Delta_{k} H_{s}( f_{k_{1}} , (f_{k_{2},j} \bar{f}_{k_{3},j}) ) \Vert_{Z_{k}}. 
\end{split}
\end{equation}
Fix integers $k_{1},k_{2},k_{3} \geq 0$ then we consider several cases.
\\
\textbf{Case 1:} if $ k_{1} = \max\{ k_{1},k_{2} ,k_{3} \}$. Then in this case it suffices to prove
\begin{equation}\label{vv4}
\begin{split}
& \sum_{k_{1},k_{2},k_{3}\geq 0 } \sum_{k=0}^{k_{1} +10 } 2^{\sigma k}  \Vert (\i \partial_{t} - (-\Delta)^{s}   + \i)^{-1} \Delta_{k} H_{s}( f_{k_{1}} , (f_{k_{2},j} \bar{f}_{k_{3},j}) ) \Vert_{Z_{k}}
\\
& \lesssim \sum_{k_{1}, k_{2}, k_{3} \geq 0} 2^{\sigma k_{1} } 2^{ k_{1} (n+1)/2 } 2^{k_{3}(n+1)/2 } \Vert f_{k_{1}} \Vert_{Z_{k_{1}}} \Vert f_{k_{2},j} \Vert_{Z_{k_{2}}} \Vert f_{k_{3},j} \Vert_{Z_{k_{3}}}.
\end{split}
\end{equation}
To prove \eqref{vv4} we consider several sub-cases, depending on whether $ 2^{ \max \{ k_{2},k_{3} \}} \approx 2^{k_{1}}$ or not.\\
\textbf{Case 1a:} if $ k_{1} = \max \{ k_{1} , k_{2} , k_{3} \} $ and $ \max \{ k_{2} ,k_{2} \} \in [k_{1} -30 , k_{1}]$.
Without loss of generality assume $ k_{2} \geq k_{3}$. Then we estimate as follows; for any fixed $k \in [0, k_{1} +10 ]$ we have
\begin{equation}\label{yu1}
\begin{split}
& \Vert (\i \partial_{t} - (-\Delta)^{s}   + \i)^{-1} \Delta_{k} H_{s}( f_{k_{1}} , (f_{k_{2},j} \bar{f}_{k_{3},j}) ) \Vert_{Z_{k}}
\\
& \lesssim \sup_{e \in \mathscr{E}}
\Vert \mathcal{F}^{-1} \left( \vartheta_{e} \mathcal{F} \left( \Delta_{k} (\i \partial_{t} - (-\Delta)^{s}   + \i)^{-1} \Delta_{k} H_{s}( f_{k_{1}} , f_{k_{2},j} \bar{f}_{k_{3},j} ) \right) \right)\Vert_{Z_{k}}
\\
& \lesssim 2^{-k(2s-1)/2} \sup_{e \in \mathscr{E}}  \Vert H_{s}( f_{k_{1}} , (f_{k_{2},j} \bar{f}_{k_{3},j}) ) \Vert_{L^{1}_{e} L^{2}_{e^{\perp},t}},
\end{split}
\end{equation}
and for any $ e \in \mathscr{E}$ we have 
\begin{equation}\label{yu2}
\begin{split}
& \Vert H_{s}( f_{k_{1}} , (f_{k_{2},j} \bar{f}_{k_{3},j}) ) \Vert_{L^{1}_{e} L^{2}_{e^{\perp},t}}
\\
& \lesssim \Vert (-\Delta)^s ( f_{k_{1}} f_{k_{2} ,j} \bar{f}_{k_{3},j} ) \Vert_{L^{1}_{e}L^{2}_{e^{\perp},t}} 
\\
& + \Vert f_{k_{2},j} \bar{f}_{k_{3},j}  (-\Delta)^s ( f_{k_{1}}) \Vert_{L^{1}_{e}L^{2}_{e^{\perp},t}} 
\\
& + \Vert f_{k_{1}} (-\Delta)^s ( f_{k_{2},j} \bar{f}_{k_{3},j} ) \Vert_{L^{1}_{e}L^{2}_{e^{\perp},t}}.
\end{split}
\end{equation}
We estimate each term on the right hand side of \eqref{yu2}. Starting with the first term, using Lemma \ref{Multiplier} and the maximal estimate we obtain
\begin{equation}\label{yu3}
\begin{split}
& \Vert (-\Delta)^s ( f_{k_{1}} f_{k_{2} ,j} \bar{f}_{k_{3},j} ) \Vert_{L^{1}_{e}L^{2}_{e^{\perp},t}}
\\
& \lesssim 2^{2sk_{1}} \Vert (-\Delta)^s ( f_{k_{1}} f_{k_{2} ,j} \bar{f}_{k_{3},j} ) \Vert_{L^{1}_{e}L^{2}_{e^{\perp},t}}
\\
& \lesssim 2^{k_{1} (2s-1) +k_{1}} \Vert f_{k_{1}} \Vert_{L^{2}_{e} L^{\infty}_{e^{\perp},t}} \Vert f_{k_{2},j} \bar{f}_{k_{3},j} \Vert_{L^{2}(\R^{n+1})}
\\
& \lesssim 2^{k_{1}(2s-1)} 2^{k_{1} (n+1)/2} \Vert f_{k_{1}} \Vert_{Z_{k_{1}}} \Vert f_{k_{2},j} \bar{f}_{k_{3},j} \Vert_{L^{2}(\R^{n+1})}.
\end{split}
\end{equation}
Using Lemma \ref{es1} and the assumption $ 2^{k_{1}} \approx 2^{k_{2}}$, we bound the right hand side of \eqref{yu3} by
\begin{equation}\label{yu4}
\begin{split}
2^{k_{1}(2s-1)/2} 2^{k_{2} (n+1)/2} \Vert f_{k_{1}} \Vert_{Z_{k_{1}}} 2^{k_{3} (n+1)/2} \Vert f_{k_{2},j} \Vert_{Z_{k_{2}}} \Vert f_{k_{3},j } \Vert_{Z_{k_{3}}}.
\end{split}
\end{equation}
In the same manner we can bound the remaining terms on the right hand side of \eqref{yu2} with 
$$
2^{k_{1}(2s-1)/2} 2^{k_{2} (n+1)/2} \Vert f_{k_{1}} \Vert_{Z_{k_{1}}} 2^{k_{3} (n+1)/2} \Vert f_{k_{2},j} \Vert_{Z_{k_{2}}} \Vert f_{k_{3},j } \Vert_{Z_{k_{3}}}.
$$
This implies 
\begin{equation}\label{yu5}
\begin{split}
& \Vert  H_{s}( f_{k_{1}} , (f_{k_{2},j} \bar{f}_{k_{3},j}) ) \Vert_{L^{1}_{e}L^{2}_{e^{\perp},t}}
\\
& \lesssim 2^{k_{1}(2s-1)/2} 2^{k_{2} (n+1)/2} \Vert f_{k_{1}} \Vert_{Z_{k_{1}}} 2^{k_{3} (n+1)/2} \Vert f_{k_{2},j} \Vert_{Z_{k_{2}}} \Vert f_{k_{3},j } \Vert_{Z_{k_{3}}}.
\end{split}
\end{equation}
Plugging the bound \eqref{yu5} back into \eqref{yu1} we obtained
\begin{equation}\label{yu6}
\begin{split}
& \Vert (\i \partial_{t} -(-\Delta)^s + \i)^{-1} \Delta_{k} H_{s}( f_{k_{1}} , (f_{k_{2},j} \bar{f}_{k_{3},j}) ) \Vert_{Z_{k}}
\\
& \lesssim 2^{(k_{1} - k) (2s-1)/2} 2^{k_{2} (n+1)/2} 2^{k_{3} (n+1)/2} \Vert f_{k_{1}} \Vert_{Z_{k_{1}}} \Vert f_{k_{2},j} \Vert_{Z_{k_{2}}} \Vert f_{k_{3},j} \Vert f_{k_{3},j} \Vert_{Z_{k_{3}}}.
\end{split}
\end{equation}
The bound \eqref{yu6} yields \eqref{vv4}. 
\\
\textbf{Case 1b:}
If $ \max\{ k_{1},k_{2},k_{3} \} =k_{1}$ and $\max\{ k_{2},k_{3} \} \leq k_{1} -30 $. In this case we use Lemma \ref{comest2}
\begin{equation}\label{qz6}
\begin{split}
& \sum_{k=0}^{k_{1}+10} 2^{\sigma k } \Vert(\i \partial_{t} -(-\Delta)^s + \i)^{-1} \Delta_{k} H_{s} ( f_{k_{1}}, ( f_{k_{2},j} \bar{ f}_{k_{3},j})) \Vert_{Z_{k}}
\\
& \lesssim \sum_{k=0}^{k_{1} + 10} 2^{\sigma k} \sup_{e \in \mathscr{E}} \Vert (\i \partial_{t} - (-\Delta)^{s}   + \i)^{-1} \Delta_{k} H_{s} ( \mathcal{F}^{-1} ( \vartheta_{e} \hat{f}_{k_{1}}), ( f_{k_{2},j} \bar{ f}_{k_{3},j})) \Vert_{Z_{k}}
\\
& \lesssim  \sum_{k=0}^{k_{1} +10} \sup_{e \in \mathscr{E}} 2^{\sigma k} 2^{-k(2s-1)/2} \Vert \Delta_{k} H_{s} ( \mathcal{F}^{-1}(\vartheta_{e} \hat{f}_{k_{1}}) , ( f_{k_{2},j} \bar{f}_{k_{3},j} ) ) \Vert_{L^{1}_{e} L^{2}_{e^{\perp},t}}
\\
& \lesssim 2^{k_{1} \sigma} 2^{k_{2} (n+1)/2} 2^{k_{3} (n+1)/2} \Vert f_{k_{1}} \Vert_{Z_{k_{1}}} \Vert f_{k_{2},j} \Vert_{Z_{k_{2}} } \Vert f_{k_{3},j} \Vert_{Z_{k_{3}} }
\end{split}
\end{equation}
This proves \eqref{vv4}.
\\
\textbf{Case 2:}
Next, we consider the case $\max \{ k_{1} , k_{2} , k_{3} \} \in \{ k_{1}, k_{2} \}$. By symmetry we may assume $ k_{2} \geq k_{3}$. 
Then we need to prove 
\begin{equation}\label{vvv4}
\begin{split}
& \sum_{k_{1},k_{2},k_{3}\geq 0 } \sum_{k=0}^{k_{2} +10 } 2^{\sigma k}  \Vert (\i \partial_{t} - (-\Delta)^{s}   + \i)^{-1} \Delta_{k} H_{s}( f_{k_{1}} , (f_{k_{2},j} \bar{f}_{k_{3},j}) ) \Vert_{Z_{k}}
\\
& \lesssim \sum_{k_{1}, k_{2}, k_{3} \geq 0} 2^{k_{1}(n+1)/2 } 2^{\sigma k_{2}} 2^{k_{3}(n+1)/2 } \Vert f_{k_{1}} \Vert_{Z_{k_{1}}} \Vert f_{k_{2},j} \Vert_{Z_{k_{2}}} \Vert f_{k_{3},j} \Vert_{Z_{k_{3}}}.
\end{split}
\end{equation}
As before, to prove \eqref{vvv4} we consider several sub-cases
\\
\textbf{Case 2a: }If $ \max \{ k_{1} , k_{3} \} \in [k_{2} - 30,k_{2} ]$ then assume without loss of generality that $k_{1} \geq k_{3} $ ( the argument is the same if $k_{3} \geq k_{1}$ ) then this is precisely Case 1a above.\\
\textbf{Case 2b:} If $ \max \{ k_{1} , k_{3} \} \leq k_{2} -30$. Then we use Lemma \ref{comest3} to obtain
\begin{equation}\label{qz2}
\begin{split}
& \sum_{k=0}^{k_{2} + 10 } 2^{\sigma k}  \Vert(\i \partial_{t} -(-\Delta)^s + \i)^{-1} \Delta_{k} H_{s} ( f_{k_{1}},  f_{k_{2},j} \bar{ f}_{k_{3},j} )\Vert_{Z_{k}}
\\
& \lesssim \sum_{k=0}^{k_{1} + 10} 2^{\sigma k} \sup_{e \in \mathscr{E}} \Vert (\i \partial_{t} - (-\Delta)^{s}   + \i)^{-1} \Delta_{k} H_{s} ( f_{k_{1}}, \mathcal{F}^{-1} ( \vartheta_{e} \hat{f}_{k_{2},j}) \bar{ f}_{k_{3},j}) \Vert_{Z_{k}}
\\
& \lesssim 2^{\sigma k_{2}} 2^{k_{1} (n+1)/2} 2^{k_{3}(n+1)/2} \Vert f_{k_{1}} \Vert_{Z_{k_{1}}} \Vert  f_{k_{2},j} \Vert_{Z_{k_{2}}} \Vert f_{k_{3},j}  \Vert_{Z_{k_{3}}} 
\end{split}
\end{equation}
which clearly implies \eqref{vvv4}. 
Case 1, and Case 2 imply \eqref{nonll}. Now using Corollary \ref{Algebra}, we obtain  
\begin{equation}
\begin{split}
& \Vert H_{s} ( f , \frac{1}{1+ \vert f \vert^{2}} ) \Vert_{N^{\sigma}} 
\\
& \lesssim \sum_{ j \geq 1} \Vert f \Vert_{F^{\sigma}} \Vert f^{j} \Vert^{2}_{F^{n/2}} + 2\Vert f \Vert_{F^{n/2}} \Vert f^{j} \Vert_{F^{n/2}} \Vert f^{j} \Vert_{F^{\sigma}}
\\
& \lesssim \Vert f \Vert_{F^{\sigma}} \Vert f \Vert^{2}_{F^{n/2}} \sum_{j \geq 1} ( C_{A} \epsilon)^{2j-1} + \Vert f \Vert^{2}_{F^{n/2}} \sum_{j \geq 1} (j+1)( C_{A} \epsilon)^{2(j-1)} \Vert f \Vert_{F^{\sigma}}
\end{split}
\end{equation}
This concludes the Lemma once we choose $ \epsilon \ll \frac{1}{C_{A} + 100}$.
\end{proof}
Notice that the above proof yields the following inequality; for any $ f,g,h \in F^{\sigma}$ with $ \sigma \geq \frac{n+1}{2}$ one has  
\begin{equation}\label{nonl1}
\begin{split}
 & \Vert H_{s} ( \tilde{f}, \tilde{g} \tilde{h} ) \Vert_{N^{\sigma}} 
 \lesssim \Vert f \Vert_{F^{\sigma}} \Vert g \Vert_{F^{\frac{n+1}{2}}} \Vert h \Vert_{F^{\frac{n+1}{2}}}
 \\
 & + \Vert f \Vert_{F^{\frac{n+1}{2}}} \Vert g \Vert_{F^{\sigma}} \Vert h \Vert_{F^{\frac{n+1}{2}}}
 \\
 & + \Vert f \Vert_{F^\frac{n+1}{2}} \Vert g \Vert_{F^{\frac{n+1}{2}}} \Vert h \Vert_{F^{\sigma}}, 
\end{split}
\end{equation}
where $ \tilde{f} \in \{ f, \bar{f} \}$, and similarly for $ \tilde{g}, \tilde{h}$. 

Next, we prove the same for the remaining nonlinear terms. All the remaining terms are of the form 
$$
F_{1}(u) H_{s}(F_{2} (u) , F_{3}(u) )
$$
where $F_{i}$ is a function of $ u $ and $\bar{u}$. The proofs of the remaining nonlinear terms are all similar to the proof of \eqref{firstn}. We proceed with the estimates for the second term 
\begin{lemma}\label{Prenon}
Let $ f,g,h, w,v \in F^{\sigma}$ with  $ \sigma \geq \frac{n+1}{2}$ then we have the following inequality 
\begin{equation}\label{zvb}
\begin{split}
& \Vert \tilde{f} \tilde{g}  \tilde{h} H_{s} ( \tilde{w},\tilde{v}) \Vert_{N^{\sigma}}
\\
& \lesssim \Vert f \Vert_{F^{\sigma}} \Vert g \Vert_{F^{(n+1)/2}} \Vert h \Vert_{F^{(n+1)/2}} \Vert w \Vert_{F^{(n+1)/2}} \Vert v \Vert_{F^{(n+1)/2}} 
\\
& + \Vert f \Vert_{F^{(n+1)/2}} \Vert g \Vert_{F^{\sigma}} \Vert h \Vert_{F^{(n+1)/2}} \Vert w \Vert_{F^{(n+1)/2}} \Vert v \Vert_{F^{(n+1)/2}} 
\\
& +.. 
\\
& + \Vert f \Vert_{F^{(n+1)/2}} \Vert g \Vert_{F^{(n+1)/2}} \Vert h \Vert_{F^{(n+1)/2}} \Vert w \Vert_{F^{(n+1)/2}} \Vert v \Vert_{F^{\sigma}}
\end{split}
\end{equation}
\end{lemma}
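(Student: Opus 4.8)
The plan is to reduce the quintilinear estimate \eqref{zvb} to a sum of dyadic pieces and then invoke the dyadic estimates of Section 7, exactly as in the proof of Proposition \ref{firstn}. First I would perform a Littlewood--Paley decomposition of each of the five inputs, writing $\tilde f = \sum_{k_1} f_{k_1}$, $\tilde g = \sum_{k_2} g_{k_2}$, etc., so that
\[
\tilde f \tilde g \tilde h H_s(\tilde w,\tilde v) = \sum_{k_1,\dots,k_5} f_{k_1} g_{k_2} h_{k_3} H_s(w_{k_4},v_{k_5}).
\]
By the algebra-type bound \eqref{Besov1} at the level of the $F^\sigma$ spaces (more precisely Corollary \ref{Algebra} and the multiplier lemma \ref{Multiplier}), the product $\tilde f\tilde g\tilde h$ can be absorbed into an $L^\infty$-type factor times a single frequency-localized function: group the three low-order factors as a bounded multiplier $\alpha_{k_i}$ (with Fourier support $\ll 2^{k_i}$ after a paraproduct splitting) whenever their frequency is much lower than the frequency of the factor carrying the derivatives, and as a genuine $Z_k$ function otherwise. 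This is the standard paraproduct bookkeeping: in each dyadic term one of the five frequencies dominates (up to $O(1)$), and depending on whether the dominating frequency sits on $w,v$, on $f,g,h$, or is shared, one feeds the term into Lemma \ref{Dyadic1} or Lemma \ref{Dyadic2}, possibly after first using Corollary \ref{Algebra} to collapse the ``spectator'' factors into one $F^\sigma$ function and several $F^{(n+1)/2}$ factors that are turned into $L^\infty$ bounds via Lemma \ref{Energy}(2) (which costs $2^{k_i n/2}$, compensated by the $2^{k_i(n+1)/2}$ room in \eqref{ddd1} and \eqref{dddd1}).

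Concretely I would split into cases according to $\max\{k_1,\dots,k_5\}$. If the maximum is attained (up to $O(1)$) at $k_4$ or $k_5$ — the factors inside the commutator — then after symmetrizing so that $k_4\ge k_5$ we apply Lemma \ref{Dyadic2} with $f_{k_2}\to w_{k_4}$, the multiplier $\alpha_2$ absorbing the part of $fgh$ plus $v$ of frequency $\ll 2^{k_4}$, and $f_{k_1}, f_{k_3}$ playing the role of the two remaining localized factors; the resulting geometric-type sums over the small frequencies converge because $\sigma\ge \frac{n+1}{2}$ and the excess decay $2^{(k-k_2)(2s-1)/2}$ or $2^{-|k-k_2|}$ from Lemma \ref{Dyadic2}/Theorem \ref{Bilin1} is summable. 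If instead the maximum is attained at one of $k_1,k_2,k_3$ — say $k_1$, after relabelling — then the commutator $H_s(\tilde w,\tilde v)$ has comparatively low frequency, and we use Corollary \ref{Algebra} to write $\tilde g\tilde h\cdot(\text{the two spectator pieces of }f) $ as $\alpha_2$, then apply Lemma \ref{Dyadic1} with $f_{k_1}$ as the high-frequency factor and with $\alpha_2 \tilde w_{k}, \alpha_3 \tilde v_{k}$ in the commutator slots; again the $2^{\sigma k_1}$ gain on the right of \eqref{ddd1} matches the left side and the remaining frequency sums converge. The degenerate sub-cases where two of the largest frequencies are comparable are handled by the ``two high frequencies'' argument inside Lemma \ref{Dyadic1}/\ref{Dyadic2} (Case 1 there), which distributes the $\sigma$ derivatives between the two comparable pieces and puts the lower ones in the maximal norm.

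The main obstacle, as usual in these nonlinear estimates, is the bookkeeping of the paraproduct decomposition: one must be careful that whenever the three factors $\tilde f\tilde g\tilde h$ are regrouped into a bounded multiplier $\alpha_i$, the frequency-support hypotheses of Lemmas \ref{Dyadic1} and \ref{Dyadic2} (namely $\supp\widehat{\alpha_1}\subset\{|\xi|\le 2^{k_1-40}\}$, resp. $\supp\widehat{\alpha_2}\subset\{|\xi|\le 2^{k_2-50}\}$) are genuinely met, which forces a further decomposition of each product $fg$, $fh$, $gh$ into high-high, high-low and low-high pieces before the grouping is legitimate. This is precisely the point where the remark after Lemma \ref{Dyadic1} is relevant. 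Once the decomposition is organized so that in every surviving term the ``coefficient'' part has Fourier support strictly below the frequency of the single distinguished $Z_k$ factor, the estimate is a mechanical application of the two dyadic lemmas together with Corollary \ref{Algebra}, and summing the resulting five-fold geometric series (in which exactly one index carries weight $2^{\sigma k}$ and the other four carry weight $2^{(n+1)k/2}$, all with a strictly summable surplus) yields \eqref{zvb}. I would also record, as in the remark following Proposition \ref{firstn}, that the same argument gives the symmetrized version with any of $f,g,h,w,v$ (or their conjugates) carrying the high regularity, which is what the ``$+\dots+$'' in \eqref{zvb} abbreviates.
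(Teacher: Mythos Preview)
Your high-level strategy matches the paper's: decompose dyadically, split cases on which of $k_1,\dots,k_5$ is maximal, and feed each case into Lemma \ref{Dyadic1} or \ref{Dyadic2}. However, your bookkeeping is more complicated than necessary and in places confused about which slot each factor occupies.

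Two concrete points. First, in the case $r=\max\{k_i\}\in\{k_4,k_5\}$ (say $r=k_4$), you propose to put ``part of $fgh$ plus $v$'' into $\alpha_2$. But $\alpha_2$ in Lemma \ref{Dyadic2} sits \emph{inside} the commutator, multiplying the high-frequency argument, whereas $\tilde f\tilde g\tilde h$ lives outside $H_s$; you cannot move factors across $H_s$. The paper simply takes $\alpha_2=\alpha_3=1$, picks whichever of $f_{k_1},g_{k_2},h_{k_3}$ has the largest frequency to play the role of the lemma's outside function, and packages the remaining two outside factors into $\alpha_1$, bounding $\|\alpha_1\|_{L^\infty}$ by $2^{n k_i/2}\|\cdot\|_{Z_{k_i}}$ via Lemma \ref{Energy}(2). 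No Corollary \ref{Algebra} or paraproduct splitting is needed here.

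Second, the ``main obstacle'' you flag (verifying the support constraints $\supp\widehat{\alpha_1}\subset\{|\xi|\le 2^{k_1-40}\}$) is resolved in the paper by a simple dichotomy rather than a further high--low decomposition: in Case 1 ($r=k_1$), either $\max\{k_2,k_3\}\le k_1-50$, in which case $\widehat{g_{k_2}h_{k_3}}$ is automatically supported in $\{|\xi|\le 2^{k_1-40}\}$ and Lemma \ref{Dyadic1} applies directly; or $\max\{k_2,k_3\}\in[k_1-50,k_1]$, and then two of the five frequencies are comparable, so one bypasses the dyadic lemma entirely and estimates by hand using Lemma \ref{es1} on the comparable pair together with the maximal estimate on $H_s(\tilde w_{k_4},\tilde v_{k_5})$ in $L^2_eL^\infty_{e^\perp,t}$. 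So the extra paraproduct layer you describe is unnecessary; once you straighten out which factor goes into which slot, the proof is a direct application of the two dyadic lemmas plus one explicit sub-case.
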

\begin{proof}
Fix $ k_{1} , k_{2} ,k_{3} ,k_{4} ,k_{5} \geq 0$, and as before, we use the notation $ f_{k}$ to mean $ \Delta_{k}(f)$.
Set
$$
r : = \max\{ k_{1} ,k_{2} ,k_{3} ,k_{4} ,k_{5} \}
$$
then we want to estimate 
$$
\sum_{k =0}^{r+10} 2^{\sigma k} \Vert \Delta_{k} ( \i \partial_{t} - (-\Delta)^{s}   + \i)^{-1} \tilde{f}_{k_{1}}  \tilde{g}_{k_{2}}  \tilde{h}_{k_{3}} H_{s} (\tilde{w}_{k_{4}},\tilde{v}_{k_{5}}) \Vert_{Z_{k}} 
$$
To that end, we consider several cases. \\
\textbf{Case 1:} If $ r \in \{ k_{1},k_{2},k_{3} \}$. By symmetry we may assume $r = k_{1}$. Then we need to prove
\begin{equation}\label{zvb1}
\begin{split}
& \sum_{k =0}^{k_{1}+10} 2^{\sigma k} \Vert \Delta_{k} ( \i \partial_{t} - (-\Delta)^{s}   + \i)^{-1} \tilde{f}_{k_{1}}  \tilde{g}_{k_{2}}  \tilde{h}_{k_{3}} H_{s} (\tilde{w}_{k_{4}},v_{k_{5}}) \Vert_{Z_{k}} 
\\
& \lesssim  2^{\sigma k_{1}} 2^{k_{2} ( \frac{n+1}{2})} 2^{k_{3} ( \frac{n+1}{2})} 2^{k_{4} ( \frac{n+1}{2})} 2^{k_{5} ( \frac{n+1}{2})} \Vert f_{k_{1}} \Vert_{Z_{k_{1}}} \Vert g_{k_{2}} \Vert_{Z_{k_{2}}} \Vert h_{k_{3}} \Vert_{Z_{k_{3}}} \Vert w_{k_{4}} \Vert_{Z_{k_{4}}} \Vert v_{k_{5}} \Vert_{Z_{k_{5}}} 
\end{split}
\end{equation}
If $ \max \{ k_{2}, k_{3} \} \leq k_{1} -50$ then this follows immediately from Lemma \ref{Dyadic1} and Lemma \ref{Energy}. If on the other hand $ \max \{ k_{2} ,k_{3} \} \in [k_{1} -50,k_{1}]$ then assume without loss of generality that $ k_{2} \geq k_{3}$. Then this means that $ 2^{k_{2} } \approx 2^{k_{1}}$ and we estimate as follows 
\begin{equation}\label{zvb2}
\begin{split}
& \Vert \Delta_{k} ( \i \partial_{t} - (-\Delta)^{s}   + \i)^{-1} \tilde{f}_{k_{1}}  \tilde{g}_{k_{2}}  \tilde{h}_{k_{3}} H_{s} (\tilde{w}_{k_{4}},\tilde{v}_{k_{5}}) \Vert_{Z_{k}}
\\
& \lesssim 2^{-k(2s-1)/2} \sup_{e \in \mathscr{E}} \Vert \tilde{f}_{k_{1}} \tilde{g}_{k_{2}}  \tilde{h}_{k_{3}} H_{s} (\tilde{w}_{k_{4}},\tilde{v}_{k_{5}}) \Vert_{L^{1}_{e}L^{2}_{e^{\perp},t}}
\end{split}
\end{equation}
and for any $ e \in \mathscr{E}$ we have by Lemma \ref{es1} and Lemma \ref{Energy}
\begin{equation}\label{zvb3}
\begin{split}
& \Vert \tilde{f}_{k_{1}}  \tilde{g}_{k_{2}}  \tilde{h}_{k_{3}} H_{s} (\tilde{w}_{k_{4}},\tilde{v}_{k_{5}}) \Vert_{L^{1}_{e}L^{2}_{e^{\perp},t}}
\\
& \lesssim \Vert h_{k_{3}} \Vert_{L^{\infty}} \Vert  H_{s} (\tilde{w}_{k_{4}},\tilde{v}_{k_{5}}) \Vert_{L^{2}_{e}L^{\infty}_{e^{\perp},t}} 
\Vert \tilde{f}_{k_{1}} \tilde{g}_{k_{2}} \Vert_{L^{2}}
\\
& \lesssim 2^{k_{2}(n-1)/2} 2^{-k_{1}(2s-1)/2} 2^{k_{3}n/2} \Vert h_{k_{3}} \Vert_{Z_{k_{3}}} \Vert f_{k_{1}} \Vert_{Z_{k_{1}}} \Vert g_{k_{2}} \Vert_{Z_{k_{2}}} \Vert  H_{s} (\tilde{w}_{k_{4}},\tilde{v}_{k_{5}}) \Vert_{L^{2}_{e}L^{\infty}_{e^{\perp},t}}
\end{split}
\end{equation}
and clearly we have
\begin{equation}\label{zvb4}
\begin{split}
& \Vert  H_{s} (\tilde{w}_{k_{4}},\tilde{v}_{k_{5}}) \Vert_{L^{2}_{e}L^{\infty}_{e^{\perp},t}}
\\
& \lesssim 2^{2sk_{1}} 2^{k_{4}(n+1)/2} 2^{k_{5} (n+1)/2} \Vert w_{k_{4}} \Vert_{Z_{k_{4}}} \Vert v_{k_{5}} \Vert_{Z_{k_{5}}}
\end{split}
\end{equation}
Plugging the bound \eqref{zvb4} back into \eqref{zvb3} yields ( using that $ 2^{k_{1}} \approx 2^{k_{2}}$)
\begin{equation}\label{zvb5}
\begin{split}
& \Vert \tilde{f}_{k_{1}} \cdot \tilde{g}_{k_{2}} \cdot \tilde{h}_{k_{3}} H_{s} (\tilde{w}_{k_{4}},\tilde{v}_{k_{5}}) \Vert_{L^{1}_{e}L^{2}_{e^{\perp},t}}
\\
& \lesssim 2^{k_{1}(2s-1)/2} 2^{k_{2} ( \frac{n+1}{2})} 2^{k_{3} ( \frac{n+1}{2})} 2^{k_{4} ( \frac{n+1}{2})} 2^{k_{5} ( \frac{n+1}{2})} \Vert f_{k_{1}} \Vert_{Z_{k_{1}}} \Vert g_{k_{2}} \Vert_{Z_{k_{2}}} \Vert h_{k_{3}} \Vert_{Z_{k_{3}}} \Vert w_{k_{4}} \Vert_{Z_{k_{4}}} \Vert v_{k_{5}} \Vert_{Z_{k_{5}}}
\end{split}
\end{equation}
Plugging the estimate \eqref{zvb5} back into  \eqref{zvb2} yields
\begin{equation*}
\begin{split}
& 2^{k \sigma} \Vert \Delta_{k} ( \i \partial_{t} - (-\Delta)^{s}   + \i)^{-1} \tilde{f}_{k_{1}} \cdot \tilde{g}_{k_{2}} \cdot \tilde{h}_{k_{3}} H_{s} (\tilde{w}_{k_{4}},\tilde{v}_{k_{5}}) \Vert_{Z_{k}}
\\
& \lesssim 2^{(k_{1}-k)(2s-1)/2} 2^{\sigma k} 2^{k_{2}( \frac{n+1}{2})} 2^{k_{3} ( \frac{n+1}{2})} 2^{k_{4} ( \frac{n+1}{2})} 2^{k_{5} ( \frac{n+1}{2})} \Vert f_{k_{1}} \Vert_{Z_{k_{1}}} \Vert g_{k_{2}} \Vert_{Z_{k_{2}}} \Vert h_{k_{3}} \Vert_{Z_{k_{3}}} \Vert w_{k_{4}} \Vert_{Z_{k_{4}}} \Vert v_{k_{5}} \Vert_{Z_{k_{5}}}
\end{split}
\end{equation*}
Summing over $k \leq k_{1} +10$ gives \eqref{zvb1}.
\\
\textbf{Case 2:} if $ r \in \{ k_{4},k_{5} \}$. By symmetry we may assume $k_{4} \geq k_{5}$. Then we need to prove 
\begin{equation}
\begin{split}
& \sum_{k =0}^{k_{4}+10} 2^{\sigma k} \Vert \Delta_{k} ( \i \partial_{t} - (-\Delta)^{s}   + \i)^{-1} \tilde{f}_{k_{1}} \cdot \tilde{g}_{k_{2}} \cdot \tilde{h}_{k_{3}} H_{s} (\tilde{w}_{k_{4}},v_{k_{5}}) \Vert_{Z_{k}} 
\\
& \lesssim  2^{(\frac{n+1}{2}) k_{1}} 2^{k_{2} ( \frac{n+1}{2})} 2^{k_{3} ( \frac{n+1}{2})} 2^{k_{4} \sigma} 2^{k_{5} ( \frac{n+1}{2})} \Vert f_{k_{1}} \Vert_{Z_{k_{1}}} \Vert g_{k_{2}} \Vert_{Z_{k_{2}}} \Vert h_{k_{3}} \Vert_{Z_{k_{3}}} \Vert w_{k_{4}} \Vert_{Z_{k_{4}}} \Vert v_{k_{5}} \Vert_{Z_{k_{5}}}.
\end{split}
\end{equation}
This follows immediately from Lemma \ref{Dyadic2} with $ \alpha_{2} =1 =\alpha_{3} $.

\end{proof}
As a consequence of the above Lemma, we obtain the nonlinear estimates for the second term

\begin{proposition}\label{secondnn} 
Let $ \sigma \geq \frac{n+1}{2}$. Then 
there exists $ \epsilon:= \epsilon(n,s,\sigma) > 0$ so that for any  $f \in F^{\sigma}$ with $ \Vert f \Vert_{F^{\frac{n+1}{2}}} \leq \epsilon$ one has the following inequality 
\begin{equation}\label{nonlinear3}
\begin{split}
& \Vert \frac{f}{1+\vert f \vert^{2}} H_{s}(f,\bar{f} ) \Vert_{N^{\sigma}} 
\lesssim \Vert f \Vert^{2}_{F^{\frac{n+1}{2}}} \Vert f \Vert_{F^{\sigma}}.
\end{split}
\end{equation}
\end{proposition}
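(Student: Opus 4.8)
The plan is to follow the strategy of \Cref{firstn}: expand the rational prefactor in a power series and reduce everything to the multilinear estimates already established. Since $\Vert f\Vert_{F^{(n+1)/2}}\le\epsilon\ll1$ (so in particular, by \Cref{Energy}, $\Vert f\Vert_{L^{\infty}_{x,t}}$ is small), I would first write
\begin{equation*}
\frac{f}{1+|f|^{2}}=\sum_{j\ge0}(-1)^{j}\,f^{\,j+1}\,\bar f^{\,j},
\end{equation*}
the series converging absolutely in $F^{\sigma}$ by \Cref{Algebra}, so that
\begin{equation*}
\frac{f}{1+|f|^{2}}\,H_{s}(f,\bar f)=\sum_{j\ge0}(-1)^{j}\,f^{\,j+1}\,\bar f^{\,j}\,H_{s}(f,\bar f).
\end{equation*}
By the triangle inequality it then suffices to estimate each summand in $N^{\sigma}$ by a constant that is summable in $j$.

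For the term $j=0$, namely $f\,H_{s}(f,\bar f)$, I would decompose all three factors into Littlewood--Paley pieces, note that $\Delta_{k}$ annihilates the product unless $k\lesssim\max\{k_{1},k_{2},k_{3}\}$, and split into cases according to which frequency is largest. When the outer factor carries the top frequency I would apply \Cref{Dyadic1} with all multipliers $\alpha_{i}\equiv1$ (admissible, since the constant function lies in $L^{\infty}$ with Fourier support $\{0\}$, which sits inside every ball $\{|\xi|\le 2^{k_{i}-40}\}$); when one of the commutator factors carries the top frequency I would apply \Cref{Dyadic2}, the remaining sub-case following from the symmetry $H_{s}(a,b)=H_{s}(b,a)$. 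Summing the resulting dyadic bounds over $k_{1},k_{2},k_{3}$ against the definitions of the $F^{\sigma}$ and $F^{(n+1)/2}$ norms would yield $\Vert f\,H_{s}(f,\bar f)\Vert_{N^{\sigma}}\lesssim\Vert f\Vert_{F^{\sigma}}\Vert f\Vert_{F^{(n+1)/2}}^{2}$.

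For $j\ge1$ I would write $f^{\,j+1}\bar f^{\,j}=f\cdot f^{j}\cdot\bar f^{j}$, which by \Cref{Algebra} is a product of three genuine $F^{\sigma}$ functions, so that $f^{\,j+1}\bar f^{\,j}\,H_{s}(f,\bar f)$ falls under \Cref{Prenon} with $(g,h,w,v)=(f^{j},\bar f^{j},f,\bar f)$. Combining the conclusion of \Cref{Prenon} with the standard consequences of the algebra property, $\Vert f^{j}\Vert_{F^{(n+1)/2}}\lesssim C^{\,j-1}\Vert f\Vert_{F^{(n+1)/2}}^{\,j}$ and $\Vert f^{j}\Vert_{F^{\sigma}}\lesssim j\,C^{\,j-1}\Vert f\Vert_{F^{(n+1)/2}}^{\,j-1}\Vert f\Vert_{F^{\sigma}}$ (with $C=C(n,s,\sigma)$), each of the five terms on the right-hand side of \eqref{zvb} would be controlled by $j\,\bigl(C\Vert f\Vert_{F^{(n+1)/2}}\bigr)^{2j-2}\Vert f\Vert_{F^{(n+1)/2}}^{2}\Vert f\Vert_{F^{\sigma}}$; summing over $j\ge1$ and shrinking $\epsilon$ so that $C\epsilon<\tfrac12$ makes the series converge and gives the bound for the tail $\sum_{j\ge1}$, which together with the $j=0$ estimate yields \eqref{nonlinear3}.

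I do not expect any genuine analytic obstacle here: the real work is already contained in \Cref{Dyadic1}, \Cref{Dyadic2} and \Cref{Prenon}. The only delicate points are bookkeeping ones --- checking that the constant multipliers $\alpha_{i}\equiv1$ meet the support hypotheses of the Dyadic lemmas, and carefully tracking the combinatorial factor $j$ together with the powers of $C\Vert f\Vert_{F^{(n+1)/2}}$ produced by the algebra property, so that the $j$-series converges once $\epsilon$ is chosen small enough.
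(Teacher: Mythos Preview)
Your proposal is correct and follows essentially the same approach as the paper: Taylor-expand the prefactor, handle the $j=0$ term via \Cref{Dyadic1} and \Cref{Dyadic2} with $\alpha_i\equiv 1$, handle $j\ge 1$ via \Cref{Prenon} applied to $f\cdot f^{j}\cdot\bar f^{\,j}\,H_{s}(f,\bar f)$, and sum using \Cref{Algebra}. The bookkeeping points you flag are exactly the ones the paper handles, and there is no missing ingredient.
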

\begin{proof}
Expanding using Taylor, we see that
\begin{equation*}
\begin{split}
& \frac{f}{1+\vert f \vert^{2}} H_{s}(f,\bar{f} ) 
\\
& = \sum_{j \geq 0} (-1)^{j} f f^{j} \bar{f}^{j} H_{s} (f, \bar{f} ).
\end{split}
\end{equation*}
For $j = 0$ we have the bound 
\begin{equation}\label{Trivialj}
\begin{split}
& \Vert f  H_{s} (f,\bar{f} ) \Vert_{N^{\sigma}}
\\
& \lesssim \Vert f \Vert^{2}_{F^{\frac{n+1}{2}}} \Vert f \Vert_{F^{\sigma}},
\end{split}
\end{equation}
which follows easily from Lemma \ref{Dyadic1}, and Lemma \ref{Dyadic2} ( with $ \alpha_{i} =1 $ for $ i \in \{1,2,3\}$). Next, for $j \geq 1$ we have by Lemma \ref{Prenon} the following
\begin{equation}\label{zvb7}
\begin{split}
& \Vert f (f)^{j} (\bar{f})^{j} H_{s} (f,\bar{f}) \Vert_{N^{\sigma}}
\\
& \lesssim \Vert f \Vert_{F^{\sigma}} \Vert f \Vert^{2}_{F^{(n+1)/2}}  \Vert f^{j} \Vert^{2}_{F^{(n+1)/2}}
\\
& + \Vert f \Vert^{3}_{F^{(n+1)/2}} \Vert f^{j} \Vert_{F^{(n+1)/2}} \Vert f^{j} \Vert_{F^{\sigma}}
\end{split}
\end{equation}
and by Corollary \ref{Algebra} we have 
\begin{equation}
\begin{split}
& \Vert f (f)^{j} (\bar{f})^{j} H_{s} (f,\bar{f}) \Vert_{N^{\sigma}}
\\
& \lesssim ( j +1)( C_{A}\epsilon)^{2j-1} \Vert f \Vert^{2}_{F^{(n+1)/2}} \Vert f \Vert_{F^{\sigma}}
\end{split}
\end{equation}
We conclude the result by choosing $ \epsilon \ll \frac{1}{C_{A} + 100}$ and summing over $j \geq 1$.

\end{proof}
The next lemma will help us handle the third nonlinear term.
\begin{lemma}\label{thirdl}
Let $f,g,h,w,v \in F^{\sigma}$ with $ \sigma \geq \frac{n+1}{2}$ then we have he following inequality
\begin{equation}\label{zvb10}
\begin{split}
& \Vert \tilde{f} H_{s} ( \tilde{g} \tilde{h} , \tilde{w} \tilde{v} ) \Vert_{N^{\sigma}}
\\
& \lesssim \Vert f \Vert_{F^{\sigma}} \Vert g \Vert_{F^{(n+1)/2}} \Vert h \Vert_{F^{(n+1)/2}} \Vert w \Vert_{F^{(n+1)/2}} \Vert v \Vert_{F^{(n+1)/2}} 
\\
& + \Vert f \Vert_{F^{(n+1)/2}} \Vert g \Vert_{F^{\sigma}} \Vert h \Vert_{F^{(n+1)/2}} \Vert w \Vert_{F^{(n+1)/2}} \Vert v \Vert_{F^{(n+1)/2}} 
\\
& +.. 
\\
& + \Vert f \Vert_{F^{(n+1)/2}} \Vert g \Vert_{F^{(n+1)/2}} \Vert h \Vert_{F^{(n+1)/2}} \Vert w \Vert_{F^{(n+1)/2}} \Vert v \Vert_{F^{\sigma}}
\end{split}
\end{equation}
\end{lemma}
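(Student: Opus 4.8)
The strategy is to Littlewood--Paley decompose all five factors, write $f=\sum_{k_1}f_{k_1}$ and similarly for $g,h,w,v$, and set $r:=\max\{k_1,k_2,k_3,k_4,k_5\}$. Since the output frequency of $\tilde f_{k_1}H_s(\tilde g_{k_2}\tilde h_{k_3},\tilde w_{k_4}\tilde v_{k_5})$ is $\lesssim 2^{r}$, the $N^\sigma$-norm reduces to the dyadic sum $\sum_{k=0}^{r+10}2^{\sigma k}\|\Delta_k(\i\partial_t-(-\Delta)^s+\i)^{-1}(\cdots)\|_{Z_k}$, and it suffices to prove the single dyadic estimate with $2^{\sigma r}$ on the right together with factors $2^{k_i(n+1)/2}$ for the four low indices; summing the resulting geometric series in the four small parameters gives \eqref{zvb10} since $\sigma\geq\frac{n+1}{2}$. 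The argument then splits according to which factor carries the top frequency $r$.

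First I would treat the case $r\in\{k_1\}$ (the case $r=k_5$ being handled by symmetry of the argument, and $r\in\{k_2,k_3,k_4\}$ reducing to one of the others after relabeling). Here the product $\tilde g_{k_2}\tilde h_{k_3}$ and $\tilde w_{k_4}\tilde v_{k_5}$ can be grouped: $H_s$ is bilinear, so $\tilde f_{k_1}H_s(\tilde g_{k_2}\tilde h_{k_3},\tilde w_{k_4}\tilde v_{k_5})$ is a trilinear-type expression with the ``first'' slot occupied by the product $\tilde g_{k_2}\tilde h_{k_3}$ etc. When $\max\{k_2,k_3,k_4,k_5\}$ (the largest among the remaining ones appearing \emph{inside} $H_s$ or multiplying it) is $\ll 2^{k_1}$, one localizes $f_{k_1}$ to a cone $\vartheta_e$ (the output is also essentially on that cone), passes to $\|\cdot\|_{L^1_eL^2_{e^\perp,t}}$ at the cost of $2^{-k_1(2s-1)/2}$ via the smoothing embedding, and applies Lemma~\ref{CommTri}-type reasoning: the commutator $H_s(\cdot,\cdot)$ of two low-frequency products is estimated in $L^1_eL^\infty_{e^\perp,t}$ by the method of Lemma~\ref{comest4} (treating each low product $\tilde g_{k_2}\tilde h_{k_3}$ as a single low-frequency factor via Lemma~\ref{Energy}), then $\tilde f_{k_1}$ goes into $L^\infty_eL^2_{e^\perp,t}$. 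This yields the $2^{k_1(2s-1)/2}$ gain that cancels the loss. When instead the top frequency inside/near $H_s$ is comparable to $2^{k_1}$ (say $k_2\in[k_1-50,k_1]$), then $2^{k_2}\approx 2^{k_1}$, there are effectively two ``high'' factors, and I would proceed as in Case~1a of Lemma~\ref{Prenon}: estimate in $L^1_eL^2_{e^\perp,t}$, put $\tilde f_{k_1}$ in the maximal space $L^2_eL^\infty_{e^\perp,t}$, bound $H_s(\tilde w_{k_4},\tilde v_{k_5})$ (or the product with it) in $L^2$ using Lemma~\ref{comest} or in $L^2_eL^\infty$ and absorb the remaining low factors with Lemma~\ref{Energy}/Lemma~\ref{es1}; the factor $2^{2sk_1}=2^{k_1(2s-1)}2^{k_1}$ and $2^{k_1}\approx 2^{k_2}$ distribute the derivatives correctly.

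Second, the case $r\in\{k_4,k_5\}$ (say $r=k_4$): now the highest frequency sits inside $H_s$ in the ``second'' (low) slot of the commutator, exactly the configuration of Lemma~\ref{Dyadic2}. I would apply Lemma~\ref{Dyadic2} with $\alpha_1,\alpha_3$ absorbing $\tilde f_{k_1}$ and one of the factors, and reducing the product $\tilde g_{k_2}\tilde h_{k_3}$ to a single $Z$-function of frequency $\lesssim 2^{\max\{k_2,k_3\}}$ via Theorem~\ref{Bilin1} (the bilinear/algebra estimate), at which point Lemma~\ref{Dyadic2} gives the stated bound directly. If $r=k_2$ or $r=k_3$ (high frequency in the first slot of $H_s$), one symmetrically invokes Lemma~\ref{Dyadic1} after grouping. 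The main obstacle I anticipate is the bookkeeping when the maximum is \emph{not} unique --- i.e.\ when two (or more) of the five frequencies are comparable and large --- because then neither Lemma~\ref{Dyadic1} nor Lemma~\ref{Dyadic2} applies as a black box and one must split the derivatives ``by hand'' between the two high factors (the $2^{2sk}=2^{k(2s-1)}\cdot 2^{k}$ splitting, together with the smoothing/maximal pair) exactly as in Case~1a of Lemma~\ref{Prenon}; everything else is a routine reduction to the dyadic lemmas of Sections~7 and the commutator lemmas of Section~5, followed by summing geometric series.
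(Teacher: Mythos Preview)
Your strategy is correct and overlaps substantially with the paper's, but the paper's execution is more direct. Rather than re-deriving the cone-localization and commutator bounds, or passing through Theorem~\ref{Bilin1} to merge products, the paper simply invokes Lemmas~\ref{Dyadic1} and~\ref{Dyadic2} as black boxes, absorbing the extra low-frequency factor in each product slot as the $L^\infty$ multiplier~$\alpha$: Case~1 ($r=k_1$) is Lemma~\ref{Dyadic1} with $\alpha_1=1$ (and, say, $\alpha_2=\tilde h_{k_3}$, $\alpha_3=\tilde v_{k_5}$ after ordering each pair); Case~2 ($r\neq k_1$, without loss of generality $r=k_2$ by the symmetries $H_s(A,B)=H_s(B,A)$ and $g\leftrightarrow h$) is Lemma~\ref{Dyadic2} with $\alpha_1=1$, $\alpha_2=\tilde h_{k_3}$ when $k_3\leq k_2-50$, together with a direct hand computation (exactly your anticipated ``two comparable highs'' sub-case) when $k_3\in[k_2-50,k_2]$. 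Lemma~\ref{Energy} converts each $\|\alpha_i\|_{L^\infty}$ into $2^{nk_i/2}\|\cdot\|_{Z_{k_i}}\leq 2^{(n+1)k_i/2}\|\cdot\|_{Z_{k_i}}$, and no appeal to the bilinear Theorem~\ref{Bilin1} is needed.

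One slip in your case split: for $r\in\{k_2,k_3\}$ you write ``symmetrically invokes Lemma~\ref{Dyadic1}'', but Lemma~\ref{Dyadic1} requires the \emph{outer} factor $f_{k_1}$ to carry the top frequency. The correct reference is Lemma~\ref{Dyadic2}, exactly as in your $r\in\{k_4,k_5\}$ case --- indeed, by $H_s(A,B)=H_s(B,A)$ these two configurations are identical, which is why the paper collapses all of $r\in\{k_2,k_3,k_4,k_5\}$ into a single Case~2.
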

\begin{proof}
We need to estimate 
$$
\sum_{k_{1},k_{2},k_{3},k_{4},k_{5} =0}^{\infty} \mbox{ 
 }\sum_{k=0}^{\infty}  2^{\sigma k} \Vert \Delta_{k} ( \i \partial_{t} - ( - \Delta)^{s} + \i)^{-1} \tilde{f}_{k_{1}} H_{s} (\tilde{g}_{k_{2}} \tilde{h}_{k_{3}}, \tilde{w}_{k_{4}} \tilde{ v}_{k_{5}} ) \Vert_{Z_{k}}.
$$
Fix integers $k_{1},k_{2},k_{3},k_{4},k_{5} \geq 0$. 
Then we consider several cases. 
\\
\textbf{Case 1:} $ \max \{k_{1},k_{2},k_{3} ,k_{4} ,k_{5} \} = k_{1}$. Then in this case we need to prove 
\begin{equation}\label{zvb11}
\begin{split}
& \sum_{k=0}^{k_{1} +10} 2^{ \sigma k} \Vert \Delta_{k} ( \i \partial_{t} - (-\Delta)^{s}   + \i)^{-1} \tilde{f}_{k_{1}} H_{s} ( \tilde{g}_{k_{2}} \tilde{h}_{k_{3}} , \tilde{w}_{k_{4}} \tilde{v}_{k_{5}} ) \Vert_{Z_{k}}
\\
& \lesssim 2^{\sigma k_{1}} 2^{ k_{2} ( \frac{n+1}{2}) } 2^{k_{3} ( \frac{n+1}{2})} 2^{k_{4} ( \frac{n+1}{2} )} 2^{k_{5} ( \frac{n+1}{2})} \Vert f_{k_{1}} \Vert_{Z_{k_{1}}} \Vert g_{k_{2}} \Vert_{Z_{k_{2}}} \Vert h_{k_{3}} \Vert_{Z_{k_{4}}} \Vert w_{k_{4}} \Vert_{Z_{k_{4}}} \Vert v_{k_{5}} \Vert_{Z_{k_{5}}}
\end{split}
\end{equation}
But \eqref{zvb11} follows from Lemma \ref{Dyadic1} with $ \alpha_{1} =1$. 
\\
\textbf{Case 2:} if $k_{1} \neq  \max\{ k_{1},k_{2} ,k_{3} ,k_{4} ,k_{5} \} $. Then by symmetry we may assume that $ \max \{ k_{1},k_{2} ,k_{3} ,k_{4} ,k_{5} \} = k_{2}$. Then in this case we need to prove the inequality 
\begin{equation}\label{zvb12}
\begin{split}
& \sum_{k=0}^{k_{2} +10} 2^{ \sigma k} \Vert \Delta_{k} ( \i \partial_{t} - (-\Delta)^{s}   + \i)^{-1} \tilde{f}_{k_{1}} H_{s} ( \tilde{g}_{k_{2}} \tilde{h}_{k_{3}} , \tilde{w}_{k_{4}} \tilde{v}_{k_{5}} ) \Vert_{Z_{k}}
\\
& \lesssim 2^{ k_{1}(\frac{n+1}{2})} 2^{ k_{2} \sigma } 2^{k_{3} ( \frac{n+1}{2})} 2^{k_{4} ( \frac{n+1}{2} )} 2^{k_{5} ( \frac{n+1}{2})} \Vert f_{k_{1}} \Vert_{Z_{k_{1}}} \Vert g_{k_{2}} \Vert_{Z_{k_{2}}} \Vert h_{k_{3}} \Vert_{Z_{k_{4}}} \Vert w_{k_{4}} \Vert_{Z_{k_{4}}} \Vert v_{k_{5}} \Vert_{Z_{k_{5}}}
\end{split}
\end{equation}
To prove \eqref{zvb12} we consider two sub-cases.
\\
\textbf{Case 2a:} if $k_{3} \in [k_{2} -50,k_{2} ]$. In this case we have $ 2^{k_{3}} \approx 2^{k_{2}}$ and we may distribute the derivatives between $ g_{k_{2}}$ and $h_{k_{3}}$. More precisely, 
\begin{equation}\label{zvb13}
\begin{split}
& \Vert \Delta_{k} ( \i \partial_{t} - (-\Delta)^{s}   + \i)^{-1} \tilde{f}_{k_{1}} H_{s} ( \tilde{g}_{k_{2}} \tilde{h}_{k_{3}} , \tilde{w}_{k_{4}} \tilde{v}_{k_{5}} ) \Vert_{Z_{k}}
\\
& \lesssim \sup_{e \in \mathscr{E}} \Vert \tilde{f}_{k_{1}} H_{s} ( \tilde{g}_{k_{2}} \tilde{h}_{k_{3}} , \tilde{w}_{k_{4}} \tilde{v}_{k_{5}} ) \Vert_{L^{1}_{e} L^{2}_{e^{\perp},t}}
\end{split}
\end{equation}
and for each $ e \in \mathscr{E}$ we have, by Lemma \ref{Energy}, and Lemma \ref{es1},
\begin{equation}\label{zvb14}
\begin{split}
& \Vert \tilde{f}_{k_{1}} H_{s} ( \tilde{g}_{k_{2}} \tilde{h}_{k_{3}} , \tilde{w}_{k_{4}} \tilde{v}_{k_{5}} ) \Vert_{L^{1}_{e} L^{2}_{e^{\perp},t}}
\\
& \lesssim 2^{k_{1}(n+1)/2} \Vert f_{k_{1}} \Vert_{Z_{k_{1}}} \Vert 
 H_{s} ( \tilde{g}_{k_{2}} \tilde{h}_{k_{3}} , \tilde{w}_{k_{4}} \tilde{v}_{k_{5}} ) \Vert_{L^{2}}
 \\
 & \lesssim 2^{2s k_{2}} 2^{k_{1} (\frac{n+1}{2})} 2^{k_{4} (\frac{n+1}{2})} 2^{k_{5} ( \frac{n+1}{2})} \Vert f_{k_{1}} \Vert_{Z_{k_{1}}} \Vert w_{k_{4}} \Vert_{Z_{k_{4}}} \Vert v_{k_{5}} \Vert_{Z_{k_{5}}} \Vert \tilde{g}_{k_{2}} \tilde{h}_{k_{3}} \Vert_{L^{2}}
 \\
 & \lesssim 2^{2sk_{2}} 2^{k_{3} (\frac{n-1}{2})} 2^{-k_{2}(2s-1)/2}
 \\
 & \cdot \left(   2^{k_{1} (\frac{n+1}{2})} 2^{k_{4} (\frac{n+1}{2})} 2^{k_{5} ( \frac{n+1}{2})} \Vert f_{k_{1}} \Vert_{Z_{k_{1}}} \Vert w_{k_{4}} \Vert_{Z_{k_{4}}} \Vert v_{k_{5}} \Vert_{Z_{k_{5}}} \Vert g_{k_{2}} \Vert_{Z_{k_{2}}} \Vert h_{k_{3}} \Vert_{Z_{k_{3}}} \right)
 \\
 & \lesssim 2^{k_{2}(2s-1)/2}\left(   2^{k_{1} (\frac{n+1}{2})} 2^{k_{3} (\frac{n+1}{2})}
 2^{k_{4} (\frac{n+1}{2})} 2^{k_{5} ( \frac{n+1}{2})} \Vert f_{k_{1}} \Vert_{Z_{k_{1}}} \Vert w_{k_{4}} \Vert_{Z_{k_{4}}} \Vert v_{k_{5}} \Vert_{Z_{k_{5}}} \Vert g_{k_{2}} \Vert_{Z_{k_{2}}} \Vert h_{k_{3}} \Vert_{Z_{k_{3}}} \right)
 \end{split}
\end{equation}
where we used that $ 2^{k_{2}} \approx 2^{k_{3}}$ in the last line. Plugging the bound \eqref{zvb14} back into \eqref{zvb13} gives  
\begin{equation*}
\begin{split}
& \Vert \Delta_{k} ( \i \partial_{t} - (-\Delta)^{s}   + \i)^{-1} \tilde{f}_{k_{1}} H_{s} ( \tilde{g}_{k_{2}} \tilde{h}_{k_{3}} , \tilde{w}_{k_{4}} \tilde{v}_{k_{5}} ) \Vert_{Z_{k}}
\\
& 2^{(k_{2}-k)(2s-1)/2}\left(   2^{k_{1} (\frac{n+1}{2})} 2^{k_{3} (\frac{n+1}{2})}
 2^{k_{4} (\frac{n+1}{2})} 2^{k_{5} ( \frac{n+1}{2})} \Vert f_{k_{1}} \Vert_{Z_{k_{1}}} \Vert w_{k_{4}} \Vert_{Z_{k_{4}}} \Vert v_{k_{5}} \Vert_{Z_{k_{5}}} \Vert g_{k_{2}} \Vert_{Z_{k_{2}}} \Vert h_{k_{3}} \Vert_{Z_{k_{3}}} \right)
\end{split}
\end{equation*}
This clearly implies \eqref{zvb12}. 
\\
\textbf{Case 2b:} if $k_{3} \leq k_{2} -50$. Then in this case \eqref{zvb12} follows from Lemma \ref{Dyadic2} with $ \alpha_{1} =1$ and $ \alpha_{2} = h_{k_{3}}$.

\end{proof}
Lemma \ref{thirdl} will imply the desired estimates for the third nonlinear term. 
\begin{proposition}\label{Thirdnon}
Let $ \sigma \geq \frac{n+1}{2}$. Then 
there exists $ \epsilon : = \epsilon(n,s, \sigma) >0$ so that for any  $ f \in F^{\sigma}$ with $ \Vert f \Vert_{F^{\frac{n+1}{2}}} \leq \epsilon$ one has the following inequality 
\begin{equation*}
\begin{split}
& \Vert f H_{s} (\vert f \vert^{2} , \frac{1}{1+ \vert f \vert^{2}} ) \Vert_{N^{\sigma}} 
\\
& \lesssim \Vert f \Vert^{2}_{F^{\frac{n+1}{2}}} \Vert f \Vert_{F^{\sigma}}
\end{split}
\end{equation*}
\end{proposition}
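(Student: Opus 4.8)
The proof will follow the same pattern as Proposition~\ref{secondnn}. The plan is to expand $\frac{1}{1+|f|^{2}} = \sum_{j\ge 0}(-1)^{j}|f|^{2j}$ and use the bilinearity of $H_{s}$ to write
\[
f\,H_{s}\!\Big(|f|^{2},\tfrac{1}{1+|f|^{2}}\Big) \;=\; \sum_{j\ge 0}(-1)^{j}\, f\,H_{s}\!\big(|f|^{2},\,|f|^{2j}\big).
\]
The $j=0$ term vanishes identically: since $(-\Delta)^{s}$ annihilates constants, $H_{s}(g,1) = (-\Delta)^{s}g - (-\Delta)^{s}g - g(-\Delta)^{s}1 = 0$. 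Thus, unlike in Proposition~\ref{secondnn}, there is no ``trivial'' $j=0$ term to handle separately, and the sum runs effectively over $j\ge 1$.

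For each fixed $j\ge 1$ I would write $f\,H_{s}(|f|^{2},|f|^{2j}) = f\,H_{s}\big(f\cdot\bar f,\; f^{j}\cdot\bar f^{j}\big)$ and apply Lemma~\ref{thirdl} with the five input functions $f,\,f,\,\bar f,\,f^{j},\,\bar f^{j}$, all of which lie in $F^{\sigma}$ — in particular $f^{j}\in F^{\sigma}$ by Corollary~\ref{Algebra}. This produces a sum of five terms, in each of which exactly one factor carries the $F^{\sigma}$-norm and the remaining four carry $F^{(n+1)/2}$-norms, using $\Vert\bar f^{j}\Vert_{F^{(n+1)/2}} = \Vert f^{j}\Vert_{F^{(n+1)/2}}$ and likewise for the $F^{\sigma}$-norms. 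Concretely this gives, for each $j\ge 1$,
\[
\Vert f\,H_{s}(f\bar f, f^{j}\bar f^{j})\Vert_{N^{\sigma}} \lesssim \Vert f\Vert_{F^{\sigma}}\Vert f\Vert^{2}_{F^{(n+1)/2}}\Vert f^{j}\Vert^{2}_{F^{(n+1)/2}} + \Vert f\Vert^{3}_{F^{(n+1)/2}}\Vert f^{j}\Vert_{F^{(n+1)/2}}\Vert f^{j}\Vert_{F^{\sigma}}.
\]

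Then I would iterate Corollary~\ref{Algebra} to obtain the crude bounds $\Vert f^{j}\Vert_{F^{(n+1)/2}}\lesssim j\,(2C_{A})^{j-1}\Vert f\Vert^{j}_{F^{(n+1)/2}}$ and $\Vert f^{j}\Vert_{F^{\sigma}}\lesssim j\,(2C_{A})^{j-1}\Vert f\Vert^{j-1}_{F^{(n+1)/2}}\Vert f\Vert_{F^{\sigma}}$, exactly as in the proofs of Propositions~\ref{firstn} and \ref{secondnn}. Substituting these and using $\Vert f\Vert_{F^{(n+1)/2}}\le\epsilon$ to absorb the surplus powers of $\Vert f\Vert_{F^{(n+1)/2}}$, each $j$-term is bounded by a constant multiple of $j^{2}(C_{A}\epsilon)^{2j-2}\,\epsilon^{2}\,\Vert f\Vert^{2}_{F^{(n+1)/2}}\Vert f\Vert_{F^{\sigma}}$; summing the resulting geometric-times-polynomial series over $j\ge 1$ is finite once $\epsilon\ll(C_{A}+100)^{-1}$, which yields the claimed estimate.

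I do not expect a genuine obstacle here: all the analytic content is packaged in Lemma~\ref{thirdl} (which in turn rests on Lemmas~\ref{Dyadic1}, \ref{Dyadic2} and Corollary~\ref{Algebra}). The only points requiring minor care are verifying that the $j=0$ term drops out and correctly matching the nested product structure $|f|^{2} = f\cdot\bar f$, $|f|^{2j} = f^{j}\cdot\bar f^{j}$ to the hypotheses of Lemma~\ref{thirdl}; the combinatorial bookkeeping of the series is routine and identical to the earlier propositions.
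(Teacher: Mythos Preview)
Your proposal is correct and follows essentially the same approach as the paper: Taylor expand $\tfrac{1}{1+|f|^{2}}$, note the $j=0$ term vanishes, apply Lemma~\ref{thirdl} to each $j\ge 1$ summand, and conclude via Corollary~\ref{Algebra} by summing the resulting series. Your write-up is in fact slightly more explicit than the paper's (which simply starts the sum at $j\ge 1$ and appeals to Lemma~\ref{thirdl} and Corollary~\ref{Algebra} without spelling out the bookkeeping).
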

\begin{proof}
By expanding using Taylor we see that
$$
f H_{s} ( \vert f \vert^{2} , \frac{1}{1+ \vert f \vert^{2}}) = \sum_{j \geq 1} (-1)^{j} f H_{s} ( \vert f \vert^{2} , f^{j} \bar{f}^{j} ) 
$$
and by Lemma \ref{thirdl} we have for every $j \geq 1$
\begin{equation*}
\begin{split}
& \Vert f H_{s} ( \vert f \vert^{2} , f^{j} \bar{f}^{j} ) \Vert_{N^{\sigma}}
\\
& \lesssim \Vert f \Vert_{F^{\sigma}} \Vert f \Vert^{2}_{F^{(n+1)/2}} \Vert f^{j} \Vert^{2}_{F^{(n+1)/2}}
\\
& + \Vert f^{j} \Vert_{F^{\sigma}} \Vert f^{j} \Vert_{F^{(n+1)/2}} \Vert f \Vert^{3}_{F^{(n+1)/2}} 
\end{split}
\end{equation*}
and the result follows by Corollary \ref{Algebra} and summing over $j \geq 1$. 
\end{proof}
It remains to prove the same for the fourth nonlinear term. 
\begin{lemma}\label{Lastnonlemma}
Let $ f,g,h,w \in F^{\sigma}$ with $ \sigma \geq \frac{n+1}{2}$ then the following inequality holds 
\begin{equation*}
\begin{split}
& \Vert \tilde{f} H_{s} ( \tilde{g}\tilde{h} , \tilde{w} ) \Vert_{N^{\sigma}}
\\
& \lesssim \Vert f \Vert_{F^{\sigma}} \Vert g \Vert_{F^{\frac{n+1}{2}}} \Vert h \Vert_{F^{\frac{n+1}{2}}} \Vert w \Vert_{F^{\frac{n+1}{2}}} 
\\
&+ \Vert f \Vert_{F^{\frac{n+1}{2}}} \Vert g \Vert_{F^{\sigma}} \Vert h \Vert_{F^{\frac{n+1}{2}}} \Vert w \Vert_{F^{\frac{n+1}{2}}} 
\\
& + \Vert f \Vert_{F^{\frac{n+1}{2}}} \Vert g \Vert_{F^{\frac{n+1}{2}}} \Vert h \Vert_{F^{\sigma}} \Vert w \Vert_{F^{\frac{n+1}{2}}}
\\
& + \Vert f \Vert_{F^{\frac{n+1}{2}}} \Vert g \Vert_{F^{\frac{n+1}{2}}} \Vert h \Vert_{F^{\frac{n+1}{2}}} \Vert w \Vert_{F^{\sigma}}
\end{split}
\end{equation*}
\end{lemma}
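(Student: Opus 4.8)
The plan is to follow the proofs of Lemma~\ref{Prenon} and, most closely, Lemma~\ref{thirdl}, of which the present statement is the variant where the second slot of $H_s$ carries a single factor $\tilde w$ rather than a product. First I would decompose $f,g,h,w$ into Littlewood--Paley pieces $f_{k_1},g_{k_2},h_{k_3},w_{k_4}$, set $r:=\max\{k_1,k_2,k_3,k_4\}$, and for each quadruple aim at the dyadic bound
$$\sum_{k=0}^{r+10}2^{\sigma k}\Vert \Delta_k(\i\partial_t-(-\Delta)^s+\i)^{-1}\tilde f_{k_1}H_s(\tilde g_{k_2}\tilde h_{k_3},\tilde w_{k_4})\Vert_{Z_k}\lesssim 2^{\sigma r}2^{(n+1)(k_1+k_2+k_3+k_4-r)/2}\Vert f_{k_1}\Vert_{Z_{k_1}}\Vert g_{k_2}\Vert_{Z_{k_2}}\Vert h_{k_3}\Vert_{Z_{k_3}}\Vert w_{k_4}\Vert_{Z_{k_4}},$$
i.e. the weight $2^{\sigma r}$ on the factor realizing $r$ and $2^{(n+1)k_i/2}$ on the others; summing the geometric series in $k_1,k_2,k_3,k_4$ and using the comparisons between the $Z_{k_i}$-norms and the $F^{(n+1)/2}$, $F^{\sigma}$ norms then produces the four terms of the claim. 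Since the statement is symmetric in $g$ and $h$ we may assume $k_2\ge k_3$, and since $H_s$ is symmetric we are free to write $H_s(\tilde g\tilde h,\tilde w)=H_s(\tilde w,\tilde g\tilde h)$. The case split is governed by which of $k_1,k_2,k_4$ equals $r$ and by whether $\tilde g_{k_2}\tilde h_{k_3}$ meets $\tilde w_{k_4}$ in a high--low or high--high fashion.

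If $r=k_1$ I would absorb the lower factor of the product into a bounded multiplier, $\tilde g_{k_2}\tilde h_{k_3}=\alpha\,\tilde g_{k_2}$ with $\alpha=\tilde h_{k_3}$ of frequency $\lesssim2^{k_2}$, so that the expression has the shape $\alpha_1\tilde f_{k_1}H_s(\alpha_2\tilde g_{k_2},\alpha_3\tilde w_{k_4})$ with $\alpha_1\equiv1$, $\alpha_3\equiv1$ and $k_2,k_4\le k_1+10$; Lemma~\ref{Dyadic1} applies verbatim, and the loss $\Vert\alpha_2\Vert_{L^\infty}\lesssim2^{nk_3/2}\Vert h_{k_3}\Vert_{Z_{k_3}}$ from Lemma~\ref{Energy} is harmless since $2^{nk_3/2}\le2^{(n+1)k_3/2}$. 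If $r=k_2$ I would split on $k_3$: when $k_3\le k_2-50$ the product is $\alpha_2\tilde g_{k_2}$ with $\alpha_2=\tilde h_{k_3}$ of frequency $\ll2^{k_2}$, and Lemma~\ref{Dyadic2} (outer $\alpha_1\equiv1$, highest commutator factor $g_{k_2}$) closes it; when $k_3\in[k_2-50,k_2]$ I would argue directly as in \eqref{zvb13}--\eqref{zvb14}, reducing first to $2^{-k(2s-1)/2}\sup_e\Vert\tilde f_{k_1}H_s(\tilde g_{k_2}\tilde h_{k_3},\tilde w_{k_4})\Vert_{L^1_eL^2_{e^\perp,t}}$, then putting $\tilde f_{k_1}$ in the maximal space $L^2_eL^\infty_{e^\perp,t}$ and $\tilde w_{k_4}$ in $L^\infty_{x,t}$, bounding the commutator in $L^2$ by $2^{2sk_2}\Vert\tilde g_{k_2}\tilde h_{k_3}\Vert_{L^2}$, and using Lemma~\ref{es1} for $\Vert\tilde g_{k_2}\tilde h_{k_3}\Vert_{L^2}\lesssim2^{k_3(n-1)/2}2^{-k_2(2s-1)/2}\Vert g_{k_2}\Vert_{Z_{k_2}}\Vert h_{k_3}\Vert_{Z_{k_3}}$. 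The identity $2^{2sk_2}2^{-k_2(2s-1)/2}=2^{k_2}2^{k_2(2s-1)/2}$ with $2^{k_2}\approx2^{k_3}$ upgrades $2^{k_3(n-1)/2}$ to $2^{k_3(n+1)/2}$, while $\sum_{k\le k_2+10}2^{k(\sigma-(2s-1)/2)}\approx2^{k_2(\sigma-(2s-1)/2)}$ (here $\sigma-(2s-1)/2>0$) supplies $2^{\sigma k_2}$.

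The remaining regime $r=k_4$, where the single factor $\tilde w_{k_4}$ carries the top frequency, is the delicate one; I would use $H_s(\tilde g_{k_2}\tilde h_{k_3},\tilde w_{k_4})=H_s(\tilde w_{k_4},\tilde g_{k_2}\tilde h_{k_3})$. When $\max\{k_2,k_3\}\le k_4-50$, so that $\tilde g_{k_2}\tilde h_{k_3}$ is genuinely low frequency, I would conify $\tilde w_{k_4}$ via Lemma~\ref{dec} so that each $H_s(\tilde w_{k_4}^{\,e},\tilde g_{k_2}\tilde h_{k_3})$ is supported on a cone about $e$, invoke Lemma~\ref{comest3} to get $\Vert H_s(\tilde w_{k_4}^{\,e},\tilde g_{k_2}\tilde h_{k_3})\Vert_{L^1_eL^2_{e^\perp,t}}\lesssim2^{k_2(n+1)/2}2^{k_3(n+1)/2}2^{k_4(2s-1)/2}\Vert w_{k_4}\Vert_{Z_{k_4}}\Vert g_{k_2}\Vert_{Z_{k_2}}\Vert h_{k_3}\Vert_{Z_{k_3}}$, then multiply by $\tilde f_{k_1}$ in $L^\infty_{x,t}$ and return to $Z_k$ at the price of $2^{-k(2s-1)/2}$; the sum over $k\le k_4+10$ contributes $2^{k_4(\sigma-(2s-1)/2)}$, which combines with $2^{k_4(2s-1)/2}$ to give exactly $2^{\sigma k_4}$. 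When instead $\max\{k_2,k_3\}\approx k_4$ the product has no definite high frequency, so I would split $\tilde g_{k_2}\tilde h_{k_3}=\sum_{\ell\le k_2+1}\Delta_\ell(\tilde g_{k_2}\tilde h_{k_3})$, control each piece by the Bilinear estimate of Theorem~\ref{Bilin1}, $\Vert\Delta_\ell(\tilde g_{k_2}\tilde h_{k_3})\Vert_{Z_\ell}\lesssim2^{-n\ell/2}2^{-|\ell-k_2|}2^{nk_2/2}2^{nk_3/2}\Vert g_{k_2}\Vert_{Z_{k_2}}\Vert h_{k_3}\Vert_{Z_{k_3}}$, apply Lemma~\ref{comest} to $H_s(\Delta_\ell(\tilde g_{k_2}\tilde h_{k_3}),\tilde w_{k_4})$ with the commutator gain $2^{(\ell+(2s-1)k_4)/2}$ replacing the naive $2^{2sk_4}$, and sum over $\ell$ via $\sum_\ell2^{\ell/2}2^{-|\ell-k_2|}\approx2^{k_2/2}$; the surviving $k_1,k_4$ powers are reconciled with the target using $2^{\sigma k_1}2^{(n+1)k_4/2}\le2^{(n+1)k_1/2}2^{\sigma k_4}$ for $k_1\le k_4$. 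I expect this last configuration to be the main obstacle: one must keep enough frequency localization on the pieces $\Delta_\ell(\tilde g_{k_2}\tilde h_{k_3})$ for Lemma~\ref{comest} (and, after further conification of $f_{k_1},g_{k_2},h_{k_3}$ by Lemma~\ref{dec}, Lemma~\ref{comest3}) to apply, and ensure that the conification of $\tilde w_{k_4}$ needed to reach $L^1_eL^2_{e^\perp,t}$ and harvest the $2^{-k(2s-1)/2}$ smoothing survives multiplication by the remaining factors — exactly the bookkeeping for which the $Y^e_{k,k'}$ spaces were introduced. Running the finitely many case distinctions and summing the geometric series then yields the stated inequality.
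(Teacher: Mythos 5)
Your first two regimes ($r=k_1$ via Lemma~\ref{Dyadic1} with $\alpha_2=\tilde h_{k_3}$, and $r=k_2$ via Lemma~\ref{Dyadic2} for $k_3\le k_2-50$ and the direct smoothing/maximal/$L^2$ argument for $k_3\in[k_2-50,k_2]$) are exactly the paper's Case~1 and Case~2, and they are fine. The trouble is in your third regime $r=k_4$. The machinery you propose there — decomposing $\tilde g_{k_2}\tilde h_{k_3}=\sum_\ell\Delta_\ell(\tilde g_{k_2}\tilde h_{k_3})$, invoking Theorem~\ref{Bilin1} on each piece, feeding those pieces into Lemma~\ref{comest}, and then re-conifying to recover the $2^{-k(2s-1)/2}$ smoothing gain — is not needed, and as you yourself suspect it does not close cleanly: when $\ell\approx k_4$ the commutator $H_s(\Delta_\ell(\tilde g\tilde h),\tilde w_{k_4})$ has a high--high interaction which populates all output frequencies $k\lesssim k_4$, so the convolution with $\tilde f_{k_1}$ is no longer contained in the cone that the conification of $\tilde w_{k_4}$ provided, and the passage $\Vert\Delta_kQ^{-1}(\cdot)\Vert_{Z_k}\lesssim 2^{-k(2s-1)/2}\Vert\cdot\Vert_{L^1_eL^2_{e^\perp,t}}$ is no longer automatic.

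The fix is that no sub-case split inside $r=k_4$ is needed at all. Using the symmetry $H_s(\tilde g_{k_2}\tilde h_{k_3},\tilde w_{k_4})=H_s(\tilde w_{k_4},\tilde g_{k_2}\tilde h_{k_3})$, the whole regime $r=k_4$ is a single application of Lemma~\ref{Dyadic2} with the identifications $\alpha_1\equiv 1$, the outer factor $\tilde f_{k_1}$, the first (high) commutator slot $\alpha_2\tilde f_{k_2}\to \tilde w_{k_4}$ with $\alpha_2\equiv 1$ (whose Fourier support is trivially $\subset\{|\xi|\le 2^{k_4-50}\}$), and the second commutator slot $\alpha_3\tilde f_{k_3}\to\tilde h_{k_3}\,\tilde g_{k_2}$ with $\alpha_3=\tilde h_{k_3}$ (whose support is $\subset\{|\xi|\le 2^{k_2+10}\}$ since WLOG $k_2\ge k_3$). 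The hypotheses $k,k_1,k_2\le k_4+10$ and the support conditions on the $\alpha_i$ then hold precisely because $k_4=r$, and Lemma~\ref{Dyadic2} yields the weight $2^{\sigma k_4}2^{(n+1)k_1/2}2^{(n+1)k_2/2}2^{(n+1)k_3/2}$ directly, with the cone-localization and $Y^e_{k,k'}$ bookkeeping that worry you already carried out inside that lemma's proof. (Alternatively one can note that in your "delicate" sub-case $\max\{k_2,k_3\}\approx k_4=r$ forces $k_2\approx r$, so the $r=k_2$ direct argument already covers it.) This is what the paper means by its brief "without loss of generality $k_1\ge k_4$" in Case~1: the $k_4>k_1$ branch is the mirror Dyadic2 application, and writing it out dissolves the obstacle you flagged. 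A small further slip: in the sub-case $\max\{k_2,k_3\}\le k_4-50$ the commutator lemma you want is Lemma~\ref{comest2} (first, standalone argument of $H_s$ high and cone-localized), not Lemma~\ref{comest3}; but this is moot once Dyadic2 is used directly.
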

\begin{proof}
Fix integers $k_{1},k_{2} ,k_{3},k_{4} \geq 0$ then we want to estimate 
$$
\sum_{k=0}^{\max\{k_{1},k_{2},k_{3},k_{4} \} +10 } 2^{k \sigma} \Vert \Delta_{k} ( \i \partial_{t} - (-\Delta)^{s}   + \i )^{-1} \tilde{f}_{k_{1}} H_{s} ( \tilde{g}_{k_{2}} \tilde{h}_{k_{3}}, \tilde{w}_{k_{4}} ) \Vert_{Z_{k}}.
$$
We consider two cases 
\\
\textbf{Case 1:} if $\max \{ k_{1},k_{2} ,k_{3} ,k_{4} \} \in \{ k_{1},k_{4} \}$. Assume without loss of generality that $ k_{1} \geq k_{4}$. Then Lemma \ref{Dyadic1} give the estimate
\begin{equation}\label{wer1}
\begin{split}
& \sum_{k=0}^{k_{1} +10} 2^{ \sigma k} \Vert \Delta_{k} ( \i \partial_{t} - (-\Delta)^{s}   + \i)^{-1} \tilde{f}_{k_{1}} H_{s} ( \tilde{g}_{k_{2}} \tilde{h}_{k_{3}} , \tilde{w}_{k_{4}} ) \Vert_{Z_{k}}
\\
& \lesssim 2^{\sigma k_{1}} 2^{ k_{2} ( \frac{n+1}{2}) } 2^{k_{3} ( \frac{n+1}{2})} 2^{k_{4} ( \frac{n+1}{2} )}  \Vert f_{k_{1}} \Vert_{Z_{k_{1}}} \Vert g_{k_{2}} \Vert_{Z_{k_{2}}} \Vert h_{k_{3}} \Vert_{Z_{k_{4}}} \Vert w_{k_{4}} \Vert_{Z_{k_{4}}}
\end{split}
\end{equation}
\textbf{Case 2:} If $\max \{k_{1} , k_{2} ,k_{3} ,k_{4} \} \in \{ k_{2},k_{3} \}$. By symmetry we may assume that $ k_{2} \geq k_{3} $. Then we consider two sub-cases. If $k_{3} \leq k_{2} -50$ then we obtain by Lemma \ref{Dyadic2}
\begin{equation}\label{wer2}
\begin{split}
& \sum_{k=0}^{k_{2} +10} 2^{ \sigma k} \Vert \Delta_{k} ( \i \partial_{t} - (-\Delta)^{s}   + \i)^{-1} \tilde{f}_{k_{1}} H_{s} ( \tilde{g}_{k_{2}} \tilde{h}_{k_{3}} , \tilde{w}_{k_{4}}) \Vert_{Z_{k}}
\\
& \lesssim 2^{\frac{n+1}{2} k_{1}} 2^{ k_{2} \sigma } 2^{k_{3} ( \frac{n+1}{2})} 2^{k_{4} ( \frac{n+1}{2} )}  \Vert f_{k_{1}} \Vert_{Z_{k_{1}}} \Vert g_{k_{2}} \Vert_{Z_{k_{2}}} \Vert h_{k_{3}} \Vert_{Z_{k_{4}}} \Vert w_{k_{4}} \Vert_{Z_{k_{4}}}.
\end{split}
\end{equation}
If on the other hand $ k_{3} \in [k_{2} -50,k_{2} ]$ then $ 2^{k_{2}} \approx 2^{k_{3}}$ and we may distribute the derivatives between $ g_{k_{2}}$ and $h_{k_{3}}$ as before. Namely,
\begin{equation*}
\begin{split}
& \Vert \Delta_{k} ( \i \partial_{t} - (-\Delta)^{s}   + \i)^{-1} \tilde{f}_{k_{1}} H_{s} ( \tilde{g}_{k_{2}} \tilde{h}_{k_{3}} , \tilde{w}_{k_{4}} \tilde{v}_{k_{5}} ) \Vert_{Z_{k}}
\\
& \lesssim \sup_{e \in \mathscr{E}} 2^{-k(2s-1)/2} \Vert \tilde{f}_{k_{1}} H_{s} ( \tilde{g}_{k_{2}} \tilde{h}_{k_{3}} , \tilde{w}_{k_{4}}  ) \Vert_{L^{1}_{e} L^{2}_{e^{\perp},t}}
\\
& \lesssim 2^{-k(2s-1)/2} \sup_{e \in \mathscr{E}} \Vert \tilde{f}_{k_{1}} \Vert_{L^{2}_{e}L^{\infty}_{e^{\perp},t}} \Vert H_{s} (\tilde{g}_{k_{2}} \tilde{h}_{k_{3}} , \tilde{w}_{k_{4}} )\Vert_{L^{2}}
\\
& \lesssim 2^{-k(2s-1)/2} 2^{k_{1} (n+1)/2} \Vert f_{k_{1}} \Vert_{Z_{k_{1}}} 2^{2sk_{2}} 2^{k_{4} \frac{n+1}{2}} \Vert w_{k_{4}} \Vert_{Z_{k_{4}}} \Vert g_{k_{2}} h_{k_{3}} \Vert_{L^{2}} 
\\
& \lesssim 2^{(k_{2}-k)(2s-1)/2} 2^{\frac{n+1}{2} k_{1}}2^{k_{3} ( \frac{n+1}{2})} 2^{k_{4} ( \frac{n+1}{2} )}  \Vert f_{k_{1}} \Vert_{Z_{k_{1}}} \Vert g_{k_{2}} \Vert_{Z_{k_{2}}} \Vert h_{k_{3}} \Vert_{Z_{k_{4}}} \Vert w_{k_{4}} \Vert_{Z_{k_{4}}}.
\end{split}
\end{equation*}
and this implies \eqref{wer2}. The bounds \eqref{wer1}, \eqref{wer2} give the result.

\end{proof}
As a corollary of Lemma \ref{Lastnonlemma} we obtain the estimates for the fourth nonlinear term
\begin{proposition}\label{fourthnon}
Let $ \sigma \geq \frac{n+1}{2}$. Then 
there exists $ \epsilon : = \epsilon(n,s,\sigma) > 0$ so that for any $ f \in F^{\sigma}$ with $ \Vert f \Vert_{F^{\frac{n+1}{2}}} \leq \epsilon$, one has the following
\begin{equation*}
\begin{split}
& \Vert f^{2} H_{s} (\bar{f}, \frac{1}{1+\vert f \vert^{2}} ) \Vert_{N^{\sigma}}
\\
& \lesssim \Vert f \Vert^{2}_{F^{\frac{n+1}{2}}} \Vert f \Vert_{F^{\sigma}} 
\end{split}
\end{equation*}
\end{proposition}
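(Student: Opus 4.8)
The plan is to mimic the proofs of Propositions~\ref{firstn}, \ref{secondnn}, and \ref{Thirdnon}: first expand the factor $\frac{1}{1+|f|^2}$ in a Taylor series, then invoke the appropriate dyadic/multilinear lemma, and finally sum the resulting geometric series using the algebra property of $F^\sigma$ from Corollary~\ref{Algebra}. Concretely, I would begin by writing
\[
f^{2} H_{s}\Big(\bar f, \tfrac{1}{1+|f|^{2}}\Big) = \sum_{j \geq 0} (-1)^{j}\, f^{2} H_{s}\big(\bar f, f^{j}\bar f^{j}\big),
\]
which is legitimate since $\|f\|_{F^{(n+1)/2}} \leq \epsilon$ is small and (as used in the earlier propositions) Taylor expansion of $r \mapsto \frac{1}{1+r}$ converges in the relevant norm together with the bilinearity of $H_{s}$ in the second slot. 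For the $j=0$ term, the expression is $f^{2} H_{s}(\bar f,\bar f)$, which is exactly of the trilinear type $\tilde f_{1} H_{s}(\tilde f_{2},\tilde f_{3})$ handled by Lemma~\ref{Dyadic1} and Lemma~\ref{Dyadic2} with all $\alpha_{i}=1$ (note $f^{2}=f\cdot f$ can be split so that one copy of $f$ plays the role of the outer factor $\tilde f_{k_1}$ and the other is absorbed as a bounded multiplier via Lemma~\ref{Energy} when it has lower frequency, or treated as a genuine factor in the high-frequency-collision subcase exactly as in Case~1a of Lemma~\ref{thirdl}); this yields the bound $\|f^{2}H_{s}(\bar f,\bar f)\|_{N^\sigma}\lesssim \|f\|_{F^{(n+1)/2}}^{2}\|f\|_{F^\sigma}$.

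For $j \geq 1$, I would apply Lemma~\ref{Lastnonlemma} with the substitution $\tilde f = f$, $\tilde g = f$, $\tilde h = f^{j}\bar f^{j}$, $\tilde w = \bar f$ — that is, regarding $f^{2} H_{s}(\bar f, f^{j}\bar f^{j})$ as $f \cdot H_{s}(f \cdot (f^{j}\bar f^{j})\, ,\, \bar f)$ after relabelling slots, or more directly noting that Lemma~\ref{Lastnonlemma} is stated precisely for expressions $\tilde f\, H_{s}(\tilde g\tilde h,\tilde w)$ and here we can take $\tilde f = f$, then the other factor of $f$ combined with $f^{j}\bar f^{j}$ forms one of $\tilde g,\tilde h$. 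This gives
\[
\big\| f^{2} H_{s}(\bar f, f^{j}\bar f^{j}) \big\|_{N^\sigma}
\lesssim \|f\|_{F^\sigma}\,\|f\|^{2}_{F^{(n+1)/2}}\,\|f^{j}\bar f^{j}\|_{F^{(n+1)/2}}
+ \|f\|^{3}_{F^{(n+1)/2}}\,\|f^{j}\bar f^{j}\|_{F^{(n+1)/2}}^{?}\ldots
\]
where the right-hand side consists of the usual four terms from Lemma~\ref{Lastnonlemma}, each with exactly one factor carrying the $F^\sigma$ norm. Then Corollary~\ref{Algebra} gives $\|f^{j}\bar f^{j}\|_{F^{(n+1)/2}} \lesssim (C_{A}\epsilon)^{2j-1}\|f\|_{F^{(n+1)/2}}$ and, when the high-norm factor is $f^{j}$ or $\bar f^{j}$, $\|f^{j}\|_{F^\sigma}\lesssim j (C_A\epsilon)^{j-1}\|f\|_{F^\sigma}$ (again by iterating Corollary~\ref{Algebra}, as already done in the proofs of Propositions~\ref{firstn} and \ref{secondnn}), so each summand is bounded by a constant times $(j+1)(C_{A}\epsilon)^{2(j-1)}\,\|f\|^{2}_{F^{(n+1)/2}}\|f\|_{F^\sigma}$.

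Summing over $j \geq 0$ and choosing $\epsilon \ll \frac{1}{C_{A}+100}$ so that $\sum_{j\geq 1}(j+1)(C_{A}\epsilon)^{2(j-1)} < \infty$ converges to an absolute constant, we conclude $\|f^{2}H_{s}(\bar f,\frac{1}{1+|f|^{2}})\|_{N^\sigma}\lesssim \|f\|^{2}_{F^{(n+1)/2}}\|f\|_{F^\sigma}$, which is the claim. The only step that needs genuine care — rather than a verbatim copy of an earlier argument — is bookkeeping the slot assignment when feeding $f^{2}H_{s}(\bar f, f^{j}\bar f^{j})$ into Lemma~\ref{Lastnonlemma}: one must verify that the leftover factor of $f$ (beyond the one assigned to the outer $\tilde f_{k_1}$) can always be grouped with $f^{j}\bar f^{j}$ inside one argument of $H_{s}$ without violating the frequency-support hypotheses on the $\alpha_i$'s in Lemmas~\ref{Dyadic1}–\ref{Dyadic2}; this is handled exactly as in the $j=0$ case and in Lemma~\ref{thirdl} by a Littlewood–Paley decomposition of that extra $f$ and Lemma~\ref{Energy}, so no new difficulty arises. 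I do not expect any substantive obstacle beyond this routine relabelling.
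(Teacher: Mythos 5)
Your proposal contains two errors, one minor and one substantive, both in the accounting of factors inside versus outside the commutator $H_s$.

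First, the $j=0$ term of the Taylor expansion is $f^{2}H_{s}(\bar f, f^{0}\bar f^{0}) = f^{2}H_{s}(\bar f,1)$, and since $(-\Delta)^{s}1=0$ one has $H_{s}(\bar f,1)=0$; it is \emph{not} $f^{2}H_{s}(\bar f,\bar f)$ as you write. The sum therefore starts at $j\geq 1$, exactly as in Propositions~\ref{firstn} and~\ref{Thirdnon} (and unlike Proposition~\ref{secondnn}, where the $j=0$ term genuinely survives). This error happens to be harmless here because the term you bound is vacuous, but it indicates the expansion was not written out carefully.

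The substantive gap is in the $j\geq 1$ case. You propose to apply Lemma~\ref{Lastnonlemma} with $\tilde f = f$, $\tilde g = f$, $\tilde h = f^{j}\bar f^{j}$, $\tilde w = \bar f$, i.e.\ to ``regard $f^{2}H_{s}(\bar f, f^{j}\bar f^{j})$ as $f\cdot H_{s}\bigl(f\cdot(f^{j}\bar f^{j}),\,\bar f\bigr)$ after relabelling slots.'' But $H_{s}$ is not a pointwise operator, so one cannot move a factor of $f$ from outside the commutator to inside its first argument: $f\,H_{s}(a,b)\neq H_{s}(fa,b)$ in general, and the expression $f\,H_{s}\bigl(f\cdot(f^{j}\bar f^{j}),\bar f\bigr)$ is simply a different function from $f^{2}H_{s}(\bar f,f^{j}\bar f^{j})$. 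If you carried out your slot assignment you would be bounding the wrong quantity. The correct reading of the paper's use of Lemma~\ref{Lastnonlemma} is much simpler: using the symmetry $H_{s}(\bar f, f^{j}\bar f^{j}) = H_{s}(f^{j}\bar f^{j},\bar f)$, take the outer factor to be $\tilde f := f^{2}$ — a single element of $F^{\sigma}$ by Corollary~\ref{Algebra} — and $\tilde g = f^{j}$, $\tilde h = \bar f^{j}$, $\tilde w = \bar f$. This fits Lemma~\ref{Lastnonlemma} verbatim, with no leftover factor to relocate, and Corollary~\ref{Algebra} then converts $\|f^{2}\|_{F^{\sigma}}$, $\|f^{2}\|_{F^{(n+1)/2}}$, $\|f^{j}\|_{F^{\sigma}}$, $\|f^{j}\|_{F^{(n+1)/2}}$ into products of $\|f\|$-norms with $j$-dependent constants of size $j\,(C_{A}\epsilon)^{j-1}$, giving a convergent sum for $\epsilon \ll 1/(C_A+100)$. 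Your final geometric-series bookkeeping is correct; the trouble is purely in how the bilinear estimate is invoked for each fixed $j$.
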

\begin{proof}
expanding by Taylor we see that 
\begin{equation*}
\begin{split}
&  f^{2} H_{s} (\bar{f}, \frac{1}{1+\vert f \vert^{2}} )
\\
& = \sum_{j \geq 1} (-1)^{j} f^{2} H_{s} (\bar{f}, f^{j} \bar{f}^{j} ).
\end{split}
\end{equation*}
Then by Lemma \ref{Lastnonlemma} and Corollary \ref{Algebra} we have for any $j \geq 1$ 
\begin{equation*}
\begin{split}
& \Vert f^{2} H_{s} (\bar{f}, f^{j} \bar{f}^{j} ) \Vert_{N^{\sigma}}
\\
& \lesssim \Vert f \Vert_{F^{\sigma}} \Vert f \Vert^{2}_{F^{\frac{n+1}{2}}} \Vert f^{j} \Vert^{2}_{F^{\frac{n+1}{2}}}
\\
& + \Vert f^{j} \Vert_{F^{\sigma}} \Vert f \Vert^{3}_{F^{\frac{n+1}{2}}} \Vert f^{j} \Vert_{F^{\frac{n+1}{2}}}
\end{split}
\end{equation*}
and we can conclude by using Corollary \ref{Algebra} and then summing over $j \geq 1$. 
\end{proof}

The proof of Theorem \ref{NonlinearEst} is clear 
\begin{proof}[Proof of Theorem \ref{NonlinearEst}]
Fix $ \sigma \geq \frac{n+1}{2}$ and take $ \epsilon >0$ so that the conclusion of Proposition \ref{firstn}, \ref{secondnn},  \ref{Thirdnon}, \ref{fourthnon} holds. Then we have 
\begin{equation*}
\begin{split}
& \Vert \mathcal{N}(f) \Vert_{N^{\sigma}}
\\
& \lesssim \Vert  H_{s}(f, \frac{1}{1+|f|^{2}})  \Vert_{N^{\sigma}}
+  \Vert \frac{f}{1+|f|^{2}} H_{s}(f,\bar{f}) \Vert_{N^{\sigma}} + \Vert f H_{s}( |f|^{2},\frac{1}{1+|f|^{2}}) \Vert_{N^{\sigma}}
\\
&+ \Vert f^{2} H_{s} ( \bar{f},\frac{1}{1+|f|^{2}}) \Vert_{N^{\sigma}}
\\
& \lesssim \Vert f \Vert_{F^{\sigma}} \Vert f \Vert^{2}_{F^{\frac{n+1}{2}}}
\end{split}
\end{equation*}
\end{proof}
We close this section with proving the following Lipschitz bounds, these bounds follow from \eqref{nonl1}, Lemma \ref{Prenon}, Lemma \ref{thirdl}, and Lemma \ref{Lastnonlemma}.

\begin{proposition}\label{LipschitzboundN}
Let $ \sigma \geq \sigma_{0} \geq \frac{n+1}{2}$.
There exists $ \epsilon := \epsilon(n,s, \sigma_{0},\sigma) > 0$ so that for any  $ f, g \in F^{\sigma} $ with $ \Vert f\Vert_{F^{\sigma_{0}}} \leq \epsilon, $ and $ \Vert g \Vert_{F^{\sigma_{0}}} \leq \epsilon$, the following Lipschitz bound holds
\begin{equation}\label{lastlip}
\begin{split}
& \Vert \mathcal{N}(f) - \mathcal{N}(g) \Vert_{N^{\sigma}}
\\
&\lesssim \epsilon \Vert f - g \Vert_{F^{\sigma_{0}}}( \Vert f \Vert_{F^{\sigma}} + \Vert g \Vert_{F^{\sigma}})
 + \epsilon^{2} \Vert f - g \Vert_{F^{\sigma}}.
\end{split}
\end{equation}
In particular, taking $ \sigma = \sigma_{0}$ then we have 
 \begin{equation}\label{LipNonlinear}
\Vert \mathcal{N} (f) - \mathcal{N}(g) \Vert_{N^{\sigma_{0}}} \lesssim \epsilon^{2} \Vert f-g \Vert_{F^{\sigma_{0}}}.
\end{equation}

 \end{proposition}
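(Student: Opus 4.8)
The plan is to deduce \eqref{lastlip} from the multilinear estimates already established — namely \eqref{nonl1}, Lemma \ref{Prenon}, Lemma \ref{thirdl}, Lemma \ref{Lastnonlemma} (and, for the trilinear $j=0$ term of the second piece, Lemma \ref{Dyadic1} and Lemma \ref{Dyadic2}, exactly as in the proof of Proposition \ref{secondnn}) — together with the algebra property Corollary \ref{Algebra}, by a telescoping argument. First I would note that it suffices to prove \eqref{lastlip}, since \eqref{LipNonlinear} is the special case $\sigma=\sigma_{0}$: then $\Vert f\Vert_{F^{\sigma}},\Vert g\Vert_{F^{\sigma}}\leq\epsilon$ and $\Vert f-g\Vert_{F^{\sigma_{0}}}=\Vert f-g\Vert_{F^{\sigma}}$, so both terms on the right of \eqref{lastlip} are $\lesssim\epsilon^{2}\Vert f-g\Vert_{F^{\sigma_{0}}}$. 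I would also record the elementary embeddings $\Vert u\Vert_{F^{(n+1)/2}}\leq\Vert u\Vert_{F^{\sigma_{0}}}\leq\Vert u\Vert_{F^{\sigma}}$, immediate from \eqref{eq:Fsigma} since $\sigma\geq\sigma_{0}\geq\frac{n+1}{2}$ and $k\geq 0$; in particular $\Vert f\Vert_{F^{(n+1)/2}},\Vert g\Vert_{F^{(n+1)/2}}\leq\epsilon$.

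I would then recall the Taylor expansions used in the proofs of Propositions \ref{firstn}, \ref{secondnn}, \ref{Thirdnon}, \ref{fourthnon}, which write $\mathcal{N}(f)$ as a sum over $j\geq 0$ — convergent in $F^{\sigma}$ thanks to Corollary \ref{Algebra} and $\epsilon\ll(C_{A}+100)^{-1}$ — of multilinear monomials of the shapes $H_{s}(\tilde f,\tilde g\tilde h)$, $\tilde f\tilde g\tilde h\,H_{s}(\tilde w,\tilde v)$, $\tilde f\,H_{s}(\tilde g\tilde h,\tilde w\tilde v)$, and $\tilde f\,H_{s}(\tilde g\tilde h,\tilde w)$, each argument being a monomial in $f,\bar f$ (the powers $f^{j},\bar f^{j}$ bundled into single $F^{\sigma}$-functions via Corollary \ref{Algebra}); the $j=0$ monomials of the first, third and fourth pieces vanish because $H_{s}(\cdot,1)=0$, while the second piece contributes the trilinear monomial $fH_{s}(f,\bar f)$ at $j=0$. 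Forming $\mathcal{N}(f)-\mathcal{N}(g)$ and applying the telescoping identity $a_{1}\cdots a_{m}-b_{1}\cdots b_{m}=\sum_{i}b_{1}\cdots b_{i-1}(a_{i}-b_{i})a_{i+1}\cdots a_{m}$ — both to the factors $F_{1},F_{2},F_{3}$ of a monomial and inside each power, $f^{j}-g^{j}=\sum_{\ell}f^{\ell}(f-g)g^{j-1-\ell}$ — rewrites the difference as a finite sum of multilinear expressions, each with exactly one ``difference slot'' holding $f-g$ or $\bar f-\bar g$ and the remaining (at least two) slots holding copies of $f,\bar f,g,\bar g$. To each such expression I would apply the corresponding estimate from the list above, distinguishing two cases according to where the $F^{\sigma}$ norm is placed. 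If it lands on the difference slot, the remaining slots carry $F^{(n+1)/2}$ norms, each $\leq\epsilon$, so this contributes $\lesssim\epsilon^{2}\Vert f-g\Vert_{F^{\sigma}}$. If it lands on a non-difference slot, that slot carries $\Vert f\Vert_{F^{\sigma}}$ or $\Vert g\Vert_{F^{\sigma}}$, the difference slot carries $\Vert f-g\Vert_{F^{(n+1)/2}}\leq\Vert f-g\Vert_{F^{\sigma_{0}}}$, and the other slot(s) carry $F^{(n+1)/2}$ norms $\leq\epsilon$, so this contributes $\lesssim\epsilon\Vert f-g\Vert_{F^{\sigma_{0}}}(\Vert f\Vert_{F^{\sigma}}+\Vert g\Vert_{F^{\sigma}})$.

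It remains to sum over $j$. Each value of $j$ produces at most polynomially many (in $j$) telescoped monomials; bundling the powers via Corollary \ref{Algebra} bounds the $F^{(n+1)/2}$ norm of $f^{j},g^{j}$ (and of the factors of $f^{j}-g^{j}$) by a constant times $(C_{A}\epsilon)^{j-1}\epsilon$, so that the $j=1$ monomials reproduce exactly the right-hand side of \eqref{lastlip} and each subsequent $j$ carries an extra factor $\mathrm{poly}(j)\,(C_{A}\epsilon)^{2(j-1)}$ (with, when the high norm falls on a power, the two-term structure of \eqref{lastlip} preserved). The resulting series converges geometrically once $\epsilon\ll(C_{A}+100)^{-1}$. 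Summing the four pieces gives \eqref{lastlip}, and $\sigma=\sigma_{0}$ then gives \eqref{LipNonlinear}. The only genuine difficulty I anticipate is the bookkeeping — tracking, across the infinite Taylor series and the telescoping, which slot carries the high norm and the combinatorial weights — so that the right-hand side of \eqref{lastlip} emerges with exactly its two terms and the series converges; no analytic input beyond the multilinear estimates and Corollary \ref{Algebra} should be required.
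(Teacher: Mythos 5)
Your proposal is correct and follows essentially the same route as the paper: Taylor-expand each of the four pieces of $\mathcal{N}$, telescope $\mathcal{N}(f)-\mathcal{N}(g)$ so that each multilinear monomial carries exactly one $f-g$ (or $\bar f-\bar g$) slot, apply the multilinear estimates \eqref{nonl1}, Lemma \ref{Prenon}, Lemma \ref{thirdl}, Lemma \ref{Lastnonlemma} (and Lemma \ref{Dyadic1}, Lemma \ref{Dyadic2} for the trilinear $j=0$ piece) distinguishing where the $F^{\sigma}$ norm falls, then sum the geometric series via Corollary \ref{Algebra} after taking $\epsilon\ll(C_A+100)^{-1}$. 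The bookkeeping you flag as the only real concern is exactly what the paper works through, and your placement of the high norm (difference slot vs.\ not) correctly produces the two-term structure of \eqref{lastlip}.
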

 \begin{proof}
We estimate each nonlinear term separately. Namely, we first prove
\begin{equation}\label{lip1}
\begin{split}
& \Vert H_{s} ( f, \frac{1}{1+\vert f \vert^{2}} ) - H_{s} ( g, \frac{1}{1+\vert g \vert^{2}} ) \Vert_{N^{\sigma}}
\\
& \lesssim \epsilon \Vert f -g \Vert_{F^{\sigma_{0}}}( \Vert f \Vert_{F^{\sigma}} + \Vert g \Vert_{F^{\sigma}} ) + \epsilon^{2} \Vert f -g \Vert_{F^{\sigma}}
\end{split}
\end{equation}
By expanding using Taylor, we can write 
\begin{equation*}
\begin{split}
& H_{s} ( f, \frac{1}{1+\vert f \vert^{2}} ) - H_{s} ( g, \frac{1}{1+\vert g \vert^{2}} ) 
\\
& = \sum_{j=1}^{\infty} (-1)^{j}\left( H_{s} ( f, f^{j} \bar{f}^{j}) - H_{s}(g , g^{j} \bar{g}^{j} ) \right)
\end{split}
\end{equation*}
and for each $j \geq 1$ we have
\begin{equation}\label{lk1}
\begin{split}
&  H_{s} ( f, f^{j} \bar{f}^{j}) - H_{s}(g , g^{j} \bar{g}^{j} ) 
\\
& = H_{s} ( f-g, f^{j} \bar{f}^{j} ) + H_{s} (g,f^{j} \bar{f}^{j}) - H_{s} (g, g^{j} \bar{g}^{j} )
\\
& =  H_{s} ( f-g, f^{j} \bar{f}^{j} ) + H_{s} (g, \bar{f}^{j} ( f^{j} -g^{j} ) ) + H_{s} (g,(  \bar{f}^{j}- \bar{g}^{j} ) g^{j} )
\end{split}
\end{equation}
using \eqref{nonl1} and Corollary \ref{Algebra} we estimate the $N^{\sigma}$ norm of each term on the right hand side of \eqref{lk1}. Starting with the first term we have by \eqref{nonl1}
\begin{equation*}
\begin{split}
& \Vert H_{s} (f -g , f^{j} \bar{f}^{j} ) \Vert_{N^{\sigma}}
\\
& \lesssim \Vert f- g \Vert_{F^{\sigma}} \Vert f^{j} \Vert^{2}_{F^{\frac{n+1}{2}}} + \Vert f- g \Vert_{F^{\frac{n+1}{2}}} \Vert f^{j} \Vert_{F^{\frac{n+1}{2}}}\Vert f^{j} \Vert_{F^{\sigma}} 
\end{split}
\end{equation*}
Using Corollary \ref{Algebra} we obtain 
\begin{equation*}
\begin{split}
& \Vert H_{s} (f -g , f^{j} \bar{f}^{j} ) \Vert_{N^{\sigma}}
\\
& \lesssim 
\epsilon^{2} \Vert f - g \Vert_{F^{\sigma}} ( C_{A} \epsilon)^{2(j-1)} + \Vert f- g \Vert_{F^{\sigma_{0}}} (C_{A} \epsilon)^{j-1} \epsilon  \left( j C^{j-1}_{A}\epsilon^{j-1}\right) \Vert f \Vert_{F^{\sigma}}
\end{split}
\end{equation*}
Choosing $ \epsilon \ll \frac{1}{C_{A} +100}$ and summing over $j \geq 1$ we obtain the bound 
\begin{equation}\label{vvp1}
\sum_{j \geq 1} \Vert H_{s} (f -g , f^{j} \bar{f}^{j} ) \Vert_{N^{\sigma}} \lesssim \epsilon^{2} \Vert f-g \Vert_{F^{\sigma}}+ \epsilon \Vert f - g \Vert_{F^{\sigma_{0}}} \Vert f \Vert_{F^{\sigma}}.
\end{equation}
Next, we handle the second term on the right hand side of \eqref{lk1} similarly. In particular, using \eqref{nonl1} we obtain 
\begin{equation*}
\begin{split}
& \Vert H_{s} ( g, \bar{f}^{j} ( f^{j} - g^{j} )) \Vert_{N^{\sigma}}
\\
& \lesssim \Vert g \Vert_{F^{\sigma}} \Vert f^{j} \Vert_{F^{\frac{n+1}{2}}} \Vert f^{j} - g^{j} \Vert_{F^{\frac{n+1}{2}}} 
\\
& + \Vert g \Vert_{F^{\frac{n+1}{2}}} \Vert f^{j} \Vert_{F^{\sigma}} \Vert f^{j} - g^{j} \Vert_{F^{\frac{n+1}{2}}} 
\\
&+ \Vert f^{j} -g^{j} \Vert_{F^{\sigma}} \Vert f^{j} \Vert_{F^{\frac{n+1}{2}}} \Vert g \Vert_{F^{\frac{n+1}{2}}}
\end{split}
\end{equation*}
using Corollary \ref{Algebra} we obtain
\begin{equation*}
\begin{split}
& \Vert H_{s} ( g, \bar{f}^{j} ( f^{j} - g^{j} )) \Vert_{N^{\sigma}}
\\
& \lesssim \epsilon ( C_{A} \epsilon)^{j-1} \Vert g \Vert_{F^{\sigma}} j (C_{A} \epsilon)^{j-1} \Vert f -g \Vert_{F^{\sigma_{0}}}
\\
& + \epsilon (j ( C_{A} \epsilon)^{j-1})^{2} \Vert f \Vert_{F^{\sigma}} \Vert f -g \Vert_{F^{\sigma_{0}}} 
\\
& + \epsilon (C_{A} \epsilon)^{2j}  \Vert f- g \Vert_{F^{\sigma_{0}}} j^{2} (C_{A} \epsilon)^{j-1} ( \Vert f \Vert_{F^{\sigma}} + \Vert g \Vert_{F^{\sigma}} ) + \epsilon^{2} j( C_{A} \epsilon)^{2(j-1)} \Vert f-g \Vert_{F^{\sigma}}.
\end{split}
\end{equation*}
Choosing $ \epsilon \ll \frac{1}{C_{A} +100}$ and summing over $j \geq 1$ we obtain 
\begin{equation}\label{vvp2}
\begin{split}
 & \sum_{j \geq 1} \Vert H_{s} ( g, \bar{f}^{j} ( f^{j} - g^{j} )) \Vert_{N^{\sigma}} 
 \\
 & \lesssim \epsilon \Vert f-g \Vert_{F^{\sigma_{0}}} ( \Vert f \Vert_{F^{\sigma}} + \Vert g \Vert_{F^{\sigma}} ) + \epsilon^{2} \Vert f -g \Vert_{F^{\sigma}}.
\end{split}
 \end{equation}
The same proof also give the estimate for the third term on the right hand side of \eqref{lk1}. Namely, 
\begin{equation}\label{vvp3}
\begin{split}
& \sum_{j \geq 1} \Vert H_{s} ( g, (\bar{f}^{j}- \bar{g}^{j}) g^{j} \Vert_{N^{\sigma}} 
\\
& \lesssim \epsilon \Vert f-g \Vert_{F^{\sigma_{0}}} ( \Vert f \Vert_{F^{\sigma}} + \Vert g \Vert_{F^{\sigma}} ) +  \epsilon^{2} \Vert f - g \Vert_{F^{\sigma}}.
\end{split}
\end{equation}
The estimates \eqref{vvp1}, \eqref{vvp2}, and \eqref{vvp3} yield 
\eqref{lip1}.

Next, we handle the second nonlinear term. That is, we prove 
\begin{equation}\label{lip2}
\begin{split}
& \Vert \frac{f}{1+\vert f \vert^{2}} H_{s}(f,\bar{f}) -\frac{g}{1+\vert g\vert^{2}} H_{s}(g,\bar{g}) \Vert_{N^{\sigma}}
\\
&\lesssim \epsilon \Vert f -g \Vert_{F^{\sigma_{0}}} ( \Vert f \Vert_{F^{\sigma}} + \Vert g \Vert_{F^{\sigma}} ) + \epsilon^{2} \Vert f - g \Vert_{F^{\sigma}}.
\end{split}
\end{equation}
Using the same trick, we expand by Taylor to obtain
\begin{equation*}
\begin{split}
& \frac{f}{1+\vert f \vert^{2}} H_{s}(f,\bar{f}) -\frac{g}{1+\vert g\vert^{2}} H_{s}(g,\bar{g})
\\
& = \sum_{j \geq 0 }(-1)^{j} \left( f f^{j} \bar{f}^{j} H_{s}(f,\bar{f}) - g g^{j} \bar{g}^{j} H_{s}(g,\bar{g}) \right).
\end{split}
\end{equation*}
For $j =0$ notice that we have by Lemma \ref{Dyadic1} and Lemma \ref{Dyadic2} (with $ \alpha_{i} =1$ for $ i \in \{1,2,3 \}$ )
\begin{equation}\label{vvp5}
\begin{split}
& \Vert f H_{s}(f,\bar{f}) - g H_{s} (g,\bar{g} )\Vert_{N^{\sigma}}
\\
& \leq \Vert (f-g) H_{s} (f,\bar{f}) \Vert_{N^{\sigma}} + \Vert g H_{s} (f-g, \bar{f}) \Vert_{N^{\sigma}} + \Vert g H_{s}(g,f-g) \Vert_{N^{\sigma}}
\\
& \lesssim \epsilon \Vert f-g \Vert_{F^{\sigma_{0}}} ( \Vert f \Vert_{F^{\sigma}} + \Vert g \Vert_{F^{\sigma}}) +\epsilon^{2} \Vert f -g \Vert_{F^{\sigma}} .
\end{split}
\end{equation}
Therefore, fix $j \geq 1$. Then we have by bi-linearity of the commutator
\begin{equation}\label{vvp8}
\begin{split}
& f f^{j} \bar{f}^{j} H_{s}(f,\bar{f}) - g g^{j} \bar{g}^{j} H_{s}(g,\bar{g} ) 
\\
& = (f-g) f^{j} \bar{f}^{j} H_{s} (f,\bar{f} ) + g ( f^{j} - g^{j}) \bar{f}^{j} H_{s} (f,\bar{f} ) + g g^{j} ( \bar{f}^{j} - \bar{g}^{j}) H_{s} (f, \bar{f})
\\
& + g g^{j} \bar{g}^{j} H_{s} (f-g,\bar{f}) + g g^{j} \bar{g}^{j} H_{s} (g, \bar{f} - \bar{g} ).
\end{split}
\end{equation}
We estimate the $N^{\sigma}$ norm of each term on the right hand side of \eqref{vvp8} to obtain, using
Lemma \ref{Prenon} and Corollary \ref{Algebra} ,
\begin{equation*}
\begin{split}
& \Vert  f f^{j} \bar{f}^{j} H_{s}(f,\bar{f}) - g g^{j} \bar{g}^{j} H_{s}(g,\bar{g}) \Vert_{N^{\sigma}}
\\
& \lesssim ((j^{2}+1) \epsilon^{j-1} ) \epsilon \Vert f -g \Vert_{F^{\sigma_{0}}} ( \Vert f \Vert_{F^{\sigma}} + \Vert g \Vert_{F^{\sigma}}) + \epsilon^{2} ( (j^{2}+1)\epsilon^{j-1}) \Vert f -g \Vert_{F^{\sigma}}. 
\end{split}
\end{equation*}
Choosing $ \epsilon $ small enough we obtain \eqref{lip2} once we sum over $j \geq 1$. For the last two nonlinear terms we use the same argument used for \eqref{lip2} and \eqref{lip1}.
Namely,  using Taylor expansion and  Lemma \ref{thirdl}, and Lemma \ref{Lastnonlemma}.

\end{proof}

\section{Proof of Theorem \ref{Redeq} and \Cref{MainResult}}
In this section we prove Theorem \ref{Redeq}. First, we recall the linear estimates which were already proven in \cite{model}. We will sketch the proof for completeness. Throughout this section we fix a Schwartz function
$$
\psi : \R \to [0,1]
$$
which is supported in $ [-2,2]$ and equal to $1$ on $[-1,1]$.
\begin{lemma}\label{LinearEstimates}
Let $ \sigma \geq 0 $ then the following linear estimates hold
\begin{enumerate}
    \item for any $ f_{0} \in B^{\sigma}_{2,1}$ one has
    $$
    \Vert \psi(t) e^{ \i t (-\Delta)^s} f_{0} \Vert_{F^{\sigma}} \lesssim \Vert f_{0} \Vert_{B^{\sigma}_{2,1}}
    $$
    \item Let $ f(x,t) : = \i \psi(t) \int_{0}^{t} e^{\i(t-r) (-\Delta)^s} F(x,r) dr $ then we have
    $$
    \Vert f \Vert_{N^{\sigma}} \lesssim \Vert F \Vert_{N^{\sigma}}
    $$
\end{enumerate}
\end{lemma}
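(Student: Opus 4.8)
The plan is to reduce both inequalities to single-frequency bounds and then exploit the structure of the spaces $Z_{k}$. Recall $\Vert u\Vert_{F^{\sigma}}=\sum_{k\geq0}2^{k\sigma}\Vert\Delta_{k}u\Vert_{Z_{k}}$ and $\Vert F\Vert_{N^{\sigma}}=\sum_{k\geq0}2^{k\sigma}\Vert(\i\partial_{t}-(-\Delta)^{s}+\i)^{-1}\Delta_{k}F\Vert_{Z_{k}}$. Since $(\i\partial_{t}-(-\Delta)^{s}+\i)^{-1}$ is bounded on each $Z_{k}$ -- on $X_{k}$ this is immediate from $\sum_{j}2^{-j/2}\Vert Q_{j}\cdot\Vert_{L^{2}}\leq\sum_{j}2^{j/2}\Vert Q_{j}\cdot\Vert_{L^{2}}$, and on $Y^{e}_{k,k'}$ by the same computation as in \Cref{Embeddings} -- one has the continuous embedding $F^{\sigma}\hookrightarrow N^{\sigma}$. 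Hence for $(2)$ it suffices to prove the stronger estimate $\Vert f\Vert_{F^{\sigma}}\lesssim\Vert F\Vert_{N^{\sigma}}$, and for both parts it is enough to fix $k\geq0$ and estimate a single dyadic piece in $Z_{k}$.

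For $(1)$: since $\Delta_{k}$ acts only in $x$ it commutes with the time cutoff, so $\Delta_{k}(\psi(t)e^{\i t(-\Delta)^{s}}f_{0})=\psi(t)e^{\i t(-\Delta)^{s}}\Delta_{k}f_{0}$, whose space-time Fourier transform is $\widehat{\psi}(\tau+|\xi|^{2s})\,\widehat{\Delta_{k}f_{0}}(\xi)$. I place this in $X_{k}$: for each $j\geq0$, Plancherel in $\tau$ followed by the substitution $\mu=\tau+|\xi|^{2s}$ gives $\Vert Q_{j}(\psi e^{\i t(-\Delta)^{s}}\Delta_{k}f_{0})\Vert_{L^{2}_{x,t}}\lesssim\Vert\varphi_{j}(\mu)\widehat{\psi}(\mu)\Vert_{L^{2}_{\mu}}\Vert\Delta_{k}f_{0}\Vert_{L^{2}}$, and $\Vert\varphi_{j}(\mu)\widehat{\psi}(\mu)\Vert_{L^{2}_{\mu}}\lesssim 2^{j/2}\sup_{|\mu|\aeq 2^{j}}|\widehat{\psi}(\mu)|\lesssim 2^{-10j}2^{j/2}$ because $\psi$ is Schwartz. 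Summing $2^{j/2}\Vert Q_{j}\cdot\Vert_{L^{2}}$ over $j$ yields $\Vert\Delta_{k}(\psi e^{\i t(-\Delta)^{s}}f_{0})\Vert_{X_{k}}\lesssim\Vert\Delta_{k}f_{0}\Vert_{L^{2}}$; since $X_{k}\hookrightarrow Z_{k}$ with constant $1$, multiplying by $2^{k\sigma}$ and summing over $k$ gives $(1)$.

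For $(2)$: write $G=\Delta_{k}F$ and $g=(\i\partial_{t}-(-\Delta)^{s}+\i)^{-1}G$, so $G=(\i\partial_{t}-(-\Delta)^{s})g+\i g$. Using $\partial_{r}\big[e^{\i(t-r)(-\Delta)^{s}}g(r)\big]=-\i\,e^{\i(t-r)(-\Delta)^{s}}\big[(\i\partial_{r}-(-\Delta)^{s})g\big](r)$ and integrating from $0$ to $t$, the contribution of $(\i\partial_{t}-(-\Delta)^{s})g$ to the Duhamel integral is $\i\big(g(t)-e^{\i t(-\Delta)^{s}}g(0)\big)$, whence
\[
f=-\psi(t)g(t)+\psi(t)e^{\i t(-\Delta)^{s}}g(0)-\psi(t)\int_{0}^{t}e^{\i(t-r)(-\Delta)^{s}}g(r)\,dr .
\]
The first term is controlled by $\Vert g\Vert_{Z_{k}}$ once multiplication by the time cutoff $\psi$ is known to be bounded on $Z_{k}$. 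The second is $\lesssim\Vert g(\cdot,0)\Vert_{L^{2}}\lesssim\Vert g\Vert_{Z_{k}}$ by part $(1)$ at frequency $k$ together with the energy estimate \Cref{Energy}$(1)$. The third term is again a Duhamel integral, now with the better data $g$ in place of $G$: a crude bound gives $\Vert\psi(t)\int_{0}^{t}e^{\i(t-r)(-\Delta)^{s}}g\,dr\Vert_{L^{\infty}_{t}L^{2}_{x}}\lesssim\sup_{|t|\leq2}\int_{0}^{|t|}\Vert g(r)\Vert_{L^{2}_{x}}\,dr\lesssim\Vert g\Vert_{Z_{k}}$ (using $\supp\psi\subset[-2,2]$ and \Cref{Energy}$(1)$), and a further decomposition in modulation -- exactly as in \cite{model}, and as in \cite{DIE,cmp} for the borderline exponents -- puts the high-modulation part back into $X_{k}$ and the low-modulation part, which has modulation $\aeq1$ and frequency $\aeq2^{k}$, into $Z_{k}$ directly from the definition of $X_{k}$; this is also where the $+\i$ regularization near the characteristic cone $\{\tau+|\xi|^{2s}=0\}$ is harmlessly absorbed. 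Collecting the three terms gives $\Vert\Delta_{k}f\Vert_{Z_{k}}\lesssim\Vert g\Vert_{Z_{k}}=\Vert(\i\partial_{t}-(-\Delta)^{s}+\i)^{-1}\Delta_{k}F\Vert_{Z_{k}}$, and multiplying by $2^{k\sigma}$ and summing over $k$ yields $\Vert f\Vert_{F^{\sigma}}\lesssim\Vert F\Vert_{N^{\sigma}}$, hence $\Vert f\Vert_{N^{\sigma}}\lesssim\Vert F\Vert_{N^{\sigma}}$.

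The only point demanding genuine attention is the boundedness of multiplication by the time cutoff $\psi(t)$ on the $Y^{e}_{k,k'}$-component of $Z_{k}$: unlike a spatial Fourier multiplier (\Cref{Multiplier}), $\psi$ does not commute with $\i\partial_{t}-(-\Delta)^{s}$, and one must dispose of the commutator term $\i\psi'(t)g$, whose $L^{1}_{e}L^{2}_{e^{\perp},t}$-norm is controlled via the representation formula \Cref{repf} (or the embeddings in \Cref{Embeddings}). With that in hand the remaining bookkeeping is a routine repetition of the arguments in \cite{model} and \cite{cmp}.
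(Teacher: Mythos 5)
Your physical-space route — resolvent identity $G=(\i\partial_{t}-(-\Delta)^{s})g+\i g$, then Duhamel integration by parts into three boundary/interior pieces — is a legitimate alternative to the paper's Fourier-side computation, and in fact the three pieces match, term by term, the three parts of the paper's formula for $T(g)$ (the first two come from the factor $1$ in $\frac{r+|\xi|^{2s}+\i}{r+|\xi|^{2s}}=1+\frac{\i}{r+|\xi|^{2s}}$, the third from the $\frac{\i}{r+|\xi|^{2s}}$ part). So the approaches are structurally equivalent, and both end up needing the same two technical facts. The problem is that you defer both of them.

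For the third term the argument given does not work. An $L^{\infty}_{t}L^{2}_{x}$ bound says nothing about the $Z_{k}$ norm: that norm sums $2^{j/2}\Vert Q_{j}\cdot\Vert_{L^{2}}$ over \emph{all} modulation levels, and a crude energy bound only controls a $\sup_{j}$-type quantity, not the $\ell^{1}_{j}$-weighted sum. Moreover $-\psi(t)\int_{0}^{t}e^{\i(t-r)(-\Delta)^{s}}g\,dr$ is precisely the truncated Duhamel integral whose Fourier kernel carries the \emph{bare} singular factor $\frac{1}{\tau+|\xi|^{2s}}$; the cancellation of this singularity via the time truncation $\psi$ (the vanishing of $\hat\psi(r-\tau)-\hat\psi(\tau+|\xi|^{2s})$ on the characteristic set) is exactly the content of the $X_{k}$-case bound that the paper cites as \cite[(4.2)]{model}. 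Saying ``a further decomposition in modulation...puts the high-modulation part back into $X_{k}$'' is not a proof of this; it is the statement to be proved. And the remark that the $+\i$ regularization is ``harmlessly absorbed'' here is misplaced: the $+\i$ was already spent in defining $g=(\i\partial_{t}-(-\Delta)^{s}+\i)^{-1}\Delta_{k}F$, and the third term carries no $+\i$ at all — it is the raw singular kernel, which is exactly what makes it nontrivial.

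For the first term you correctly identify the issue (multiplication by $\psi(t)$ does not commute with $\i\partial_{t}-(-\Delta)^{s}$, so the $Y^{e}_{k,k'}$-norm does not pass trivially through $\psi$), but then leave it unproven. The paper's reduction to \eqref{wlll} is designed precisely to dispose of this via \Cref{Emb} (their $t=0$ trace bound) and the argument of \cite[(4.5)]{model}; if you want to follow your route, you should carry out that commutator estimate — or at least reduce to those cited facts explicitly — rather than gesturing at the representation formula. As it stands, the proposal correctly reorganises the proof but leaves the two load-bearing estimates as assertions.

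(A minor point on signs: with the paper's literal $e^{\i(t-r)(-\Delta)^{s}}$ and the operator $\i\partial_{t}-(-\Delta)^{s}$, your integration-by-parts identity has the wrong sign on the $(-\Delta)^{s}$ term; $e^{\i t(-\Delta)^{s}}$ is the propagator of $\i\partial_{t}+(-\Delta)^{s}$. This is really the paper's own convention being sloppy, and it does not affect the structure, but it is worth being careful.)
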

\begin{proof}
First, we prove (1)
\begin{equation*}
\begin{split}
& \Vert \psi(t) e^{ \i t (-\Delta)^s} f_{0} \Vert_{F^{\sigma}}
\\
& = \sum_{k=0}^{\infty} 2^{ \sigma k} \Vert \Delta_{k} ( \psi(t) e^{\i t (-\Delta)^s } f_{0} )\Vert_{Z_{k}}
\\
& \lesssim \sum_{k=0}^{\infty} 2^{ \sigma k} \Vert \Delta_{k} ( \psi(t) e^{\i t (-\Delta)^s } f_{0} )\Vert_{X_{k}}
\\
& \lesssim_{\psi} \sum_{k=0}^{\infty } 2^{ \sigma k } \Vert  \Delta^{k} f_{0} \Vert_{L^{2}}.
\end{split}
\end{equation*}
Next, we proceed to prove (2). Notice that we have 
\begin{equation*}
\begin{split}
& \mathcal{F}_{\R^{n+1}}( \Delta_{k} f )
\\
& = \mathcal{F}_{\R^{n+1}} (  \Delta_{k} \i \psi(t) \int_{0}^{t} e^{\i (t-r) (-\Delta)^s} F(x,r) dr )
\\
& = \i \int_{\R} \frac{ \mathcal{F}_{\R^{n+1}} \left( (\i \partial_{t} - (-\Delta)^{s}   + \i)^{-1} \Delta_{k} F \right)(\xi,r) ( r + \vert \xi \vert^{2s} + \i) }{ r + \vert \xi \vert^{2s}} \left( \hat{\psi}(r -\tau) - \hat{\psi}(\tau + \vert \xi\vert^{2s}) \right) dr.
\end{split}
\end{equation*}
Therefore, for $h \in Z_{k}$ set 
$$
T(h) := \mathcal{F}^{-1}_{\R^{n+1}} \left(  \i \int_{\R} \frac{ \hat{h}(\xi,r) ( r + \vert \xi \vert^{2s} + \i)}{ r + \vert \xi \vert^{2s}} \left( \hat{\psi}(r -\tau) - \hat{\psi}(\tau + \vert \xi\vert^{2s}) \right) dr \right)
$$
and to prove (2) it suffices to prove 
\begin{equation}
\Vert T(h) \Vert_{Z_{k}} \lesssim \Vert h \Vert_{Z_{k}}
\end{equation}
If $ h \in X_{k}$ then the result was already proven in \cite[(4.2)]{model}. Next, assume $ h \in Y^{e}_{k,k'}$ with $ k' \in T_{k}$ and $h$ has modulation controlled by $ 2^{2k(s+q -1)-80}$. Then write 
$$
h(x,t) = \frac{ \i \partial_{t} - (-\Delta)^{s}  }{ \i \partial_{t} - (-\Delta)^{s}   + \i} h(x,t) + \frac{ \i}{ \i \partial
_{t} -(-\Delta)^s + \i} h(x,t)
$$
And notice that by Lemma \ref{Embeddings} we have
$$
\Vert ( \i \partial_{t} - (-\Delta)^{s}   + \i)^{-1} h \Vert_{X_{k}} \lesssim \Vert f \Vert_{Y^{e}_{k,k'}}
$$
Therefore, for (2) it suffices to prove
\begin{equation}\label{wlll}
\begin{split}
& \Vert \mathcal{F}^{-1}_{\R^{n+1}} \left(  \i \int_{\R} \hat{h}(\xi,r)  \hat{\psi}(r -\tau) dr \right) \Vert_{Z_{k}} +  \Vert \mathcal{F}^{-1}_{\R^{n+1}} \left(  \i \int_{\R} \hat{h}(\xi,r)  \hat{\psi}(\tau +\vert \xi \vert^{2s}) dr \right) \Vert_{X_{k}}
\\
& \lesssim \Vert h \Vert_{Z_{k}}
\end{split}
\end{equation}
The second term on the left hand side of \eqref{wlll} satisfy the estimate by using Lemma \ref{Energy} with $ t =0$. For the first term, the result follows by repeating the argument in \cite[(4.5)]{model}, which does not depend on the factor $ 2^{-k'(2s-1)/2}$.

\end{proof}
Now we are ready to prove Theorem \ref{Redeq}. This will be a standard fix point argument.
\begin{proof}[Proof of Theorem \ref{Redeq}]
Fix $ \sigma_{0} \geq \frac{n+1}{2}$ and $f_{0} : \R^{n} \to \C$ with $ \Vert f_{0} \Vert_{B^{\sigma_{0}}_{2,1}} \leq \epsilon_{0}$ for $ \epsilon_{0}$ to be chosen later. And
define 
$$
X_{\lambda} : = \{ f \in F^{\sigma_{0}} : \Vert f \Vert_{F^{\sigma_{0}}} \leq \lambda \}
$$
for $ \lambda \ll 1$ so that the conclusion of Theorem \ref{NonlinearEst} holds. Then consider the operator given by 
$$
T(f) : = \psi(t) e^{\i t (-\Delta)^s} f_{0} - \i \psi(t) \int_{0}^{t} e^{\i (t-r) (-\Delta)^s} \mathcal{N}(f) (x,r) dr.
$$
From Lemma \ref{LinearEstimates} and Theorem \ref{NonlinearEst} we obtain 
\begin{equation}
\begin{split}
& \Vert T(f) \Vert_{F^{\sigma_{0}}} 
\\
& \leq   C \Vert f_{0} \Vert_{B_{2,1}^{\sigma_{0}}} + C_{\sigma_{0}} \Vert f \Vert^{3}_{F^{\sigma_{0}}} 
\\
& \leq C \epsilon_{0} + C_{\sigma_{0}}\lambda^{3}
\end{split}
\end{equation}
Therefore, for $ \epsilon_{0} , \lambda > 0$ appropriately small, namely $ C \epsilon_{0} \leq \lambda/2$ and $ \lambda^{2} C_{\sigma_{0}} \leq 1/2$,  one obtains 
$$
T : X_{\lambda } \to X_{\lambda}.
$$
Next, notice that by Lemma \ref{LinearEstimates} and Proposition \ref{LipschitzboundN} we obtain that for any $ f,g \in X_{\lambda}$
\begin{equation}
\Vert T(f) - T(g) \Vert_{F^{\sigma_{0}}}
\leq C_{\sigma_{0}} \lambda^{2} \Vert f -g \Vert_{F^{\sigma_{0}}}.
\end{equation}
Hence, for appropriately small $ \lambda > 0$, say $ \lambda^{2} C_{\sigma_{0}} \ll \frac{1}{2}$, we have that $T$ is a contraction and thus, there exists a fixed point 
$$
f = T(f).
$$
Moreover, Theorem \ref{NonlinearEst} gives us the following bound; for any $ \sigma \in [\frac{n+1}{2},\sigma_{0} +100]$ we have 
\begin{equation*}
\begin{split}
& \Vert f \Vert_{F^{\sigma}}
\\
& \lesssim  \Vert f_{0} \Vert_{B^{\sigma}_{2,1}} + \lambda^{2} C_{\sigma_{0}} \Vert f \Vert_{F^{\sigma}}.  
\end{split}
\end{equation*}
Therefore, for our choice of $ \lambda \ll 1$ we obtain 

\begin{equation}\label{xzvb}
\Vert f \Vert_{F^{\sigma}}
\lesssim \Vert f_{0} \Vert_{B^{\sigma}_{2,1}}.
\end{equation}
And hence by Lemma \ref{Energy} we obtain
$$
\sup_{t \in [-1,1]} \Vert f \Vert_{B^{\sigma}_{2,1}} \lesssim \Vert f_{0} \Vert_{B^{\sigma}_{2,1}}.
$$
Moreover, if $ f, g $ are the solutions obtained above for initial data $ f_{0} ,g_{0}$ then we have 
\begin{equation*}
\begin{split}
& f - g = \psi(t) e^{\i t (-\Delta)^s} ( f_{0} - g_{0} ) - \i \psi(t) \int_{0}^{t} e^{\i ( t-r) (-\Delta)^s} ( \mathcal{N}(f) - \mathcal{N}(g)) dr
\end{split}
\end{equation*}
and so by Proposition \ref{LipschitzboundN} and Lemma \ref{LinearEstimates} we obtain
\begin{equation}\label{fhg}
\begin{split}
& \Vert f- g \Vert_{F^{\sigma_{0}}}
\\
& \leq C \Vert f_{0} -g_{0} \Vert_{B^{\sigma_{0}}_{2,1}} + C_{\sigma_{0}} \lambda^{2} \Vert f-g \Vert_{F^{\sigma_{0}}}.
\end{split}
\end{equation}
Hence, for our choice of $ \lambda$ this gives
\begin{equation}\label{tur}
    \Vert f- g \Vert_{F^{\sigma_{0}}} \lesssim \Vert f_{0}  - g_{0} \Vert_{B^{\sigma_{0}}_{2,1}}.
    \end{equation}
This implies, using Lemma \ref{Energy}, 
$$
\sup_{t \in [-1,1]} \Vert f-g \Vert_{B^{\sigma_{0}}_{2,1}} \lesssim \Vert f_{0} -g_{0} \Vert_{B^{\sigma_{0}}_{2,1}}.
$$
Lastly, it remains to prove the local Lipschitz bound. Let $ \sigma \in [\sigma_{0}, \infty)$ and take $ \epsilon(\sigma) > 0$ so that the hypothesis of Proposition \ref{LipschitzboundN} is satisfied, and let $ f : \R^{n+1} \to \C$ and $g : \R^{n+1} \to \C$ be the solutions obtained above for initial data $ f_{0},g_{0}$ respectively. Fix any $ \sigma' \in [\sigma_{0} , \sigma]$ and assume that 
$$
\Vert f_{0} \Vert_{B^{\sigma_{0}}_{2,1}}, \Vert g_{0} \Vert_{B^{\sigma_{0}}_{2,1}} \leq \epsilon(\sigma) \text{ , and }
\Vert f_{0} \Vert_{B^{\sigma'}_{2,1}}, \Vert g_{0} \Vert_{B^{\sigma'}_{2,1}} \leq R.
$$
Then 
$$
f - g = \psi(t) e^{\i t ( - \Delta)^{s} }(f_{0} -g_{0} ) - \i \psi(t) ) \int_{0}^{t} e^{\i ( t-r) (-\Delta)^s} ( \mathcal{N}(f) - \mathcal{N}(g)) dr
$$
and by Lemma \ref{LinearEstimates} and Proposition \ref{LipschitzboundN} we obtain 
\begin{equation*}
\begin{split}
& \Vert f -g \Vert_{F^{\sigma'}}
\\
& \leq C  \Vert f_{0} - g_{0} \Vert_{B^{\sigma'}_{2,1}} + C \Vert \mathcal{N}(f) - \mathcal{N} (g) \Vert_{N^{\sigma'}}
\\
& \leq C\Vert f_{0} - g_{0} \Vert_{B^{\sigma'}_{2,1}} + C_{\sigma'} R \lambda \Vert f - g \Vert_{F^{\sigma_{0}}} + C_{\sigma'} \epsilon(\sigma)^{2} \Vert f -g \Vert_{F^{\sigma'}}. 
\end{split}
\end{equation*}
Hence, choosing $ \epsilon(\sigma)$ appropriately small, and using \eqref{tur} we obtain
\begin{equation*}
\Vert f -g \Vert_{F^{\sigma'}} \lesssim_{R,\sigma'} \Vert f_{0} - g_{0} \Vert_{B^{\sigma'}_{2,1}}
\end{equation*}
and we can conclude by using Lemma \ref{Energy}.
\end{proof}

\appendix 
\section{Product structures}
The following two lemmata give control of the modulation of the product of two functions. Namely, under appropriate conditions, if $ f_{j_{1}}, f_{j_{2}}$ have modulation $2^{j_{1}},2^{j_{2}} $ then the product cannot have arbitrary large/small modulation.
\begin{lemma}\label{An1}
Let $ f_{k_{1}} $ and $ f_{k_{2}} $ have Fourier transform supported in $D_{k_{1}, \leq j_{1}}$ and $D_{k_{2}, \leq j_{2}}$ respectively. Assume that $ k_{1} \leq k_{2} + 10$ and consider $ j \geq \tilde{K} = k_{1} + (2s-1)k_{2} +C'  $ for sufficiently large constant $C'$. If  $ j \geq \max \{ j_{1},j_{2} \} +50 $ then we have 
$$
Q_{j} \Delta_{\leq k_{2} + 20} ( f_{k_{1}} f_{k_{2}}) = 0
$$
\end{lemma}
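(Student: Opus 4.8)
The plan is to work entirely on the Fourier side and to show that on the Fourier support of $f_{k_1}f_{k_2}$ the modulation $|\tau+|\xi|^{2s}|$ is strictly smaller than $2^{j-1}$; since the multiplier defining $Q_j$ (for $j\ge 1$) is $\varphi((\tau+|\xi|^{2s})/2^j)$, which is supported in $2^{j-1}\le|\tau+|\xi|^{2s}|\le 2^{j+1}$, this forces $Q_j\Delta_{\le k_2+20}(f_{k_1}f_{k_2})=0$. First I would write $\mathcal{F}(f_{k_1}f_{k_2})$ as a convolution: any $(\xi,\tau)$ in its support decomposes as $\xi=\xi^{(1)}+\xi^{(2)}$, $\tau=\tau^{(1)}+\tau^{(2)}$ with $(\xi^{(i)},\tau^{(i)})\in\supp\hat f_{k_i}$, so $|\xi^{(i)}|\in I_{k_i}$ and $|\tau^{(i)}+|\xi^{(i)}|^{2s}|\le 2^{j_i+1}$. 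In particular $|\xi|\le|\xi^{(1)}|+|\xi^{(2)}|\lesssim 2^{k_2}$ using $k_1\le k_2+10$, so the cutoff $\Delta_{\le k_2+20}$ is in fact redundant here but harmless. Substituting the modulation bounds yields
\begin{equation*}
|\tau+|\xi|^{2s}|\le\bigl|\,|\xi^{(1)}+\xi^{(2)}|^{2s}-|\xi^{(1)}|^{2s}-|\xi^{(2)}|^{2s}\,\bigr|+2^{j_1+1}+2^{j_2+1},
\end{equation*}
so everything comes down to bounding the resonance function $g(\xi^{(1)},\xi^{(2)}):=|\xi^{(1)}+\xi^{(2)}|^{2s}-|\xi^{(1)}|^{2s}-|\xi^{(2)}|^{2s}$.

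The hard part is the estimate $|g(\xi^{(1)},\xi^{(2)})|\lesssim 2^{k_1+(2s-1)k_2}$, which I would prove by splitting into two regimes. If $k_1\le k_2-10$, then $|\xi^{(1)}|\ll|\xi^{(2)}|$, so $|\xi^{(2)}+t\xi^{(1)}|\approx 2^{k_2}$ for $t\in[0,1]$, and applying the mean value theorem to $t\mapsto|\xi^{(2)}+t\xi^{(1)}|^{2s}$ gives $\bigl|\,|\xi^{(1)}+\xi^{(2)}|^{2s}-|\xi^{(2)}|^{2s}\,\bigr|\lesssim 2^{(2s-1)k_2}|\xi^{(1)}|\lesssim 2^{k_1+(2s-1)k_2}$; moreover, since $s>1/2$ and $k_1\le k_2$ one has $|\xi^{(1)}|^{2s}\lesssim 2^{k_1}2^{(2s-1)k_1}\le 2^{k_1+(2s-1)k_2}$, so $|g|\lesssim 2^{k_1+(2s-1)k_2}$. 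If instead $k_1\in[k_2-10,k_2+10]$, then $|\xi^{(1)}|,|\xi^{(2)}|,|\xi^{(1)}+\xi^{(2)}|$ are all $\lesssim 2^{k_2}$, so trivially $|g|\lesssim 2^{2sk_2}$; but in this regime $k_1+(2s-1)k_2\ge 2sk_2-10$, so again $|g|\lesssim 2^{k_1+(2s-1)k_2}$ up to an absolute constant.

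Combining the two regimes, there is an absolute constant $C_0$ with
\begin{equation*}
|\tau+|\xi|^{2s}|\le 2^{k_1+(2s-1)k_2+C_0}+2^{j_1+1}+2^{j_2+1}.
\end{equation*}
Finally I would invoke the hypotheses $k_1+(2s-1)k_2\le j-C'$ (the definition of $\tilde{K}$) and $j_1,j_2\le j-50$: choosing $C'\ge C_0+10$, each of the three terms on the right is at most $2^{j-9}$, so $|\tau+|\xi|^{2s}|<2^{j-1}$ on the entire Fourier support of $\Delta_{\le k_2+20}(f_{k_1}f_{k_2})$, and the conclusion follows as explained above. Apart from the elementary but slightly fiddly resonance estimate on $g$, the argument is pure bookkeeping with the frequency and modulation supports.
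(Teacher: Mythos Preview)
Your proof is correct and follows essentially the same approach as the paper: both reduce to bounding the resonance function $\bigl|\,|\xi^{(1)}+\xi^{(2)}|^{2s}-|\xi^{(1)}|^{2s}-|\xi^{(2)}|^{2s}\,\bigr|$ by $\lesssim 2^{k_1+(2s-1)k_2}$ via the mean value theorem, then comparing with the hypotheses $j\ge\tilde K$ and $j\ge\max\{j_1,j_2\}+50$. The paper phrases the argument contrapositively (assuming the output is nonzero and showing the modulation of the $f_{k_2}$ factor would exceed $2^{j_2}$), whereas you argue directly that the output modulation stays below $2^{j-1}$; the content is identical, and your case split handling $|\eta|^{2s}$ explicitly is arguably cleaner than the paper's one-line mean value bound.
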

\begin{proof}
Take the Fourier transform, 
\begin{equation*}
\begin{split}
& \mathcal{F}(Q_{j} \Delta_{\leq k_{2} +20 } (f_{k_{1}} f_{k_{2}} ))(\xi,\tau) 
\\
& = \varphi_{\leq k_{2}+20} (\xi) \varphi_{j}( \vert \xi \vert^{2s} +\tau) \int_{\R^{n+1}} \hat{f}_{k_{1}} ( \eta,r) \hat{f}_{k_{2}}( \xi - \eta, \tau - r) d \eta d r
\end{split}
\end{equation*}
in order for this to be nonzero we must have
\begin{enumerate}
    \item $2^{j-1} \leq \vert \vert \xi \vert^{2s} + \tau \vert \leq 2^{j+1}$
    \item  $ \vert  \vert \eta \vert^{2s} + r \vert \leq 2^{j_{1}+1} \ll 2^{j}$
    \item $ \vert \xi - \eta \vert \approx 2^{k_{2}} \geq 2^{-30} \vert \xi \vert $
\end{enumerate}
we will show that $ \vert \vert \xi - \eta \vert^{2s} + \tau -r \vert \geq 2^{j-c}$ for some appropriate constant $c$, this will yield that the convolution is $0$ because $ 2^{j_{2}} \ll 2^{j}$. To that end, write $A : = \vert \xi \vert^{2s} + \tau$ and $B:= \vert \eta \vert^{2s} + r$ and $ W : = \vert \xi - \eta \vert^{2s} - \vert \xi \vert^{2s} + \vert \eta \vert^{2s}$ then by definition 
$$
\vert \xi - \eta \vert^{2s} + \tau - r = A -B + W
$$
and by the reverse triangle inequality we have
$$
| \vert \xi - \eta \vert^{2s} + \tau - r | \geq \vert A \vert - \vert B \vert - \vert W \vert 
$$
by hypothesis $ | A | \geq 2^{j-1} $ while $ |B| \ll 2^{j}$ therefore $ | A | - |B| \geq 2^{j-2} $. It remains to show that $ |W| \leq 2^{j-c} $ for some constant larger than $3$. To that end,
\begin{equation*}
\begin{split}
& \vert W \vert = \vert \vert \xi - \eta \vert^{2s} - \vert \xi \vert^{2s} \vert \leq 2s \max\{ \vert \xi - \eta \vert^{2s-1} , \vert \xi \vert^{2s-1} \} \vert \eta \vert 
\\
& \lesssim 2^{(2s-1)k_{2} +30} \vert \eta \vert
\\
& \lesssim 2^{\tilde{K}+ 30 - C'}
\\
& \leq 2^{j +30 - C'}
\end{split}
\end{equation*}
if $C'$ was chosen so that $ C' > 50 $ then we obtain 
$$
\vert W \vert \leq 2^{j-5} 
$$
and hence 
$$
\vert \xi - \eta \vert^{2s} + \tau -r \vert \geq 2^{j-2} - 2^{j-5} = 2^{j} ( 2^{-2} - 2^{-5}) \geq 2^{j-10} \gg 2^{j_{2}}
$$
which tells us that the convolution is $0$.
\end{proof}
For a lower bound we have
\begin{lemma}\label{lowerboundmod}
Let $ k_{1} , k_{2} \geq 0$ with $ k_{1} \leq k_{2}-10$ and $ j_{1} \geq  k_{1}+ (2s-1)k_{2}  + 100$. Let $ f_{k_{1}} \in Z_{k_{1}}, \text{ and } f_{k_{2}} \in Z_{k_{2}}$. Then there exists $C'$ so that for any $ j \leq j_{1} - C'$ we have 
$$
Q_{j} \left( ( Q_{\geq j_{1}+100}f_{k_{2}}) Q_{j_{1}}(f_{k_{1},j_{1}}) \right)  =0
$$
\end{lemma}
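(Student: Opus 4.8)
The plan is to follow the proof of Lemma~\ref{An1}: passing to the Fourier side, one checks that the space--time frequency support of $Q_j$ applied to the product is empty once $j\le j_1-C'$, because the frequency separation $k_1\le k_2-10$ together with the modulation sizes prevents the phase $\tau+|\xi|^{2s}$ of the output from being small.

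Write the two factors as $g_1:=Q_{j_1}(f_{k_1,j_1})$, so that $\hat g_1$ is supported where $|\eta|\in I_{k_1}$ and $|r+|\eta|^{2s}|\le 2^{j_1+1}$ (here $j_1\ge 100\ge 1$), and $g_2:=Q_{\ge j_1+100}(f_{k_2})$, so that $\hat g_2$ is supported where $|\xi-\eta|\in I_{k_2}$ and $|(\tau-r)+|\xi-\eta|^{2s}|\ge 2^{j_1+99}$. First I would expand
\[
\mathcal{F}\big(Q_j(g_1g_2)\big)(\xi,\tau)=\varphi_j(\tau+|\xi|^{2s})\int_{\R^{n+1}}\hat g_1(\eta,r)\,\hat g_2(\xi-\eta,\tau-r)\,d\eta\,dr ,
\]
and note that for this to be nonzero at a fixed $(\xi,\tau)$ there must exist $(\eta,r)$ with $|\eta|\in I_{k_1}$ and $|\xi-\eta|\in I_{k_2}$; since $k_1\le k_2-10$ forces $2^{k_1+1}\ll 2^{k_2-1}$, this gives $|\xi|,|\xi-\eta|\approx 2^{k_2}$ and $|\eta|\lesssim 2^{k_1}$, and moreover $|\tau+|\xi|^{2s}|\le 2^{j+1}$, $|r+|\eta|^{2s}|\le 2^{j_1+1}$, $|(\tau-r)+|\xi-\eta|^{2s}|\ge 2^{j_1+99}$.

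The key algebraic point is the identity
\[
\big((\tau-r)+|\xi-\eta|^{2s}\big)=\big(\tau+|\xi|^{2s}\big)-\big(r+|\eta|^{2s}\big)+W,\qquad W:=|\xi-\eta|^{2s}-|\xi|^{2s}+|\eta|^{2s},
\]
combined with the bound $|W|\lesssim 2^{(2s-1)k_2+k_1}$. The latter I would obtain from the mean value theorem for $t\mapsto|t|^{2s}$ on the segment joining $\xi-\eta$ to $\xi$, which stays at distance $\lesssim 2^{k_2}$ from the origin and on which the gradient of $|t|^{2s}$ therefore has size $\lesssim 2^{(2s-1)k_2}$ (here $2s-1\in(0,1)$); this gives $\big||\xi-\eta|^{2s}-|\xi|^{2s}\big|\lesssim 2^{(2s-1)k_2}|\eta|\lesssim 2^{(2s-1)k_2+k_1}$, and one absorbs $|\eta|^{2s}\lesssim 2^{2sk_1}\le 2^{(2s-1)k_2+k_1}$ (using $k_1\le k_2$ and $2s-1>0$). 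By the hypothesis $j_1\ge k_1+(2s-1)k_2+100$ this yields $|W|\le C\,2^{j_1-100}$ with $C$ absolute.

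Feeding these bounds into the identity gives $2^{j_1+99}\le 2^{j+1}+2^{j_1+1}+C\,2^{j_1-100}$, which is impossible for $j\le j_1-C'$ with any fixed $C'\ge 1$, since the right-hand side is then $\le 2^{j_1+2}+C\,2^{j_1-100}<2^{j_1+99}$. Hence the integrand vanishes identically for every such $(\xi,\tau)$, so $Q_j(g_1g_2)=0$. I do not anticipate a genuine obstacle here: this is the same almost-orthogonality bookkeeping as in Lemma~\ref{An1}, and the only step calling for mild care is the estimate on $W$, where one must apply the mean value theorem along a segment on which $|t|^{2s-1}$ is controlled by $2^{(2s-1)k_2}$ and absorb $|\eta|^{2s}$ using $k_1\le k_2$.
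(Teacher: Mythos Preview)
Your proof is correct and follows essentially the same approach as the paper: both pass to the Fourier side, use the algebraic identity relating the three modulations via the cross term $W=|\xi-\eta|^{2s}-|\xi|^{2s}+|\eta|^{2s}$, and bound $|W|\lesssim 2^{(2s-1)k_2+k_1}$ by the mean value theorem together with $|\eta|^{2s}\le|\xi|^{2s-1}|\eta|$. Your write-up is in fact somewhat cleaner than the paper's (which contains a minor typo in the cross term).
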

\begin{proof}
This follows from the following fact: If $ ( \xi,\tau) , ( \eta,r) \in \R^{n+1}$ satisfy 
\begin{enumerate}
    \item 
    $$
    \vert \vert \xi - \eta \vert^{2s} + (\tau -r ) \vert \geq 2^{j_{1} +100} 
    $$
    \item  
    $$
    \vert \vert \eta \vert^{2s} + r \vert \in [2^{j_{1}-1} , 2^{j_{1} +1}]
    $$
    \item 
    $$ 
    2^{j_{1}-10} \geq  \vert \xi \vert^{2s-1} \vert \eta \vert, \text{ and } \vert \eta \vert \ll \vert \xi \vert.
    $$
\end{enumerate}
Then 
$$
\vert \vert \xi \vert^{2s} +\tau \vert \geq c 2^{j_{1}}
$$
\end{proof}
To prove this fact, notice that 
\begin{equation*}
\begin{split}
& \vert \vert \xi \vert^{2s} + \tau \vert
\\
& \geq \left( \vert \text{ } \vert \xi - \eta \vert^{2s} + (\tau -r ) \vert \right) -
 \left( \vert \text{ } \vert  \eta \vert^{2s} + r \vert \right)  - \vert \vert \xi - \eta \vert^{2s} - \vert \eta \vert^{2s} \vert
\\
& \geq 2^{j_{1} + 100} - 2^{j_{1} +1} - \vert \vert \xi - \eta \vert^{2s} - \vert \eta \vert^{2s} \vert
\end{split}
\end{equation*}
and the result follows once we prove 
$$
\vert \vert \xi - \eta \vert^{2s} - \vert \eta \vert^{2s} \vert \lesssim 2^{j_{1}}
$$
But this follows since we have by the hypothesis 
\begin{equation*}
\begin{split}
& \vert \vert \xi - \eta \vert^{2s} - \vert \eta \vert^{2s} \vert
\\
& \lesssim_{s} \max \{ \vert \xi - \eta \vert^{2s-1} , \vert \xi \vert^{2s-1} \} \vert \eta \vert
\\
& \lesssim \vert \xi \vert^{2s-1} \vert \eta \vert
\\
& \lesssim 2^{j_{1}}.
\end{split}
\end{equation*}

Lastly, below we include the calculations for Lemma \ref{stereographic stuff}. 
\begin{lemma}
Let $ L^{-1} : \C \to \S^{2}
$ be the inverse of the stereographic projection, i.e for $ z = z_{1} + \i z_{2} $ 
$$
L^{-1}(z) = \frac{1}{1+|z|^{2}} [ 2z_{1} , 2z_{2} , 1- |z|^{2} ].
$$
 Then we have
\begin{enumerate}
    \item 
    $$
    \partial_{z_{1}} L^{-1}(z) = \frac{2}{(1+|z|^{2})^{2}} [ (1+ \vert z \vert^{2} - 2 z^{2}_{1}, -2 z_{1} z_{2} , -2z_{1}]
    $$
    \item 
    $$
    \partial_{z_{2}} L^{-1}(z) = \frac{2}{(1+ |z|^{2})^{2}} [ -2 z_{1} z_{2} , (1 + |z|^{2} ) - 2 z_{2}^{2} , -2z_{2}]
    $$
    \item 
    $$
    | \partial_{z_{1}} L^{-1}(z) | = \frac{2}{1+|z|^{2}} = | \partial_{z_{2}} L^{-1}(z) |
    $$
    \item 
    $$
    \langle \partial_{z_{1}}L^{-1}(z), \partial_{z_{2}} L^{-1}(z) \rangle = 0
    $$
    \item 
    $$
    L^{-1}(z) \times \partial_{z_1} L^{-1} (z) = \partial_{z_{2}} L^{-1}(z) 
    $$
    \item 
    $$
    L^{-1}(z) \times \partial_{z_{2}} L^{-1}(z) = - \partial_{z_{1}} L^{-1}(z)
    $$
\end{enumerate}
\end{lemma}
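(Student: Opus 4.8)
The plan is to prove all six identities by direct computation, organized so that later parts reuse earlier ones. First I would compute $\partial_{z_1}L^{-1}$ and $\partial_{z_2}L^{-1}$ by differentiating each of the three components of $L^{-1}(z) = \frac{1}{1+|z|^2}[2z_1,2z_2,1-|z|^2]$ via the quotient rule, using $\partial_{z_1}|z|^2 = 2z_1$ and $\partial_{z_2}|z|^2 = 2z_2$. For the first component with respect to $z_1$: $\partial_{z_1}\frac{2z_1}{1+|z|^2} = \frac{2(1+|z|^2) - 2z_1\cdot 2z_1}{(1+|z|^2)^2} = \frac{2(1+|z|^2 - 2z_1^2)}{(1+|z|^2)^2}$; the other components are entirely analogous. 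This establishes (1) and (2). By the symmetry $z_1 \leftrightarrow z_2$ of the formula for $L^{-1}$, part (2) follows from part (1) by swapping indices, so only one of the two needs to be written out in detail.

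Next, for part (3) I would take the Euclidean norm of the vector in (1): the squared norm equals $\frac{4}{(1+|z|^2)^4}\big[(1+|z|^2-2z_1^2)^2 + 4z_1^2 z_2^2 + 4z_1^2\big]$, and the bracket should simplify to $(1+|z|^2)^2$ after expanding $(1+|z|^2-2z_1^2)^2 = (1+|z|^2)^2 - 4z_1^2(1+|z|^2) + 4z_1^4$ and using $4z_1^2 z_2^2 + 4z_1^2 = 4z_1^2(z_2^2+1)$, noting $-4z_1^2(1+|z|^2) + 4z_1^4 + 4z_1^2 z_2^2 + 4z_1^2 = -4z_1^2(1+z_1^2+z_2^2) + 4z_1^2(z_1^2+z_2^2+1) = 0$. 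Hence the squared norm is $\frac{4(1+|z|^2)^2}{(1+|z|^2)^4} = \frac{4}{(1+|z|^2)^2}$, giving (3); the $\partial_{z_2}$ case is identical by the index swap. For (4) I would dot the two vectors from (1) and (2): the inner product is $\frac{4}{(1+|z|^2)^4}\big[(1+|z|^2-2z_1^2)(-2z_1z_2) + (-2z_1z_2)((1+|z|^2)-2z_2^2) + 4z_1z_2\big]$, and the bracket is $-2z_1z_2\big[(1+|z|^2-2z_1^2) + (1+|z|^2-2z_2^2) - 2\big] = -2z_1z_2\big[2|z|^2 - 2z_1^2 - 2z_2^2\big] = 0$.

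For parts (5) and (6) I would compute the cross products componentwise. For (5), $L^{-1}(z) \wedge \partial_{z_1}L^{-1}(z)$ with $L^{-1} = \frac{1}{1+|z|^2}[2z_1,2z_2,1-|z|^2]$ and $\partial_{z_1}L^{-1} = \frac{2}{(1+|z|^2)^2}[1+|z|^2-2z_1^2,\,-2z_1z_2,\,-2z_1]$: the overall scalar prefactor is $\frac{2}{(1+|z|^2)^3}$, and I would check that the three components of the bracketed cross product reproduce $(1+|z|^2)[-2z_1z_2,\,(1+|z|^2)-2z_2^2,\,-2z_2]$, which after absorbing one factor of $(1+|z|^2)$ into the prefactor yields exactly $\partial_{z_2}L^{-1}(z)$ from part (2); part (6) follows the same way, or alternatively from (5) using $L^{-1}\wedge(L^{-1}\wedge\partial_{z_1}L^{-1}) = \langle L^{-1},\partial_{z_1}L^{-1}\rangle L^{-1} - |L^{-1}|^2 \partial_{z_1}L^{-1} = -\partial_{z_1}L^{-1}$ since $|L^{-1}|=1$ and $\langle L^{-1},\partial_{z_1}L^{-1}\rangle = \frac12\partial_{z_1}|L^{-1}|^2 = 0$. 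I do not expect any genuine obstacle here — the only mild nuisance is bookkeeping the cross-product components in (5)–(6) carefully, but the structure is dictated by the fact that $\{L^{-1}, e_1, e_2\}$ forms a positively oriented orthonormal frame (which is essentially the content of (3),(4),(5),(6) together), so the identities are forced once orthonormality is established.
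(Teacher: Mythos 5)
Your proposal is correct and follows essentially the same direct-computation route as the paper's appendix proof; the small organizational economies you add (deducing (2) from (1) by the $z_1\leftrightarrow z_2$ symmetry, and deriving (6) from (5) via the vector triple-product identity together with $|L^{-1}|=1$ and $\langle L^{-1},\partial_{z_1}L^{-1}\rangle=0$) are both valid and slightly tidier than the paper's componentwise check. All the algebraic simplifications you sketch (the cancellation in the squared norm, the vanishing bracket $2|z|^2-2z_1^2-2z_2^2$ in (4), and the factorization $(1+|z|^2)$ in each component of the cross product) check out.
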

\begin{proof}
This is direct computations.
The derivative of the first entry is 
\begin{equation*}
\begin{split}
& \partial_{z_{1}} \big[ \frac{1}{1+ | z|^{2}} 2z_{1} \big]
\\
& =  \frac{2}{1+ |z|^{2}} - \frac{ -4 z_{1}^{2}}{(1+ |z|^{2})^{2}}
\\
& = \frac{2}{ ( 1+ |z|^{2})^{2} } \big[ ( 1 + \vert z \vert^{2} ) - 2 z_{1}^{2} ].
\end{split}
\end{equation*}
For the second entry 
\begin{equation*}
\begin{split}
& \partial_{z_{1}} \big[ \frac{2 z_{2}}{1+ |z|^{2}}]
\\
& = \frac{ - 4 z_{1} z_{2}}{(1+ \vert z \vert^{2})^{2}}
\\
& = \frac{2}{ (1 + | z|^{2})^{2}} \big[ -2 z_{1} z_{2} \big]
\end{split}
\end{equation*}
for the last entry 
\begin{equation*}
\begin{split}
& \partial_{z_{1}} \big[  \frac{ 1 - |z|^{2}}{1+|z|^{2}} \big]
\\
& = \frac{ -2z_{1} }{1 + |z|^{2}} + \frac{ - 2 z_{1} ( 1- |z|^{2})}{(1+|z|^{2})^{2}}
\\
& = \frac{2}{(1+ |z|^{2})^{2}} \big[ -z_{1} ( 1 + |z|^{2} + (1 - |z|^{2})]
\\
& = \frac{2}{(1+ |z|^{2})^{2}} ( -2 z_{1}) 
\end{split}
\end{equation*}
this proves (1). Next, we prove (2) 
\begin{equation*}
\begin{split}
& \partial_{z_{2}} \big[ \frac{2z_{1}}{ 1+ |z|^{2}}   \big]
\\
& = \frac{ -4 z_{1} z_{2}}{(1+|z|^{2})^{2}}
\\
& = \frac{2}{(1+|z|^{2})^{2}} \big[ -2 z_{1} z_{2} \big]
\end{split}
\end{equation*}
for the second entry we obtain 
\begin{equation*}
\begin{split}
& \partial_{z_{2}} \big[ \frac{2z_{2}}{1+ |z|^{2}} \big]
\\
& = \frac{2}{1+ |z|^{2}} - \frac{-4 z_{2}^{2}}{(1+ | z|^{2})^{2}}
\\
& = \frac{2}{(1+ | z|^{2})^{2}} \big[ (1+ |z|^{2}) - 2 z_{2}^{2} \big]
\end{split}
\end{equation*}
lastly, for the last entry 
\begin{equation*}
\begin{split}
& \partial_{z_{2}} \big[  \frac{1 - |z|^{2}}{1+|z|^{2}} \big]
\\
& = \frac{-2 z_{2}}{ 1+ |z|^{2}} + \frac{ -2z_{2} ( 1 - | z|^{2})}{( 1+ |z|^{2})^{2}}
\\
& = \frac{2}{(1+ |z|^{2})^{2}} ( -z_{2} ( 1+ |z|^{2} + 1 - |z|^{2}))
\\
& = \frac{2}{ ( 1+ |z|^{2})^{2}} ( -2z_{2})
\end{split}
\end{equation*}
this proves (2). We proceed to prove (3) 
\begin{equation*}
\begin{split}
& | \partial_{z_{1}} L^{-1} (z) |^{2}
\\
& = \frac{4}{ (1 + |z|^{2})^{4}} \big[ (1+ |z|^{2})^{2} + 4 z_{1}^{4} - 4 z_{1}^{2} ( 1+ |z|^{2}) + 4 z_{1}^{2} z_{2}^{2} + 4 z_{1}^{2}]
\\
& = \frac{4}{(1+ |z|^{2})^{4} } ( 1 + |z|)^{2}
\\
& = \frac{4}{(1+ |z|^{2})^{2}}
\end{split}
\end{equation*}
and similarly, we obtain $ | \partial_{z_{2}} L^{-1}(z) | = \frac{2}{1+ |z|^{2}}$. This proves (3). Lastly, for (4) we have
\begin{equation*}
\begin{split}
& \langle \partial_{z_{1}} L^{-1}(z) , \partial_{z_{2}} L^{-1}(z) \rangle
\\
& = \frac{4}{(1+ |z|^{2})} \big[-2 z_{1} z_{2} (1 + |z|^{2} ) + 4 z_{1}^{2} z_{1} z_{2} - 2z_{1} z_{2} (1+ |z|^{2} ) + 4 z_{2}^{2} z_{1} z_{2} + 4 z_{1} z_{2}    \big]
\\
& = \frac{4}{(1+ |z|^{2})} \big[ -4z_{1} z_{2}( - |z|^{2} + z_{2}^{2} + z_{1}^{2})    \big]
\\
& = 0 
\end{split}
\end{equation*}
this proves (4). Next, we proceed to (5)
\begin{equation*}
\begin{split}
& L^{-1}(z) \times \partial_{z_{1}} L^{-1}(z)
\\
& = \frac{2}{(1 + |z|^{2})^{3}} \big[ -4 z_{1}z_{2} + 2z_{1} z_{2} ( 1- |z|^{2}) , 4z_{1}^{2} |z|^{2} 
+ (1 -|z|^{2})(1+|z|^{2})
\\
&, -4 z_{1}^{2} z_{2} - 2z_{2} - 2 z_{2} |z|^{2} + 4 z_{2} z_{1}^{2}      \big]
\end{split}
\end{equation*}
We simplify each entry, for the first entry 
\begin{equation*}
\begin{split}
& \frac{ 2 }{ (1 + |z|^{2})^{3}} ( -4 z_{1} z_{2} + 2z_{1} z_{2} ( 1 - |z|^{2}))
\\
& = \frac{ 2 }{ (1 + |z|^{2})^{3}} ( -2 z_{1} z_{2} ( 1  + |z|^{2}))
\\
& = \frac{2}{(1+ |z|^{2})^{2}} ( -2 z_{1} z_{2})
\end{split}
\end{equation*}
which agrees with the first entry of $ \partial_{z_{2}}L^{-1}(z)$.
Next, we do the second entry 
\begin{equation*}
\begin{split}
& \frac{2}{(1+ |z|^{2})^{3}} \big[ 4 z_{1}^{2} + ( 1 - | z|^{2})(1+|z|^{2} - 2 z_{1}^{2}) \big]
\\
& = \frac{2}{(1+ |z|^{2})^{3}} \big[ 2z_{1}^{2}(1+ | z|^{2}) + ( 1 -|z|^{2} )( 1+ |z|^{2} ) \big]
\\
& = \frac{2}{(1+ |z|^{2})^{2}} ( 2 z_{1}^{2} + (1 - |z|^{2}) )
\\
& = \frac{2}{(1+ |z|^{2})^{2}}( 1+ |z|^{2} - 2z_{2}^{2})
\end{split}
\end{equation*}
which agrees with the second entry of $ \partial_{z_{2}} L^{-1}(z)$. Lastly, we do the third entry 
\begin{equation*}
\begin{split}
& \frac{2}{(1+|z|^{2})^{3}} ( -4 z_{1}^{2} z_{2} - 2z_{2} - 2 z_{2} |z|^{2} + 4 z_{2}z_{1}^{2} )
\\
& = \frac{2}{(1+|z|^{2})^{3}} ( - 2z_{2} (1+|z|^{2}))
\\
& \frac{2}{(1+|z|^{2})^{2}} ( - 2z_{2}) 
\end{split}
\end{equation*}
which agrees with the third entry of $\partial_{z_{2}}L^{-1}(z)$. This proves (5). The calculations for (6) are almost identical. 
\end{proof}

\bibliographystyle{abbrv}
\bibliography{bib}

\begin{thebibliography}{10}

\bibitem{Bahouri}
H.~Bahouri, J.-Y. Chemin, and R.~Danchin.
\newblock {\em Fourier analysis and nonlinear partial differential equations}.
\newblock Springer Nature, 2011.

\bibitem{Null}
I.~Bejenaru.
\newblock On schrodinger maps.
\newblock {\em American Journal of Mathematics}, 2008.

\bibitem{AnnalesKI}
I.~Bejenaru, A.~D. Ionescu, C.~E. Kenig, and D.~Tataru.
\newblock Global schrödinger maps in dimensions d geq 2: Small data in the critical sobolev spaces.
\newblock {\em Annals of Mathematics}, 173(3):1443--1506, 2011.

\bibitem{model}
A.~Dughayshim, S.~R. Farina, and A.~Schikorra.
\newblock Local well-posedness for cubic fractional schr\"odinger equations with derivatives on the right-hand side, 2025.

\bibitem{AhmedArminFuture}
A.~Dughayshim and A.~Schikorra.
\newblock In peparation.
\newblock 2025.

\bibitem{Uniq}
E.~Eyeson, S.~Reyes~Farina, and A.~Schikorra.
\newblock On uniqueness for half-wave maps in dimension n geq 3.
\newblock {\em Transactions of the American Mathematical Society, Series B}, 11(15):508–539, Feb. 2024.

\bibitem{halfn1}
P.~Gérard and E.~Lenzmann.
\newblock Global well-posedness and soliton resolution for the half-wave maps equation with rational data, 2025.

\bibitem{DIE}
A.~Ionescu and C.~Kenig.
\newblock Low regularity schrodinger maps.
\newblock {\em Differential and integral equations}, 2006.

\bibitem{cmp}
A.~D. I. C.~E. Kenig.
\newblock Low-regularity schr\"{o}dinger maps, ii: global well-posedness in dimensions $d\geq 3$, 2006.

\bibitem{halfwav2}
J.~Krieger and A.~Kiesenhofer.
\newblock Small data global regularity for half-wave maps in n=4 dimensions.
\newblock {\em Communications in partial differential equations}, 2021.

\bibitem{halfwav}
J.~Krieger and Y.~Sire.
\newblock Small data global regularity for half-wave maps.
\newblock {\em Analysis and PDE}, pages 661--682, 2018.

\bibitem{lenzsurvey}
E.~Lenzmann.
\newblock A short primer on the half-wave maps equation, 2019.

\bibitem{LenzSchik}
E.~Lenzmann and A.~Schikorra.
\newblock On energy-critical half-wave maps into $\mathbb{S}^{2}$.
\newblock {\em Inventiones Mathematicae}, 213(1):1--82, 2018.

\bibitem{Liu_2023}
Y.~Liu.
\newblock Global well-posedness for half-wave maps with $ s^2 $ and $ \mathbb{H}^2 $ targets for small smooth initial data.
\newblock {\em Communications on Pure and Applied Analysis}, 22(1):127–166, 2023.

\bibitem{marsden2024}
K.~Marsden.
\newblock Global solutions to the 3d half-wave maps equation with angular regularity, 2024.

\bibitem{SurveySch}
A.~Nahmoud, A.~Stevanov, and K.~Uhlenbeck.
\newblock On schrodinger maps.
\newblock {\em Comm. Pure Appl. Math}, 2003.

\end{thebibliography}

\end{document}